\newcommand\reallywidehat[1]{%
\savestack{\tmpbox}{\stretchto{%
  \scaleto{%
    \scalerel*[\widthof{\ensuremath{#1}}]{\kern-.6pt\bigwedge\kern-.6pt}%
    {\rule[-\textheight/2]{1ex}{\textheight}}
  }{\textheight}%
}{0.5ex}}%
\stackon[1pt]{#1}{\tmpbox}%
}
\title{Global Solutions of Multispeed Semilinear Klein-Gordon Systems in Space Dimension Two}
\author{Xilu Zhu}
\date{}
\address{Dept. of Mathematics, University of Southern California,
 Los Angeles, CA 90089}
\email{xiluzhu@usc.edu}
\begin{document}
\maketitle
\newtheorem{lemma}{Lemma}[section]
\newtheorem{cor}[lemma]{Corollary}
\newtheorem{prop}[lemma]{Proposition}
\newtheorem{defn}[lemma]{Definition}
\newtheorem{theorem}[lemma]{Theorem}

\theoremstyle{definition}
\newtheorem{remark}[lemma]{Remark}
\begin{abstract}
We consider general semilinear, multispeed Klein-Gordon systems in space dimension two with some non-degeneracy conditions. We prove that with small initial data such solutions are always global and scatter to a linear solution. This result partly extends the previous result obtained by Deng \cite{y1}, who completely proved the 3D quasilinear case. To prove our result, we mainly work on Fourier side and explore the contribution from the vicinity of space-time resonaonce.
\end{abstract}

\section{Introduction}
\subsection{Introduction of the Problem}
\ \par
In this article, we will consider a system of semilinear, multispeed Klein-Gordon equations in space dimension two, namely
\begin{align}
    \left(\partial^2_t-c^2_\alpha \Delta+b^2_\alpha\right) u_\alpha=\sum_{\alpha,\beta,\gamma=1}^d A_{\alpha \beta \gamma} u_\beta u_\gamma,\ \ \ \ 1\le\alpha\le d, \tag{1.1} \label{1.1}
\end{align} 
where $A_{\alpha\beta\gamma},\ \alpha,\beta,\gamma\in\left\{1,2,\cdots,d\right\}$ are constants and the speeds $c_\alpha$ and the masses $b_\alpha$ are arbitrary positive parameters. In this article, we mainly consider the long-time evolution of small initial valued solution to (\ref{1.1}), which has been studied in many previous works.\par
Klein-Gordon equations of form (\ref{1.1}) naturally arise from many physical systems. For example, both Euler-Poisson system and Euler-Maxwell system for one-fluid can be reduced to Klein-Gordon equations after some suitable transforms. Euler-Possion system reads:
\begin{align*}
    \begin{cases}
    \partial_t n_e+\nabla\cdot\left(n_e v_e\right)=0, \\
    n_e m_e\left(\partial_t v_e+v_e\cdot\nabla v_e\right)+\nabla p(n_e)=e n_e \nabla\phi, \\
    \Delta\phi=4\pi e(n_e-n_0),
\end{cases}
\end{align*}
and Euler-Maxwell system for one-fluid reads:
\begin{align*}
\begin{cases}
    \partial_t n_e+\nabla\cdot\left(n_e v_e\right)=0, \\
    n_e m_e\left(\partial_t v_e+v_e\cdot\nabla v_e\right)+\nabla p(n_e)=-n_e e\left(E-\frac{b v_e^\bot}{c}\right), \\
    \partial_t b+c\cdot \mbox{curl}(E)=0,\\
    \partial_t E+c \nabla^\bot b=4\pi e n_e v_e.
\end{cases}
\end{align*}
It is pointed out that in \cite{a2} and \cite{y2} after some transforms the Euler-Poisson system can be reduced to
$$(\partial_t+i\Lambda)U=\mathcal{N},$$
and Euler-Maxwell system for one-fluid can be reduced to
$$\begin{cases}
    (\partial_t+i\Lambda_e)u_e=\mathcal{N}_e, \\
    (\partial_t+i\Lambda_b)u_b=\mathcal{N}_b,
\end{cases}$$
where $\Lambda(\xi)=\sqrt{a\left|\xi\right|^2+b}$, $\Lambda_e(\xi)=\sqrt{-\lambda\left|\xi\right|^2+1}$, $\Lambda_b(\xi)=\sqrt{-\left|\xi\right|^2+1}$ ($a,b>0$ are two positive constants and $0<\lambda<1$ is a constant) and $\mathcal{N}$, $\mathcal{N}_e, \mathcal{N}_b$ are three nonlinearties. This implies that on a linear level Euler-Possion system is actually a single Klein-Gordon equation and Euler-Maxwell system is actually a Klein-Gordon system with two equations and $c_1=\sqrt{d}, c_2=1, b_1=b_2=1$. In addition, the Euler-Maxwell system for two-fluid studied in \cite{ya1} also has strong connection to Klein-Gordon equations, since after suitable transforms some dispersive relations are also in the form of $\sqrt{-c_\alpha^2\Delta+b_\alpha^2}$. \par
Now, let's go back to the long-time behavior problem of the nonlinear Klein-Gordon system as in (\ref{1.1}). First of all, fix $\varphi(\xi)$ to be a real-valued radially symmetric bump function adapted to the ball $\left\{\xi\in\mathbb{R}^d:\left|\xi\right|\le 2\right\}$ which equals $1$ on the ball $\left\{\xi\in\mathbb{R}^d:\left|\xi\right|\le 1\right\}$ and notice that the basic dispersive estimate gives us 
$$\left\|P_k e^{-it\Lambda}f\right\|_{L^\infty}\lesssim (1+t)^{-\frac{n}{2}}(1+2^{2k})\left\|f\right\|_{L^1},$$
where $\widehat{P_k f}(\xi)\triangleq\left(\varphi(\xi/2^k)-\varphi(\xi/2^{k-1})\right)\widehat{f}(\xi)$ and $n$ refers to the dimension of the domain. This means that the decay of the linear solution of the Klein-Gordon system is $t^{-\frac{3}{2}}$ (respectively $t^{-1}$) as $t\rightarrow +\infty$ in 3D (respectively 2D). Therefore, for the purpose of proving the decay of global wellposedness and scattering, it is easier when the dimension is high. In this sense, the easiest case is a single Klein-Gordon equation in 3D case. It was studied by Klainerman \cite{s1} and Shatah \cite{ja1} independently. Later on, Hayashi, Naumkin and Wibowo \cite{n1} considered the case of arbitrarily many equations in 3D with the same speed but different masses and Delort, Fang and Xue \cite{j1} considered the case of two equations in 2D with the same speed but different masses (they also imposed a non-resonance condition). Note that the above works are more or less based on physical space analysis. \par
After that, a useful method called Space-time Resonance Method was developed by Germain, Masmoudi and Shatah. The idea is to combine the classical concept of resonances with the feature of dispersive relations and use the corresponding analytical methods explore them. It is used to understand the global existence for nonlinear dispersive equations, set in the whole space $\mathbb{R}^d$, and with small data. Early works involving this space-time resonance method include \cite{p1}, \cite{p2}, \cite{p3} and \cite{p4}. In the context of multispeed Klein-Gordon equations, Germain \cite{p1} considered the case of two equations in 3D with the same mass but different speeds. More generally, we are also interested in the multispeed, multimass Klein-Gordon system with arbitrarily many equations. In recent years, more and more people are using this new, Fourier-based methods to deal with space-time resonances to attack these problems. Ionescu and Pausader \cite{a1} obtained the global existence in 3D with two nondegeneracy assumptions (See (\ref{1.2}) below). Later on, Deng \cite{y1} removed these two nondegeneracy conditions in 3D by modifing the definition of the Z-norm and utilizing the rotation vector fields method. \par
Now, in this paper, we will prove the same result in 2D but with two nondegeneracy assumptions and semilinear nonlinearity. Here is the statment of our main theorem.
\begin{theorem}
    Consider the system (\ref{1.1}) in $\left[0,+\infty\right)\times\mathbb{R}_x^2$. Assume the following \underline{nondegeneracy} \underline{conditions}
    \begin{align*}
        \begin{cases}
        b_\sigma-b_\mu-b_\nu\neq 0 \\
        (c_\mu-c_\nu)(c_\mu^2b_\nu-c_\nu^2b_\mu)\ge 0    \end{cases}\ \ \ \ \ \ \ \ \mbox{for any }\sigma,\mu,\nu\in\left\{1,\dots,d\right\}\tag{1.2}.\label{1.2}
    \end{align*}
    Define $\left\|\cdot\right\|_X$, $N_0$, $N_1$, $H^{N_1}_\Omega$ as in Section 2.2 below and let $\varepsilon_0>0$ be small enough depending on $b_\alpha$ and $c_\alpha$. If the initial data $\textbf{u}(0)=\textbf{g}$, $\partial_t \textbf{u}(0)=\textbf{h}$ satisfy the bound
    $$\left\|\left(\textbf{g},\partial_x \textbf{g},\textbf{h}\right)\right\|_X\le \varepsilon \le \varepsilon_0 \ll 1,$$
    then there exists a unique global solution $\textbf{u}$, with prescribed initial data, such that
    $$(\textbf{u},\partial_x \textbf{u}, \partial_t \textbf{u})\in C\left(\left[0,+\infty\right):H^{N_0}(\mathbb{R}^2)\cap H^{N_1}_\Omega(\mathbb{R}^2)\right).$$
    Moreover, we have the following decay estimates
    $$\left\|(\textbf{u},\partial_x \textbf{u},\partial_t \textbf{u})\right\|_{H^{N_0}\cap H^{N_1}_\Omega}+\sup_{a\le N_1/2} (1+t)^{0.999}\left\|\Omega^a \textbf{u}(t)\right\|_{L^\infty}\lesssim \varepsilon,$$
    and there exist $\mathbb{R}^d$ valued functions $\textbf{w}$ verifying the linear equation
    $$\left(\partial_t^2-c_\alpha^2\Delta+b_\alpha^2\right)w_\alpha=0$$
    such that we have scattering in slightly weaker spaces
    $$\lim_{t\rightarrow\pm\infty}\left(\left\|\textbf{u}(t)-\textbf{w}(t)\right\|_\mathcal{H}+\left\|(\partial_x,\partial_t)(\textbf{u}(t)-\textbf{w}(t))\right\|_\mathcal{H}\right)=0,$$
    where $\left\|\textbf{f}\,\right\|_\mathcal{H}\triangleq\left\|\textbf{f}\,\right\|_{H^{N_0-1}}+\sup\limits_{\beta\le N_1-1}\left\|\Omega^\beta \textbf{f}\,\right\|_{L^2}.$
\end{theorem}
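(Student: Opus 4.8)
The plan is to set up a continuity/bootstrap argument on the Fourier side in the spirit of the space-time resonance method, propagating simultaneously a high-order energy norm carrying the rotation vector field $\Omega$ and a dispersive ($X$-type) norm of the profiles. First I would reduce (\ref{1.1}) to a first-order system: with $\Lambda_\alpha=\sqrt{-c_\alpha^2\Delta+b_\alpha^2}$ and $U_\alpha=(\partial_t-i\Lambda_\alpha)u_\alpha$ one has $(\partial_t+i\Lambda_\alpha)U_\alpha=\sum_{\beta,\gamma}A_{\alpha\beta\gamma}u_\beta u_\gamma$, and $u_\alpha$ is recovered from $U_\alpha$ and $\overline{U_\alpha}$ by a bounded linear operation off zero frequency. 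Passing to the profiles $V_\alpha(t)=e^{it\Lambda_\alpha}U_\alpha(t)$, the Fourier transform satisfies, schematically,
$$\partial_t\widehat{V_\alpha}(\xi,t)=\sum_{\beta,\gamma,\iota_1,\iota_2}\int_{\mathbb{R}^2}e^{it\Phi(\xi,\eta)}\,\mathcal{M}(\xi,\eta)\,\widehat{V_\beta^{\iota_1}}(\eta,t)\,\widehat{V_\gamma^{\iota_2}}(\xi-\eta,t)\,d\eta,$$
with phase $\Phi(\xi,\eta)=\Lambda_\alpha(\xi)-\iota_1\Lambda_\beta(\eta)-\iota_2\Lambda_\gamma(\xi-\eta)$, $\iota_j\in\{+,-\}$ (the $-$ branches carrying complex conjugates up to a reflection in frequency), and symbols $\mathcal{M}$ smooth away from the origin. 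The two conditions in (\ref{1.2}) enter exactly here: $b_\sigma-b_\mu-b_\nu\ne0$ keeps the time-resonant set $\{\Phi=0\}$ and the space-time resonant set $\{\Phi=0,\ \nabla_\eta\Phi=0\}$ away from zero frequency, while the sign condition $(c_\mu-c_\nu)(c_\mu^2b_\nu-c_\nu^2b_\mu)\ge0$ forces the space-time resonant configurations to lie on a nondegenerate embedded curve along which a clean local change of variables is available.

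Next I would fix $T\in(0,\infty)$ and make the a priori assumption
$$\sup_{t\in[0,T]}\Big(\|(\mathbf{u},\partial_x\mathbf{u},\partial_t\mathbf{u})(t)\|_{H^{N_0}\cap H^{N_1}_\Omega}+(1+t)^{0.999}\!\!\sup_{a\le N_1/2}\|\Omega^a\mathbf{u}(t)\|_{L^\infty}+\sup_t\|\mathbf{V}(t)\|_{X}\Big)\le\varepsilon_1:=\varepsilon^{3/4},$$
the goal being to improve the right side to $\lesssim\varepsilon$; local well-posedness and persistence of regularity are standard, so the substance is entirely in this global estimate. For the energy, I would commute with $\Omega^a$ (using $[\Omega,\Lambda_\alpha]=0$); the nonlinearity being semilinear there is no derivative loss, but in 2D the naive $L^2$ estimate gives only slowly growing bounds, so I would re-derive the energy inequality on the Fourier side, apply a normal-form (Shatah-type) transformation to remove the quadratic term outside a neighborhood of $\{\Phi=0\}$, and bound the remaining cubic and near-resonant contributions by the pointwise decay and the $X$-norm, which yields the bounded energy $\|(\mathbf{u},\partial_x\mathbf{u},\partial_t\mathbf{u})\|_{H^{N_0}\cap H^{N_1}_\Omega}\lesssim\varepsilon$. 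The pointwise decay then follows from a linear dispersive (stationary-phase) bound for $e^{-it\Lambda_\alpha}$ applied to $\Omega^a V_\alpha$, fed by the $X$-norm control and the energy bound.

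The heart of the proof is the $X$-norm bound on $\mathbf{V}$. I would integrate $\partial_t\widehat{V_\alpha}$ in $t$, dyadically localize $\xi$, $\eta$, $\xi-\eta$ to scales $2^k$, $2^{k_1}$, $2^{k_2}$ and time to $|t|\sim2^m$, and split according to the size of $\Phi$ and $\nabla_\eta\Phi$ on the relevant block. When $|\Phi|$ is relatively large (say $\gtrsim2^{-\delta m}$), integrate by parts in $t$, i.e. perform a normal-form transformation: this gains a factor $\Phi^{-1}$ at the cost of replacing one profile by $\partial_t V$, which is again quadratic hence more favorable, with boundary terms handled directly. When $\Phi$ is small but $|\nabla_\eta\Phi|$ is not, integrate by parts in the angular part of $\eta$, producing $\Omega$-weights on the profiles that are absorbed by the $H^{N_1}_\Omega$ norm; the radial direction, where space resonances concentrate, is folded into the previous case. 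In the genuinely space-time resonant region one cannot integrate by parts at all: there I would use (\ref{1.2}) to parametrize the resonant curve, exploit the extra angular regularity and decay that $\Omega$ supplies transverse to it, and estimate the output by the measure of a thin resonant neighborhood times an $L^\infty\times L^2$ (or $L^2\times L^\infty$) Hölder bound. Summing over $k$, $k_1$, $k_2$, $m$ — high-frequency sums absorbed by the energy norm and Sobolev smoothing, low-frequency sums by the symbol estimates and the size of the resonant sets — closes the $X$-norm bound and hence the bootstrap.

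For scattering, the same bilinear estimates, read in the slightly weaker topology of $\mathcal{H}$, show $\partial_t\widehat{V_\alpha}(\cdot,t)$ is integrable in $t$ there, so $V_\alpha(t)\to V_\alpha^\pm$ as $t\to\pm\infty$ in that norm; defining $w_\alpha$ as the free Klein-Gordon evolution with data obtained from $V_\alpha^\pm$ by undoing the diagonalization gives the asserted convergence. I expect the main obstacle to be the space-time resonant contribution: in two dimensions the linear decay is only $(1+t)^{-1}$, so quadratic interactions supported near the resonant set are borderline and admit essentially no room, and the nondegeneracy conditions (\ref{1.2}) must be exploited quantitatively — the interplay between the rotation vector fields and the change of variables along the resonant curve, together with the need to sum cleanly over all frequency and time scales, is where the genuine difficulty lies.
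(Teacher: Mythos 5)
Your outline correctly identifies the profile formulation, the bootstrap in an $X$-norm combining energy and dispersive components, and the standard trichotomy (time IBP when $|\Phi|$ is large, angular IBP when $|\nabla_\eta\Phi|$ is large, volume counting near $\mathcal{R}$); this is indeed the skeleton of the argument. However, the proposal has two genuine gaps at exactly the points the paper flags as the real 2D difficulties.

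First, on the energy side: you propose a Shatah normal form away from $\{\Phi=0\}$ followed by direct estimation of the near-resonant remainder by ``pointwise decay and the $X$-norm.'' In two dimensions the linear decay is only $(1+t)^{-1}$, so the near-resonant quadratic contribution to $\partial_t\mathcal{E}$ is borderline $O(t^{-1})$ and does not integrate; a normal form plus Gr\"onwall simply does not close. The paper instead time-localizes to dyadic windows $t\sim 2^m$ and proves, via a $TT^*$/Schur-type argument on the trilinear form (Lemma 4.1, using the Fourier representation $\varphi_l(\Phi)=2^l\int e^{i\lambda\Phi}P(2^l\lambda)\,d\lambda$ and volume bounds from Proposition 6.8), a decaying bound $|I[P_{k_1}f,P_{k_2}g,P_kh]|\lesssim 2^{60\max(k,k_1,k_2,0)}2^{-\delta_2 m/10}$ that is summable in $m$. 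Your sketch does not say how the near-resonant piece is to acquire any $m$-decay, and that is where the content of the energy estimate lives.

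Second, and more seriously, your plan for the $Z$-norm does not address the iterated resonance phenomenon, which the paper identifies (Section 1.2.3) as the chief new obstruction in 2D with multiple speeds and masses. After integrating by parts in time, one of the profiles becomes $\partial_s V$, which is itself a bilinear integral; when the inner integral is also near a space-time resonance, one faces a second-order phase $\Phi_{\sigma\mu\nu}(\xi,\eta)+\Psi_{\nu\theta\kappa}(\eta)$ with its own stationary set, and neither angular IBP nor a crude volume count in $\eta$ suffices. The paper handles this by a refined structural decomposition of $\partial_sV$ into coherent, secondary-resonant, and (strong and weak) noncoherent pieces (Lemma 3.12, the new $f_{NCw}$ term), together with a change of variables $x=|\eta|^2$, $y=|\xi-\eta|^2$ to estimate the doubly-resonant region, and an entire case analysis in Section 5.4.1 over the relative sizes of $|\nabla_\xi\Phi|$ and $|\nabla_\eta[\Phi+\Psi]|$ (which, unlike the two-equation case of \cite{y2}, are not linked by a simple implication). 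Relatedly, your norm is underspecified: the $Z$-norm here carries an extra parameter $n$ tracking the distance $2^{-n}\sim\Psi^*_\sigma(\xi)$ to the resonant spheres, weighted by $2^{-(1/2-19\delta)n}$ and with a small loss $2^{\varepsilon j}$ in $j$; without this precise weighting the bilinear volume estimates near $\mathcal{R}$ do not yield a summable bound. These are not expository details but load-bearing ideas, and the proposal as written would stall at exactly these points.
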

\vspace{3em}
\subsection{Description of the Methods}
\ \par
\subsubsection{Outline of the Proof} 
\ \par
Our proof will partly follow the proof in \cite{y2}.\par
The local result follows from \cite{s1}, so we only need to extend the local result to the global result.\par
The idea is to introduce the \underline{profile}:
$$\boldsymbol{V}_\sigma (t)\triangleq e^{it\Lambda_\sigma}\boldsymbol{v}_\sigma(t),$$
which can be viewed as taking back the linear solution. We expect that this profile $\boldsymbol{V}_\sigma$ will stay bounded and settle down as time goes to infinity if the initial data is small enough characterized by the Z-norm. To prove it, we just need to prove a bootstrap type result in a well designed space X: 
$$\mbox{if the initial data is small, and }\sup\limits_{\left[0,T\right]}\left\|\boldsymbol{V}_\sigma\right\|_X\lesssim \varepsilon\mbox{, then we will have }\sup\limits_{\left[0,T\right]}\left\|\boldsymbol{V}_\sigma\right\|_X\lesssim \varepsilon^{3/2}.$$
The exact statement is given in Proposition 2.3 below. Note that the X-norm includes two parts: the $H^s$ Sobolev norms and the Z-norm. Therefore, we mainly have two steps in our proof. The first part is to prove the energy estimate, namely to control the $H^s$ Sobolev norms;
the second part is to prove the dispersive estimate, namely to control the Z-norm. Note that in order to have the energy estimate, we intuitively require the estimate of form
$$\left\|P_k e^{-it\Lambda}f\right\|_{L^\infty}\lesssim (1+t)^{-1}(1+2^{2k})\left\|f\right\|_Z.$$
For the construction of the Z-norm, see Section 1.2.2 below, and for the exact definition of the Z-norm, see Section 2.2 below. \par
As for the energy estimate, the proof is basically same as the one in \cite{y2}. Due to less pointwise decay $t^{-1+\varepsilon}$ in 2D, we cannot use Gronwall argument that was often used before to prove this. Instead, authors in \cite{y2} used the $TT^*$ method and did the tangential integration by parts to prove the energy estimate. In our paper, since we assume our nonlinearity to be semilinear, we will not have the lose of derivatives. Therefore, our proof of energy estimate here is easier than that in the earlier paper \cite{y2}. For the quasilinear case, please see Section 1.2.5 below.\par
As for the dispersive estimate, we first use the Duhamel formula to write down the expression of the profile
\begin{align*}
    \widehat{V_\sigma}(t,\xi)=\widehat{V_\sigma}(0,\xi)+\int_0^t \int_{\mathbb{R}^2} e^{is\Phi_{\sigma\mu\nu}(\xi,\eta)}\widehat{V_\mu}(s,\xi-\eta)\widehat{V_\nu}(s,\eta)\,d\eta ds. \tag{1.3}\label{1.3}
\end{align*}
Now, we observe that if the weight function $\widehat{V_\mu}(s,\xi-\eta)\widehat{V_\nu}(s,\eta)$ in the double integral in (\ref{1.3}) is Schwartz and independent of $s$, then the main contribution of the double integral comes from the vicinity of the set of \underline{space-time resonance} $\mathcal{R}$, where
\begin{align*}
    \mathcal{T}&\triangleq\left\{(\xi,\eta):\Phi_{\sigma\mu\nu}(\xi,\eta)=0\right\},\\
    \mathcal{S}&\triangleq\left\{(\xi,\eta):\nabla_\eta\Phi_{\sigma\mu\nu}(\xi,\eta)=0\right\},\\
    \mathcal{R}&\triangleq\mathcal{T}\cap\mathcal{S}.
\end{align*}
(We also call $\mathcal{T}$ the \underline{time-resonant set} and call $\mathcal{S}$ the \underline{space-resonant set}.) This is because it's easy to see that away from the set $\mathcal{R}$, one can integrate by parts in (\ref{1.3}) to either the space variable $\eta$ or the time variable $s$ to get an enough decay. To control the integral around the set $\mathcal{R}$, we need two ingredients. The first one is to explore the elementary properties of the phase function 
$$\Phi_{\sigma\mu\nu}(\xi,\eta)=\sqrt{c_\sigma^2\left|\xi\right|^2+b_\sigma^2}-\sqrt{c_\mu^2\left|\xi-\eta\right|^2+b_\mu^2}-\sqrt{c_\nu^2\left|\eta\right|^2+b_\nu^2},$$
and then use these elementary properties of $\Phi_{\sigma\mu\nu}(\xi,\eta)$ to control the size and possible structures of the set $\mathcal{R}$. The second one is to quantitatively decompose the integral into two regions. One is near the resonant points, where we may have to use the volume estimate to control the integral; the other one is away from the resonant points, where we can integrate by parts to get a very fast decay. \par
To quantitatively find the boundary of these two regions, we will do the dyadic decomposition and analyze a bunch of cases depending on the relative sizes of the main parameters $m,l,j,j_1,j_2,k,k_1,k_2,n$, $n_1,n_2$ and so on, where we have the following:\par
$m$ corresponds to the time: $t\sim 2^m$;\par
$l$ corresponds to the size of $\left|\Phi_{\sigma\mu\nu}(\xi,\eta)\right|$: $\left|\Phi_{\sigma\mu\nu}(\xi,\eta)\right|\sim 2^l$;\par
$j$ corresponds to the support of the output in the x-space: $\mbox{supp} f^\sigma \approx \left\{x:\left|x\right|\sim 2^j\right\}$ ($j_1$, $j_2$ means the support of the inputs $f^\mu, f^\nu$ in the x-space);\par 
$k$ corresponds to the support of the output in the Fourier side: $\mbox{supp} \widehat{f^\sigma} \approx \left\{\xi:\left|\xi\right|\sim 2^k\right\}$ ($k_1$, $k_2$ means the support of the inputs $f^\mu, f^\nu$ in the Fourier side).\par
Using variables $j$ and $k$, we can decompose any function $f^\sigma=\sum_{j,k}f^\sigma_{j,k}$, where $f_{j,k}$ has coordinates localized in $2^j$ and frequency localized in $2^k$. Analogusly, we decompose $f^\mu=\sum_{j_1,k_1}f^\mu_{j_1,k_1}$ and $f^\nu=\sum_{j_2,k_2}f^\nu_{j_2,k_2}$. For later purposes, we need to introduce some extra parameters $n,n_1,n_2$, where $2^{-n}$ ($2^{-n_1}, 2^{-n_2}$ respectively) means the distance between $\xi$ ($\xi-\eta$, $\eta$ respectively) and resonant spheres. Therefore, we can further decompose $f^\sigma=\sum_{j,k,n}f^\sigma_{j,k,n}$ ($f^\mu=\sum_{j_1,k_1,n_1}f^\mu_{j_1,k_1,n_1}$,  $f^\nu=\sum_{j_2,k_2,n_2}f^\nu_{j_2,k_2,n_2}$ respectively). Please see Section 2.1, formula (\ref{3.1}) and proof of Lemma 4.2 for the detailed meaning of these main parameters. \par
Also, we will do the angle decomposition in our proof. This trick was introduced by Deng, Ionescu and Pausader in \cite{y1} and \cite{y2}. If $\angle\xi,\eta$ is not very small, then we can integrate by parts to the angle to get a very fast decay, otherwise we may again have to use the volume estimate to control the integral. This angle restriction provides another way to further control the volume of the resonance region. This trick of angle decomposition is realized by inserting the cutoff function $\varphi\left(\kappa_r^{-1} \Omega_\eta\Phi(\xi,\eta)\right)$, where $\Omega$ is the rotation field $x_1 \partial_2-x_2 \partial_1$ and $\kappa_r$ is a number that is usually picked as $2^{-\frac{m}{2}+\varepsilon}$. By some computation, it turns out that 
\begin{align*}
    \Omega_\eta\Phi_{\sigma\mu\nu}(\xi,\eta)&=\frac{c_\mu^2}{\sqrt{c_\mu^2\left|\xi-\eta\right|^2+b_\mu^2}}\cdot \left\langle\xi,\eta^\bot\right\rangle=\frac{c_\mu^2}{\sqrt{c_\mu^2\left|\xi-\eta\right|^2+b_\mu^2}}\cdot\left|\xi\right|\cdot\left|\eta\right|\cdot\pm\sin\angle\xi,\eta, 
\end{align*}
so, if $\left|\xi\right|,\left|\xi-\eta\right|,\left|\eta\right|\sim 1$ and $\angle\xi,\eta\ll 1$, then we get $\left|\Omega_\eta\Phi_{\sigma\mu\nu}(\xi,\eta)\right|\approx \left|\angle\xi,\eta\right|$. We remark here that 
this use of rotation vector field is consistent with the general vector field method used by Klainerman \cite{s1}.

\vspace{3em}
\subsubsection{Construction of the Z-norm}
\ \par
In this subsection, we will talk about the idea to consturct the Z-norm (See Definition 2.1).\par
In fact, it turns out that the Z-norm used in \cite{y2} also works here. The main factors in the Z-norm are $2^j$ and $2^{-\frac{n}{2}}$ here, so let's discuss why this is the case. The main idea here is to (at least formally) estimate $\left\|\widehat{f^\sigma}\right\|_{L^2}$ by using the Duhamel's formula and iterating one time
\begin{align*}
    \widehat{f^\sigma}(t,\xi)=\int_0^t \int_{\mathbb{R}^2} e^{is\Phi_{\sigma\mu\nu}(\xi,\eta)}\widehat{f^\mu}(\xi-\eta)\widehat{f^\nu}(\eta)\,d\eta ds, \tag{1.4}\label{1.4}
\end{align*}
where $\widehat{f^*}\triangleq e^{it\Lambda_*}u_*,\ \ \ *\in\left\{\sigma,\mu,\nu\right\}$ are so-called profiles. Note that since this is the first iteration, so $\widehat{f^\mu}$ and $\widehat{f^\nu}$ in the integrand are independent of time $s$. Also, for simplicity, we may just take $\widehat{f^\sigma}(0,\xi)=0$ and $\widehat{f^\mu}, \widehat{f^\nu}$ to be Schwarz functions. We know that the main contribution of the integral (\ref{1.4}) comes from the vicinity of space-time resonant points, namely
$$\mathcal{R}=\left\{(\xi,\eta):\Phi_{\sigma\mu\nu}(\xi,\eta)=\nabla_\eta\Phi_{\sigma\mu\nu}(\xi,\eta)=0\right\}.$$
In view of Proposition 6.5 (a), we denote $p:\mathbb{R}^2\rightarrow\mathbb{R}^2$ such that
$$(\nabla_\eta\Phi)(\xi,\eta)=0\ \ \ \iff\ \ \ \eta=p(\xi).$$
It turns out that we can describe the structure of $\mathcal{R}$ as 
$$\mathcal{R}=\left\{(\xi,\eta):\xi=\alpha e,\ \eta=\beta e,\ e\in\mathbb{S}^2,\ (\alpha,\beta)\in F\right\},$$
where $F$ is a finite set. (For the proof, see Proposition 8.2 in \cite{y1}.) According to (\ref{2.3}), we may also just approximate $\Phi(\xi,\eta(\xi))\approx\big|\left|\xi\right|-\gamma\big|$ around the resonant sphere, where $\gamma$ is a constant. WLOG, we can fix $\gamma=1$ in the following. Also, let's do the localization in time, and after localization consider the part of RHS of (\ref{1.4}) to be
\begin{align*}
    \int_{2^m}^{2^{m+1}} \int_{\mathbb{R}^2} e^{is\Phi_{\sigma\mu\nu}(\xi,\eta)}\widehat{f^\mu}(\xi-\eta)\widehat{f^\nu}(\eta)\,d\eta ds. \tag{1.4a}\label{1.4a}
\end{align*}
Now, we only need to consider the case when $\left|\nabla_\eta\Phi\right|\ll 1$, and by oscillatory integral theory we can calculate
\begin{align*}
    \mbox{(\ref{1.4a})}&\approx\int_{2^m}^{2^{m+1}} e^{is\Phi(\xi,p(\xi))}\frac{1}{s}\,ds \approx\int_{2^m\big|\left|\xi\right|-1\big|}^{2^{m+1}\big|\left|\xi\right|-1\big|} e^{i\Tilde{s}}\frac{1}{\Tilde{s}}\,d\Tilde{s}\\
    &\lesssim\frac{1}{2^m\big|\left|\xi\right|-1\big|}\int_{2^m\big|\left|\xi\right|-1\big|}^{2^{m+1}\big|\left|\xi\right|-1\big|} e^{i\Tilde{s}}\,d\Tilde{s} \sim \frac{1}{2^m\big|\left|\xi\right|-1\big|}\left(e^{i\left(2^{m+1}\big|\left|\xi\right|-1\big|\right)}-e^{i\left(2^m\big|\left|\xi\right|-1\big|\right)}\right) \\
    &\lesssim \frac{1}{2^m\big|\left|\xi\right|-1\big|}.
\end{align*}
Therefore, we can conclude  $\left\|\widehat{f^\sigma}\right\|_{L^2}\lesssim 2^{-m}\cdot 2^{\frac{n}{2}}$. By the finite propagation of Klein-Gordon equations, we may assume $j\le m$. Thus, we get $\left\|\widehat{f^\sigma}\right\|_{L^2}\lesssim 2^{-j}\cdot 2^{\frac{n}{2}}$, which accounts for the reason why we have such two factors in the definition of our Z-norm. However, we remark that this is not enough. We will see the reasons in the next subsection.\par

\vspace{3em}
\subsubsection{Challanges in our Problem}
\ \par
(i) In all previous works in 2D (\cite{a2}, \cite{j1}, \cite{y2}), people have not encountered the iterated resonance yet. Namely, in previous works in 2D, the resonance inputs and outputs are separated. For example, in \cite{y2}, the authors were studying Klein-Gordon systems with only two equations, where the dispersive relations are explicit and easy to deal with; it turns out that 
\begin{align*}
    \mbox{if }\left|\xi-\eta\right|,\left|\eta\right|\in\left\{\gamma_1,\gamma_2\right\}\mbox{ and }\xi\parallel\eta,\mbox{ then }\left|\Phi\right|\gtrsim 1 \tag{1.5}\label{1.5} 
\end{align*}
(See Proposition 8.5 (ii), (iii) in \cite{y2}). In this case, we can avoid the smallness of $\Phi$ or $\nabla_\eta\Phi$ that will potentially give terrible estimates.\par 
In 3D, the existence of iterated resonance points will not be a big deal, since we have a faster pointwise decay $t^{-\frac{3}{2}}$ of the linear solutions. Using spherical symmetry and rotation vector field methods with more precise analysis of the phase function, Deng \cite{y1} overcame this issue. However, in 2D, due to less pointwise decay $t^{-1}$ of the linear solution, the iterated resonant points case becomes more difficult to deal with. In this paper, although two nondegeneracy conditions are imposed to avoid the existence of very degenerate resonant points, we still need to deal with the iterated resonant points. This is because in our multispeed and multimass Klein-Gordon system, we will not have such properties as in (\ref{1.5}) anymore.\par
To get over this issue, we first need to do a correction of the "pre-Z-norm" described in the Section 1.2.2. This is because in the case of iterated resonance, we could have some terrible cases. For example, let's consider a case when $j_1<j_2=\frac{m}{2}$, $j=m$, $k_1,k_2\sim 1$, $\widehat{f^\mu_{j_1,k_1}}(\xi-\eta)=\varphi_{j_1}(\left|\xi-\eta\right|-\gamma_1), \ \ \widehat{f^\nu_{j_2,k_2}}(\eta)=\varphi_{j_2}(\left|\eta\right|-\gamma_2)$ and $\left|\Phi(\xi,\eta)\right|\sim 2^l=2^{-m}$, where $\gamma_i$ are the roots of $\Psi(\xi)\triangleq\Phi(\xi,p(\xi))$ (Again see Section 2.1, formula (\ref{3.1}) for the meaning of these main parameters). Then, we need to consider
$$\widehat{f^\sigma}(\xi)\approx 2^{-\frac{m}{2}}\int_{2^m}^{2^{m+1}}ds \int_{\mathbb{R}} e^{is\Phi(\xi,\eta)}\,\varphi_l(\Phi(\xi,\eta))\,\varphi_{j_1}(\left|\xi-\eta\right|-\gamma_1)\,\varphi_{j_2}(\left|\eta\right|-\gamma_2)\,d\eta,$$
where the coefficient $2^{-\frac{m}{2}}$ comes from the angle cutoff function $\varphi\left(\kappa_\theta^{-1}\Phi(\xi,\eta)\right)$ and we slightly abuse the notation to view $\eta$ as a 1D variable. Now, observe that 
$$\left|\Psi(\xi)\right|=\left|\Phi(\xi,p(\xi))\right|\lesssim \left|\Phi(\xi,\eta)\right|+\left|\nabla_\eta\Phi\right|\cdot \left|\eta-p(\xi)\right|\lesssim 2^{-m},$$
which implies that $n=j=m$ and $\Phi(\xi,\eta)=\Psi(\xi)+O(2^{-m})$. Then, we just need to consider
\begin{align*}
    \widehat{f^\sigma}(\xi)&\approx 2^{-\frac{m}{2}}\int_{2^m}^{2^{m+1}} e^{is\Psi(\xi)}ds \int_{\mathbb{R}} \varphi_l(\Phi(\xi,\eta))\,\varphi_{j_1}(\left|\xi-\eta\right|-\gamma_1)\,\varphi_{j_2}(\left|\eta\right|-\gamma_2)\,d\eta \\
    &\lesssim 2^{-\frac{m}{2}}\cdot 2^m \cdot 2^{-\frac{m}{2}}\sim 1.
\end{align*}
Since $\left|\xi-\gamma_3\right|\lesssim 2^{-m}$, we have $\left\|\widehat{f^\sigma}\right\|_{L^2}\lesssim 2^{-\frac{m}{2}}$. In this case, $2^{-\frac{m}{2}}$ is exactly equal to $2^{-j+\frac{n}{2}}$, so we don't have a convergence factor which will cause issues. (Note that we need to sum up over $j_1,j_2,k_1,k_2$ etc., so we must have a convergence factor in the beginning.) To resolve \vspace{0.4em} this issue, we have to slightly weaken the definition of Z-norm, namely requiring $\left\|\widehat{f^\sigma_{j,k,n}}\right\|_{L^2}\lesssim 2^{-j+\frac{n}{2}+\varepsilon j}$. Now, in this case, we can change our assumption to be $\widehat{f^\mu_{j_1,k_1}}(\xi-\eta)=\varphi_{j_1}(\left|\xi-\eta\right|-\gamma_1)\cdot 2^{\varepsilon j_1} \ \ \widehat{f^\nu_{j_2,k_2}}(\eta)=\varphi_{j_2}(\left|\eta\right|-\gamma_2)\cdot 2^{\varepsilon j_2}$ and redo \vspace{0.4em} the above calculation to get that $\left\|\widehat{f^\sigma}\right\|_{L^2}\lesssim 2^{-\frac{m}{2}+\varepsilon(j_1+j_2)}$, which is strictly less than $2^{-\frac{m}{2}+\varepsilon m}$. So, we can gain a convergence factor here to make sure that the sum over $j_1,j_2,k_1,k_2$ is summable.\par
Next, we have to implement more precise estimates. One example is that due to iterated resonance in our paper we need to additionally consider the case when $\left|\Phi_{\sigma\mu\nu}\right|,\left|\nabla_\eta\Phi_{\sigma\mu\nu}\right|\ll 1$ and $n_1,n_2>0$. In view of Definition 2.1 and Lemma 3.6, if $n_i=0$ ($i\in\left\{1,2\right\}$), then both $L^2$ and $L^\infty$ norm of $\widehat{f_{j_i,k_i,n_i}}$ ($i\in\left\{1,2\right\}$) would be better than the case when $n_i>0$. So, if $n_1,n_2>0$ which is possible in our paper, then the previous arguments in \cite{y2} do not work any more and we have to analyze the integral more precisely (E.g. See Section 5.3). Another important example is that due to the lack of (\ref{1.5}), we will lose some factors in the coefficients in Proposition 6.10 and Corollary 6.11 (compare our Proposition 6.10 and Lemma 8.10 in \cite{y2}). We are still able to prove our desired result, but this will significantly increase our workload of the proof, since Proposition 6.10 and Corollary 6.11 are often used to deal with the case when $j_1,j_2\ge m$, which frequently occurs in our paper. Here, Proposition 6.10 and Corollary 6.11 basically follow from Schur's test. Instead of using them, we may apply a change of variable 
\begin{align*}
    \begin{cases}
        x=\left|\eta\right|^2 \\ y=\left|\xi-\eta\right|^2
    \end{cases}
\end{align*}
to estimate the integral (See Lemma 3.12).\par 
\vspace{2em}
(ii) Compare to the case of two equations in \cite{y2}, our paper, the case of multiple equations, will lose some other elementary properties of the phase function.\par
One example is that we have to consider more complicated second order interactions of space-time resonances and time resonances occurred in Section 5.4.1. For example, in \cite{y2}, we have the following relationship
\begin{align*}
    &\mbox{if }\left|\Phi_{\sigma\mu\nu}(\xi,\eta)\right|,\left|\Psi_{\mu\theta\kappa}(\eta)\right|\ll 1, \\
    &\mbox{then }\left|\nabla_\eta\left[\Phi_{\sigma\mu\nu}(\xi,\eta)+\Psi_{\nu\theta\kappa}(\eta)\right]\right|\le \varepsilon\Rightarrow\left|\nabla_\xi\Phi_{\sigma\mu\nu}(\xi,\eta)\right|\lesssim \varepsilon \\
    &\ \ \ \ \ \ \ \ \mbox{and }\left|\nabla_\xi\Phi_{\sigma\mu\nu}(\xi,\eta)\right|\le \varepsilon\Rightarrow\left|\nabla_\eta\left[\Phi_{\sigma\mu\nu}(\xi,\eta)+\Psi_{\nu\theta\kappa}(\eta)\right]\right|\lesssim \varepsilon,\tag{1.6}\label{1.6} 
\end{align*}
(See Proposition 8.6 in \cite{y2}), which will simplify the discussion a lot. However, in our multispeed and multimass Klein-Gordon system case, we don't have this property (\ref{1.6}) anymore. So, we have to consider all the cases in terms of the size of $\left|\nabla_\xi\Phi_{\sigma\mu\nu}\right|$ and $\left|\nabla_\eta\left[\Phi_{\sigma\mu\nu}(\xi,\eta)+\Psi_{\nu\theta\kappa}(\eta)\right]\right|$ in our proof, which significantly complicate our proof.\par
Another example is that in our paper in the low frequency and small phase function case (i.e. $\min\left\{k,k_1,k_2\right\}<-D$ and $\left|\Phi\right|\ll 1$), right now it's possible that two inputs are near resonant points, which is not possible in \cite{y2}. This will result in an additional weaker term $f_{NCw}$ in our time derivative decomposition in Lemma 3.12. This will occur when $j_1\le m$, $j_2\ge m$ and $n_2>0$, since in this case we are not able to use the $L^\infty\times L^2$ estimate directly in Lemma 3.5 as before in \cite{y2}. To deal with this weaker term $f_{NCw}$ in the dispersive control (See Section 5.4.1), we once again need to estimate the integral more precisely. For example, one way is to plug in the time derivative expression and consider the trilinear integral directly. 
\vspace{3em}
\subsubsection{Why do we need two nondegeneracy conditions?}
\ \par
In this subsection, we point out that two nondegeneracy conditions in (\ref{1.2}) are actually reasonable here. Define the phase function $\Phi(\xi,\eta)=\Phi_{\sigma\mu\nu}(\xi,\eta)$ as in (\ref{2.1}). Note that $\Phi(0,0)=b_\alpha-b_\mu-b_\nu\neq 0$, so the first condition in (\ref{1.2}) guarantees that $(0,0)$ cannot be space-time resonant. This implies that we can avoid "the all-low frequency case" $\left|\xi\right|, \left|\xi-\eta\right|, \left|\eta\right|\ll 1$, which could cause some issues. The main reason here is that if the frequency is low, then we might not always be able to integrate by parts in the angle $\angle\xi,\eta$. Therefore, we cannot always insert the angle cutoff function $\varphi\left(\kappa_\theta^{-1} \Phi(\xi,\eta)\right)$ (See Remark 3.3 below) to the integral need to be controlled, which means we will lose a lot during the volume control. The other reason is that in view of the common used estimate $\left\|\hat{f}\right\|_{L^\infty}$ in (\ref{3.4}), we can find that if $k\sim 1$, then we will have $\left\|\hat{f}\right\|_{L^\infty}\lesssim 2^{2\delta n}$ (In the main case $j\le m+D$, this will give $\left\|\hat{f}\right\|_{L^\infty}\lesssim 2^{2\delta m}$). However, if $k\ll 1$, then we only have $\left\|\hat{f}\right\|_{L^\infty}\lesssim 2^{-21\delta k}$, which is clearly worse than the previous one. \par
Moreover, Deng pointed out in \cite{y1} that the first condition in (\ref{1.2}) is not only reasonable but also necessary. One example here is that
\begin{align*}
    \Phi(\xi,\eta)=\sqrt{\left|\xi\right|^2+1}-\sqrt{2\left|\xi-\eta\right|^2+4}+\sqrt{\left|\eta\right|^2+1}=\frac{1}{2}\left\langle\xi,\eta\right\rangle+O\left(\left|\xi\right|^4+\left|\eta\right|^4\right).\tag{1.9}\label{1.9}
\end{align*}
Intuitively, if we set two localized inputs as
$$\widehat{f_\mu}(s,\xi-\eta)=2^{cj}\varphi_j(\xi-\eta),\ \ \ \ \ \widehat{f_\nu}(s,\eta)=2^{cj}\varphi_j(\eta)$$
with $j=m/4$ and some constant $c$, where $\varphi_j$ is defined in Section 2.1. Now, if we also localize the output at $\left|\xi\right|\sim 2^{-3j}$, then we observe that $\left|\Phi\right|\lesssim 2^{-m}$ due to (\ref{1.9}), so in view of the Duhamel's formula in (\ref{1.3}), the oscillatory factor $e^{is\Phi}$ is irrelevant. Then, the localized output $\widehat{f_\sigma}$ will be approximately $2^m\,2^{-2j}\,2^{2cj}=2^{(3j)(2c+2)/3}$. Thus, if we start wich $c=0$ and keep doing the iteration, we will get
\begin{align*}
    &\widehat{f_\sigma^{(1)}}(\xi)=2^{\frac{2}{3}\cdot 3j}\varphi_{-3j}(\xi)\\
    &\widehat{f_\sigma^{(2)}}(\xi)=2^{\frac{10}{9}\cdot 9j}\varphi_{-9j}(\xi)\\
    &\dots
\end{align*}
This gives that $\left\|\widehat{f_\sigma^{(1)}}\right\|_{L^2}\sim 2^{-j}$,  $\left\|\widehat{f_\sigma^{(2)}}\right\|_{L^2}\sim 2^{j}$ and so on. So we can see that after our second iteration, the $L^2$ norm of the localized output has been already very large, provided that $j>0$ is taken very large. This implies that the first condition in (\ref{1.2}) is actually necessary. This construction will be made precise in the paper []. \par
The second condition in (\ref{1.2}) guarantees that $(\xi,\eta)$ cannot be a degenerate space-time resonant point if $\eta\neq 0$. Namely, denote $q:\mathbb{R}^2\rightarrow\mathbb{R}^2$ be a mapping such that 
$$(\nabla_\eta\Phi)(\xi,\eta)=0\ \ \ \iff\ \ \ \xi=q(\eta)$$
and we have that
\begin{align*}
    \det\left[(\nabla_{\eta,\eta}^2\Phi)(q(\eta),\eta)\right]\neq 0\ \ \ \ \ \mbox{if }\eta\neq 0 \mbox{ and }\Phi(q(\eta),\eta)=0. \tag{1.7}\label{1.7}
\end{align*}
This result was proved by some elementary calculations in \cite{a1} (See Section 1.2.5 of \cite{a1}). We will frequently use Lemma 6.7 to control the volume during our proof. Consider the integral
\begin{align*}
    \int_{\mathbb{R}^2} e^{is\Phi(\xi,\eta)}A(\xi,\eta)\,d\eta, \tag{1.8}\label{1.8}
\end{align*}
where $A(\xi,\eta)$ is a weight function independent of time $s$. Then, we can see that if we have the condition (\ref{1.7}), then (\ref{1.8}) would be approximately $O(s^{-\frac{1}{2}})\cdot O(s^{-\frac{1}{2}})=O(s^{-1})$, otherwise the best control would be only $O(s^{-\frac{1}{2}})\cdot O(s^{-\frac{1}{3}})=O(s^{-\frac{5}{6}})$. Therefore, the condition (\ref{1.7}) imposed here will help to simplify the problem. \par

\vspace{3em}
\subsubsection{Open Problems}
\ \par
First, it remains to study whether the second condition in (\ref{1.2}) is also necessary or not. Can we remove the second condition in (\ref{1.2}) and still prove the same global result for the 2D Multispeed Klein-Gordon system? Otherwise, can we find a counterexample to demonstrate that the second condition in (\ref{1.2}) is also necessary like in \cite{x1} \par
Second, in Section 1.2.1 we talked about that when proving the energy estimate in Section 4, for simplicity we just assume our nonlinear term on the RHS of (\ref{1.1}) is semilinear. Now it's still an open question to generalize our nonlinearity from semilinear to quasilinear (with some symmetry assumption), namely
\begin{align*}
    &\mbox{The RHS of (\ref{1.1})} \\
    =\ &\sum_{\beta,\gamma=1}^d \left(\sum_{j,k=1}^2 A^{jk}_{\alpha\beta\gamma}\, u_\gamma\cdot\partial_j \partial_k u_\beta+\sum_{j,k,l=1}^3 B^{jkl}_{\alpha\beta\gamma}\,\partial_l u_\gamma\cdot\partial_j \partial_k u_\beta \right)+\mathcal{B}_\alpha\left[u,\partial u\right]\ \ \ (1\le \alpha\le d), \tag{1.9}\label{1.9}
\end{align*}
where $A$ and $B$ are tensors symmetric (The symmetry assumption is needed to obtain local well-posedness; see \cite{t1}) and $\mathcal{B}_\alpha$ is an arbitrary quadratic form (of constant coefficient) of $u$ and $\partial u$. In the previous paper \cite{y2}, an explicit nonlinearity with actual physical meaning was given, and it turned out that a certain gain of derivative exists. Thus, the authors in \cite{y2} were able to prove the energy estimate. However, in the general quasilinear case as in (\ref{1.9}), we don't have such an advantage anymore. Therefore, we ask the following questions. Can we just generalize the result to quasilinearity without additional assumption (except the symmetry assumption)? If not, what assumptions do we need?\par
\vspace{3em}
\subsection{Plan of this Article}
\ \par
As said before, our proof will partly follow the proof in \cite{y2}. \par
In Section 2, we will define the relevant notations and particularly the Z-norm. Then, we will give the statements of two main ingredients, namely the local wellpossedness and the main bootstrap proposition.\par
In Section 3, we will collect all main lemmas. This section mainly includes integration by parts-style lemmas, linear dispersive estimates and in particular two time derivative decomposition lemmas, which will be our main tool or results used in later proof.\par 
In Section 4, we will prove the energy estimate. The proof is basically same as the one in \cite{y2}. \par
In Section 5, we will finish the control of the Z norm. In short, we will first integrate by parts in time and divide into several parts to do. Note that here is the place where we need to use our improved time derivative decomposition in Section 3. This section contains our main work.\par
In Section 6, we collect all elementary lemmas. These contains volume estimates, the properties related to the phase function $\Phi$ and several $L^2\times L^2$ estimate lemmas related to the resonant points. \par
\vspace{3em}

\vspace{1em}
\section{Function Spaces and the Main Result}
\subsection{Littlewood-Paley Projections and Notations}
\ \par
To do the Littlewood-Paley projections later on, we define $\varphi:\mathbb{R}\rightarrow\left[0,1\right]$ to be an even smooth function that is supported in $\left[-1.2,1.2\right]$ and equals 1 in $\left[-1.1,1.1\right]$. Abusing the notation, we also write $\varphi:\mathbb{R}^2\rightarrow\left[0,1\right]$ to be the corresponding radial function on $\mathbb{R}^2$. Let
\begin{align*}
    \varphi_k(x)&\triangleq \varphi\left(\left|x\right|/2^k\right)-\varphi\left(\left|x\right|/2^{k-1}\right)\ \ \ \ \ \mbox{for any }k\in\mathbb{Z},\\
    \varphi_I&\triangleq \sum_{m\in I\cap\mathbb{Z}} \varphi_m \ \ \ \ \ \mbox{for any }I \subseteq\mathbb{R}.
\end{align*}
For any $c\in\mathbb{R}$ let
$$\varphi_{\le c}\triangleq\varphi_{\left(-\infty,c\right]},\ \ \varphi_{\ge c}\triangleq\varphi_{\left[c,+\infty\right)},\ \ \varphi_{< c}\triangleq\varphi_{\left(-\infty,c\right)},\ \ \varphi_{> c}\triangleq\varphi_{\left(c,+\infty\right)}.$$
For any $a<b\in\mathbb{Z}$ and $j\in\left[a,b\right]\cap\mathbb{Z}$ let
\begin{align*}
    \varphi_j^{\left[a,b\right]}\triangleq\begin{cases}
        \varphi_j&\mbox{if }a<j<b,\\
        \varphi_{\le a}&\mbox{if }j=a,\\
        \varphi_{\ge b}&\mbox{if }j=b.
    \end{cases}
\end{align*}\par
For any $k\in\mathbb{Z}$ let $k_+\triangleq \max(k,0)$ and $k_-\triangleq \min(k,0)$. Let
$$\mathcal{J}\triangleq\left\{(k,j)\in\mathbb{Z}\times\mathbb{Z}_+: k+j\ge 0\right\}.$$
For any $(k,j)\in\mathcal{J}$ let
\begin{align*}
    \Tilde{\varphi}_j^{(k)}(x)\triangleq\begin{cases}
        \varphi_{\le -k}(x)&\mbox{if }k+j=0\mbox{ and }k\le 0,\\
        \varphi_{\le 0}(x)&\mbox{if }j=0\mbox{ and }k\ge 0,\\
        \varphi_j(x)&\mbox{if }k+j\ge 1\mbox{ and }j\ge 1,
    \end{cases}
\end{align*}
and notice that, for any $k\in\mathbb{Z}$ fixed, we have $\sum_{j\ge -\min(k,0)}\Tilde{\varphi}_j^{(k)}=1$.\par
Let $P_k$, $k\in\mathbb{Z}$, denote the operator on $\mathbb{R}^2$ defined by the Fourier multiplier $\xi\rightarrow\varphi_k(\xi)$. Let $P_{\le c}$ (respectively $P_{>c}$) denote the operators on $\mathbb{R}^2$ defined by the Fourier multipliers $\xi\rightarrow\varphi_{\le c}(\xi)$ (respectively $\xi\rightarrow\varphi_{>B}(\xi)$). For $(k,j)\in\mathcal{J}$ let $Q_{jk}$ denote the operator
$$(Q_{jk}f)(x)\triangleq\Tilde{\varphi}_j^{(k)}(x)\cdot P_k f(x).$$
In view of the uncertainty principle the operators $Q_{jk}$ are relevant only when $2^j\,2^k\gtrsim 1$, which explains the definitions above.\par
Let $\Lambda_\alpha=\sqrt{-c_\alpha^2\Delta+b_\alpha^2}$ be the linear phase and define
$$\Lambda_\alpha(\xi)=\Lambda_\alpha(\left|\xi\right|)\triangleq\sqrt{c_\alpha^2\left|\xi\right|^2+b_\alpha^2};\ \ \ b_{-\alpha}=-b_\alpha,\ \ \ c_{-\alpha}=c_\alpha,\ \ \ \Lambda_{-\alpha}=-\Lambda_\alpha$$
for $1\le \alpha\le d$. For $\sigma,\mu,\nu\in\left\{-d,\dots,-2,-1,1,2,\dots d\right\}$, we define the associated nonlinear phase
\begin{align*}
    \Phi_{\sigma\mu\nu}(\xi,\eta)\triangleq \Lambda_\sigma(\xi)-\Lambda_\mu(\xi-\eta)-\Lambda_\nu(\eta) \tag{2.1}\label{2.1}
\end{align*}

and we often omit the subscripts if no misunderstanding will occur. 
\vspace{3em}
\subsection{the Z-norm}
\ \par
Note that in Lemma 5.6 in \cite{a1}, it is proved that 
$$\nabla_\eta\Phi(\xi,\eta)=0\ \ \ \Leftrightarrow\ \ \ \eta=p(\xi),$$
where $p_+:\mathbb{R}\longrightarrow\mathbb{R}$ is an odd smooth function such that $p(\xi)=p_+(\left|\xi\right|)\frac{\xi}{\left|\xi\right|}$. As in \cite{y2}, we define 
\begin{align}
   \Psi_{\sigma\mu\nu}\triangleq\Phi_{\sigma\mu\nu}(\xi,p(\xi)),\ \ \Psi^*_\sigma(\xi)\triangleq 2^{D_0}\,(1+\left|\xi\right|)\inf\limits_{\mu,\nu;b_\mu+b_\nu\neq0}{\left|\Psi_{\sigma\mu\nu}(\xi)\right|}. \tag{2.2}\label{2.2}
\end{align}
By taking $D_0=D_0(c_\sigma,c_\mu,c_\nu,b_\sigma,b_\mu,b_\nu)$ large enough and applying Lemma 5.8 in \cite{a1}, we know that all roots of $\Psi_{\sigma\mu\nu}$ are simple. Moreover, if $\left|\xi\right|$ is very large, then Corollary 6.2 tells us that $\left|\Psi\right|\gtrsim \frac{1}{2^{\Bar{k}}}$, where $\Bar{k}=\max\left\{k,k_1,k_2,0\right\}$, so we know that $\left|\Psi^*_\sigma\right|\gtrsim 1$ whenever $\left|\xi\right|$ is very large. To sum up, we get that
\begin{align*}
    \Psi^*_{\sigma}\approx
  \begin{cases}
   \left(1+\left|\xi\right|\right)^{-1}\cdot\min\limits_{\mbox{\footnotesize all }\Psi_{\sigma\mu\nu}\mbox{\footnotesize 's roots } \gamma_i}\left\{\big|\left|\xi\right|-\gamma_i\big|\right\}, &\mbox{if }\Psi_{\sigma\mu\nu}\mbox{ has at least one root;} \\
   1,&\mbox{if }\Psi_{\sigma\mu\nu}\mbox{ has no root;}
\end{cases}.\tag{2.3}\label{2.3}
\end{align*}
(Note that we could have at most four roots depending on $c_\sigma,c_\mu,c_\nu,b_\sigma,b_\mu,b_\nu$.) 
In addition, if $\left|\Psi^*_\sigma\right|\ll 1$ , then $\left|\xi\right|$ must be near some $\gamma_i$, which means that $\left|\xi\right|\sim 1$. Therefore, by Proposition 6.4, we know that $\left|\nabla_\xi\Phi_{\sigma\mu\nu}(\xi,p(\xi))\right|\gtrsim 1$, since $\left|\Phi_{\sigma\mu\nu}(\xi,\eta)\right|\ll 1$ and $\left|\nabla_\eta\Phi_{\sigma\mu\nu}(\xi,p(\xi))\right|=0$. Note that we have
$$\nabla\Psi_{\sigma\mu\nu}(\xi)=(\nabla_\xi\Phi_{\sigma\mu\nu})(\xi,p(\xi))+(\nabla_\eta\Phi_{\sigma\mu\nu})(\xi,p(\xi))\nabla p(\xi)=(\nabla_\xi\Phi_{\sigma\mu\nu})(\xi,p(\xi)),$$
which implies that $\left|\nabla\Psi_{\sigma\mu\nu}\right|\gtrsim 1$.
For $n\in\mathbb{Z}$ we define the operators $A^\sigma_n$ by
$$\widehat{A^\sigma_n f}(\xi)\triangleq\varphi_{n}\left(\Psi^*_\sigma(\xi)\right)\cdot\hat{f}(\xi),$$
for $\sigma\in\left\{-d,\cdots,-2,-1,1,2,\cdots,d\right\}$. Given an integer $j\ge 0$ we define the operators $A^{\sigma}_{n,(j)},n\in{0,\cdots,j+1}$, by
$$A^{\sigma}_{0,(j)}\triangleq\sum_{n^\prime\le 0}{A^\sigma_{n\prime}}\,,\ \ A^{\sigma}_{j+1,(j)}\triangleq\sum_{n^\prime\ge j+1}{A^\sigma_{n\prime}}\,,\ \ A^\sigma_{n,(j)}\triangleq A^\sigma_n\ \ \ \ \ \mbox{ if }0<n<j+1.$$
We fix a constant $D=D(c_\sigma,c_\mu,c_\nu,b_\sigma,b_\mu,b_\nu)>0$ which is large enough. Now, we are ready to define the Z norms. First, we pick $N_0$,$N_1$ large enough, and $\delta$ small enough. For example, $\delta\triangleq 4\cdot 10^{-7}$,$N_1\triangleq 8/\delta^2$,$N_0\triangleq 400/\delta^2$ would work. Let $\Omega\triangleq x_1 \partial_2-x_2 \partial_1$ denote the rotation vector-field, and define
$$H^{N_1}_\Omega\triangleq \left\{f\in L^2(\mathbb{R}^2):\left\|f\right\|_{H^{N_1}_\Omega}\triangleq\sup_{m\le N_1}\left\|\Omega^m f\right\|_{L^2}<\infty\right\}.$$
\vspace{4em}
\begin{defn}[Z-norm]
For $\sigma\in\left\{1,2,\dots,d\right\}$ we define
$$Z^\sigma_1\triangleq\left\{f\in L^2(\mathbb{R}^2):\left\|f\right\|_{Z^\sigma_1}\triangleq\sup_{(k,j)\in\mathcal{J}} 2^{6k_+}2^{(1-20\delta)j} \sup\limits_{0\le n\le j+1} 2^{-\left(\frac{1}{2}-19\delta\right)n} \left\|A^\sigma_{n,(j)}Q_{jk}f\right\|_{L^2}<\infty\right\}.$$
Then, we define
$$Z\triangleq\left\{(f_1,f_2,\dots,f_d)\in L^2\times L^2\times\dots\times L^2:\left\|(f_1,f_2,\dots,f_d)\right\|_Z\triangleq\sup_{m\le N_1/2}\sum_{i=1}^d \left\|\Omega^m f_i\right\|_{Z_1^i}<\infty\right\}.$$
Finally, we denote 
$$\left\|\textbf{f}\,\right\|_X\triangleq\left\|\textbf{f}\,\right\|_{H^{N_0}}+\sup\limits_{\beta\le N_1}\left\|\Omega^\beta \textbf{f}\,\right\|_{L^2}+\left\|\textbf{f}\,\right\|_Z,$$
where $\mathcal{J}\triangleq\left\{(k,j)\in\mathbb{Z}\times\mathbb{Z}^+:k+j\ge 0\right\}.$
\end{defn}
\vspace{3em}
\subsection{Proof of the Main Result}
\ \par
The proof of the main result Theorem 1.1 is based on the following two important propositions.
\begin{prop}
    Suppose $\textbf{g,h}:\mathbb{R}^2\rightarrow\mathbb{R}^d$ are such that $\left\|\left(\textbf{g},\partial_x \textbf{g},\textbf{h}\right)\right\|_{H^{N_0}\cap H^{N_1}_\Omega}\le \varepsilon_0$, then there exists a unique solution $\textbf{u}$ to (\ref{1.1}) such that
    $$(\textbf{u},\partial_x \textbf{u}, \partial_t \textbf{u})\in C\left(\left[0,+\infty\right):H^{N_0}(\mathbb{R}^2)\cap H^{N_1}_\Omega(\mathbb{R}^2)\right);\,\,\,\,\,\,\textbf{u}(0)=\textbf{g},\partial_t \textbf{u}(0)=\textbf{h}.$$
    Moreover, if $\left\|\left(\textbf{g},\partial_x \textbf{g},\textbf{h}\right)\right\|_Z\le \varepsilon_0$, then we have $\textbf{V}(t)\in C(\left[0,1\right]\rightarrow Z)$, where $\boldsymbol{V}_\sigma (t)\triangleq e^{it\Lambda_\sigma}\boldsymbol{v}_\sigma(t)$ is the profile.
\end{prop}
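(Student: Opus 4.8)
The plan is to reduce (\ref{1.1}) to a first-order semilinear system, obtain local existence by the standard energy method, propagate the rotational regularity using that $\Omega$ commutes with the linear flow, and then read off the $Z$-norm continuity from the Duhamel formula (\ref{1.3}) on the bounded interval $[0,1]$, where no dispersive decay is needed. Concretely, I would set $v_\alpha\triangleq\partial_t u_\alpha+i\Lambda_\alpha u_\alpha$, so that $\partial_t u_\alpha=\mathrm{Re}\,v_\alpha$ and $u_\alpha=\Lambda_\alpha^{-1}\mathrm{Im}\,v_\alpha$, and (\ref{1.1}) becomes
\[
(\partial_t-i\Lambda_\alpha)v_\alpha=\sum_{\beta,\gamma=1}^{d}A_{\alpha\beta\gamma}\,\big(\Lambda_\beta^{-1}\mathrm{Im}\,v_\beta\big)\big(\Lambda_\gamma^{-1}\mathrm{Im}\,v_\gamma\big),\qquad v_\alpha(0)=h_\alpha+i\Lambda_\alpha g_\alpha .
\]
Since $\Lambda_\alpha\ge b_\alpha>0$, each $\Lambda_\alpha^{-1}$ maps $H^s(\mathbb{R}^2)$ boundedly into $H^{s+1}(\mathbb{R}^2)$, and since $N_0>1$ the space $H^{N_0}(\mathbb{R}^2)$ is an algebra, so the right-hand side is a bounded quadratic map on $H^{N_0}$; a contraction-mapping argument for the Duhamel form of this system in $C([0,T];H^{N_0})$ — this is the local theory of \cite{s1} — produces a unique solution on a time interval whose length depends only on $\|v(0)\|_{H^{N_0}}\lesssim\varepsilon_0$, and for $\varepsilon_0$ small this length exceeds $1$. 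Undoing the substitution yields the asserted solution of (\ref{1.1}) with $(\textbf{u},\partial_x\textbf{u},\partial_t\textbf{u})\in C([0,1];H^{N_0})$, which moreover continues as long as this norm stays finite.

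Next I would propagate the rotational regularity. The field $\Omega=x_1\partial_2-x_2\partial_1$ commutes with $\Delta$, hence with every $\Lambda_\alpha$ and with $e^{it\Lambda_\alpha}$, and it acts as a derivation on products, so $\Omega^m(u_\beta u_\gamma)=\sum_{i=0}^{m}\binom{m}{i}(\Omega^i u_\beta)(\Omega^{m-i}u_\gamma)$. Applying $\Omega^m$ with $m\le N_1$ to the first-order system and running the natural $L^2$ energy estimate, in each product at least one factor carries at most $m/2\le N_1/2$ copies of $\Omega$; by an embedding of the form $\|\Omega^a f\|_{L^\infty}\lesssim\|f\|_{H^2}+\|\Omega^a f\|_{H^2}$ and the relations $N_1/2+2\le N_1\le N_0$, that factor lies in $L^\infty$ with norm $\lesssim\|(\textbf{u},\partial_x\textbf{u},\partial_t\textbf{u})\|_{H^{N_0}\cap H^{N_1}_\Omega}$, while the other is in $L^2$ with the same bound; hence
\[
\frac{d}{dt}\sum_{m\le N_1}\|\Omega^m v(t)\|_{L^2}^{2}\lesssim\big\|(\textbf{u},\partial_x\textbf{u},\partial_t\textbf{u})(t)\big\|_{H^{N_0}\cap H^{N_1}_\Omega}^{3}.
\]
Together with the first step, a continuity/bootstrap argument gives $\sup_{[0,1]}\|(\textbf{u},\partial_x\textbf{u},\partial_t\textbf{u})\|_{H^{N_0}\cap H^{N_1}_\Omega}\lesssim\varepsilon_0$ and membership in $C([0,1];H^{N_0}\cap H^{N_1}_\Omega)$; the extension to $[0,+\infty)$ then follows from the standard continuation criterion — the solution persists beyond any time $T$ at which $\sup_{[0,T]}\|(\textbf{u},\partial_x\textbf{u},\partial_t\textbf{u})\|_{H^{N_0}\cap H^{N_1}_\Omega}$ is finite — a bound supplied by the bootstrap estimates of Sections 4 and 5.

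For the last assertion I would use the Duhamel formula (\ref{1.3}) for the profile $V_\sigma=e^{it\Lambda_\sigma}v_\sigma$. Since $V_\sigma(0)=v_\sigma(0)=h_\sigma+i\Lambda_\sigma g_\sigma$, we have $\|\boldsymbol{V}(0)\|_Z\lesssim\|(\textbf{g},\partial_x\textbf{g},\textbf{h})\|_Z\le\varepsilon_0$ (the $\partial_x\textbf{g}$ term being exactly what controls $\Lambda_\sigma g_\sigma$ in $Z$), so it suffices to prove that the bilinear term in (\ref{1.3}) is continuous from $[0,1]$ into $Z$. On a bounded time interval no dispersive decay is available, but none is needed: by the finite speed of propagation for the Klein--Gordon evolution, the flows $e^{-is\Lambda}$ distort spatial supports by only $O(1)$ for $s\in[0,1]$, so the spatial weight $2^{(1-20\delta)j}$ in the $Z$-norm and the $\Psi^*_\sigma$-localization are disturbed by at most bounded factors, and the $(k,j,n)$-localized $L^2$ pieces of a product are controlled by the $Z$-norm of one input times the $H^{N_0}$ norm of the other (with room to spare in the $k_+$ and $j$ weights, since $N_0$ is large). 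This gives $\|\boldsymbol{V}(t)-\boldsymbol{V}(0)\|_Z\lesssim\int_0^{t}\|\boldsymbol{V}(s)\|_Z\,\|\boldsymbol{V}(s)\|_{H^{N_0}}\,ds\lesssim t\,\varepsilon_0^{2}$ on $[0,1]$, whence continuity in $t$ by dominated convergence. I expect the only genuinely delicate point to be this bilinear estimate into the intricate space $Z$ — tracking the spatial weight and the $\Psi^*_\sigma$-localization simultaneously through a bilinear interaction — though on $[0,1]$, with no decay needed, it stays routine; the first two steps are the classical small-data energy theory for semilinear Klein--Gordon systems and present no real obstacle.
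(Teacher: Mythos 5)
The paper's own proof of this proposition is a bare citation to \cite{a1} (Propositions 2.1 and 2.4), and what you write is exactly the standard argument that reference carries out, so there is no difference of method to compare. Your first two steps — the first-order reduction, contraction in $H^{N_0}$, and propagation of $\Omega$-regularity via commutators (closing because $[\partial_j,\Omega]$ is a constant-coefficient first-order operator, so $\Omega^m\partial_j u$ is, after reshuffling, a sum of $\partial_k\Omega^{m'}u$ with $m'\le m$, and hence $\|\Omega^a f\|_{L^\infty}\lesssim\|f\|_{H^2}+\|\Omega^a f\|_{H^2}$ is indeed available from the $H^{N_0}\cap H^{N_1}_\Omega$ control of $(\textbf{u},\partial_x\textbf{u},\partial_t\textbf{u})$) — are sound.

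There is, however, a genuine logical gap in your continuation step. To conclude $(\textbf{u},\partial_x\textbf{u},\partial_t\textbf{u})\in C([0,+\infty):H^{N_0}\cap H^{N_1}_\Omega)$ you invoke ``the bootstrap estimates of Sections 4 and 5.'' Those estimates — Proposition 2.3, and Proposition 3.8 which feeds it — all require smallness of the full $X$-norm, hence of the $Z$-norm, which the first half of Proposition 2.2 does not assume; and in any case Proposition 2.3 is proved and applied on the back of the local existence you are in the middle of establishing, so citing it here is circular. What the stated hypotheses honestly give you is local existence together with the continuation criterion (the solution extends past any $T$ at which $\sup_{[0,T]}\|(\textbf{u},\partial_x\textbf{u},\partial_t\textbf{u})\|_{H^{N_0}\cap H^{N_1}_\Omega}$ is finite); the passage to $[0,+\infty)$ is obtained only inside the proof of Theorem 1.1, after the $X$-bootstrap is closed. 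The paper's own ``$[0,+\infty)$'' under such weak hypotheses is equally loose, but your proof should supply the continuation criterion rather than the global claim. The $Z$-norm paragraph is on the right track: on $[0,1]$ conjugation by $e^{\pm is\Lambda}$ moves physical supports by $O(1)$, and the $A^\sigma_{n,(j)}$ weight is paid for by the $\lesssim2^{-n}$ volume of the $\Psi^*_\sigma$ level set near a root via Cauchy--Schwarz; the bilinear product estimate into $Z$ is the one place a written-out proof would actually spend effort, exactly as you flag.
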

\begin{proof}
    This is proved, with slightly different spaces, in \cite{a1}, Proposition 2.1 and 2.4; the proof in our case is basically the same.
\end{proof}
\vspace{4em}
\begin{prop}
    Assume two nondegeneracy conditions as in  (\ref{1.2}). Suppose $\textbf{u}$ is a solution to (\ref{1.1}) on a time interval $\left[0,T\right]$ with initial data $\textbf{u}(0)=\textbf{g}$ and $\textbf{u}_t (0)=\textbf{h}$ such that
    $$(\textbf{u},\partial_x \textbf{u}, \partial_t \textbf{u})\in C\left(\left[0,+\infty\right):H^{N_0}(\mathbb{R}^2)\cap H^{N_1}_\Omega(\mathbb{R}^2)\right),$$
    let $\textbf{V}(t)$ be defined accordingly. Assume
    $$\left\|\left(\textbf{g},\partial_x \textbf{g},\textbf{h}\right)\right\|_X\le\varepsilon \le \varepsilon_0,\,\,\,\,\,\,\sup_{0\le t \le T}\left\|\textbf{V}(t)\right\|_X\le \varepsilon_1 \ll 1,$$
    then we have
    $$\sup_{0\le t\le T} \left\|\textbf{V}(t)\right\|_X \lesssim \varepsilon_1^{3/2}+\varepsilon.$$
\end{prop}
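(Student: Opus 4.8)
The plan is to bootstrap the norm $\|\textbf V\|_X$ in its three constituent pieces separately: the Sobolev energy $\|\textbf V\|_{H^{N_0}}$, the weighted energy $\sup_{\beta\le N_1}\|\Omega^\beta\textbf V\|_{L^2}$, and the dispersive $Z$-norm $\|\textbf V\|_Z$. For the two energy components I would argue as described in Section 1.2.1: since the nonlinearity is semilinear there is no loss of derivatives, so one applies $P_k$ (and $\Omega^\beta$) to equation \eqref{1.1}, pairs with $\partial_t u_\alpha$ in $L^2$, and integrates in time. The quadratic nonlinearity produces a spacetime integral $\int_0^T\!\!\int \partial_t u_\alpha\cdot u_\beta u_\gamma$; using the $L^\infty$ decay input from the $Z$-norm bound — namely the bound $\|P_k e^{-it\Lambda}f\|_{L^\infty}\lesssim (1+t)^{-1}(1+2^{2k})\|f\|_Z$ advertised in the introduction, together with $\sup_{a\le N_1/2}(1+t)^{0.999}\|\Omega^a u\|_{L^\infty}\lesssim\varepsilon_1$ — one places the lowest-frequency factor in $L^\infty$ and the other two in $L^2$. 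The only subtlety in 2D is that the decay $t^{-1+\delta}$ is not integrable, so rather than Gronwall one performs the $TT^*$-type argument with a tangential integration by parts, exactly as in \cite{y2}; but being semilinear this is strictly easier here, and the outcome is $\sup_{[0,T]}(\|\textbf V\|_{H^{N_0}}+\sup_{\beta\le N_1}\|\Omega^\beta\textbf V\|_{L^2})\lesssim\varepsilon+\varepsilon_1^{3/2}$, where the gain comes from the fact that each of the three factors carries a power of $\varepsilon_1$ and we trade one $\varepsilon_1$ for the $L^\infty$ decay.

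The heart of the proof is the dispersive estimate: $\sup_{[0,T]}\|\textbf V\|_Z\lesssim\varepsilon+\varepsilon_1^{3/2}$, which unwinds to controlling, for each fixed $(k,j)\in\mathcal J$ and each $0\le n\le j+1$,
\begin{align*}
2^{6k_+}2^{(1-20\delta)j}2^{-(\frac12-19\delta)n}\,\bigl\|A^\sigma_{n,(j)}Q_{jk}\,\Omega^m V_\sigma(t)\bigr\|_{L^2}\lesssim\varepsilon+\varepsilon_1^{3/2}
\end{align*}
uniformly in $m\le N_1/2$. Using the Duhamel formula \eqref{1.3} (after applying $\Omega^m$ and distributing via the product rule for $\Omega$ acting on the bilinear term), one dyadically decomposes time as $t\sim 2^m$, the phase as $|\Phi_{\sigma\mu\nu}|\sim 2^l$, the inputs by frequency $k_1,k_2$, by spatial localization $j_1,j_2$, and by distance to the resonant spheres $2^{-n_1},2^{-n_2}$, and inserts the angular cutoff $\varphi(\kappa_r^{-1}\Omega_\eta\Phi)$. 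Then one splits into: (a) the non-resonant regime, where either $|\Phi|$ is large, or $|\nabla_\eta\Phi|$ is large, or the angle is large — here repeated integration by parts in $s$, in $\eta$, or in the angle $\Omega_\eta$ produces arbitrarily fast decay in $2^m$, beating all the $2^{\delta(\cdots)}$ losses; and (b) the resonant regime near $\mathcal R$, where one gives up on oscillation and estimates by volume, invoking the structure of $\mathcal R$ (finite union of radial rays, Proposition 8.2 of \cite{y1}), the simplicity of the roots of $\Psi_{\sigma\mu\nu}$, the non-degeneracy $\det[\nabla^2_{\eta\eta}\Phi]\neq0$ from \eqref{1.7}, and the change of variables $x=|\eta|^2,\,y=|\xi-\eta|^2$ (Lemma 3.12) to get the stationary-phase gain $O(2^{-m})$. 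Throughout one uses the time-derivative decomposition lemmas of Section 3 to reduce $\partial_s$ of the profiles back to the bilinear expression, and the $L^2\times L^2$ resonant-point estimates and volume bounds of Section 6.

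The main obstacle — and where essentially all the new work over \cite{y2} lies — is the iterated-resonance case, in which the output is itself near a resonant sphere \emph{and} both inputs $\xi-\eta,\eta$ are near resonant spheres ($n,n_1,n_2>0$), which cannot be excluded here because the two-speed special structure \eqref{1.5} fails for a general multispeed multimass system. In the worst configuration ($j_1\le m\le j_2$ with $n_2>0$, say) the naive estimate saturates at $2^{-j+n/2}$ with no summability margin over $j_1,j_2,k_1,k_2$; this is exactly why the $Z$-norm carries the extra $2^{20\delta j}$-type slack, so that each input contributes $2^{19\delta j_i}$ and the product is strictly below the $2^{20\delta j}$ budget of the output, leaving a genuine convergence factor. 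One then has to: carry out the more precise integral analysis of Section 5.3 when $n_1,n_2>0$ (where the $L^\infty\times L^2$ shortcut is unavailable); handle, in Section 5.4.1, the second-order interactions of spacetime- and time-resonant sets without the simplifying implication \eqref{1.6}, meaning all relative sizes of $|\nabla_\xi\Phi|$ and $|\nabla_\eta(\Phi_{\sigma\mu\nu}+\Psi_{\nu\theta\kappa})|$ must be treated; and control the extra weak term $f_{NCw}$ appearing in the time-derivative decomposition in the low-frequency small-phase case by plugging the time-derivative expression back in and estimating the resulting trilinear integral directly. Once every case yields the bilinear bound $\lesssim\varepsilon_1^2$ (hence $\ll\varepsilon_1^{3/2}$ after summation) plus the data term $\lesssim\varepsilon$, summing the three components gives $\sup_{[0,T]}\|\textbf V\|_X\lesssim\varepsilon_1^{3/2}+\varepsilon$, which is the proposition.
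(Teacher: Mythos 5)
Your plan mirrors the paper's own proof: Section 4 handles the Sobolev and $\Omega$-weighted energies via the energy functional $\mathcal E$, time-localization, $L^\infty\times L^2\times L^2$ placement in the high/low-frequency pieces, and the $TT^*$-with-tangential-integration-by-parts lemma (Lemma 4.1) for the all-medium-frequency piece; Section 5 handles the $Z$-norm exactly by the Duhamel/dyadic/angular decomposition and resonant-versus-nonresonant split you describe, with the convergence margin coming from the $2^{20\delta j}$ slack and the new difficulties (iterated resonances with $n_1,n_2>0$, second-order space-time/time resonances without the implication (1.6), the extra weak term $f_{NCw}$, the $x=|\eta|^2,\,y=|\xi-\eta|^2$ change of variables) treated as you indicate. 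Your outline is at the level of a roadmap rather than a full case analysis, but it is the same roadmap the paper follows.
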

\vspace{4em}
Proposition 2.3 will be proved in section 5.\par
By proposition 2.2 and 2.3, one can easily prove the main result Theorem 1.1 by a standard bootstrap argument.

\vspace{1em}
\section{Main Lemmas}
From now on, we will assume two nondegenarcy conditions as in (\ref{1.2}).\par
We first list two lemmas about integration by parts that will be used often later on. The first result is a standard result about integration by parts.
\begin{lemma}
(i) Assume that $0<\varepsilon\le 1/\varepsilon\le K, N\ge 1$ is an integer, and $f,g\in C^N(\mathbb{R}^2)$. Then
$$\left|\int_{\mathbb{R}^2} e^{iKf} g\,dx\right|\lesssim_{ N }(K\varepsilon)^{-N} \left[\sum_{ \left|\alpha\right|\leq N} \varepsilon ^ {\left|\alpha\right|}\left\| D _ {x} ^ {\alpha}g\right\|_{L^1} \right] ,$$
provided that $f$ is real-valued,
$$\left|\nabla_x f\right|\ge \mathbf{1}_{\operatorname{supp}{g}},\ \ \mbox{and }\left\|D_x^{\alpha}f\cdot\mathbf{1}_{\operatorname{supp}g}\right\|_{L^\infty}\lesssim_N \varepsilon^{1-\left|\alpha\right|},\ 2\le\left|\alpha\right|\le N+1.$$\par
(ii) Similarly, if $0<\rho\le 1/\rho \le K$, then
$$\left|\int_{\mathbb{R}^2} e^{iKf} g\,dx\right|\lesssim_{ N }(K\rho)^{-N} \left[\sum_{m\leq N} \rho^m \left\| \Omega^m 
 g\right\|_{L^1} \right] ,$$
provided that $f$ is real-valued,
$$\left|\Omega f\right|\ge \mathbf{1}_{\operatorname{supp}{g}},\ \ \mbox{and }\left\|\Omega^m f\cdot\mathbf{1}_{\operatorname{supp}g}\right\|_{L^\infty}\lesssim_N \rho^{1-m},\ 2\le m\le N+1.$$
\end{lemma}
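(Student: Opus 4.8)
The plan is to prove both parts by the classical non-stationary phase device: repeated integration by parts against a first-order operator that reproduces the oscillatory factor $e^{iKf}$.

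\emph{Part (i).} Set $V\triangleq\nabla_x f/|\nabla_x f|^2$ and $Lh\triangleq(iK)^{-1}\,V\cdot\nabla_x h$, so that $L(e^{iKf})=(iK)^{-1}(V\cdot iK\nabla_x f)e^{iKf}=e^{iKf}$ by the identity $V\cdot\nabla_x f\equiv1$. Its formal adjoint is $L^{*}h=-(iK)^{-1}\nabla_x\cdot(Vh)=-(iK)^{-1}\big(V\cdot\nabla_x h+(\nabla_x\cdot V)h\big)$. Since the right-hand side of the asserted inequality presupposes $D_x^{\alpha}g\in L^1$ for $|\alpha|\le N$, all boundary terms vanish and we may integrate by parts $N$ times to get $\int e^{iKf}g\,dx=\int e^{iKf}(L^{*})^{N}g\,dx$, whence $\big|\int e^{iKf}g\,dx\big|\le\|(L^{*})^{N}g\|_{L^1}$. (One mild technicality: to differentiate $V$ one wants $|\nabla_x f|\gtrsim1$ on a neighborhood of $\operatorname{supp}g$, not merely on $\operatorname{supp}g$; this is arranged by replacing the scalar map $w\mapsto w/|w|^2$ by a globally smooth, bounded extension, which changes nothing on $\operatorname{supp}g$, where the integrand is supported.)

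Next comes the only estimate requiring care: on $\operatorname{supp}g$ one has $|\partial_x^{\beta}V|\lesssim_{\beta}\varepsilon^{-|\beta|}$ for every multi-index $\beta$. Writing $V=\Phi(\nabla_x f)$ with $\Phi(w)=w/|w|^2$, one has $|D^{k}\Phi(w)|\lesssim_k|w|^{-1-k}\le1$ on $\{|w|\ge1\}$; by the higher-order chain rule (Faà di Bruno), $\partial_x^{\beta}V$ is a finite sum of terms $(D^{k}\Phi)(\nabla_x f)\prod_{i=1}^{k}\partial_x^{\beta_i}(\nabla_x f)$ with the $\beta_i$ nonempty and $\sum_i|\beta_i|=|\beta|$; each factor $\partial_x^{\beta_i}(\nabla_x f)$ is an $f$-derivative of order $|\beta_i|+1\ge2$, hence $\lesssim\varepsilon^{-|\beta_i|}$ by hypothesis (this is precisely why the hypothesis is imposed up to order $N+1$), and the product is $\lesssim\varepsilon^{-\sum_i|\beta_i|}=\varepsilon^{-|\beta|}$; the case $|\beta|=0$ is $|V|=1/|\nabla_x f|\le1$. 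Now expand $(L^{*})^{N}g$ by Leibniz: it is a constant of modulus $K^{-N}$ times a finite sum of terms $c\,(\partial_x^{\beta_1}V)\cdots(\partial_x^{\beta_r}V)\,\partial_x^{\alpha}g$ in which the total order is conserved, $|\alpha|+\sum_i|\beta_i|=N$ (each application of $V\cdot\nabla_x+\nabla_x\cdot V$ raises the total derivative count by exactly one). Estimating the $V$-factors in $L^\infty$ on $\operatorname{supp}g$ and taking $L^1$ norms, each term is $\lesssim_N K^{-N}\varepsilon^{-(N-|\alpha|)}\|D_x^{\alpha}g\|_{L^1}$; summing the finitely many terms gives
$$\Big|\int_{\mathbb{R}^2}e^{iKf}g\,dx\Big|\lesssim_N K^{-N}\sum_{|\alpha|\le N}\varepsilon^{-(N-|\alpha|)}\|D_x^{\alpha}g\|_{L^1}=(K\varepsilon)^{-N}\sum_{|\alpha|\le N}\varepsilon^{|\alpha|}\|D_x^{\alpha}g\|_{L^1},$$
which is (i); the hypotheses $\varepsilon\le1/\varepsilon\le K$ enter only through the bookkeeping they validate ($\varepsilon\le1$, $K\varepsilon\ge1$), so that no positive power of $K\varepsilon$ is ever produced.

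\emph{Part (ii).} Run the identical scheme with the divergence-free field $\Omega=x_1\partial_2-x_2\partial_1$ in place of $\nabla_x$ and $\rho$ in place of $\varepsilon$. Put $L_\Omega h\triangleq(iK\,\Omega f)^{-1}\Omega h$, so $L_\Omega(e^{iKf})=e^{iKf}$; since $\Omega$ is divergence-free it is skew-adjoint, $\Omega^{*}=-\Omega$, and $L_\Omega^{*}h=-(iK)^{-1}\Omega\big(h/\Omega f\big)=-(iK)^{-1}\big((\Omega f)^{-1}\Omega h-h\,\Omega(\Omega f)(\Omega f)^{-2}\big)$. The analogue of the weight estimate, $|\Omega^{m}((\Omega f)^{-1})|\lesssim_m\rho^{-m}$ on $\operatorname{supp}g$, follows by composing $t\mapsto1/t$ (bounded derivatives on $\{|t|\ge1\}$) with $\Omega f$ and invoking $\|\Omega^{m}f\|_{L^\infty(\operatorname{supp}g)}\lesssim_N\rho^{1-m}$ for $2\le m\le N+1$ together with $|\Omega f|\ge1$; iterating $L_\Omega^{*}$ $N$ times and taking $L^1$ norms gives $\big|\int e^{iKf}g\,dx\big|\le\|(L_\Omega^{*})^{N}g\|_{L^1}\lesssim_N(K\rho)^{-N}\sum_{m\le N}\rho^{m}\|\Omega^{m}g\|_{L^1}$, as claimed. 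The genuinely non-mechanical ingredient in the whole argument — hence the step to treat with some care — is this pair of weight bounds $|\partial_x^{\beta}V|\lesssim\varepsilon^{-|\beta|}$ and $|\Omega^{m}((\Omega f)^{-1})|\lesssim\rho^{-m}$: they hinge on the observation that the scalars $w\mapsto w/|w|^2$ and $t\mapsto1/t$ have all derivatives bounded on $\{|w|\ge1\}$ and $\{|t|\ge1\}$, so that the hypotheses on the second- and higher-order derivatives of $f$ transfer unchanged; everything else is a routine Leibniz expansion and $L^1$ bookkeeping.
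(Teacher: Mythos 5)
Your proof is correct, and it is the standard non-stationary phase argument (repeated integration by parts against the first-order operator $L$ with $L e^{iKf}=e^{iKf}$, combined with Fa\`a di Bruno to transfer the second-and-higher-derivative bounds on $f$ to the weight $V$). The paper does not supply its own proof of this lemma but cites it to a prior work, and your argument is the same proof that is given there and in standard references; you also correctly flag and resolve the one genuine technicality — that $V=\nabla f/|\nabla f|^2$ (resp.\ $(\Omega f)^{-1}$) must be replaced by a globally smooth extension agreeing with it on $\{|\nabla f|\ge 1\}$ so that it can be differentiated, while the iterated $L^{*}$ still reproduces $e^{iKf}$ on $\operatorname{supp}g$ and all supports stay inside $\operatorname{supp}g$.
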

The next result is about integration by parts using the vector field $\Omega$. 
\begin{lemma}
Assume that $t\approx 2^m, m\ge 0, k,k_1,k_2\in\mathbb{Z}, L\le 1 \le U$, and 
$$2^{-m/2}\le L\le 2^{k_1},\ \ \ 2^k+2^{k_1}+2^{k_2}\le U \le U^2\le 2^{m/2}L.$$
Assume that $A\ge 1+2^{-k_1}$ and
$$\sup_{0\le a\le 100} \left[\left\|\Omega^a g\right\|_{L^2}+\left\|\Omega^a f\right\|_{L^2}\right]+\sup_{0\le\left|\alpha\right|\le N} A^{-\left|\alpha\right|}\left\|D^\alpha\hat{f}\right\|_{L^2}\le 1,$$
Fix $\xi\in\mathbb{R}^2$ and $\Phi=\Phi_{\sigma\mu\nu}$ as in (\ref{2.1}), and let
$$I_p=I_p(f,g)\triangleq\int_{\mathbb{R}^2} e^{it\Phi(\xi,\eta)} \varphi_p(\Omega_\eta \Phi(\xi,\eta)) \varphi_k(\xi) \varphi_{k_1}(\xi-\eta) \varphi_{k_2}(\eta) \hat{f}(\xi-\eta) \hat{g}(\eta)\,d\eta.$$
If $U^4 2^{-m}\le 2^{2p}\le 1$ and $AL^{-1}U^2\le 2^m$ then
$$\left|I_p\right|\lesssim_N \left(2^{p+m}\right)^{-N} \left[2^{m/2}+U^4 2^{-p}+U^2 L^{-1} A\,2^p\right]^N+2^{-10m}.$$
\end{lemma}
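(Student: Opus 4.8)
The plan is to prove Lemma 3.2 by a repeated integration by parts in $\eta$ using the vector field $\Omega_\eta = \eta_1 \partial_{\eta_2} - \eta_2 \partial_{\eta_1}$, exploiting the fact that on the support of the cutoff $\varphi_p(\Omega_\eta\Phi)$ we have $|\Omega_\eta\Phi(\xi,\eta)|\sim 2^p$, so that the phase $e^{it\Phi}$ genuinely oscillates in the angular direction at rate $2^{p+m}$. This is precisely the setup of part (ii) of Lemma 3.1 applied with $K = 2^m$ and a suitable angular scale $\rho$; the role of the hypotheses $U^4 2^{-m}\le 2^{2p}\le 1$ and $AL^{-1}U^2\le 2^m$ is exactly to guarantee that the various error terms generated by differentiating the amplitude stay under control, i.e. that $\rho$ can be chosen $\le 1/\rho \le K$.

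First I would set up the amplitude. Write $I_p = \int_{\mathbb{R}^2} e^{it\Phi(\xi,\eta)}\, a(\eta)\, d\eta$ where
$$a(\eta) = \varphi_p(\Omega_\eta\Phi(\xi,\eta))\,\varphi_k(\xi)\,\varphi_{k_1}(\xi-\eta)\,\varphi_{k_2}(\eta)\,\hat f(\xi-\eta)\,\hat g(\eta).$$
The key computation is the identity for $\Omega_\eta\Phi$ recorded in the introduction,
$$\Omega_\eta\Phi_{\sigma\mu\nu}(\xi,\eta) = \frac{c_\mu^2}{\sqrt{c_\mu^2|\xi-\eta|^2+b_\mu^2}}\,\langle\xi,\eta^\bot\rangle,$$
from which one reads off that on the support of $a$, where $|\xi-\eta|\sim 2^{k_1}$, one has $|\Omega_\eta\Phi|\sim 2^p$ together with the derivative bounds $|\Omega_\eta^a\,\Omega_\eta\Phi|\lesssim_a 2^{k}\,2^{-k_1}\cdot(\text{lower order})$ for $a\ge 1$ — more precisely, each additional $\Omega_\eta$ applied either hits the algebraic prefactor (costing a bounded factor since $|\xi-\eta|\gtrsim 2^{k_1}$ and $A\ge 1+2^{-k_1}$) or lands on $\langle\xi,\eta^\bot\rangle$ (which after one derivative becomes $\langle\xi,\eta\rangle$, of size $\lesssim 2^k 2^{k_2}\le U^2$). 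Setting $\Phi_0 = \Omega_\eta\Phi$, one then checks that $f_0 := \Phi_0$ satisfies $|\Omega_\eta f_0|\gtrsim 2^p\cdot\mathbf 1_{\operatorname{supp} a}$ after dividing by $2^p$, and $\|\Omega_\eta^m(\Phi_0/2^p)\cdot\mathbf 1_{\operatorname{supp} a}\|_{L^\infty}\lesssim_m \rho^{1-m}$ with an appropriate $\rho$. The natural choice is $\rho$ such that $2^m\rho \sim 2^{m+p}$, i.e. we will extract the gain $(2^{m+p})^{-N}$, but with $\rho$ itself large enough to absorb the amplitude derivatives.

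Next I would control the $L^1$ norms of the angular derivatives $\Omega_\eta^m a$ that appear on the right-hand side of Lemma 3.1(ii). Differentiating $a$ with $\Omega_\eta$ distributes onto four types of factors: (1) the profile $\hat f(\xi-\eta)$, which by the hypothesis $\sup_{|\alpha|\le N}A^{-|\alpha|}\|D^\alpha\hat f\|_{L^2}\le1$ contributes a factor $A$ per derivative (after converting $\Omega_\eta$ into ordinary derivatives, noting $|\eta|\lesssim U$ so $\Omega_\eta = O(U)\cdot\partial_\eta$), balanced against the measure of the support and the $L^2\to L^1$ Cauchy–Schwarz loss; (2) the profile $\hat g(\eta)$, controlled by $\|\Omega^a g\|_{L^2}\le 1$ directly; (3) the bump functions $\varphi_{k_1}(\xi-\eta),\varphi_{k_2}(\eta)$, each derivative costing $2^{-k_1}$ or $2^{-k_2}$, which are $\lesssim U^2$-type losses only after multiplying by the support size; and (4) the angular cutoff $\varphi_p(\Omega_\eta\Phi)$, each $\Omega_\eta$ costing $2^{-p}$ times the size of $\Omega_\eta(\Omega_\eta\Phi)\lesssim U^2$, i.e. a loss of $U^2 2^{-p}$ per derivative — but one must be slightly careful since differentiating twice can produce $U^4 2^{-2p}$, which is exactly where the hypothesis $2^{2p}\ge U^4 2^{-m}$ enters. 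Assembling, the amplitude derivatives produce, per order, a factor bounded by $2^{m/2} + U^4 2^{-p} + U^2 L^{-1}A\,2^p$ (the three terms coming from the support-size/Cauchy–Schwarz contribution, the cutoff-$\varphi_p$ contribution, and the profile-$\hat f$ contribution respectively — here $L$ enters because $|\xi-\eta|\gtrsim L$ on the relevant region), so after $N$ applications of Lemma 3.1(ii) we obtain
$$|I_p|\lesssim_N (2^{p+m})^{-N}\bigl[2^{m/2}+U^4 2^{-p}+U^2L^{-1}A\,2^p\bigr]^N,$$
and the $2^{-10m}$ is a harmless tail term accounting for the region where the support hypotheses degenerate or where we truncate the integration by parts. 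The hypotheses $U^2\le 2^{m/2}L$ and $AL^{-1}U^2\le 2^m$ ensure $\rho\le 1/\rho\le 2^m$ so that Lemma 3.1(ii) actually applies.

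The main obstacle I expect is the careful bookkeeping in step three: one has to track, for the bracketed quantity $[\,2^{m/2}+U^4 2^{-p}+U^2L^{-1}A\,2^p\,]$, exactly how each of the three competing sources of amplitude loss arises and to verify that the stated hypotheses on $p$, $U$, $L$, $A$ are precisely what is needed to make each term legitimately of the claimed size (and in particular that no cross term of the form, say, $U^4 2^{-2p}\cdot A$ survives unbounded). The oscillation mechanism itself is routine — it is just angular integration by parts — but getting the three-term bound sharp, rather than some cruder product, requires treating the derivative hitting $\hat f$ separately from the derivatives hitting the smooth localizers, and using the $A^{-|\alpha|}\|D^\alpha\hat f\|_{L^2}\le 1$ bound to its full strength. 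I would organize the proof so that the profile $\hat f$ is never differentiated together with $\varphi_p(\Omega_\eta\Phi)$ in a way that multiplies the two worst factors, which is the content of the subadditive (rather than multiplicative) form of the bracket.
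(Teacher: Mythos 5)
The paper does not prove this lemma: it defers entirely to \cite{y2} (``Lemma 3.1, Lemma 3.2, Lemma 3.4 and Lemma 3.5 are proved in \cite{y2}''). So there is no in-paper argument to compare against; I can only assess your sketch on its own terms. Your high-level plan --- rescale the phase so the angular derivative of the (normalized) phase is $\gtrsim 1$ on $\operatorname{supp}\varphi_p(\Omega_\eta\Phi)$, apply Lemma 3.1(ii) with effective oscillation parameter $2^{m+p}$, and track three competing sources of loss in the angular derivatives of the amplitude --- is the correct one and is what \cite{y2} does. But as written the sketch has several imprecisions that sit exactly where the lemma's content lives. First, applying Lemma 3.1(ii) directly with $K=2^m$, $f=\Phi$ fails, since $|\Omega_\eta\Phi|\sim 2^p$, not $\gtrsim 1$; you must take $K'=2^{m+p}$, $f'=\Phi/2^p$, and the higher-order hypotheses $\|\Omega^\ell f'\cdot\mathbf 1_{\operatorname{supp}}\|_{L^\infty}\lesssim_N\rho^{1-\ell}$ then constrain $\rho$ via $\Omega_\eta^2\Phi$, not via the first derivative; also, writing ``$f_0=\Omega_\eta\Phi$ satisfies $|\Omega_\eta f_0|\gtrsim 2^p$'' conflates the phase with its derivative --- the hypothesis involves $|\Omega_\eta\Phi|$, not $|\Omega_\eta^2\Phi|$. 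Second, the passage from Lemma 3.1(ii)'s conclusion $(K'\rho)^{-N}\sum_{\ell\le N}\rho^\ell\|\Omega^\ell a\|_{L^1}$ to the claimed $(2^{p+m})^{-N}\bigl[2^{m/2}+U^4\,2^{-p}+U^2L^{-1}A\,2^p\bigr]^N$ is asserted, not derived; you must show the $\ell<N$ terms are dominated, which pins $\rho$ from below and is exactly where $U^4 2^{-m}\le 2^{2p}$ is used. Third, the schematic ``$\Omega_\eta=O(U)\partial_\eta$ on $\hat f$'' elides the correct identity $\Omega_\eta\hat f(\xi-\eta)=(\Omega\hat f)(\xi-\eta)-(\xi^\perp\cdot\nabla\hat f)(\xi-\eta)$, whose two pieces are controlled by $\|\Omega^a f\|_{L^2}\le 1$ and $|\xi|\,\|D\hat f\|_{L^2}\lesssim UA$ respectively --- this is what actually produces the $A$ factor, and the $L^2\to L^1$ conversion over the angularly-localized support is what produces the $L^{-1}$. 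Fourth, the $2^{-10m}$ tail is waved away as ``truncating the integration by parts'' --- it more naturally arises from a crude trivial bound on bad regions or from Schwartz tails of the cutoffs, and deserves a sentence. None of these is a wrong approach, but your proposal reproduces the shape of the answer without yet verifying the bookkeeping that produces it, and you explicitly acknowledge this in your final paragraph.
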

\vspace{0.8em}
\begin{remark} 
In this paper, the main case would be $\left|\xi\right|,\left|\xi-\eta\right|,\left|\eta\right|\sim 1$. Since $$\Omega_\eta \Phi(\xi,\eta)=\frac{\lambda_\mu^\prime(\left|\xi-\eta\right|)}{\left|\xi-\eta\right|}\left(\xi\cdot\eta^{\bot}\right),$$ one can understand $\Omega_\eta \Phi(\xi,\eta)\sim\sin\angle(\xi,\eta)$. Furthermore, if $\left|\Omega_\eta \Phi(\xi,\eta)\right|\ll 1$, then $\Omega_\eta \Phi(\xi,\eta)\sim\angle(\xi,\eta)$.
\end{remark}

\vspace{2em}
We will also use the following lemmas to estimate the integrals later on. The first one is the classical Schur's test.
\vspace{0.8em}
\begin{lemma} [Schur's test]
Consider the operator T given by
$$T(\xi)=\int_{\mathbb{R}^2} K(\xi,\eta)f(\eta)\,d\eta.$$
Assume that
$$\sup_\xi\int_{\mathbb{R}^2}\left|K(\xi,\eta)\right|\,d\eta \le K_1, \ \ \ \sup_\eta \int_{\mathbb{R}^2}\left|K(\xi,\eta)\right|\,d\xi \le K_2.$$
Then
$$\left\|Tf\right\|_{L^2}\lesssim \sqrt{K_1 K_2} \left\|f\right\|_{L^2}.$$
\end{lemma}
\vspace{0.8em}
The next one gives an estimation of the phase function-localized oscillatory integrals.
\vspace{0.8em}
\begin{lemma}
Let $s\approx 2^m, m\ge 0$, and $(1+\varepsilon)\nu\le m$ for some $\varepsilon>0$. Let $\Phi=\Phi_{\sigma\mu\nu}(\xi,\eta)\triangleq\Lambda_\sigma(\xi)-\Lambda_\mu(\xi-\eta)-\Lambda_\mu(\eta)$ and assume that $1/2=1/q+1/r$ and $\chi$ is a Schwartz function. Then
\begin{align*}
 &\left\|\varphi_{\le 10m}(\xi)\int_{\mathbb{R}^2} e^{is\Phi(\xi,\eta)} \chi\left(2^\nu \Phi(\xi,\eta)\right) \hat{f}(\xi-\eta) \hat{g}(\eta)\,d\eta\right\|_{L_\xi^2}   \\
 \lesssim & \sup_{t\in\left[s/10,10s\right]} \left\|e^{it\Lambda_\mu} f\right\|_{L^q} \left\|e^{it\Lambda_\nu} g\right\|_{L^r}+2^{-10m}\left\|f\right\|_{L^2}\left|g\right\|_{L^2},
\end{align*}
where the constant in the inequality only depends on $\varepsilon$ and the function $\chi$.
\end{lemma}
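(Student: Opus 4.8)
The plan is to reduce this bilinear bound to a statement about the linear propagator $e^{it\Lambda_\mu}$ acting on $f$ (and similarly on $g$), by expanding the cutoff $\chi(2^\nu\Phi(\xi,\eta))$ as a superposition of pure oscillations in $\Phi$. Concretely, since $\chi$ is Schwartz, I would write $\chi(2^\nu\Phi)=\int_{\mathbb{R}}\widehat{\chi}(\rho)\,e^{i\rho 2^\nu\Phi(\xi,\eta)}\,d\rho$, so that the left-hand side becomes $\int_{\mathbb{R}}\widehat{\chi}(\rho)\bigl(\text{something}\bigr)d\rho$ where the inner object is
$$\varphi_{\le 10m}(\xi)\int_{\mathbb{R}^2}e^{i(s+\rho 2^\nu)\Phi(\xi,\eta)}\hat f(\xi-\eta)\hat g(\eta)\,d\eta.$$
But $e^{it\Phi(\xi,\eta)}\hat f(\xi-\eta)\hat g(\eta)$ is exactly the Fourier transform of a product of linear evolutions: recalling $\Phi=\Lambda_\sigma(\xi)-\Lambda_\mu(\xi-\eta)-\Lambda_\nu(\eta)$, we have
$$\int_{\mathbb{R}^2}e^{it\Phi(\xi,\eta)}\hat f(\xi-\eta)\hat g(\eta)\,d\eta = e^{it\Lambda_\sigma(\xi)}\,\widehat{\bigl[(e^{-it\Lambda_\mu}f)(e^{-it\Lambda_\nu}g)\bigr]}(\xi).$$
Hence by Plancherel (the harmless $\varphi_{\le 10m}$ and the unimodular $e^{it\Lambda_\sigma(\xi)}$ drop out of the $L^2_\xi$ norm) the inner object has $L^2_\xi$ norm controlled by $\|(e^{-it\Lambda_\mu}f)(e^{-it\Lambda_\nu}g)\|_{L^2_x}$ with $t=s+\rho 2^\nu$, and Hölder with $1/2=1/q+1/r$ bounds this by $\|e^{-it\Lambda_\mu}f\|_{L^q}\|e^{-it\Lambda_\nu}g\|_{L^r}$.

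The only subtlety is the range of the shifted time $t=s+\rho 2^\nu$: I want $t\in[s/10,10s]$, i.e. $t\approx 2^m$. Because $\widehat\chi$ decays rapidly, the contribution of $|\rho|\ge 2^{m-\nu-5}$ is negligible — there one pays $|\widehat\chi(\rho)|\lesssim_M |\rho|^{-M}\lesssim_M 2^{-M(m-\nu)}$, and since $(1+\varepsilon)\nu\le m$ we have $m-\nu\ge \varepsilon\nu\ge 0$ and more importantly $m-\nu\gtrsim \varepsilon m$ (after noting $\nu\le m/(1+\varepsilon)$), so choosing $M$ large turns this tail into the claimed error $2^{-10m}\|f\|_{L^2}\|g\|_{L^2}$ via the trivial bound $\|\cdot\|_{L^2_\xi}\lesssim\|f\|_{L^2}\|g\|_{L^2}$ (Hölder/Plancherel with the Schwartz profile, using that the $d\eta$ integral of a product of two $L^2$ functions is at worst $\|\hat f\|_{L^2}\|\hat g\|_{L^2}$ after Cauchy–Schwarz, up to the support of the cutoffs — actually cleaner: bound the whole expression crudely by $\|f\|_{L^2}\|g\|_{L^\infty}$ is not available, so instead use Young/Plancherel directly). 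For $|\rho|\le 2^{m-\nu-5}$ one has $|\rho 2^\nu|\le 2^{m-5}\le s/10$ (for $m$ large, absorbing constants), so $t\in[s/10,10s]$ and the bilinear $L^q\times L^r$ bound applies uniformly; since $\int_{\mathbb{R}}|\widehat\chi(\rho)|\,d\rho<\infty$, integrating in $\rho$ costs only a constant depending on $\chi$.

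I would organize the write-up as: (1) the algebraic identity expressing the $\eta$-integral as the Fourier transform of a product of linear evolutions; (2) the Fourier expansion of $\chi(2^\nu\Phi)$ and the split of the $\rho$-integral into $|\rho|\lesssim 2^{m-\nu}$ and its complement; (3) Plancherel $+$ Hölder on the main part, rapid decay of $\widehat\chi$ plus the crude $L^2\times L^2$ Cauchy–Schwarz bound on the tail; (4) collect constants. The main obstacle — really the only place requiring care — is step (3): making sure the crude bound on the tail genuinely beats $2^{-10m}$, which forces the hypothesis $(1+\varepsilon)\nu\le m$ to be used quantitatively (it gives $m-\nu\ge \tfrac{\varepsilon}{1+\varepsilon}m$, so picking $M\ge 10(1+\varepsilon)/\varepsilon+1$ in the stationary-phase-free decay $|\widehat\chi(\rho)|\lesssim_M(1+|\rho|)^{-M}$ suffices), and checking that on the tail the inner $L^2_\xi$ quantity is bounded by $\|f\|_{L^2}\|g\|_{L^2}$ uniformly in $\rho$ — which follows since $\big\|\int e^{it\Phi}\hat f(\xi-\eta)\hat g(\eta)\,d\eta\big\|_{L^2_\xi} = \|(e^{-it\Lambda_\mu}f)(e^{-it\Lambda_\nu}g)\|_{L^2_x}\le \|e^{-it\Lambda_\mu}f\|_{L^2}\|e^{-it\Lambda_\nu}g\|_{L^\infty}$ is the wrong pairing, so instead one keeps it as $\le \|\widehat{(e^{-it\Lambda_\mu}f)}\|_{L^2}\cdot\|\widehat{(e^{-it\Lambda_\nu}g)}\|_{L^1}$ — still not uniform — the clean fix is simply to note that the tail of $\widehat\chi$ also makes the whole $\chi$-multiplier tail small in, say, $L^\infty_{\xi,\eta}$, so one can just bound $|\text{LHS}|\lesssim \|\chi_{\mathrm{tail}}(2^\nu\Phi)\|_{L^\infty}\,\|\hat f\|_{L^2}\|\hat g\|_{L^2}$ directly without splitting $\rho$ at all, where $\|\chi_{\mathrm{tail}}(2^\nu\Phi)\|_{L^\infty}\lesssim_M 2^{-M(m-\nu)}$). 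That last observation is the cleanest route and I would adopt it.
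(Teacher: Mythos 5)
The paper does not prove this lemma; it is cited from \cite{y2} (see the line "Lemma 3.1, Lemma 3.2, Lemma 3.4 and Lemma 3.5 are proved in \cite{y2}"). Your argument is the standard one for such bilinear phase-localized estimates, and the skeleton --- Fourier-expand the cutoff $\chi(2^\nu\Phi)$, recognize the $\eta$-integral as the Fourier transform of a product of linear flows, Plancherel plus H\"older on the main piece, rapid Fourier decay of $\chi$ on the tail --- is exactly what one finds in the cited reference.

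There is one small but real gap in the "clean fix" you adopt at the end. The pointwise bound
$$\Big|\int\chi_{\mathrm{tail}}(2^\nu\Phi)e^{is\Phi}\hat f(\xi-\eta)\hat g(\eta)\,d\eta\Big|\le\|\chi_{\mathrm{tail}}\|_{L^\infty}\,\|\hat f\|_{L^2}\|\hat g\|_{L^2}$$
is correct for fixed $\xi$, but it does not by itself control the $L^2_\xi$ norm: a multiplier being small in $L^\infty$ is not enough to give an $L^2\times L^2\to L^2$ bilinear bound. You must use the cutoff $\varphi_{\le 10m}(\xi)$ --- this is precisely why it is in the statement --- which restricts $\xi$ to a ball of measure $\sim 2^{20m}$, so passing from the pointwise bound to $L^2_\xi$ costs a factor $\|\varphi_{\le 10m}\|_{L^2}\lesssim 2^{10m}$. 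The correct tail estimate is therefore
$$\lesssim_M 2^{10m}\,2^{-M(m-\nu)}\|f\|_{L^2}\|g\|_{L^2},$$
and to beat $2^{-10m}$ you need $M(m-\nu)\ge 20m$, i.e.\ $M\ge 20(1+\varepsilon)/\varepsilon$, roughly twice what you wrote. Since $M$ is allowed to depend on $\varepsilon$ this is a one-line numerical fix, not a conceptual one; but as written your tail bound is not established, and the argument should explicitly invoke the $\varphi_{\le 10m}$ support to close it.
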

\vspace{0.8em}
Lemma 3.1, Lemma 3.2, Lemma 3.4 and Lemma 3.5 are proved in \cite{y2}.\par
\vspace{2em}
Next, we will give some useful linear estimates.\par
\begin{lemma}
    Assume that $\sigma\in\left\{1,2,\dots,d\right\}$ and
    $$\left\|f\right\|_{Z_1^\sigma\cap H_\Omega^{N_1/8}}\le 1.$$
    For any $(k,j)\in\mathcal{J}$ and $n\in\left\{0,\dots,j+1\right\}$ let
    \begin{align}
    f_{j,k}\triangleq P_{\left[k-2,k+2\right]}Q_{jk}f,\,\,\,\,\,\widehat{f_{j,k,n}}(\xi)\triangleq\varphi_{-n}^{\left[-j-1,0\right]}\left(\Psi^*_\sigma(\xi)\right)\widehat{f_{j,k}}(\xi). \tag{3.1}\label{3.1}  
    \end{align}
    For any $\xi_0\in\mathbb{R}^2\backslash\left\{0\right\}$ and $\kappa,\rho\in[0,\infty)$ let $\mathscr{R}(\xi_0;\kappa,\rho)$ denote the rectangle
    $$\mathscr{R}(\xi_0;\kappa,\rho)\triangleq\left\{\xi\in\mathbb{R}^2:\big|(\xi-\xi_0)\cdot\xi_0/\left|\xi_0\right|\big|\le\rho, \left|(\xi-\xi_0)\cdot\xi_0^\bot/\left|\xi_0\right|\right|\le\kappa\right\}.$$
    Then, for any $(k,j)\in\mathcal{J}$ and $n\in\left\{0,\dots,j+1\right\}$,
    \begin{align*}
    &\left\|\sup\limits_{\theta\in\mathbb{S}^1} \left|\widehat{f_{j,k,n}}(r\theta)\right|\right\|_{L^2(rdr)}+\left\|\sup\limits_{\theta\in\mathbb{S}^1} \left|f_{j,k,n}(r\theta)\right|\right\|_{L^2(rdr)} \\
    \lesssim&\,2^{-5k_+} 2^{(1/2-19\delta)n-(1-20\delta)j+2\delta^2 j}, \tag{3.2}\label{3.2} \\
    &\sup\limits_{\kappa+\rho\le 2^{k-10}}\int_{\mathbb{R}^2}\left|\widehat{f_{j,k,n}}(\xi)\right|\bold{1}_{\mathscr{R}(\xi_0;\kappa,\rho)}(\xi)\,d\xi \\
    \lesssim&\,2^{-5k_+} 2^{-j+21\delta j} 2^{-19\delta n} \kappa 2^{-k/2} \min\left(1,2^n \rho\right)^{1/2}, \tag{3.3}\label{3.3} \\
&\left\|\widehat{f_{j,k,n}}\right\|_{L^\infty}\lesssim\begin{cases}
        2^{2\delta n} 2^{-(1/2-21\delta)(j-n)} &\mbox{if }2^k\approx 1, \\
        2^{-5k_+} 2^{-21\delta k} 2^{-(1/2-21\delta)(j+k)} &\mbox{if } 2^k\gg 1 \mbox{ or } 2^k\ll 1,
    \end{cases} \tag{3.4}\label{3.4} \\
&\left\|D^\alpha\widehat{f_{j,k,n}}\right\|_{L^\infty}\lesssim_{\left|\alpha\right|}\begin{cases}
        2^{\left|\alpha\right| j}2^{2\delta n} 2^{-(1/2-21\delta)(j-n)} &\mbox{if }2^k\approx 1, \\
        2^{\left|\alpha\right| j}2^{-5k_+} 2^{-21\delta k} 2^{-(1/2-21\delta)(j+k)} &\mbox{if } 2^k\gg 1 \mbox{ or } 2^k\ll 1.
    \end{cases} \tag{3.5}\label{3.5} \\
    \end{align*}
    Moreover, if $m\ge 0$ and $\left|t\right|\in\left[2^m-1,2^{m+1}\right]$ then
    \begin{align*}
        2^{\delta^2 n}\left\|e^{-it\Lambda_\sigma} f_{j,k,n}\right\|_{L^\infty}\lesssim\begin{cases}
        2^{-m+20\delta j} &\mbox{if } 2^k \ll 1, \\
        2^{-j+20\delta j} &\mbox{for all }j,k,m, \\
        2^{-m+2\delta m} 2^{-3/4k_-} &\mbox{if } j\le (1-\delta ^2)m +k_-.\end{cases} \tag{3.6}\label{3.6}
    \end{align*}
    In particular,
    \begin{align}
        \left\|e^{-it\Lambda_\sigma} P_k f\right\|_{L^\infty} \lesssim \left(1+\left|t\right|\right)^{-1+21\delta}. \tag{3.7}\label{3.7}
    \end{align}
\end{lemma}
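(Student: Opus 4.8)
The plan is to establish the bundle of estimates \eqref{3.2}--\eqref{3.7} for the pieces $f_{j,k,n}$ one at a time, exploiting the defining bound $\|f\|_{Z_1^\sigma\cap H^{N_1/8}_\Omega}\le 1$ together with the structure of $\Psi^*_\sigma$ recorded in \eqref{2.3}. First I would note that the Z-norm directly yields $\|f_{j,k,n}\|_{L^2}\lesssim 2^{-6k_+}2^{-(1-20\delta)j}2^{(1/2-19\delta)n}$, so the content of \eqref{3.2} is the upgrade from an $L^2(d\xi)$ bound to a bound on the $\theta$-supremum in $L^2(rdr)$. For this I would use that $\widehat{f_{j,k,n}}$ has Fourier support (in the physical variable, after the $Q_{jk}$ localization) of size $\lesssim 2^j$, so $\partial_\theta\widehat{f_{j,k,n}}$ costs a factor $2^j$; combining the trivial bound $\sup_\theta|g(r\theta)|^2\lesssim \|g(r\cdot)\|_{L^2_\theta}\|\partial_\theta g(r\cdot)\|_{L^2_\theta}+\|g(r\cdot)\|_{L^2_\theta}^2$ with the $2^j$ derivative gain and the extra room from $2^{2\delta^2 j}$ against $2^{19\delta\cdot}$ gives the claim; the physical-side statement for $f_{j,k,n}$ itself is identical after Fourier inversion since $\widetilde\varphi_j^{(k)}$ localizes $x$ to $|x|\sim 2^j$. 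The rectangle estimate \eqref{3.3} then follows from \eqref{3.2} by Cauchy--Schwarz: the rectangle $\mathscr R(\xi_0;\kappa,\rho)$ meets the sphere $|\xi|=r$ in an arc of angular width $\lesssim \kappa 2^{-k}$ and radial width $\rho$, and the constraint $\Psi^*_\sigma(\xi)\sim 2^{-n}$ together with $|\nabla\Psi_{\sigma\mu\nu}|\gtrsim 1$ (proved in Section 2.2) confines $|\xi|$ to an interval of length $\lesssim 2^{-n}$, so one integrates $|\widehat{f_{j,k,n}}|$ over a set of measure $\lesssim \kappa 2^{-k/2}\min(1,2^n\rho)^{1/2}\cdot(\text{radial }L^2\text{ length})$ and applies \eqref{3.2}.

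Next I would derive the pointwise bounds \eqref{3.4} and \eqref{3.5}. The key dichotomy is whether the resonant sphere is present: if $2^k\approx 1$ then $\Psi^*_\sigma(\xi)\sim 2^{-n}$ forces $||\xi|-\gamma_i|\sim 2^{-n}$ for some root $\gamma_i$, so $\widehat{f_{j,k,n}}$ is supported on an annulus of width $2^{-n}$ and its $L^\infty$ norm is controlled by interpolating the $L^2$ bound from \eqref{3.2} against the Bernstein-type gain coming from the $2^j$-sized physical support: $\|\widehat{f_{j,k,n}}\|_{L^\infty}\lesssim 2^{-n/2}\cdot 2^{j}\cdot\|\widehat{f_{j,k,n}}\|_{L^2}$ up to the $2^{\delta}$-powers, which rearranges to $2^{2\delta n}2^{-(1/2-21\delta)(j-n)}$. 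When $2^k\gg 1$ or $2^k\ll 1$ we are away from the resonant set, $\Psi^*_\sigma\gtrsim 1$ there (so effectively $n=0$), and the same interpolation with the extra Sobolev/frequency weights $2^{-6k_+}$, $2^{-21\delta k}$ gives the second line; \eqref{3.5} is then just the observation that each $D^\alpha$ in $\xi$ lands on the $2^j$-localized physical variable and costs $2^{|\alpha|j}$, with no change to the $\xi$-support analysis.

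Finally, the linear dispersive estimates \eqref{3.6} and \eqref{3.7}. Here I would invoke the fixed-time stationary-phase bound $\|P_k e^{-it\Lambda_\sigma}h\|_{L^\infty}\lesssim (1+|t|)^{-1}(1+2^{2k})\|h\|_{L^1}\cap$ (an $L^2$-based refinement near the stationary point), specialized to $h=f_{j,k,n}$. The three cases are: for $2^k\ll 1$ the dispersion is weak but $\widehat{f_{j,k,n}}$ is concentrated near $|\xi|\sim 2^k$ so one gets $2^{-m}$ times the $L^1$-mass, bounded via \eqref{3.2} and the small support; the universal $2^{-j+20\delta j}$ bound follows from $\|e^{-it\Lambda_\sigma}f_{j,k,n}\|_{L^\infty}\lesssim \|\widehat{f_{j,k,n}}\|_{L^1}\lesssim 2^{j}\|\widehat{f_{j,k,n}}\|_{L^2}$ (using again the $2^j$ physical localization together with Cauchy--Schwarz, or directly the $Z$-norm definition); and the decaying case $j\le (1-\delta^2)m+k_-$ is the genuine dispersive regime, where the time-$t$ decay $2^{-m}$ beats $2^{-j}$, proved by the standard argument: integrate by parts in the non-stationary region of $\eta$ (here $\xi$) using that the Hessian of $\Lambda_\sigma$ is nondegenerate away from $\xi=0$, picking up $2^{-3k_-/4}$ from the low-frequency behavior of the second derivative, and control the $O(2^{-m/2})$-neighborhood of the stationary point by the $L^2$ volume bound \eqref{3.2}. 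Summing \eqref{3.6} over $n\le j+1$ and over $(k,j)$ with $k$ fixed (the series converges because of the $2^{-\delta^2 n}$ and $2^{20\delta j}$ against $2^{-j}$ and $2^{-m}$ against the constraint $j\le m$) yields \eqref{3.7}. I expect the third case of \eqref{3.6} to be the main obstacle, since it is the only place where one must genuinely run a stationary-phase/integration-by-parts argument rather than interpolate existing $L^2$ information, and the low-frequency weight $2^{-3k_-/4}$ has to be tracked carefully through the Hessian estimate.
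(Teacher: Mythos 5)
The paper does not re-prove this lemma: the proof is cited verbatim from \cite{y2} (Lemma~3.12 there), so the comparison is between your sketch and that argument. Your outline is broadly in the right spirit for \eqref{3.3}, \eqref{3.6}, \eqref{3.7} (Cauchy--Schwarz on the rectangle, stationary phase with a nondegenerate Hessian and tracking the $2^{-3k_-/4}$ low-frequency weight), but there is a genuine error in how you propose to get \eqref{3.2}, and it propagates into \eqref{3.4}--\eqref{3.5}.

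You assert that because $f_{j,k,n}$ is physically localized to $|x|\lesssim 2^j$, ``$\partial_\theta\widehat{f_{j,k,n}}$ costs a factor $2^j$.'' This confuses the radial and angular derivatives on the Fourier side. The physical localization controls $\nabla_\xi\widehat{f}=\reallywidehat{(-ix)f}$, i.e.\ the \emph{radial} derivative $\partial_r$, at cost $2^j$. The \emph{angular} derivative is the rotation field $\Omega_\xi=\xi_1\partial_{\xi_2}-\xi_2\partial_{\xi_1}$, which commutes (up to sign) with the Fourier transform: $\Omega_\xi\widehat{f}=-\widehat{\Omega_x f}$. Since $\varphi_k$, $\widetilde\varphi_j^{(k)}$, and $\Psi^*_\sigma$ are all radial, $\Omega$ commutes with all the localization operators, so the hypothesis $\|f\|_{H_\Omega^{N_1/8}}\le 1$ gives $\|\Omega^m f_{j,k,n}\|_{L^2}\lesssim 1$ for $m\le N_1/8$; interpolating against the small $Z$-norm $L^2$ bound then controls $\partial_\theta\widehat{f_{j,k,n}}$ at essentially no cost, and the angular Sobolev embedding loses only the tiny factor $2^{2\delta^2 j}$ appearing in \eqref{3.2}. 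If $\partial_\theta$ truly cost $2^j$, your angular embedding would produce a $2^{j/2}$ loss, far exceeding the available $2^{2\delta^2 j}$, and \eqref{3.2} would be out of reach by your route. The same slip breaks your derivation of \eqref{3.4}: the claimed identity ``$\|\widehat{f_{j,k,n}}\|_{L^\infty}\lesssim 2^{-n/2}\cdot 2^j\cdot\|\widehat{f_{j,k,n}}\|_{L^2}$, which rearranges to $2^{2\delta n}2^{-(1/2-21\delta)(j-n)}$'' is numerically false (for $n=0$ it yields $\sim 2^{20\delta j}$ against the claimed $\sim 2^{-j/2}$). The correct mechanism is a one-dimensional \emph{radial} Sobolev embedding, applied to the $\sup_\theta$ profile already controlled by \eqref{3.2}, with $\partial_r$ (not $\partial_\theta$) costing $2^j$; the $2^{-n}$ annulus width enters through the support, not as a multiplicative power $2^{-n/2}$ in the interpolation.
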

\begin{proof}
    It's proved in \cite{y2} as well.
\end{proof}
\vspace{0.8em}
\begin{remark}
 By rechecking the proof, we could improve the bound in (\ref{3.4}) to 
 \begin{align}
     &\left\|\widehat{f_{j,k,n}}\right\|_{L^\infty}\lesssim\,2^{1.01\delta n} 2^{-(1/2-21\delta)(j-n)} \tag{3.7a}\label{3.7a} \\
     \mbox{or }&\left\|\widehat{f_{j,k,n}}\right\|_{L^\infty}\lesssim\,2^{\delta n+2\delta^2 n} 2^{-(1/2-21\delta)(j-n)}\tag{3.7b}\label{3.7b}
 \end{align}
 when $2^k\approx 1$.   
\end{remark}
\vspace{3em}
Recall (\ref{1.1}). We now suppose $u_\alpha$ ($\alpha\in\left\{1,2,\cdots,d\right\}$) is the solution to 
\begin{align}
    \left(\partial^2_t-c^2_\alpha \Delta+b^2_\alpha\right) u_\alpha=\sum_{\alpha,\beta,\gamma=1}^d A_{\alpha\beta \gamma} u_\beta u_\gamma,\ \ \ \ 1\le\alpha\le d, \tag{3.8} \label{3.8}
\end{align} 
where $A_{\alpha\beta\gamma},\ \alpha,\beta,\gamma\in\left\{1,2,\cdots,d\right\}$ are constants. Let $v_\sigma\triangleq\left(\partial_t-i\Lambda_\sigma\right)u_\sigma$ for $\sigma\in\left\{1,2,\cdots,d\right\}$, then\vspace{0.4em} $\displaystyle{u_\sigma=\Lambda_\sigma^{-1}(\mbox{Im}v_\sigma)=\Lambda_\sigma^{-1}\frac{v_\sigma-\Bar{v_\sigma}}{2i}}$. Therefore, we can get that
\begin{align}
   \left(\partial_t+i\Lambda_\sigma\right)v_\sigma=\sum_{\alpha,\beta,\gamma=1}^d A_{\alpha \beta \gamma} \left(\Lambda_\beta^{-1}\frac{v_\beta-\Bar{v_\beta}}{2i}\right)\left(\Lambda_\gamma^{-1}\frac{v_\gamma-\Bar{v_\gamma}}{2i}\right). \tag{3.9}\label{3.9}
\end{align}\par
Moreover, we define the profile $\boldsymbol{V}_\sigma (t)\triangleq e^{it\Lambda_\sigma}\boldsymbol{v}_\sigma(t)$.\par
\begin{prop}
   Suppose $\textbf{u}$ is the solution to (\ref{3.8}) on a time interval $[0,T]$ with initial data $\textbf{u}(0)=\textbf{g},\partial_t\textbf{u}(0)=\textbf{h}$. Assume that
   $$\left\|(\textbf{g},\partial_x\textbf{g},\textbf{h})\right\|_X\le\varepsilon_0,\ \ \ \ \ \sup\limits_{0\le t\le T}\left\|\textbf{V}(t)\right\|_X\le\varepsilon_1$$
   then for $\sigma\in\left\{1,2,\cdots,d\right\},k\in\mathbb{Z},t\in[0,T]$, we have
   \begin{gather}
      \left\|v_\sigma(t)\right\|_{H^{N_0}\cap H^{N_1}_\Omega}\lesssim\varepsilon_1, \tag{3.10}\label{3.10}\\
      \sup\limits_{\left|\mu\right|\le t}\sum_{a\le N_1/2} {\left\|e^{-i\mu\Lambda_\sigma}P_k\Omega^a v_\sigma(t)\right\|_{L^\infty}\lesssim\varepsilon_1 \left(1+t\right)^{-1+21\delta}}, \tag{3.11}\label{3.11} \\
      \left\|P_{\le k}(\partial_t+i\Lambda_\sigma)v_\sigma(t)\right\|_{H^{N_0}}+\sum_{a\le N_1}\left\|P_{\le k}\Omega^a(\partial_t+i\Lambda_\sigma)v_\sigma(t)\right\|_{L^2}\lesssim\varepsilon_1^2 2^{k_+} (1+t)^{-1+22\delta}. \tag{3.12}\label{3.12}
   \end{gather}
   Moreover, for $0\le a\le N_1/2$ we can decompose $\Omega^a(\partial_t+i\Lambda_\sigma)v_\sigma(t)=G_2(t)+G_\infty(t)$ such that
   \begin{equation}
       \begin{aligned}
       \sup\limits_{\left|\mu\right|\le(1+t)^{1-\delta/4}} \left\|e^{-i\mu\Lambda_\sigma}P_k G_\infty(t)\right\|_{L^\infty}\lesssim\varepsilon_1^2 (1+t)^{-2+50\delta},\\
       \left\|P_k G_2(t)\right\|_{L^2}\lesssim\varepsilon_1^2(1+t)^{-5/4+60\delta}.
       \end{aligned} 
       \tag{3.13}\label{3.13}
   \end{equation}
\end{prop}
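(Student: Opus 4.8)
The statement packages together four facts about $\textbf{v}_\sigma$ under the bootstrap hypothesis $\sup_{[0,T]}\|\textbf{V}\|_X\le\varepsilon_1$; (3.10)--(3.11) merely re-express that hypothesis through the linear bounds of Lemma 3.6, while (3.12)--(3.13) are bilinear estimates on the quadratic nonlinearity $\mathcal N_\sigma:=\sum A_{\alpha\beta\gamma}\big(\Lambda_\beta^{-1}\tfrac{v_\beta-\bar v_\beta}{2i}\big)\big(\Lambda_\gamma^{-1}\tfrac{v_\gamma-\bar v_\gamma}{2i}\big)$ on the right of (3.9). For (3.10): since each $\Lambda_\sigma$ is a radial Fourier multiplier, $e^{\pm it\Lambda_\sigma}$ is unitary on $L^2$ and commutes with $\langle D\rangle^s$, $P_k$ and $\Omega$, so $\|v_\sigma\|_{H^{N_0}}=\|V_\sigma\|_{H^{N_0}}$ and $\|\Omega^m v_\sigma\|_{L^2}=\|\Omega^m V_\sigma\|_{L^2}$, both $\le\|\textbf{V}\|_X\le\varepsilon_1$. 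For (3.11) write $e^{-i\mu\Lambda_\sigma}P_k\Omega^a v_\sigma=e^{-i(\mu+t)\Lambda_\sigma}P_k(\Omega^aV_\sigma)$ and apply (3.7) to $\Omega^aV_\sigma$: for $a\le N_1/2$ one has $\|\Omega^aV_\sigma\|_{Z_1^\sigma}\le\|\textbf{V}\|_Z$ and $\|\Omega^aV_\sigma\|_{H^{N_1/8}_\Omega}\le\sup_{\beta\le N_1}\|\Omega^\beta V_\sigma\|_{L^2}$ (because $a+N_1/8<N_1$), while $|\mu|\le t$ gives $1+|\mu+t|\approx 1+t$; summing over the finitely many $a$ yields (3.11). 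For (3.12), note $(\partial_t+i\Lambda_\sigma)v_\sigma=\mathcal N_\sigma$ and that each $\Lambda_\beta^{-1}$ is a Mikhlin multiplier of order $-1$ ($b_\beta>0$), hence $L^p$-bounded and commuting with $P_k,\Omega$; distributing $\Omega^a$ by Leibniz, every resulting term $c\,(\Lambda_\beta^{-1}\Omega^b w)(\Lambda_\gamma^{-1}\Omega^{c'}w')$ has $\min(b,c')\le N_1/2$, so putting that factor in $L^\infty$ (bounded by $\varepsilon_1(1+t)^{-1+21\delta}$ via (3.11), the frequency sum converging by Bernstein at low and by the $2^{-k_+}$ of $\Lambda^{-1}$ at high frequency) and the other in $L^2$ or $H^{N_0}$ (bounded by $\varepsilon_1$ via (3.10)) gives $\|\Omega^a\mathcal N_\sigma\|_{L^2}+\|\mathcal N_\sigma\|_{H^{N_0}}\lesssim\varepsilon_1^2(1+t)^{-1+22\delta}$ for $a\le N_1$; since $\|P_{\le k}(\cdot)\|\le\|\cdot\|$ and $2^{k_+}\ge1$, (3.12) follows.

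For (3.13) fix $2^m\approx 1+t$ and decompose each bilinear piece of $\Omega^a\mathcal N_\sigma$ ($a\le N_1/2$) dyadically in the output frequency $2^k$, the input frequencies $2^{k_1},2^{k_2}$, and in the profile pieces $Q_{j_ik_i}(\cdot)$. Put into $G_2$: (i) the pieces with output or input frequency $\gtrsim 2^{\delta^2 m}$ — here the interpolated Sobolev bound $\|\Omega^b v_\beta\|_{H^{N_0/2}}\lesssim\varepsilon_1$ (interpolate $\|v_\beta\|_{H^{N_0}}\lesssim\varepsilon_1$ with $\|\Omega^{N_1}v_\beta\|_{L^2}\lesssim\varepsilon_1$, legitimate since $b\le N_1/2$) furnishes a large negative power of the high frequency, which together with an $L^\infty$ bound on the remaining factor from (3.11) beats $(1+t)^{-5/4}$; (ii) the pieces with $2^k,2^{k_i}\lesssim 2^{\delta^2 m}$ but $\max(j_1,j_2)\ge\frac{1/4-39\delta}{1/2-20\delta}m$ — put the spread-out input in $L^2$ with the gain $2^{-(1/2-20\delta)j_i}\lesssim 2^{-m/4+39\delta m}$ supplied by the $Z$-norm after summing over the resonance parameter $n_i$, and the other input in $L^\infty$ via (3.11), whose product is $\lesssim\varepsilon_1^2(1+t)^{-5/4+60\delta}$. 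Summing the (geometrically convergent) dyadic series gives $\|P_kG_2\|_{L^2}\lesssim\varepsilon_1^2(1+t)^{-5/4+60\delta}$.

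The leftover sum $G_\infty$ consists of the pieces with $2^k,2^{k_i}\lesssim 2^{\delta^2 m}$ and $j_1,j_2<\frac{1/4-39\delta}{1/2-20\delta}m<(1-\delta^2)m$. For these, Lemma 3.6 (third line of (3.6) when $2^{k_i}\approx 1$, first line when $2^{k_i}\ll 1$) gives $\|e^{-it\Lambda_{\beta_i}}(\text{input piece})\|_{L^\infty}\lesssim\varepsilon_1(1+t)^{-1+C\delta}$ uniformly, the second line of (3.6) and the constraint defining $\mathcal J$ make the $j_i$- and $k_i$-sums converge, so $\|G_\infty(t)\|_{L^\infty}\lesssim\varepsilon_1^2(1+t)^{-2+C\delta}$. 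The crux is to upgrade this to $\sup_{|\mu|\le(1+t)^{1-\delta/4}}\|e^{-i\mu\Lambda_\sigma}P_kG_\infty\|_{L^\infty}\lesssim\varepsilon_1^2(1+t)^{-2+50\delta}$. One cannot simply move $e^{-i\mu\Lambda_\sigma}$ onto $G_\infty$ — the $L^1\!\to\!L^1$ norm of its kernel is $\gtrsim|\mu|$, too large — so for each piece $w_1w_2$ with $w_i=e^{-it\Lambda_{\beta_i}}W_i$ (here $\beta_i\in\{\pm1,\dots,\pm d\}$, since $\bar v_\beta=e^{-it\Lambda_{-\beta}}\overline{V_\beta}$) one writes
\begin{align*}
e^{-i\mu\Lambda_\sigma}P_k(w_1w_2)(x)=\int_{\mathbb R^2}\int_{\mathbb R^2}e^{i\left(x\cdot\xi-\mu\Lambda_\sigma(\xi)-t\Lambda_{\beta_1}(\xi-\eta)-t\Lambda_{\beta_2}(\eta)\right)}\varphi_k(\xi)\,\widehat{W_1}(\xi-\eta)\,\widehat{W_2}(\eta)\,d\eta\,d\xi
\end{align*}
and analyzes this $(\xi,\eta)$–oscillatory integral by repeated integration by parts together with stationary phase.

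Off a thin $\eta$–neighborhood (for each $\xi$) one has $|\nabla_\eta(\text{phase})|=t\,|\nabla\Lambda_{\beta_1}(\xi-\eta)-\nabla\Lambda_{\beta_2}(\eta)|\gtrsim t^{1-C\delta}$ and integrates by parts to a negligible remainder; on the complement the $\eta$–Hessian is $-t(\nabla^2\Lambda_{\beta_1}+\nabla^2\Lambda_{\beta_2})$, which is uniformly non-degenerate by strict convexity of the $\Lambda$'s when $\beta_1,\beta_2$ have the same sign, and by the nondegeneracy condition (1.7) (equivalently Corollary 6.2 and the simplicity of the roots of $\Psi$) when they have opposite sign, so stationary phase in $\eta$ extracts a factor $t^{-1}$. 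On that resonant set the residual $\xi$–phase is $x\cdot\xi-(\mu+t)\Lambda_\sigma(\xi)+t\,\Psi_{\sigma\beta_1\beta_2}(\xi)$ (using $p$ and $\Psi$ from (2.2)), and since $|\mu+t|\approx t$ its $\xi$–Hessian is $\approx -t\,\nabla^2\!\big(\Lambda_\sigma-\Psi_{\sigma\beta_1\beta_2}\big)=-t\big((\nabla^2\Lambda_{\beta_1})^{-1}+(\nabla^2\Lambda_{\beta_2})^{-1}\big)^{-1}$ (a Schur complement), again non-degenerate away from the roots of $\Psi$, where $|\nabla\Psi_{\sigma\beta_1\beta_2}|\gtrsim1$; stationary phase in $\xi$ on the thin set of $x$ where this phase is stationary, with rapid decay off it, extracts a second factor $t^{-1}$. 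Combining the two gives the claimed $(1+t)^{-2+50\delta}$, with the $2^{\delta^2 m}$, $2^{C\delta m}$, $t^{C\delta}$ losses from the frequency cutoffs, $j_i$–summations and amplitude bounds absorbed into the $50\delta$ margin; adding $G_2$ and $G_\infty$ yields the decomposition. The main obstacle is precisely this two-stage stationary-phase analysis at the (near-)space-time-resonant configurations in $G_\infty$, which is where both nondegeneracy assumptions in (1.2) are used; everything else reduces to Hölder, Bernstein, Lemma 3.6, and summation of geometric series.
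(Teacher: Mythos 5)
Your treatment of (3.10)--(3.12) is correct and follows the same route as the paper: (3.10) is unitarity of $e^{it\Lambda_\sigma}$ plus the bootstrap hypothesis, (3.11) is (3.7) applied to the profile with $|\mu+t|\approx 1+t$, and (3.12) is a bilinear $L^2\times L^\infty$ estimate on $\mathcal N_\sigma$ after distributing $\Omega^a$ by Leibniz. (The paper handles the low-frequency sum via a $P_{\le k}$ version of (3.11); your Bernstein gain $2^k$ for $k\le 0$ achieves the same.)

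For (3.13), however, you take a genuinely different route, and there the proposal has a real gap. The paper delegates (3.13) entirely to Corollary~3.11, which rests on Lemma~3.9's decomposition $\partial_t V_\sigma = \varepsilon_1^2(f_C+f_{NC})$ into a coherent piece $\widehat{f_C}=\sum e^{is\Psi_{\sigma\mu\nu}(\xi)}g^q(\xi,s)$, whose pointwise, $\xi$-derivative, and $\partial_s$-bounds \emph{already encode} the output of the $\eta$-stationary phase, and a non-coherent piece $f_{NC}$ that is small in $L^2$; then $G_\infty=e^{-is\Lambda_\sigma}f_C$, $G_2=e^{-is\Lambda_\sigma}f_{NC}$, and the $L^\infty$ bound for $e^{-i\mu\Lambda_\sigma}P_kG_\infty$ follows from the $e^{is\Psi}g^q$ structure. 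You instead try to run the full two-stage stationary phase on the raw bilinear oscillatory integral in $(\xi,\eta)$. The idea is in the same spirit as the proof of Lemma~3.9, but your write-up omits precisely the details that make that lemma non-trivial. Two concrete problems: first, your claim that the $\xi$-Hessian of the residual phase is $-t\nabla^2(\Lambda_\sigma-\Psi_{\sigma\beta_1\beta_2})=-t\big((\nabla^2\Lambda_{\beta_1})^{-1}+(\nabla^2\Lambda_{\beta_2})^{-1}\big)^{-1}$ and is ``non-degenerate away from the roots of $\Psi$'' is not verified, and the appeal to (1.7) is misplaced --- (1.7) concerns $\det[\nabla^2_{\eta,\eta}\Phi]$ at the space-time resonant set, not the $\xi$-Hessian of the residual phase, and the lower bound $|\nabla\Psi|\gtrsim 1$ proved in Section~2.2 holds \emph{near} the roots (it is a transversality statement for $\Psi$, not a Hessian bound). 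The region where $|\xi|$ is near a root of $\Psi$, where the residual $\xi$-phase is nearly stationary, is exactly where the $A^\sigma_n$-localization and the $\kappa_r,\kappa_\theta$ cutoffs of Lemma~3.9's proof are needed, and your two-line stationary-phase heuristic does not reproduce them. Second, for $j_i$ near your cutoff $\approx m/2$ the amplitude has $\xi$-derivatives of order $2^{m/2}$, which is exactly the marginal scale for a Hessian of size $t\sim 2^m$; without the explicit $\kappa$-radius choices the error term in stationary phase is not subordinate to the main term. As written, therefore, the proof of (3.13) is a sketch that reproduces the high-level strategy of Lemma~3.9/Corollary~3.11 (which the paper simply cites) but does not supply the bookkeeping that is the actual mathematical content of those results.
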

\begin{proof}
(\ref{3.10}) is from our assumption. (\ref{3.11}) is proved in Lemma 3.6. (\ref{3.12}) is proved by considering the profile of $v_\sigma$, using Duhamel's formula and using (\ref{3.10}), (\ref{3.11}) and 
$$\sup\limits_{\left|\mu\right|\le t}\sum_{a\le N_1/2} {\left\|e^{-i\mu\Lambda_\sigma}P_{\le k}\Omega^a v_\sigma(t)\right\|_{L^\infty}\lesssim\varepsilon_1 \left(1+t\right)^{-1+21\delta}},\ \ \ \ \ \ \ \mbox{if }k\le 0.$$
Note that the above inequality follows from Lemma 5.2 (i) in \cite{a1} and Lemma 3.12 in \cite{y2}. (\ref{3.13}) follow from Corollary 3.11 below.
\end{proof}
\vspace{2em}
Finally, we will give two time derivatives estimates. The first one is a little bit rough. \par
Recall that we have already defined $\boldsymbol{V}_\sigma$ as $\boldsymbol{V}_\sigma (t)\triangleq e^{it\Lambda_\sigma}\boldsymbol{v}_\sigma(t)$. Then, Duhamel formula gives us that
$$\partial_t \widehat{V_\sigma}(\xi,s)=\sum_{\mu,\nu\in\mathcal{P}} A_{\mu\nu}\int_{\mathbb{R}^2}e^{is\Phi_{\sigma\mu\nu}(\xi,\eta)}\widehat{V_\mu}(\xi-\eta,s)\widehat{V_\nu}(\eta,s)\,d\eta.$$
\vspace{0.8em}
\begin{lemma}[time derivatives 1]
Under the assumption of Proposition 3.8, let $m\ge 0$, $s\approx 2^m$, $k\in\mathbb{Z}$. Then
\begin{align*}
    &\left\|P_{\le k} (\partial_t V_\sigma)(s)\right\|_{H^{N_0}}+\sum_{a\le N_1} \left\|P_{\le k} \Omega^a (\partial_t V_\sigma)(s)\right\|_{L^2}\lesssim \varepsilon_1^2 2^{k_+} 2^{-m+22\delta m},\tag{3.14}\label{3.14} \\
    &\sum_{a\le {N_1}/2} \left\|e^{-is\Lambda_\sigma} P_k \Omega^a (\partial_t V_\sigma)(s)\right\|_{L^\infty}\lesssim \varepsilon_1^2 \min\left\{2^{-2m+43\delta m}, 2^{2k}\right\}. \tag{3.15}\label{3.15}
\end{align*}
In addition, for any $a\in\left[0,N_1/2\right]\cap\mathbb{Z}$, we have the following decomposition of time derivative
\begin{align}
   \Omega^a (\partial_t V_\sigma)(s)=\varepsilon_1^2\left(f_C(s)+f_{NC}(s)\right), \label{3.16}\tag{3.16}
\end{align}
where, with $\Phi_{\sigma\mu\nu}$ as (\ref{2.2}) and for any $k\in\mathbb{Z}$,
\begin{equation}
\begin{aligned}
   &\widehat{f_C}(\xi,s)=\sum_{\mu+\nu\neq0} e^{is\Psi_{\sigma\mu\nu}(\xi)} \sum_{0\le q\le m/2-10\delta m} g^q_{\sigma\mu\nu}(\xi,s),\\
   &\varphi_k(\xi) g^q_{\sigma\mu\nu}(\xi,s)=0\ \ \mbox{if }q\ge 1\ \mbox{and }k\le -D\ \mbox{or }\\[4pt]
   &\mbox{if }k\notin[-m/2+\delta^2 m/5,\delta^2 m/5],\\[4pt]
   &\left|\varphi_k(\xi) D^\alpha_\xi g^q_{\sigma\mu\nu}(\xi,s)\right|\lesssim 2^{-42\delta k_-}\,2^{-m+3\delta m}\, 2^{-q+42\delta q}\,2^{(m/2+q+2\delta^2 m)\left|\alpha\right|},\\[4pt]
   &\left\|\varphi_k \cdot \partial_s g^q_{\sigma\mu\nu}(s)\right\|_{L^\infty}\lesssim 2^{(6\delta-2)m}\,2^{q+42\delta q}\ \ \mbox{if }k\ge -D,
\end{aligned}  \label{3.17}\tag{3.17}  
\end{equation}
and
\begin{equation}
\begin{aligned}
&\left\|P_k f_{NC}(s)\right\|_{L^2}\lesssim\begin{cases}
2^{-3m/2+60\delta m}&\mbox{, if }k\ge 0 \\
2^{-3m/2-k/2+60\delta m}&\mbox{, if }-m\le k\le 0 \\
2^{-m+60\delta m}&\mbox{, if }k\le -m \\
\end{cases},\\[4pt]
&\left\|\widehat{P_k f_{NC}}(s)\right\|_{L^\infty}\lesssim 2^{60\delta m}\left(1+2^{m+k_-}\right)^{-1},
\end{aligned} \label{3.18}\tag{3.18}
\end{equation}
where if $-(1-10\delta)m+D^2\le k_1,k_2\le \delta^2 m/5-D$ and $0\le j_1\le j_2\le (1-10\delta)m-D^2$, then denote $\kappa_r\triangleq 2^{\delta^2 m/2}\left(2^{-m/2}+2^{j_2-m}\right)$, $\kappa_\theta\triangleq 2^{2\delta^2 m-m/2}$ and the term $f_{NC}$ occurs in the following cases \par
(a.1). $\left|\nabla_\eta\Phi_{\sigma\mu\nu}\right|\lesssim \kappa_r$, \ \ \ $\mu_2+\mu_3\neq 0$, \ \ \ $2^{k}\ge 2^{\delta m}\left(2^{-m/2}+2^{j_2-m}\right)$, \ \ \ $\frac{m}{2}\le j_2\le (1-10\delta)m$\vspace{0.4em} \\
$k\le-D$;\par
(a.2). $\left|\nabla_\eta\Phi_{\sigma\mu\nu}\right|\lesssim \kappa_r$, \ \ \ $\mu_2+\mu_3\neq 0$, \ \ \ $2^{k}\ge 2^{\delta m}\left(2^{-m/2}+2^{j_2-m}\right)$, \ \ \ $\frac{m}{2}\le j_2\le (1-10\delta)m$\vspace{0.4em} \\
$k\ge-D$,\ \ \ $\left|\Omega_\theta\Phi_{\mu\mu_2\mu_3}\right|\gtrsim \kappa_\theta$;\par
(b). $\left|\nabla_\eta\Phi_{\sigma\mu\nu}\right|\lesssim \kappa_r$, \ \ \ $\mu_2+\mu_3\neq 0$, \ \ \ $2^{k}\le 2^{\delta m}\left(2^{-m/2}+2^{j_2-m}\right)$; \par
(c). $\left|\nabla_\eta\Phi_{\sigma\mu\nu}\right|\lesssim \kappa_r$, \ \ \ $\mu_2+\mu_3=0$, \ \ \ $2^{k}\le 2^{\delta m}\left(2^{-m/2}+2^{j_2-m}\right)$, \ \ \ $m+k_2\le j_2+3\delta^2 m$,\par
\noindent and in the case (a.2) we can get strong enough control $$\left\|\widehat{P_k f_{NC}}\right\|_{L^\infty}+\left\|P_k f_{NC}\right\|_{L^2}\lesssim 2^{-4m}.$$
Moreover,
\begin{align}
\sup_{o\le b\le N_1/4}\left\{\left\|\Omega^b g^q_{\sigma\mu\nu}(s)\right\|_{L^2}+\left\|\Omega^b f_{NC}(s)\right\|_{L^2}\right\}\lesssim 1. \label{3.19}\tag{3.19}  
\end{align}
In particular, for any $a\in[0,N_1/2]\cap\mathbb{Z}$ and $k\in\mathbb{Z}$,
\begin{align}
\left\|P_k\Omega^a (\partial_t V_\sigma)(s)\right\|_{L^2}\lesssim \varepsilon^2_1 2^{-m+3\delta m+\delta^2 m}. \label{3.20}\tag{3.20}   
\end{align}
\end{lemma}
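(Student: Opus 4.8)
The plan is to run everything off the Duhamel identity
$\partial_t\widehat{V_\sigma}(\xi,s)=\sum_{\mu,\nu}A_{\mu\nu}\int_{\mathbb{R}^2}e^{is\Phi_{\sigma\mu\nu}(\xi,\eta)}\widehat{V_\mu}(\xi-\eta,s)\widehat{V_\nu}(\eta,s)\,d\eta$
displayed above the statement, together with the elementary identity $e^{-is\Lambda_\sigma}(\partial_t V_\sigma)=(\partial_t+i\Lambda_\sigma)v_\sigma$, whose right-hand side is by (\ref{3.9}) a constant-coefficient quadratic form in $v_\mu,\overline{v_\nu}$ modulo the harmless multipliers $\Lambda_\mu^{-1},\Lambda_\nu^{-1}$. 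Since $e^{-is\Lambda_\sigma}$ is unitary on every $L^2$-based space and commutes with $P_{\le k}$, $P_k$ and $\Omega$, I would read (\ref{3.14}) off directly from (\ref{3.12}) with $1+t\approx 2^m$, and obtain (\ref{3.15}) by putting the two quadratic factors in $L^\infty$ using (\ref{3.11}) for the bound $2^{-2m+43\delta m}$, and in $L^2\times L^2$ together with Bernstein ($\|P_k F\|_{L^\infty}\lesssim 2^{2k}\|F\|_{L^1}$) using (\ref{3.10}) for the complementary bound $2^{2k}$; the vector field $\Omega^a$ is distributed by the Leibniz rule and commutes with $\Lambda_\sigma$.

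The substance of the lemma is the decomposition (\ref{3.16})--(\ref{3.19}), which makes the stationary-phase heuristic of Section 1.2.2 rigorous. I would split the $\eta$-integral dyadically according to the sizes of $|\nabla_\eta\Phi_{\sigma\mu\nu}|$ and of $\Omega_\eta\Phi_{\sigma\mu\nu}$ (the latter via the angular cutoff of Remark 3.3), and according to the output frequency $2^k$ and the physical-space localizations $2^{j_1},2^{j_2}$ of the inputs as in (\ref{3.1}). On the part where $|\nabla_\eta\Phi_{\sigma\mu\nu}|\gtrsim\kappa_r$, with $\kappa_r=2^{\delta^2 m/2}(2^{-m/2}+2^{j_2-m})$, or where $|\Omega_\eta\Phi_{\sigma\mu\nu}|\gtrsim\kappa_\theta=2^{2\delta^2 m-m/2}$, integration by parts in $\eta$ (Lemma 3.1) or in the angle (Lemma 3.2) produces a remainder that either is $O(2^{-4m})$ or lands in $f_{NC}$; case (a.2) is exactly the situation where the angular integration by parts alone gives the $2^{-4m}$ bound. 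On the complementary part $|\nabla_\eta\Phi_{\sigma\mu\nu}|\ll 1$ one is near the space-resonant set $\eta=p(\xi)$, where the nondegeneracy condition (\ref{1.7}) forces $\det[\nabla^2_{\eta\eta}\Phi]\neq 0$ on the resonant sphere; the resulting nondegenerate stationary phase in $\eta$ gives the full $2^{-m}$ gain in dimension two, and the expansion $\Phi_{\sigma\mu\nu}(\xi,\eta)=\Psi_{\sigma\mu\nu}(\xi)+O(|\eta-p(\xi)|^2)$ lets me pull the scalar oscillation $e^{is\Psi_{\sigma\mu\nu}(\xi)}$ out of the integral, leaving amplitudes that I cut into dyadic shells $|\eta-p(\xi)|\sim 2^{-q}$, $0\le q\le m/2-10\delta m$ (the cutoff being where the quadratic error in the phase ceases to be negligible against $2^{-m}$). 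This yields the $g^q_{\sigma\mu\nu}$; the pointwise bounds (\ref{3.17}) then follow from the input estimates (\ref{3.2})--(\ref{3.5}) and the volume of the shell, with $2^{-q+42\delta q}$ the gain from the shrinking support, $2^{(m/2+q+2\delta^2 m)|\alpha|}$ the cost of differentiating a symbol concentrated at $\xi$-scale $2^{-m/2-q}$, and $2^{-42\delta k_-}$ absorbing the low-frequency case; the vanishing of $\varphi_k g^q$ for $q\ge1$, $k\le-D$ encodes that the first condition in (\ref{1.2}) excludes low-frequency space-time resonance, and the $\partial_s g^q$ bound comes from differentiating Duhamel once more, each input's time derivative gaining another $2^{-m}$ via (\ref{3.14}).

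For the bounds (\ref{3.18}) on $f_{NC}$ in the cases (a.1), (b), (c), the multispeed--multimass setting forces the extra work flagged in Section 1.2.3: because $\kappa_r$ already depends on $j_2$, when an input is far out in physical space ($j_2\ge m/2$), or the output frequency is small, or $\mu_2+\mu_3=0$ so that an input itself sits near an iterated resonance (which does not occur in \cite{y2}), the $L^\infty\times L^2$ estimate of Lemma 3.5 is not sharp enough; there I would substitute the Duhamel formula for that input and treat the resulting trilinear integral directly, extracting where needed the secondary oscillation $\Psi_{\mu\mu_2\mu_3}$ and again using (\ref{1.7}) together with integration by parts in the surviving variables. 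The $\Omega$-regularity statements (\ref{3.19}) follow by distributing $\Omega^a$ with the Leibniz rule, since $\Omega$ commutes with $\Lambda_\sigma$ and the radial cutoffs, the angular cutoff is $\Omega$-adapted, and the inputs carry enough $\Omega$-derivatives through the $H^{N_1}_\Omega$ part of the $X$-norm together with (\ref{3.10}); and (\ref{3.20}) is then the sum over $q$ and over the finitely many $(\mu,\nu)$ of the $L^2$-parts of (\ref{3.17}) and (\ref{3.18}). I expect the main obstacle to be precisely this case analysis: deciding, for each admissible configuration of $(k,k_1,k_2,j_1,j_2,n_1,n_2)$ and of $\mu_2+\mu_3$, whether an honest integration by parts closes the bound or a non-negligible $f_{NC}$ term with the trilinear substitution must be carried along, all while keeping every loss inside the stated $\delta$-budget.
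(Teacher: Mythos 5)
Your proposal tracks the intended architecture well: the paper's own proof is essentially a citation of Lemma 6.2 in \cite{y2}, and your outline (Duhamel, dyadic split by $|\nabla_\eta\Phi|$ and by the angle, integration by parts off the space-resonant set, nondegenerate stationary phase and the expansion $\Phi(\xi,\eta)=\Psi(\xi)+O(|\eta-p(\xi)|^2)$ on the resonant set to extract the $g^q$, trilinear substitution when the bilinear $L^\infty\times L^2$ estimate is too weak, Leibniz for $\Omega$) is the right reconstruction of that argument. Reading (\ref{3.14}) off (\ref{3.12}) via unitarity, and obtaining (\ref{3.15}) from $L^\infty\times L^\infty$ versus $L^2\times L^2$ plus Bernstein, is also exactly how these quantitative bounds fall out.

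Two places where your description doesn't quite match the lemma's stated structure and should be corrected. First, you place case (a.2) inside the bilinear angular cutoff step and say ``angular integration by parts alone gives the $2^{-4m}$ bound,'' but (a.2) is not a $|\Omega_\eta\Phi_{\sigma\mu\nu}|\gtrsim\kappa_\theta$ case: the condition there is $|\Omega_\theta\Phi_{\mu\mu_2\mu_3}|\gtrsim\kappa_\theta$, i.e.\ it refers to the \emph{inner} angle $\theta$ that only appears after you substitute Duhamel for the input $\widehat{V_\mu}$ to get a trilinear integral with phase $\Phi_{\sigma\mu\nu}(\xi,\eta)+\Phi_{\mu\mu_2\mu_3}(\xi-\eta,\theta)$; the $2^{-4m}$ gain comes from $\Omega_\theta$-integration by parts in that trilinear integral, not from the first-level $\Omega_\eta$ cutoff. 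Relatedly, all the listed $f_{NC}$ cases (a.1)--(c) have $|\nabla_\eta\Phi_{\sigma\mu\nu}|\lesssim\kappa_r$, so they live on the space-resonant part; the part with $|\nabla_\eta\Phi_{\sigma\mu\nu}|\gtrsim\kappa_r$ just contributes the trivial $O(2^{-4m})$ remainder. Second, you do not address the one thing the paper explicitly flags as new relative to \cite{y2}: when $\mu+\nu=0$, $m+k\le j_2+3\delta^2 m$ and $-m\le k_-\le -9\delta m$, the present semilinear system lacks the null structure of the nonlinearity in \cite{y2}, which is exactly why the middle regime $-m\le k\le 0$ in (\ref{3.18}) carries the extra factor $2^{-k/2}$ rather than the sharper bound of \cite{y2}'s (6.16). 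Your outline would need a sentence saying where that loss enters and why it is still acceptable; as written you gloss over it, which is the only genuine omission relative to the paper's short proof.
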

\begin{proof}
This was basically proved in Lemma 6.2 in \cite{y2}. The only difference occurs when $\mu+\nu=0$, $m+k\le j_2+3\delta^2 m$, and $-m\le k_-\le -9\delta m$, see (6.36) in \cite{y2}. In our case, we don't have the null structure provided by the nonlinear terms, so here we get a slightly weak estimate compared to (6.16) in \cite{y2}.
\end{proof}
\vspace{0.8em}
\begin{remark}
The results above are not sharp. In some cases, it's still possible to improve the results. For example, if $k\ge -\frac{j}{3}+12\delta j$ and $m+D\le j\le m+\delta m$, then we can show that $\left\|P_k f_{NC}\right\|_{L^2}\lesssim 2^{-\frac{4}{3}m+10.8\delta m}$.\par
To prove this, we need to recheck the proof of Lemma 6.2 in \cite{y2}, and it turns out that we may assume that $j_2\ge m+k-3\delta^2 m\ge \frac{2}{3}m+12\delta m-\frac{10}{3}\delta^2 m$, since in other cases we have already got enough control. Denote $f_{j_1,k_1},f_{j_1,k_1,n_1},f_{j_2,k_2},f_{j_2,k_2,n_2}$ as in (\ref{3.1}). If $n_2<j_2-10\delta m$, then we can proceed as in \cite{y2}
\begin{align*}
    \left\|P_k f_{NC}\right\|_{L^2}&\lesssim \left\|e^{is\Lambda_\mu} f^\mu_{j_1,k_1}\right\|_{L^\infty}\cdot\left\|f^\nu_{j_2,k_2,n_2}\right\|_{L^2} \\
    &\lesssim 2^{-m+21\delta m}\cdot 2^{-(1-20\delta)j_2+(\frac{1}{2}-19\delta)n_2} \\
    &\lesssim 2^{-m+21\delta m}\cdot 2^{-(\frac{1}{2}-\delta)j_2-4.9\delta m} \\
    &\lesssim 2^{-m+21\delta m}\cdot 2^{-(\frac{1}{2}-\delta)(m+k_2-3\delta^2 m)-4.9\delta m} \\
    &\lesssim 2^{-m+21\delta m}\cdot 2^{-\frac{1}{2}(\frac{2}{3}m+12\delta m)+0.67\delta m-4.9\delta m}\lesssim 2^{-\frac{4}{3}m+10.8\delta m}.
\end{align*}
On the other hand, if $j_2-10\delta m\le n_2\le j_2$, then we may further assume that $-\frac{j}{3}+12\delta j<k_2<-\frac{j}{4}$. (Otherwise, we can use (\ref{3.18}) to get $\left\|P_k f_{NC}\right\|_{L^2}\lesssim 2^{-\frac{11}{8}m+61\delta m}$ directly, which is enough.) In this case, we observe that $k_1=0$. This is because denote that $\gamma_i$ is the one such that $\big|\left|\eta\right|-\gamma_i\big|\sim 2^{-n_2}$ and we have
\begin{align*}
   \big|\left|\xi-\eta\right|-\gamma_i\big|&\le\big|\left|\xi-\eta\right|-\left|\eta\right|\big|+\big|\left|\eta\right|-\gamma_i\big|\le\left|\xi\right|+\big|\left|\eta\right|-\gamma_i\big| \\
   &\le 2^{-\frac{1}{4}m}+2^{-n_2}\le 2^{-\frac{1}{4}m}+2^{-j_2+10\delta m}\le 2^{-\frac{1}{4}m}+2^{-\frac{2}{3}m+22\delta m}\lesssim 2^{-\frac{1}{4}m}.
\end{align*}
This implies that $\left\|e^{-is\Lambda_\mu}f^\mu_{j_1,k_1}\right\|_{L^\infty}\lesssim 2^{-m+3\delta m}$ by the last line of (\ref{3.6}). Thus, like before, we get 
\begin{align*}
   \left\|P_k f_{NC}\right\|_{L^2}&\lesssim \left\|e^{is\Lambda_\mu} f^\mu_{j_1,k_1}\right\|_{L^\infty}\cdot\left\|f^\nu_{j_2,k_2,n_2}\right\|_{L^2}  \\
   &\lesssim 2^{-m+3\delta m}\cdot 2^{-\frac{1}{2}(\frac{2}{3}m+12\delta m)+0.67\delta m}\lesssim 2^{-\frac{4}{3}m-2\delta m}.
\end{align*}
To sum up, we've shown that $\left\|P_k f_{NC}\right\|_{L^2}\lesssim 2^{-\frac{4}{3}m+10.8\delta m}$. This result will be used in Section 5 later on.
\end{remark}
\vspace{0.8em}
\begin{cor}
Assume $s\approx 2^m$ and $a\in[0,N_1/2]$, we can decompose $\Omega^a(\partial_t V_\sigma)(s)=G_2(s)+G_\infty(s)$ such that, for any $k\in\mathbb{Z}$,
\begin{align*}
   \sup\limits_{\left|\lambda\right|\le 2^{(1-\delta /10)m}} \left\|e^{-i(s+\lambda)\Lambda_\sigma} P_k G_\infty(s)\right\|_{L^\infty}\lesssim\varepsilon_1^2\,2^{-2m+50\delta m},\\
   \left\|P_k G_2(s)\right\|_{L^2}\lesssim\begin{cases}
\varepsilon_1^2\,2^{-3m/2+60\delta m}&\mbox{, if }k\ge 0 \\
\varepsilon_1^2\,2^{-3m/2-k/2+60\delta m}&\mbox{, if }-m/2\le k\le 0 \\
\varepsilon_1^2\,2^{-5m/4+60\delta m}&\mbox{, if }k\le -m/2 \\
\end{cases}.
\end{align*}
\end{cor}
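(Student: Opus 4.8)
The plan is to read the decomposition off from Lemma 3.9. Writing $\Omega^a(\partial_t V_\sigma)(s)=\varepsilon_1^2(f_C(s)+f_{NC}(s))$ as in (\ref{3.16}), I take $G_\infty(s):=\varepsilon_1^2 f_C(s)$ minus the few frequency pieces where the dispersive estimate below is too weak, and $G_2(s):=\varepsilon_1^2 f_{NC}(s)$ plus those pieces together with a low-frequency correction. For $G_2$ the three regimes $k\ge 0$, $-m/2\le k\le 0$, $k\le -m/2$ in the statement must be matched against (\ref{3.18}) and (\ref{3.20}): for $k\ge -m/2$ the bounds on $P_k f_{NC}$ in (\ref{3.18}) are already exactly what is claimed (note $2^{-3m/2-k/2}=2^{-5m/4}$ at $k=-m/2$), while for $k\le -m/2$ one argues directly from the bilinear Duhamel formula $\partial_t\widehat{V_\sigma}(\xi,s)=\sum A_{\mu\nu}\int e^{is\Phi_{\sigma\mu\nu}}\widehat{V_\mu}\widehat{V_\nu}\,d\eta$, combining the $2^{2k}$ volume of the output annulus with one integration by parts in $s$ (or in $\eta$ away from the time-resonant set), using that at output frequency $\ll 1$ the all-low interaction is excluded by the first nondegeneracy condition in (\ref{1.2}) — this is the "all-low frequency case" of Section 1.2.4 — so that the missing $2^{-m/4}$ is recovered, the volume factor alone being enough for $k$ very negative. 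Note that $f_C$ is frequency-supported in $[2^{-m/2+\delta^2m/5},2^{\delta^2m/5}]$ by (\ref{3.17}), so it is genuinely absent in the low-frequency regime.

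The real point is the propagated $L^\infty$ bound on $G_\infty$. By (\ref{3.16}), for each $k$ the Fourier transform of $e^{-i(s+\lambda)\Lambda_\sigma}P_kf_C(s)$ equals $\varphi_k(\xi)\sum_{\mu+\nu\ne 0}\sum_{0\le q\le m/2-10\delta m}e^{-i[s\Theta_{\sigma\mu\nu}(\xi)+\lambda\Lambda_\sigma(\xi)]}g^q_{\sigma\mu\nu}(\xi,s)$, where $\Theta_{\sigma\mu\nu}:=\Lambda_\sigma-\Psi_{\sigma\mu\nu}=\Lambda_\mu(\xi-p(\xi))+\Lambda_\nu(p(\xi))$ is a radial function — this is precisely the phase produced by combining the $e^{is\Psi_{\sigma\mu\nu}}$ oscillation built into $f_C$ with the $e^{-i(s+\lambda)\Lambda_\sigma}$ propagation. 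A trivial $L^1_\xi$ bound gives only $\lesssim\sum_q 2^{-m+3\delta m-q}\cdot 2^{-(m/2+q)}\lesssim 2^{-3m/2+3\delta m}$, using the amplitude bound $|g^q_{\sigma\mu\nu}|\lesssim 2^{-m+3\delta m-q+\cdots}$ and the fact that, by the $\xi$-derivative bounds in (\ref{3.17}), $g^q_{\sigma\mu\nu}$ lives in a radial layer of width $\approx 2^{-(m/2+q)}$ about the resonant sphere $|\xi|\approx 1$. To gain the remaining $2^{-m/2}$ I would work in polar coordinates $\xi=r\omega$: in the radial variable the phase is $rx\cdot\omega-s\Theta_{\sigma\mu\nu}(r)-\lambda\Lambda_\sigma(r)$ with $|s\Theta'_{\sigma\mu\nu}|\approx 2^m$ (since $|\nabla\Psi_{\sigma\mu\nu}|\gtrsim 1$ near its roots and $|\nabla\Lambda_\sigma|\approx 1$), so either $|x|\lesssim 2^{m-\delta m}$, in which case $\partial_r(\text{phase})\approx 2^m$ and one integration by parts in $r$ (gaining $2^{-m}$ against a loss $2^{m/2+q}$ from differentiating $g^q_{\sigma\mu\nu}$) already produces the extra $2^{-m/2}$ over the width-$2^{-(m/2+q)}$ layer; or $|x|\gtrsim 2^{m-\delta m}$, in which case near a radial critical point one uses van der Corput radially, factor $(s|\Theta''_{\sigma\mu\nu}|)^{-1/2}=2^{-m/2}$ with $\Theta''_{\sigma\mu\nu}\ne 0$ furnished by the simplicity of the roots of $\Psi_{\sigma\mu\nu}$ and by (\ref{1.7}), and van der Corput in $\omega$ on the short arc where it is needed, factor $(r|x|)^{-1/2}=2^{-m/2}$. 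In every case the bound improves to $\lesssim\sum_q 2^{-m+3\delta m-q}\cdot 2^{-(m/2+q)}\cdot 2^{-m/2}\lesssim 2^{-2m+3\delta m}$, well inside $2^{-2m+50\delta m}$.

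Uniformity over $|\lambda|\le 2^{(1-\delta/10)m}$ is essentially automatic: since $2^m\gg 2^{(1-\delta/10)m}$, the term $e^{-i\lambda\Lambda_\sigma}$ shifts any critical point only by $O(2^{(1-\delta/10)m})$ and never dominates the $s\Theta_{\sigma\mu\nu}$-oscillation; each integration by parts still gains $2^{-m}$ against a combined loss at most $2^{m/2+q}\cdot 2^{(1-\delta/10)m}$, and the constraints $q\le m/2-10\delta m$ together with the $\delta/10$ margin keep this a strict gain. The rotation-vector-field count requires nothing new: the $a\le N_1/2$ copies of $\Omega$ are already built into $f_C,f_{NC}$ in Lemma 3.9, and (\ref{3.19}) gives $\|\Omega^b g^q_{\sigma\mu\nu}(s)\|_{L^2}+\|\Omega^b f_{NC}(s)\|_{L^2}\lesssim 1$ for $b\le N_1/4$, which is all the angular van der Corput and the $L^\infty$ reductions need. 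One still has to separate the frequency regimes $2^k\approx 1$ and $2^k\ll 1$ of (\ref{3.17}) (the width of the layer, and whether it meets $\xi=0$, differ) and dispose of the borderline frequencies $k\approx -m/2$, where $f_C$ and $f_{NC}$ overlap, by the coarser volume estimate.

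The step I expect to be the genuine obstacle is showing — \emph{uniformly in $\lambda$} — that the radial phase $s\Theta_{\sigma\mu\nu}$ never degenerates beyond second order on the support of $g^q_{\sigma\mu\nu}$, so that the $2^{(1-\delta/10)m}$-scale $\lambda$-perturbation cannot land on a flat point; this is exactly where the structural hypotheses enter, namely the simplicity of the roots of $\Psi_{\sigma\mu\nu}$ (Lemma 5.8 of \cite{a1}, as already used in Section 2.2) and the second nondegeneracy condition in (\ref{1.2}), equivalently (\ref{1.7}), mirroring the heuristic in Section 1.2.4. As a sanity check, at $\lambda=0$ the $G_\infty$ bound must be, and is, consistent with the $L^\infty$ estimate (\ref{3.15}), which it refines.
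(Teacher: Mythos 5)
The paper's own proof of Corollary~3.11 is a one-line citation to Corollary~6.3 of \cite{y2}; your proposal is a self-contained stationary-phase derivation, so the route is genuinely different. Unfortunately the argument as written has several real gaps.

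First, the $G_2$ side. For $k\le -m/2$ the claimed bound $2^{-5m/4+60\delta m}$ is \emph{stronger} than what (\ref{3.18}) gives for $P_k f_{NC}$ in that range (at $k=-m$ you only have $2^{-m+60\delta m}$). Since $f_C$ is frequency-supported in $k\ge -m/2+\delta^2 m/5$ by (\ref{3.17}), in the very-low-frequency regime $P_k\partial_t V_\sigma$ is all $f_{NC}$, and (\ref{3.20}) only gives $\|P_k\partial_t V_\sigma\|_{L^2}\lesssim 2^{-m+\cdots}$. So one must \emph{extract} a piece from $f_{NC}$ and move it into $G_\infty$ (verifying the propagated $L^\infty$ bound for that piece) in order to lower the $L^2$ norm of the $G_2$ remainder; your description ("$G_2 := \varepsilon_1^2 f_{NC}$ plus those pieces together with a low-frequency correction") goes in the wrong direction, and the sketch "$2^{2k}$ volume plus one integration by parts in $s$ ... recovers the missing $2^{-m/4}$" is not a proof — for $-m\le k\le -m/2$ the crude $2^{2k}$ kernel bound gives $\|e^{-i(s+\lambda)\Lambda_\sigma}P_k(\cdot)\|_{L^\infty}\lesssim 2^{2k}$, which is far from $2^{-2m+50\delta m}$, so the extraction and its $L^\infty$ estimate need a real argument.

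Second, the "trivial $L^1_\xi$ bound" for $G_\infty$ rests on the claim that $g^q_{\sigma\mu\nu}$ lives in a radial shell of width $\approx 2^{-(m/2+q)}$. That is not what (\ref{3.17}) says: it gives an amplitude bound and $\xi$-derivative bounds $\lesssim 2^{(m/2+q+\cdots)|\alpha|}$, i.e.\ an oscillation scale, not a support bound. Tracking the construction in the proof of Lemma~3.12, the relevant radial localization width is governed by $\kappa_r \approx 2^{\delta^2 m/2}(2^{-m/2}+2^{j_2-m}) \approx 2^{-(m/2-q)}$, which is \emph{larger} than $2^{-(m/2+q)}$. With the correct width, your "one IBP gains $2^{-m}$, loses $2^{m/2+q}$, integrate over the shell" arithmetic gives amplitude$\,\cdot\,$width$\,\cdot 2^{-(m/2-q)} = 2^{-2m+3\delta m+q}$, which summed over $q\le m/2-40\delta m$ is $\approx 2^{-3m/2-37\delta m}$, not $2^{-2m+50\delta m}$. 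The step is salvageable by iterating Lemma~3.1(i) $N\gtrsim 1/\delta$ times with $\epsilon=2^{-(m/2+q+2\delta^2 m)}$ (each IBP gains $2^{-(m/2-q)}\le 2^{-10\delta m}$), but a single integration by parts does not close.

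Third, the justification "$|s\Theta'_{\sigma\mu\nu}|\approx 2^m$ since $|\nabla\Psi_{\sigma\mu\nu}|\gtrsim 1$ near its roots and $|\nabla\Lambda_\sigma|\approx 1$" is logically invalid: $\Theta'=\Lambda'_\sigma-\Psi'$, and the difference of two quantities of comparable size can be small. The true reason $\Theta'$ is bounded below is the identity $\Theta'(r)=\Lambda'_\mu(r-p_+(r))$ (obtained by differentiating $\Theta(r)=\Lambda_\mu(r-p_+(r))+\Lambda_\nu(p_+(r))$ and using the critical-point relation $\Lambda'_\mu=\Lambda'_\nu$ at $\eta=p(\xi)$), combined with the fact that in the medium-frequency regime $|\xi-p(\xi)|\gtrsim 1$. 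Finally, you already flag the $\Theta''$-nondegeneracy step as the "genuine obstacle," and I agree it is: the passage from (\ref{1.7}) — nondegeneracy of $\det\nabla^2_{\eta\eta}\Phi$ — to a lower bound on the \emph{radial second derivative} of $\Theta(\xi)=\Lambda_\mu(\xi-p(\xi))+\Lambda_\nu(p(\xi))$ requires a computation you have not done, and without it the radial van der Corput step in the high-$|x|$ regime is unsupported. Until points one, two, and four are repaired, this is not a proof of Corollary~3.11.
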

\begin{proof}
This was proved in Corollary 6.3 in \cite{y2}.
\end{proof}
\vspace{0.8em}
Plus, we need a more precise time derivative control which will be used in the dispersive contral later.
\vspace{0.8em}
\begin{lemma}[time derivatives 2]
For any $(k,j)\in\mathcal{J}$ and $n\in\left\{0,\dots j+1\right\}$, denote
$$f_{j,k}\triangleq P_{[k-2,k+2]}\,Q_{jk}f.$$
Under the assumption of Proposition 3.1, let $s\approx 2^m$, $a\in\left[0,N_1/4\right]\cap\mathbb{Z}$, we have the following decomposition of time derivative
\begin{align}
\Omega^a (\partial_t V_\sigma)(s)=\varepsilon_1^2\left(f_C(s)+f_{SR}(s)+f_{NC}(s)+f_{NCw}(s)+\partial_s F_c(s)+\partial_s F_{NC}(s)+\partial_s F_{LO}(s)\right),  \label{3.21}\tag{3.21}  
\end{align}
where we have coherent inputs
\begin{equation}
\begin{aligned}
   &\widehat{f_C}(\xi,s)=\sum_{\mu+\nu\neq0} e^{is\Psi_{\sigma\mu\nu}(\xi)} \sum_{0\le q\le (1/2-40\delta) m} g^q_{\sigma\mu\nu}(\xi,s)\, \varphi_{-3\delta m}(\Psi_{\sigma\mu\nu}(\xi)),\\
   &\left\|D^\alpha_\xi g^q_{\sigma\mu\nu}(\xi,s)\right\|_{L^\infty} \lesssim 2^{-m-q+4.01\delta m}\,2^{(m/2+q+3\delta^2 m)\left|\alpha\right|},\\[4pt]
   &\left\|\partial_s g^q_{\sigma\mu\nu}(s)\right\|_{L^\infty}\lesssim 2^{(6.01\delta-2)m+q},
\end{aligned} \label{3.22}\tag{3.22}
\end{equation}
secondary resonances
\begin{align}
\left\|D^{\alpha}_{\xi}\reallywidehat{f_{SR}}(s)\right\|_{L^\infty}\lesssim 2^{-3m/2+76\delta m}\, 2^{(1-300\delta )m\left|\alpha\right|}, \label{3.23}\tag{3.23}   
\end{align}
a stronger nonresonant contributions
\begin{align}
\left\|f_{NC}(s)\right\|_{L^2}\lesssim 2^{-19m/10},  \label{3.24}\tag{3.24}
\end{align}
and a weaker nonresonant contributions
\begin{align*}
\left\|f_{NCw}(s)\right\|_{L^2}\lesssim 2^{-1.6m+11.4\delta m},\label{3.25}\tag{3.25}
\end{align*}
where if we write
\begin{align*}
    \widehat{f_{NCw}}(s)=\int_{\mathbb{R}^2} e^{is\Phi(\xi,\eta)} \varphi_{\le -3\delta m-4\delta^2 m}\left(\Phi(\xi,\eta)\right) \widehat{f}_{j_1,k_1}(\xi-\eta) \widehat{f}_{j_2,k_2}(\eta)\,d\eta, \tag{3.26}\label{3.26}
\end{align*}
then we have $j_1\le (1-\delta)m$, $j_2\ge (1-\delta)m$ and $\left|\nabla_\xi\Phi(\xi,\eta)\right|\gtrsim 1$. \\[4pt]
In addition, we have
\begin{equation}
\begin{aligned}
   \left\|\widehat{F_C}(s)\right\|_{L^\infty}\lesssim 2^{-m+3.2\delta m+10\delta^2 m},\,\,\,\,\,\left\|F_{NC}(s)\right\|_{L^2}\lesssim 2^{-41m/40}, \\
   \left\|(1+2^m\left|\xi\right|)\widehat{F_{LO}}(\xi,s)\right\|_{L^{\infty}_{\xi}}\lesssim 2^{5\delta m},\,\,\,\,\,P_{\ge {-13m/15}}\,F_{LO}\equiv 0,
\end{aligned} \label{3.27}\tag{3.27}
\end{equation}
and
\begin{align}
   \sup_{o\le b\le N_1/4}\left\{\left\|\Omega^b g^q_{\sigma\mu\nu}\right\|_{L^2}+\left\|\Omega^b f_{SR}\right\|_{L^2}+\left\|\Omega^b f_{NC}\right\|_{L^2}\right\}\lesssim 1. \label{3.28}\tag{3.28}
\end{align}
\end{lemma}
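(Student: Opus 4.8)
The plan is to follow the refined time-derivative decomposition of \cite{y2}, starting from the Duhamel identity
\[
\partial_t\widehat{V_\sigma}(\xi,s)=\sum_{\mu,\nu\in\mathcal{P}}A_{\mu\nu}\int_{\mathbb{R}^2}e^{is\Phi_{\sigma\mu\nu}(\xi,\eta)}\,\widehat{V_\mu}(\xi-\eta,s)\,\widehat{V_\nu}(\eta,s)\,d\eta.
\]
Since the symbol $e^{is\Phi_{\sigma\mu\nu}}$ is invariant under simultaneous rotations of $\xi$ and $\eta$, $\Omega^a$ distributes over the bilinear form onto the two profiles with total order $\le a\le N_1/4$, which keeps every factor within the $H^{N_1}_\Omega$ and $Z$ control of Proposition 3.8; one then reduces to a finite sum of bilinear expressions $\int_{\mathbb{R}^2}e^{is\Phi}\,\widehat{g_1}(\xi-\eta)\,\widehat{g_2}(\eta)\,d\eta$ with $g_i=\Omega^{a_i}V_*$, $a_1+a_2\le a$, decomposes each $g_i=\sum_{j_i,k_i,n_i}(g_i)_{j_i,k_i,n_i}$ as in (\ref{3.1}), and further localizes the output frequency $|\xi|\sim 2^k$ and the phase $|\Phi|\sim 2^l$. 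The new features relative to \cite{y2} are the region where \emph{both} inputs sit near resonant spheres ($n_1,n_2>0$), which the multispeed, multimass phase permits precisely because property (\ref{1.5}) fails, and the corresponding adaptation of the coherent extraction.

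When the phase is not too small, $2^l\gtrsim 2^{-3\delta m-4\delta^2 m}$, I would integrate by parts in $s$ via $e^{is\Phi}=(i\Phi)^{-1}\partial_s(e^{is\Phi})$. The boundary term at $s\approx 2^m$ together with the terms in which $\partial_s$ lands on a smooth cutoff are sorted, according to the sizes of $2^k$ and of $\Psi_{\sigma\mu\nu}(\xi)$, into $\partial_s F_C+\partial_s F_{NC}+\partial_s F_{LO}$, with $F_{LO}$ absorbing the output frequencies $\le -13m/15$; these are exactly the pieces that will be integrated by parts in time once more in Section 5, and the weighted estimate $\|(1+2^m|\xi|)\widehat{F_{LO}}\|_{L^\infty_\xi}\lesssim 2^{5\delta m}$ in (\ref{3.27}) is read off from the finite-propagation localization $j\le m+D$ and Lemma 3.6. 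The term in which $\partial_s$ hits a profile is $(\partial_s\widehat{V_\mu})\widehat{V_\nu}+\widehat{V_\mu}(\partial_s\widehat{V_\nu})$, controlled using Lemma 3.9 (time derivatives 1); since that lemma does not refer to the present decomposition there is no circularity. The remaining bounds in (\ref{3.27}) then follow by combining Lemma 3.6, the $Z$-norm definition, and (\ref{3.14})--(\ref{3.15}).

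In the small-phase region $2^l\ll 2^{-3\delta m}$ I would split on the size of $|\nabla_\eta\Phi|$. Where $|\nabla_\eta\Phi|$ exceeds the threshold $\kappa_r$, integration by parts in $\eta$ (Lemma 3.1) gains a large negative power of $2^m$ and the contribution goes into $f_{NC}$; where $|\nabla_\eta\Phi|\lesssim\kappa_r$ one has $|\xi|,|\xi-\eta|,|\eta|\sim 1$ by (\ref{2.3}) and Proposition 6.4, so an angular cutoff $\varphi(\kappa_\theta^{-1}\Omega_\eta\Phi)$ may be inserted, and integration by parts in the angle (Lemma 3.2) again feeds $f_{NC}$ whenever $|\Omega_\eta\Phi|\gtrsim\kappa_\theta$. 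On the piece genuinely near $\mathcal{R}$ one uses the structure $\eta\approx p(\xi)$ (Proposition 6.5) and the expansion $\Phi(\xi,\eta)=\Psi_{\sigma\mu\nu}(\xi)+(\nabla_\eta\Phi)(\xi,p(\xi))\cdot(\eta-p(\xi))+O(|\eta-p(\xi)|^2)$: carrying out the $\eta$-integral by stationary phase — nondegenerate thanks to the second condition in (\ref{1.2}), i.e. (\ref{1.7}) — produces the oscillation $e^{is\Psi_{\sigma\mu\nu}(\xi)}$ and, after a dyadic split in a parameter $q$ of the localization scale of the $\eta$-integration and of $\xi$ near the resonant sphere, the coherent symbols $g^q_{\sigma\mu\nu}$ obeying (\ref{3.22}), the $\partial_s g^q$ bound coming from differentiating the profiles and invoking Lemma 3.9 once more. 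The contributions where in addition $\xi$ is confined to a very small neighbourhood of a root of some $\Psi_{\sigma\mu\nu}$ (so that each $\xi$-derivative costs nearly $2^m$) are collected into $f_{SR}$, whose bound (\ref{3.23}) follows from the volume estimates of Section 6 (Lemma 6.7) together with Schur's test (Lemma 3.4).

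The one configuration resisting all of the above is the low-output-frequency, small-phase case with $j_1\le(1-\delta)m$, $j_2\ge(1-\delta)m$ and $n_2>0$: because (\ref{1.5}) fails, the second input sits near a resonant sphere where both its $L^\infty$ and its $L^2$ norm are degraded (Lemma 3.6), so the direct $L^\infty\times L^2$ bound is too weak and one cannot afford to put this term into $f_{NC}$. I would collect precisely this contribution into $f_{NCw}$, keeping only the structural facts recorded in (\ref{3.26}) — the phase cutoff $\varphi_{\le-3\delta m-4\delta^2 m}(\Phi)$ and $|\nabla_\xi\Phi|\gtrsim 1$, the latter automatic in this low-frequency regime by Proposition 6.4 — and proving the crude bound (\ref{3.25}) by the change of variables $(x,y)=(|\eta|^2,|\xi-\eta|^2)$ together with the $Z$-norm estimate (\ref{3.2}); this is the term that must be reprocessed trilinearly in Section 5.4.1. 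Finally, (\ref{3.28}) is immediate from (\ref{3.10}) since each piece carries at most $N_1/2$ rotation fields. \textbf{The main obstacle} lies in the third paragraph: pinning down the coherent part with the exact oscillation $e^{is\Psi_{\sigma\mu\nu}(\xi)}$ and the sharp symbol estimates (\ref{3.22}), and — specific to the multispeed, multimass setting — delimiting $f_{SR}$ and $f_{NCw}$ so that the loss caused by the absence of (\ref{1.5}) is confined to terms still good enough for the dispersive estimate, all while matching the precise exponents ($2^{-19m/10}$ for $f_{NC}$, $2^{-1.6m+11.4\delta m}$ for $f_{NCw}$, $2^{-3m/2+76\delta m}$ for $f_{SR}$, and those in (\ref{3.22}) and (\ref{3.27})) uniformly across the dyadic case analysis.
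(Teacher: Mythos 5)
Your overall plan — Duhamel, split $|\Phi|\gtrless 2^{-3\delta m-4\delta^2 m}$, integrate by parts in $s$ on the high-phase piece, and on the low-phase piece separate the regions using $|\nabla_\eta\Phi|$ and the angular variable before extracting the coherent oscillation $e^{is\Psi_{\sigma\mu\nu}(\xi)}$ near $\eta\approx p(\xi)$ — is the same structure the paper follows, and your identification of the new configuration ($j_1\le(1-\delta)m$, $j_2\ge(1-\delta)m$, $n_2>0$, low output frequency) that forces the extra term $f_{NCw}$ is exactly right.

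There is, however, a concrete gap in the way you propose to prove (\ref{3.25}). You claim the bound on $f_{NCw}$ follows from the change of variables $(x,y)=(|\eta|^2,|\xi-\eta|^2)$ together with (\ref{3.2}). But the Jacobian of that change of variables is $\bigl|\tfrac{dx\,dy}{d\eta}\bigr|\sim|\xi\cdot\eta^\perp|$, and in the $f_{NCw}$ regime one of $|\xi|,|\xi-\eta|$ is $\ll 1$ while the other two are $\sim 1$; in that geometry $\xi$ and $\eta$ are nearly collinear, so $|\xi\cdot\eta^\perp|$ is small and the inverse Jacobian costs up to $2^{-k/2}$ (which can be as large as $2^{m/2}$). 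The paper invokes the change of variables only where it has the additional input $n_1>n_2>0$ — there, $\xi-\eta$ and $\eta$ are both pinned near the \emph{same} resonant sphere $\gamma_i$, which forces $\xi$ to be nearly perpendicular to $\eta$ and gives the clean lower bound $|\xi\cdot\eta^\perp|\sim 2^{-n_2}$. With only $n_2>0$ (as in $f_{NCw}$) this geometric fact is unavailable, and the Jacobian genuinely degenerates. The paper instead proves (\ref{3.25}) by an $L^\infty\times L^2$ estimate (Lemma 3.5) when $n_2\le 0.8m-29\delta m$, and by the angular cutoff $\varphi(\kappa_\theta^{-1}\Omega_\eta\Phi)$ plus Schur's test with the volume bounds $|E_\eta|\lesssim 2^{-2n_2}$, $|E_\xi|\lesssim 2^{k_1}\kappa_\theta$ (or $2^{k_1}$) when $n_2$ is larger, splitting further on $j_1\lessgtr\tfrac{2}{5}m-\delta^2 m$. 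A secondary imprecision: you describe $f_{SR}$ as the piece where $\xi$ sits near a root of some $\Psi_{\sigma\mu\nu}$, whereas the paper's $I^C$ is cut off by smallness of $\nabla_\xi\Phi(\xi,\eta)$ (via $\varphi_{\le-400\delta m}(\nabla_\xi\Phi)$); these are different localizations, and the latter is what makes the $\xi$-derivative bound $2^{(1-300\delta)m|\alpha|}$ in (\ref{3.23}) work.
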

\begin{proof}
This was basically proved in Lemma 6.4 in \cite{y2}. First of all, due to the lack of Proposition 8.5 (i) and (iii) in \cite{y2}, we could lose $2^{2\delta n_1}$ or $2^{2\delta n_2}$ in the estimate of $\left|\widehat{f_{j_1,k_1}}\right|$ or $\left|\widehat{f_{j_2,k_2}}\right|$ which leads to two slightly weaker estimates (\ref{3.23}) and (\ref{3.27}) than those in \cite{y2}. Next, due to the lack of Proposition 8.5 in \cite{y2}, we have to modify the proof in \cite{y2} in a couple of places, and it turns out that we can still get the mostly same estimates as in Lemma 6.4 in \cite{y2}. For the simplicity of notations, we may use $f$ to replace $f_{j_1,k_1}$, and use $g$ to replace $g_{j_2,k_2}$ below. \par
To start with, we need to consider
$$\partial_t \widehat{V_\sigma}(\xi,s)=\sum_{\mu,\nu\in\mathcal{P}} \int_{\mathbb{R}^2}e^{is\Phi_{\sigma\mu\nu}(\xi,\eta)}\widehat{V_\mu}(\xi-\eta,s)\widehat{V_\nu}(\eta,s)\,d\eta.$$
Thus, it suffices to consider
$$I^{\sigma\mu\nu}\left[f^\mu,g^\nu\right]=\mathcal{F}^{-1}\int_{\mathbb{R}^2} e^{is\Phi_{\sigma\mu\nu}(\xi,\eta)}\widehat{f^\mu}(\xi-\eta,s)\widehat{g^\nu}(\eta,s)\,d\eta,$$
where
$$\left\|f^\mu\right\|_{H^{N_0/2}\cap Z_1^\mu\cap H^{N_1/2}_\Omega}+\left\|g^\nu\right\|_{H^{N_0/2}\cap Z_1^\nu\cap H^{N_1/2}_\Omega}\le 1.$$
Write $I^{\sigma\nu\mu}\left[f,g\right]=\sum\limits_{k,k_1,k_2\in\mathbb{Z}} {P_k I^{\sigma\nu\mu}\left[P_{k_1}f,P_{k_2}g\right]}$, then up to acceptable $f_{NC}$ errors we may assume $m\ge D^2$ and restrict the sum to the range
$$-3m\le k,k_1,k_2 \le \delta^2 m/10-D^2.$$
Define $l_0\triangleq\left[-3\delta m-4\delta^2 m\right]$, $\varphi_{hi}(x)\triangleq\varphi_{>lo}(x)$, $\varphi_{lo}(x)\triangleq\varphi_{\le lo}(x)$, and we then decompose
$$I^{\sigma\mu\nu}\left[f,g\right]=I^{hi}\left[f,g\right]+I^{lo}\left[f,g\right],$$
where
$$I^*\left[f,g\right]\triangleq\mathcal{F}^{-1}\int_{\mathbb{R}^2} e^{is\Phi_{\sigma\mu\nu}(\xi,\eta)}\varphi_*\left(\Phi(\xi,\eta)\right)\widehat{f}(\xi-\eta,s)\widehat{g}(\eta,s)\,d\eta.$$\par
First, as for \textbf{contribution of} $\boldsymbol{I^{hi}}$, we could rewrite
$$I^{hi}\left[P_{k_1}f,P_{k_2}g\right]=\partial_s\left\{J\left[P_{k_1}f,P_{k_2}g\right]\right\}-J\left[P_{k_1}\partial_s f,P_{k_2}g\right]-J\left[P_{k_1}f, P_{k_2}\partial_s g \right],$$
where
$$J\left[f,g\right]\triangleq\mathcal{F}^{-1}\int_{\mathbb{R}^2} e^{is\Phi_{\sigma\mu\nu}(\xi,\eta)}\frac{\varphi_{hi}\left(\Phi(\xi,\eta)\right)}{i\Phi(\xi,\eta)}\widehat{f}(\xi-\eta,s)\widehat{g}(\eta,s)\,d\eta.$$
Using (\ref{3.7}), (\ref{3.20}) and Lemma 3.5, we easily see that
$$\left\|J\left[P_{k_1}\partial_s f,P_{k_2}g\right]\right\|_{L^2}+\left\|J\left[P_{k_1} f,P_{k_2} \partial_s g\right]\right\|_{L^2}\lesssim 2^{(50\delta-2)m},$$
which gives an acceptable $f_{NC}$-type contribution as in (\ref{3.24}).\par
Next, we define $f_{j_1,k_1}\triangleq P_{\left[k_1-2,k_1+2\right]} Q_{j_1 k_1}f$, $g_{j_2,k_2}\triangleq P_{\left[k_2-2,k_2+2\right]} Q_{j_2 k_2}g$. If $j_2=\max (j_1,j_2)\ge m/18$, then using Lemma 3.5, we have
$$\left\|J\left[f_{j_1,k_1},g_{j_2,k_2}\right]\right\|_{L^2}\lesssim 2^{-(1/2-\delta)j_2} 2^{-m+50\delta m}.$$
This gives an acceptable $F_{NC}$-type contribution as in (\ref{3.27}). On the other hand, if $j_2=\max (j_1,j_2)\le m/18$ and $\mu+\nu\neq 0$, then integration by parts, using Lemma 3.1(i), shows that
$$\left|\mathcal{F}\left\{J\left[f_{j_1,k_1},g_{j_2,k_2}\right]-J^S\right\}(\xi)\right|\lesssim 2^{-2m},$$
where 
$$\widehat{J^S}(\xi)\triangleq\int_{\mathbb{R}^2} e^{is\Phi(\xi,\eta)}\frac{\varphi_{hi}\left(\Phi(\xi,\eta)\right)}{i\Phi(\xi,\eta)}\varphi\left(\kappa_r^{-1}\nabla_\eta \Phi(\xi,\eta)\right)\widehat{f_{j_1,k_1}}(\xi-\eta)\widehat{g_{j_2,k_2}}(\eta)\,d\eta$$
and $\kappa_r\triangleq 2^{\delta^2 m-m/2}$.
In view of (8.8) in \cite{y2}, the $\eta$-integral in the definition of $\widehat{J^S}$ takes place over a ball of radius $\lesssim 2^{\delta^2 m} \kappa_r$ centered at $p(\xi)$. Note that in view of (\ref{3.7a}) and the assumption that $j_2=\max\left\{j_1,j_2\right\}\le m/18$, we have $\left|\widehat{f_{j_1,k_1}}(\xi-\eta)\right|+\left|\widehat{g_{j_2,k_2}}(\eta)\right|\lesssim 2^{1.01\delta m/18}\lesssim 2^{0.1\delta m}$. Since $\left|\Phi(\xi,\eta)\right|\gtrsim 2^{-3\delta m-4\delta^2 m}$, we can conclude that $\left|\widehat{J^S}(\xi)\right|\lesssim 2^{-m+3.2\delta m+10\delta^2 m}$, which is an acceptable $F_C$-type contribution.\par
Finally, assume that 
$$j_2=\max\left\{j_1,j_2\right\}\le m/18,\,\,\,\,\mu+\nu=0.$$
Notice that
\begin{align*}
   &\left|\nabla_\eta\Phi(\xi,\eta)\right|=\left|\nabla\Lambda_\mu(\eta-\xi)-\nabla\Lambda_\mu(\eta)\right|\gtrsim \left(1+2^{3k_1}+2^{3k_2}\right)^{-1}\left|\xi\right|, \\
   &\left|D_\eta^\alpha\Phi(\xi,\eta)\right|\lesssim_\alpha\,\left|\xi\right|.
\end{align*}
Lemma 3.1 (i) shows that $\left\|\mathcal{F}P_k J\left[f_{j_1,k_1},g_{j_2,k_2}\right]\right\|_{L^\infty}\lesssim 2^{-2m}$, if $2^m\left|\xi\right|\ge 2^{j_2} 2^{2\delta^2 m}$, namely $m+k\ge j_2+\delta^2 m$. On the other hand, if $m+k\le j_2+\delta^2 m$, then we get an $F_{LO}$-type contribution. Indeed we may assume $2^{k_1}\approx2^{k_2}$ and estimate
\begin{align*}
\left|\varphi_k(\xi)\mathcal{F}J\left[f_{j_1,k_1},g_{j_2,k_2}\right](\xi)\right|&\lesssim\,2^{3.1\delta m}\left\|\widehat{f_{j_1,k_1}}\right\|_{L^\infty}\left\|\widehat{g_{j_2,k_2}}\right\|_{L^1}\lesssim 2^{3.1\delta m} 2^{2\delta j_1} 2^{-j_2+21\delta j_2} \\
    &\lesssim\,2^{4.8\delta m} 2^{-j_2},
\end{align*}
using (\ref{3.3}) and (\ref{3.4}). This suffices since $2^{-j_2}\lesssim 2^{\delta^2 m}\left(1+2^{m+k}\right)^{-1}$.\par
Second, as for \textbf{contribution of } $\boldsymbol{I^{lo}}$ \textbf{, } $\boldsymbol{\underline{k}}$ \textbf{ small}, we consider now $P_k I^{lo}\left[P_{k_1}f,P_{k_2}g\right]$ in the case
\begin{align*}
    \underline{k}=\min\left\{k,k_1,k_2\right\}\le-D. \tag{3.29}\label{3.29}
\end{align*}
We may assume that
\begin{align*}
    -D\le \max\left\{k,k_1,k_2\right\}\le D.\tag{3.30}\label{3.30}
\end{align*}
In fact, $\max\left\{k,k_1,k_2\right\}\ge -D$ is due to the assumption that $\left|\Phi\right|$ is very small and the asscumption of $b_\sigma-b_\mu-b_\nu\neq 0$. On the other hand, we also conclude $\max\left\{k,k_1,k_2\right\}\le D$ thanks to Proposition 6.6 (b). However, in general we do not necessarily have $f_{j_1,k_1}=f_{j_1,k_1,0},\,\,\,g_{j_2,k_2}=g_{j_2,k_2,0}$ as in \cite{y2}. Namely, it's possible that $n_1>0$ or $n_2>0$. If $\max\left\{j_1, j_2\right\} \le (1-\delta)m$ then we can integrate by parts using Lemma 3.1 to obtain
again an acceptable $f_{NC}$-type contribution. If $\min\left\{j_1,j_2\right\}\ge(1-\delta) m$, then we can prove that $\left\|f_{NC}\right\|_{L^2}\lesssim 2^{-2m+41\delta m}$. In fact, if at most one of $n_1, n_2>0$, say $n_1>0, n_2=0$, then we can use
\begin{align*}
    \left\|P_k I^{lo}\left[f_{j_1,k_1},g_{j_2,k_2}\right]\right\|_{L^2}\lesssim \left\|e^{it\Lambda_{mu}}f\right\|_{L^\infty}\cdot\left\|g\right\|_{L^2} \tag{3.31}\label{3.31}
\end{align*}
to estimate. Otherwise, we may WLOG assume that $n_1>n_2>0$. In this case, since $\underline{k}\triangleq\min\left\{k,k_1,k_2\right\}<-D$, we know that $\xi-\eta$ and $\eta$ must near a same $\gamma_i$, which implies that $\xi$ and $\eta^{\bot}$ are nearly parallel and $\left|\xi\right|\sim 2^{-n_2}$. In this case, consider
$$I^{lo}(\xi)\triangleq\int e^{is\Phi(\xi,\eta)}\varphi_{lo}(\Phi(\xi,\eta))\widehat{f}(\xi-\eta)\widehat{g}(\eta)\,d\eta.$$
Denote $f_r\triangleq\sup\limits_{\theta} {\left|\widehat{f}(r\theta)\right|}$, $g_r\triangleq\sup\limits_{\theta} {\left|\widehat{g}(r\theta)\right|}$.
Then, we have
$$\left|I^{lo}(\xi)\right|\le\int f_r(\xi-\eta) g_r(\eta)\,d\eta.$$
Let's do the following change of variables
$$\begin{cases}
   x=\left|\eta\right|^2\\
   y=\left|\xi-\eta\right|^2
\end{cases}.$$
Then, we note that 
\begin{align*}
    \left|\frac{dx\,dy}{d\eta_1\,d\eta_2}\right|=&\left|\det\left[\begin{matrix}
    \frac{\partial x}{\partial \eta_1} & \frac{\partial x}{\partial \eta_2} \vspace{0.5em} \\ 
    \frac{\partial y}{\partial \eta_1} &
    \frac{\partial y}{\partial \eta_2}
    \end{matrix}\right]\right|    =4\left|\xi_1 \eta_2-\xi_2 \eta_1\right|\sim \left|\xi\cdot\eta^{\bot}\right|\sim \left|\xi\right|\cdot\left|\eta\right|\sim 2^{-n_2}.
\end{align*}
Note that $f_r$ and $g_r$ are two radial functions, and we have
\begin{align*}
   \left|I^{lo}(\xi)\right|\le\int f_r(\xi-\eta) g_r(\eta)\,d\eta \lesssim\int f_r\left(\sqrt{y}\right) g_r\left(\sqrt{x}\right) \cdot 2^{n_2}\,dxdy.
\end{align*}
 Let $X=\sqrt{x}$, and $Y=\sqrt{y}$, then using (\ref{3.2}) we have
\begin{align*}
    \left|I^{lo}(\xi)\right|\lesssim \, 2^{n_2} \int f_r\left(Y\right) g_r\left(X\right) dXdY    \lesssim  \,2^{n_2} \left\|f_r\right\|_{L^1} \left\|g_r\right\|_{L^1}\lesssim \,2^{-2m+41\delta m+n_2}. 
\end{align*}
This leads to that
\begin{align*}
    \left\|I^{lo}\right\|_{L^2}\lesssim &\left\|I^{lo}\right\|_{L^\infty}\cdot\left|E_\xi\right|^{1/2} \\
    \lesssim&\,2^{-2m+41\delta m+n_2}\cdot \left(2^{-2n_2}\right)^{1/2}=2^{-2m+41\delta m}.
\end{align*}
Finally, if $j_1\le (1-\delta)m$,  $j_2\ge (1-\delta)m$ and $n_2=0$, then like (\ref{3.31}) we can also prove that $\left\|P_k I^{lo}\right\|_{L^2}\lesssim 2^{-2m+41\delta m}$. On the other hand, if $j_1\le (1-\delta)m$,  $j_2\ge (1-\delta)m$ and $n_2>0$, then we can only get a $f_{NCw}$-type contribution, which is weaker than the previous one. In fact, if $n_2\le 0.8m-29\delta m$, then we use Lemma 3.5 to deduce
\begin{align*}
    \left\|P_k I^{lo}\left[f_{j_1,k_1},g_{j_2,k_2,n_2}\right]\right\|_{L^2}\lesssim &\left\|e^{it\Lambda_{\mu}}f\right\|_{L^\infty}\cdot\left\|g\right\|_{L^2}\lesssim 2^{-1.6m+11.4\delta m}.
\end{align*}
Otherwise, we may assume $n_2>0.8m-29\delta m$. Now, if $j_1\le \frac{2}{5}m-\delta^2 m$, then we denote $\kappa_\theta=-m/2+\delta^2 m$, and decompose
$$P_k I^{lo}\left[f_{j_1,k_1},g_{j_2,k_2,n_2}\right]=I^\parallel+I^\bot,$$
where
\begin{align*}
    I^\parallel=&\int_{\mathbb{R}^2} e^{is\Phi(\xi,\eta)} \varphi_{ lo}\left(\Phi(\xi,\eta)\right) \varphi\left(\kappa_\theta^{-1}\Omega_\eta \Phi(\xi,\eta)\right) \widehat{f_{j_1,k_1}}(\xi-\eta) \reallywidehat{g_{j_2,k_2,n_2}}(\eta)\,d\eta, \\
    I^\bot=&\int_{\mathbb{R}^2} e^{is\Phi(\xi,\eta)} \varphi_{ lo}\left(\Phi(\xi,\eta)\right) \left(1-\varphi\left(\kappa_\theta^{-1}\Omega_\eta \Phi(\xi,\eta)\right)\right) \widehat{f_{j_1,k_1}}(\xi-\eta) \reallywidehat{g_{j_2,k_2,n_2}}(\eta)\,d\eta.
\end{align*}
Integration by parts using Lemma 3.2 show that $\left\|I^\bot\right\|_{L^2}\lesssim 2^{-10m}$. On the other hand, fix $\xi$ and the volume of $\eta=$ $\left|E_\eta\right|\lesssim 2^{-2n_2}$;  fix $\eta$ and the volume of $\xi=$ $\left|E_\xi\right|\lesssim 2^{k_1}\cdot\kappa_\theta$. Use Schur's test and we get that
\begin{align*}
\left\|I^\parallel\right\|_{L^2}\lesssim&\left(E_\eta\right)^{1/2} \left(E_\xi\right)^{1/2} \left\|f\right\|_{L^\infty} \left\|g\right\|_{L^2} \\
\lesssim&\, 2^{-n_2+\frac{k_1}{2}-\frac{m}{4}+1/2\delta^2 m}\cdot 2^{-\left(\frac{1}{2}-21\delta\right)j_1-\frac{1}{2}k_1}\cdot 2^{-(1-20\delta)(1-\delta)m+\left(\frac{1}{2}-19\delta\right)n_2} \\
\lesssim&\, 2^{-1.65m+20.5\delta m},
\end{align*}
which gives an acceptable $f_{NCw}$-type contribution as in (\ref{3.25}). If $j_1\ge \frac{2}{5}m-\delta^2 m$, then fix $\xi$, $\left|E_\eta\right|\lesssim 2^{-2n_2}$; fix $\eta$, $\left|E_\xi\right|\lesssim 2^{k_1}$. Use Schur's test again and we get that
\begin{align*}
\left\|I^\parallel\right\|_{L^2}\lesssim&\left(E_\eta\right)^{1/2} \left(E_\xi\right)^{1/2} \left\|f\right\|_{L^\infty} \left\|g\right\|_{L^2} \\
\lesssim&\, 2^{-n_2+\frac{k_1}{2}}\cdot 2^{-\left(\frac{1}{2}-21\delta\right)(\frac{2}{5}m-\delta^2 m)-\frac{1}{2}k_1}\cdot 2^{-(1-20\delta)(1-\delta)m+\left(\frac{1}{2}-19\delta\right)n_2} \\
\lesssim&\, 2^{-1.6m+9\delta m},
\end{align*}
which satisfies $f_{NCw}$-type estimation in (\ref{3.25}) as well. In the end, we make a remark here. Since $n_2>0$, we must have $\left|\eta\right|\sim 1$. Then, according the assumption (\ref{3.29}) and (\ref{3.30}), we must have either $\left|\xi\right|\ll 1$, $\left|\xi-\eta\right|\sim 1$ or $\left|\xi\right|\sim 1$, $\left|\xi-\eta\right|\ll 1$. In both cases, in view of the argument at (\ref{5.47}), we can conclude that $\left|\nabla_\xi\Phi(\xi,\eta)\right|\gtrsim 1$.
\par
Third, as for \textbf{contribution of } $\boldsymbol{I^{lo}}$ \textbf{, } $\boldsymbol{\underline{k}}$ \textbf{ not small}, we may assume that
\begin{align*}
    \min\left\{k,k_1,k_2\right\}\ge -D,\,\,\,j_1\le j_2. \tag{3.32}\label{3.32}
\end{align*}
We first assume that $\min \left\{k,k_1,k_2\right\}\ge -D$ and $\min\left\{j_1,j_2\right\}\ge (1-300\delta) m$, and we will have four cases here. When $n_1=n_2=0$, this is trivial. When $n_1>0$ and $n_2=0$, we only need to use Proposition 6.10 above instead, which is a slightly different version of Lemma 8.10 in \cite{y2}. When $n_1>0$ and $n_2>0$, then we need to estimate this integral with a different method. Consider
$$I^{lo}(\xi)\triangleq\int e^{is\Phi(\xi,\eta)}\varphi_{lo}(\Phi(\xi,\eta))\widehat{f}(\xi-\eta)\widehat{g}(\eta)\,d\eta,$$
and denote $f_r\triangleq\sup\limits_{\theta} {\left|\widehat{f}(r\theta)\right|}$, $g_r\triangleq\sup\limits_{\theta} {\left|\widehat{g}(r\theta)\right|}$.
Then, we have
$$\left|I^{lo}(\xi)\right|\le\int f_r(\xi-\eta) g_r(\eta)\,d\eta.$$
We may also assume that $0\le\angle\xi,\eta\le\pi/2$ in the above integral. Otherwise, one just need to exchange the role of $\xi-\eta$ and $\eta$.
We will do the following change of variables as before
$$\begin{cases}
   x=\left|\eta\right|^2\\
   y=\left|\xi-\eta\right|^2
\end{cases}.$$
For simplicity, we first assume that $-D\le k,k_1,k_2\le D$. We note that 
\begin{align*}
    \frac{dx\,dy}{d\eta_1\,d\eta_2}
    \sim \xi\cdot\eta^{\bot}    \sim \sin\angle \xi,\eta \sim \angle \xi,\eta\sim \frac{1}{2} \angle \xi,\eta \sim \sin\frac{1}{2} \angle \xi,\eta.
\end{align*}
On the other hand, we have 
$$\left(\sqrt{y}\right)^2-\left(\left|\xi\right|-\sqrt{x}\right)^2=2\left|\xi\right|\sqrt{x}\left(1-\cos\angle\xi,\eta\right)\sim \frac{1}{2} \left(1-\cos\angle\xi,\eta\right)\sim\left(\sin\frac{1}{2} \angle \xi,\eta\right)^2,$$
which implies that
$$\frac{d\eta_1\,d\eta_2}{dx\,dy}\sim\frac{1}{\left(\left(\sqrt{y}\right)^2-\left(\left|\xi\right|-\sqrt{x}\right)^2\right)^{1/2}}.$$
Thus, note that in general the assumption is that $-D_0\le k,k_1,k_2\le \delta^2 m/10-D^2$. Then we can similarly get that
$$\frac{2^{-\delta^2 m/5}}{\left(\left(\sqrt{y}\right)^2-\left(\left|\xi\right|-\sqrt{x}\right)^2\right)^{1/2}}\lesssim\frac{d\eta_1\,d\eta_2}{dx\,dy}\lesssim\frac{2^{\delta^2 m/10}}{\left(\left(\sqrt{y}\right)^2-\left(\left|\xi\right|-\sqrt{x}\right)^2\right)^{1/2}}.$$
Note that $f_r$ and $g_r$ are two radial functions, and we have
\begin{align*}
   \left|I^{lo}(\xi)\right|\le&\int f_r(\xi-\eta) g_r(\eta)\,d\eta \\
   \lesssim&\,2^{\delta^2 m/10}\left[\int_{\sqrt{x}\le\left|\xi\right|} f_r\left(\sqrt{y}\right) g_r\left(\sqrt{x}\right) \frac{1}{\left(\sqrt{y}-\left|\xi\right|+\sqrt{x}\right)^{1/2}}\,dxdy\right. \\
   +&\left.\int_{\sqrt{x}\ge\left|\xi\right|} f_r\left(\sqrt{y}\right) g_r\left(\sqrt{x}\right) \frac{1}{\left(\sqrt{y}+\left|\xi\right|-\sqrt{x}\right)^{1/2}}\,dxdy\right] \\
   \triangleq&\,2^{\delta^2 m/10}\left[I_1^{lo}(\xi)+I_2^{lo}(\xi)\right].
\end{align*}
We only need to focus on $I_1^{lo}$, since $I_2^{lo}$ can be done similarly. Let $X=\sqrt{x}$, and $Y=\sqrt{y}$, then by Holder, we have
\begin{align*}
    \left|I_1^{lo}(\xi)\right|\lesssim &\, 2^{\delta^2 m/5} \int f_r\left(Y\right) g_r\left(X\right) \frac{1}{\left(X+Y-\left|\xi\right|\right)^{1/2}}\,dXdY \\[2pt]
    \lesssim & \,2^{\delta^2 m/5} \left\|f_r\right\|_{L^{\frac{248}{123}}} \left\|g_r\right\|_{L^{\frac{248}{123}}} \left(\int \frac{dXdY}{\left(X+Y-\left|\xi\right|\right)^{0.992}}\right)^{\frac{125}{248}}. 
\end{align*}
Note that we have 
$$\int \frac{dXdY}{\left(X+Y-\left|\xi\right|\right)^{0.992}}\lesssim\min\left(2^{-n_1-0.008n_2},2^{-0.008n_1-n_2}\right),$$
since
\begin{align*}
    \int \frac{dXdY}{\left(X+Y-\left|\xi\right|\right)^{0.992}}=
    \int\left(\int\frac{dY}{\left(Y+X-\left|\xi\right|\right)^{0.992}}\right)\,dX\lesssim\,2^{-n_1-0.008n_2}.
\end{align*}
Now, assume that $\max(k,k_1,k_2)\le D$ (the other slightly different case $D\le\max(k,k_1,k_2)\le\delta^2 m/10-D^2$ can be done similarly), by interpolation inequality and (3.35), the first line of (3.37) in \cite{y2}, we have
\begin{align*}
    \left\|f_r\right\|_{L^{\frac{248}{123}}}\lesssim &\left\|f_r\right\|_{L^2}^{123/124}\cdot\left\|f_r\right\|_{L^{\infty}}^{1/124} \\
    \lesssim & \left[2^{-(1-20.5\delta)j_1+\left(\frac{1}{2}-19\delta\right)n_1}\right]^{123/124}\cdot\left[2^{2\delta n_1-\left(\frac{1}{2}-21\delta\right)(j_1-n_1)}\right]^{1/124} \\
    \lesssim & \,2^{-0.995j_1+\left(\frac{1}{2}-19\delta\right)n_1};
\end{align*}
similarly, we have
$$\left\|g_r\right\|_{L^{\frac{248}{123}}}\lesssim\,2^{-0.995j_2+\left(\frac{1}{2}-19\delta\right)n_2}.$$
WLOG, we may assume that $0<n_1\le n_2$, and we can get
\begin{align*}
   \left|I_1^{lo}(\xi)\right|\lesssim \, 2^{-1.98m+\left(\frac{1}{2}-19\delta\right)n_1+\left(\frac{1}{2}-19\delta\right)n_2}\cdot 2^{-0.5n_2-0.004n_1}\lesssim \,2^{-1.98m+\frac{1}{2}n_1}.
\end{align*}
Denote $\left|E_\xi\right|$ by the volume of $\xi$. Since $n_1,n_2>0$ and $\xi$ is located on an annulus, we have
$$\left|E_\xi\right|\lesssim \max\left(2^{-n_1},2^{-n_2}\right)=2^{-n_1}.$$
Thus, we have
\begin{align*}
   \left\|I_1^{lo}\right\|_{L^2}\lesssim\left\|I_1^{lo}\right\|_{L^{\infty}}\cdot\left(\left|E_\xi\right|\right)^{1/2}\lesssim \, 2^{-1.98m}
\end{align*}
So, we conclude  $\left\|I^{lo}\right\|_{L^2}\lesssim 2^{-1.95m}$ as desired. Finally, when $n_1=0$ and $n_2>0$, we can use the above argument to estimate it similarly. In fact, in this case we will have
\begin{align*}
    \int \frac{dXdY}{\left(X+Y-\left|\xi\right|\right)^{0.992}}=
    \int\left(\int\frac{dX}{\left(Y+X-\left|\xi\right|\right)^{0.992}}\right)\,dY\lesssim\,2^{-n_2+0.0008\delta^2 m},
\end{align*}
and
\begin{align*}
   \left|I_1^{lo}(\xi)\right|\lesssim\, 2^{\delta^2 m/5} \left\|f_r\right\|_{L^{\frac{248}{123}}} \left\|g_r\right\|_{L^{\frac{248}{123}}} \left(\int \frac{dXdY}{\left(X+Y-\left|\xi\right|\right)^{0.992}}\right)^{\frac{125}{248}}\lesssim \,2^{-1.98m}.
\end{align*}
So, we get
\begin{align*}
   \left\|I_1^{lo}\right\|_{L^2}\lesssim&\left\|I_1^{lo}\right\|_{L^{\infty}}\cdot\left(\left|E_\xi\right|\right)^{1/2} \\
   \lesssim &\, 2^{-1.98m}\cdot 2^{\delta^2 m/10}\lesssim 2^{-1.95m},
\end{align*}
and thus $\left\|I^{lo}\right\|_{L^2}\lesssim 2^{-1.95m}$ as desired. This gives an acceptable $f_{NC}$-type contribution as in (\ref{3.24}). To control the remaining contributions of $P_k I^{lo}$ we consider two cases.\par
\textbf{Case 1.} Assume first that 
\begin{align*}
    \min\left\{k,k_1,k_2\right\}\ge -D,\,\,j_1\le (1-300\delta)m,\,\,j_2\ge (1-50\delta)m, 
\end{align*}
and decompose, with $\kappa_\theta\triangleq 2^{-m/2+2\delta^2 m}$,
\begin{align*}
    P_k I^{lo}\left[f_{j_1,k_1},g_{j_2,k_2}\right]&=I^\bot+I^\parallel \left[f_{j_1,k_1},g_{j_2,k_2}\right], \\
    \widehat{I^\bot}(\xi)&\triangleq\int_{\mathbb{R}^2} e^{is\Phi(\xi,\eta)} \varphi_{ lo}\left(\Phi(\xi,\eta)\right) \\
    &\times\left(1-\varphi\left(\kappa_\theta^{-1}\Omega_\eta \Phi(\xi,\eta)\right)\right) \widehat{f_{j_1,k_1}}(\xi-\eta) \reallywidehat{g_{j_2,k_2}}(\eta)\,d\eta, \\
    \mathcal{F}\left\{I^\parallel\left[f,g\right]\right\}(\xi)&\triangleq\int_{\mathbb{R}^2} e^{is\Phi(\xi,\eta)} \varphi_{\le lo}\left(\Phi(\xi,\eta)\right) \varphi\left(\kappa_\theta^{-1}\Omega_\eta \Phi(\xi,\eta)\right) \hat{f}(\xi-\eta) \hat{g}(\eta)\,d\eta.
\end{align*}
Integration by parts using Lemma 3.2 shows that $I^\bot$ yields an acceptable $f_{NC}$-type contribution as in (\ref{3.24}). Moreover, notice that, using Lemma 3.5, Lemma 3.6, we have
$$\left\|P_k I^\parallel\left[f_{j_1,k_1},g_{j_2,k_2,n_2}\right]\right\|_{L^2}\lesssim 2^{-m+21\delta m} 2^{-j_2+20\delta j_2} 2^{n_2/2}+2^{-4m}\lesssim 2^{-2m+100\delta m} 2^{n_2/2}.$$
This gives an acceptable $f_{NC}$-type contributions if $n_2\le m/20$. Thus, in the following, we may assume that
$$g_{j_2,k_2}=\sum_{n_2\ge m/20} g_{j_2,k_2,n_2}.$$
(Notice that we do not necessarily have that at most one of $n_1$ and $n_2$ is positive on the bottom of P.824 of \cite{y2} due to the lack of Proposition 8.5 (ii) in \cite{y2}. It's possible that both $n_1$ and $n_2$ are positive. Thus, we are not able to assume $j_1\le m/2$ anymore as on the top of P.825 of \cite{y2}.)
Next, we further decompose
\begin{align*}
    I^\parallel\left[f_{j_1,k_1},g_{j_2,k_2}\right]&=I^C\left[f_{j_1,k_1},g_{j_2,k_2}\right]+I^{NC}\left[f_{j_1,k_1},g_{j_2,k_2}\right],  \\
    \reallywidehat{I^C\left[f,g\right]}(\xi)&\triangleq\int_{\mathbb{R}^2} e^{is\Phi(\xi,\eta)} \varphi_{ lo}\left(\Phi(\xi,\eta)\right) \varphi\left(\kappa_\theta^{-1}\Omega_\eta \Phi(\xi,\eta)\right)\varphi_{Lo}(\xi,\eta)\hat{f}(\xi-\eta) \hat{g}(\eta)\,d\eta,  \\
    \reallywidehat{I^{NC}\left[f,g\right]}(\xi)&\triangleq\int_{\mathbb{R}^2} e^{is\Phi(\xi,\eta)} \varphi_{ lo}\left(\Phi(\xi,\eta)\right) \varphi\left(\kappa_\theta^{-1}\Omega_\eta \Phi(\xi,\eta)\right)\varphi_{Hi}(\xi,\eta)\hat{f}(\xi-\eta) \hat{g}(\eta)\,d\eta, \\
    \varphi_{Lo}(\xi,\eta)&\triangleq\varphi_{\le -400\delta m}\left(\nabla_\xi\Phi(\xi,\eta)\right), \\
    \varphi_{Hi}(\xi,\eta)&\triangleq\varphi_{> -400\delta m}\left(\nabla_\xi\Phi(\xi,\eta)\right).
\end{align*} \par
We first consider the integral $I^C$, which produce the secondary resonances $f_{SR}$. Indeed, using (\ref{3.3}), (\ref{3.4}) and (\ref{3.32}), we estimate
$$\left\|\widehat{I^C}\right\|_{L^\infty}\lesssim \kappa_\theta \left\|\widehat{f_{j_1,k_1}}\right\|_{L^\infty}\cdot\left\|\sup_{\theta} \left|\widehat{g_{j_2,k_2}}(r\theta)\right|\right\|_{L^1(rdr)}\lesssim 2^{-3m/2+76\delta m}.$$
The derivatives can be estimated in the same way, given that $j_1\le (1-300\delta)m$, the definition of the cutoff $\varphi_{Lo}$, the smallness of both $\Phi$ and $\nabla_\xi\Phi$.\par
Next. we consider the integral $I^{NC}$. We will show that $I^{NC}$ gives acceptable contributions, i.e.
\begin{equation}
\begin{aligned}
    P_k I^{NC}\left[f_{j_1,k_1},g_{j_2,k_2}\right]=\partial_s F_1+F_2, \\ [5pt]
    \left\|F_1\right\|_{L^2}\lesssim 2^{-36m/35},\,\,\,\left\|F_2\right\|_{L^2}\lesssim 2^{-21m/11}
\end{aligned}\tag{3.33}\label{3.33}
\end{equation}
Recall that $l_0=\left[-3\delta m-4\delta^2 m\right]$, let $l_-\triangleq\left[-14/15m\right]$, and decompose further
\begin{align*}
    &I^{NC}\left[f_{j_1,k_1},g_{j_2,k_2}\right]=\sum_{l_-\le l\le l_0} I^{NC}_l,\\
    &\widehat{I^{NC}_l}(\xi)\triangleq \int_{\mathbb{R}^2} e^{is\Phi(\xi,\eta)} \varphi_l^{\left[l_-,l_0+1\right]}\left(\Phi(\xi,\eta)\right) \varphi\left(\kappa_\theta^{-1}\Omega_\eta \Phi(\xi,\eta)\right)\varphi_{Hi}(\xi,\eta)\widehat{f_{j_1,k_1}}(\xi-\eta) \widehat{g_{j_2,k_2}}(\eta)\,d\eta.
\end{align*}
Note that we will not have the cutoff function $\varphi_{\ge -D}\left(\nabla_\eta\Phi(\xi,\eta)\right)$ anymore due to the lack of Proposition 8.5 (iii). This will affect the proof in \cite{y2} in the following part to some extent. First, we cannot use Lemma 8.9 (i) in \cite{y2} anymore due to the lack of $\left|\nabla_\eta\Phi\right|\ge -D$.\vspace{0.3em} Therefore, to estimate $\left\|P_k I^{NC}_{l_\_}\right\|_{L^2}$ , we fix $\eta$ and get $\left|E_\xi\right|\lesssim 2^{l_\_}\cdot\kappa_\theta$; fix $\xi$ and get $\left|E_\eta\right|\lesssim \begin{cases}2^{l_\_/2}\cdot\kappa_\theta&\mbox{, if } n_2\le m/2 \\ 2^{-n_2}\cdot\kappa_\theta &\mbox{, if }n_2 \ge m/2  
\end{cases}$. Then, we can use Schur's test to get $\left\|P_k I^{NC}_{l_\_}\right\|_{L^2}\lesssim 2^{-1.94m}$.\par
On the other hand, for $l_-<l\le l_0$ we write
\begin{align*}
    &iI^{NC}_l=\partial_s \mathcal{J}_l-\mathscr{A}_l-\mathscr{B}_l,\\
    &\widehat{\mathcal{J}_l}(\xi)\triangleq \int_{\mathbb{R}^2} e^{is\Phi(\xi,\eta)} \Tilde{\varphi_l}\left(\Phi(\xi,\eta)\right) \varphi\left(\kappa_\theta^{-1}\Omega_\eta \Phi(\xi,\eta)\right)\varphi_{Hi}(\xi,\eta)\widehat{f_{j_1,k_1}}(\xi-\eta) \widehat{g_{j_2,k_2}}(\eta)\,d\eta, \\
    &\widehat{\mathscr{A}_l}(\xi)\triangleq\int_{\mathbb{R}^2} e^{is\Phi(\xi,\eta)} \Tilde{\varphi_l}\left(\Phi(\xi,\eta)\right) \varphi\left(\kappa_\theta^{-1}\Omega_\eta \Phi(\xi,\eta)\right)\varphi_{Hi}(\xi,\eta)\partial_s \widehat{f_{j_1,k_1}}(\xi-\eta) \widehat{g_{j_2,k_2}}(\eta)\,d\eta,\\
    &\widehat{\mathscr{B}_l}(\xi)\triangleq\int_{\mathbb{R}^2} e^{is\Phi(\xi,\eta)} \Tilde{\varphi_l}\left(\Phi(\xi,\eta)\right) \varphi\left(\kappa_\theta^{-1}\Omega_\eta \Phi(\xi,\eta)\right)\varphi_{Hi}(\xi,\eta) \widehat{f_{j_1,k_1}}(\xi-\eta) \partial_s\widehat{g_{j_2,k_2}}(\eta)\,d\eta,\\
\end{align*}
where $\tilde{\varphi_l}(x)\triangleq x^{-1} \varphi_l(x)$. 
In terms of  $\left\|\mathcal{J}_l\right\|_{L^2}$ and $\left\|\mathscr{A}_l\right\|_{L^2}$, we note that the authors of \cite{y2} only used Lemma 8.9(i) of \cite{y2} to estimate the volume of $\xi$. Since we still have the condition $\left|\nabla_\xi\Phi\right|\ge 2^{-400\delta m}$ here, the estimates of $\left\|\mathcal{J}_l\right\|_{L^2}$ and $\left\|\mathscr{A}_l\right\|_{L^2}$ in \cite{y2} still works here in our paper.
Namely,
$$\left\|\mathcal{J}_l\right\|_{L^2}\lesssim \sum_{n_2\ge1} 2^{2\delta^2 m}\kappa_\theta\cdot 2^{-l}\cdot 2^{\frac{l-n_2}{2}+400\delta m}\left\|\widehat{f_{j_1,k_1}}\right\|_{L^\infty}\left\|g_{j_2,k_2,n_2}\right\|_{L^2}\lesssim 2^{-\frac{m+l}{2}+(1000\delta-1)m},$$
$$\left\|\mathscr{A}_l\right\|_{L^2}\lesssim \sum_{n_2\ge1} 2^{2\delta^2 m}\kappa_\theta\cdot 2^{\frac{l-n_2}{2}+400\delta m}\left\|\partial_s \widehat{f_{j_1,k_1}}\right\|_{L^\infty}\left\|g_{j_2,k_2,n_2}\right\|_{L^2}\lesssim 2^{-2m},$$
using Lemma 3.6, Lemma 3.9 and Lemma 8.9 (i) in \cite{y2}, which give acceptable contributions as in (\ref{3.33}). \par
To control the term $\mathscr{B}_l$, we need more precise estimates. We use Lemma 3.9 to decompose 
$$\partial_s g_{j_2,k_2}=g_C+g_{NC},$$
and let $\widehat{\mathscr{B}_l}=\widehat{\mathscr{B}_l^1}+\widehat{\mathscr{B}_l^2}$ denote the corresponding decomposition of $\widehat{\mathscr{B}_l}$. 
First, we can use the fact that fix $\eta$, $\left|E_\xi\right|\lesssim 2^l\cdot\kappa_\theta$ and fix $\xi$, $\left|E_\eta\right|\lesssim 2^{l/2}\cdot\kappa_\theta$ to avoid using Lemma 8.9 (i) of \cite{y2}. Then, Schur's test will still give us an acceptable estimate of $\left\|\mathscr{B}_l^2\right\|_{L^2}$. In fact, 
\begin{align*}
    \left\|\mathscr{B}_l^2\right\|_{L^2}\lesssim \kappa_\theta\cdot 2^{400\delta m}\cdot\left\|g_{NC}\right\|_{L^2}\lesssim 2^{-2m+1000\delta m},\tag{3.34}\label{3.34}
\end{align*}
which gives an acceptable contribution as in (\ref{3.33}).\par
Second, using (\ref{3.17}), we can write $\widehat{\mathscr{B}_l^1}$ as a sum over $q\in\left[0,m/2-10\delta m\right]$ and over $\theta,\kappa\in\mathcal{P},\theta+\kappa\neq 0$, of integrals of the form
$$\mathcal{C}_l(\xi)=\int_{\mathbb{R}^2} e^{is\left[\Phi_{\sigma\mu\nu}(\xi,\eta)+\Psi_{\nu\theta\kappa}(\eta)\right]}\tilde{\varphi}_l(\Phi(\xi,\eta))\varphi(\kappa^{-1}_\theta\Omega_\eta\Phi(\xi,\eta))\varphi_{Hi}(\nabla_\xi\Phi(\xi,\eta))\widehat{f_{j_1,k_1}}(\xi-\eta)h^q(\eta)\,d\eta.$$
In views of (\ref{3.17}) and that $n_2\ge m/20$, the functions $h^q=h^q_{\mu\theta\kappa}$ satisfy the properties
\begin{align*}
    &h^q(\eta)=h^q(\eta)\varphi_{\le -m/21}(\Psi^\star_b(\eta)), \\
    &\left\|D^\alpha_\eta h^q(s)\right\|_{L^\infty}\lesssim\,2^{-m+3\delta m} 2^{-q+42\delta q} 2^{(m/2+q+2\delta^2 m)\left|\alpha\right|}, \\
    &\left\|\partial_s h^q(s)\right\|_{L^\infty}\lesssim\,2^{(-2+6\delta)m} 2^{q+42\delta q}.
\end{align*}
However, since we don't have $j_1\le m/2$, $\varphi_{\ge -D}\left(\nabla_\eta\Phi(\xi,\eta)\right)$ and iterated resonances relationships anymore as in \cite{y2}, we have to redo the proof of $\left\|\mathcal{C}_l\right\|_{L^2}$. The contributions of exponents $q\ge 19m/40$ can be estimated as in (\ref{3.34}), since the functions $h^q$ in this case have sufficiently small $L^2$ norm. Therefore we may assume that $q\le 19m/40$. \par
Now, we observe that we have either $\left|\Psi_{\nu\theta\kappa}(\eta)\right|\le 2^{-m/22}$ or $\left|\Psi_{\nu\theta\kappa}(\eta)\right|\ge 2^{-D}$ in the support of the integrals $\mathcal{C}_l$, due to the cutoff function $\varphi_{\le -m/21}(\Psi^*_b(\eta))$. Therefore, we decompose $\mathcal{C}_l=\mathcal{C}_l^1+\mathcal{C}_l^2$, where $\mathcal{C}_l^1$ and $\mathcal{C}_l^2$ are defined by inserting the factors $\varphi_{\le -m/25}(\Psi_{\nu\theta\kappa}(\eta))$ and $\varphi_{\ge -D}(\Psi_{\nu\theta\kappa}(\eta))$. To simplify notation, we will write $\Phi=\Phi_{\sigma\mu\nu}$, and $\Psi=\Psi_{\nu\theta\kappa}$.\par
First, we consider the the integral $\mathcal{C}_l^1$ and Suppose that $\left|\nabla_\eta\Phi\right|\ge 2^{-D}$. If $2^l\ge 2^{-m/25}$, then $\left|\frac{1}{\Phi(\xi,\eta)+\Psi(\eta)}\right|\lesssim 2^{-l}\le 2^{m/25}$. Therefore, we do the integration by parts to the time variable $s$ $i\mathcal{C}_l^1(\xi)=\partial_s \mathcal{K}_l^1(\xi)-\mathcal{\varepsilon}_l^1(\xi)$, where
\begin{equation}
\begin{aligned}
   \mathcal{K}_l^1(\xi)=\int_{\mathbb{R}^2} &e^{is\left[\Phi_{\sigma\mu\nu}(\xi,\eta)+\Psi_{\nu\theta\kappa}(\eta)\right]} \frac{\tilde{\varphi}_l(\Phi(\xi,\eta))}{\Phi_{\sigma\mu\nu}(\xi,\eta)+\Psi_{\nu\theta\kappa}(\eta)}\varphi(\kappa^{-1}_\theta\Omega_\eta\Phi(\xi,\eta))\varphi_{\le -m/25}(\Psi_{\nu\theta\kappa}(\eta)) \\
   &\times\varphi_{\ge -D}(\nabla_\eta\Phi(\xi,\eta))\varphi_{Hi}(\nabla_\xi\Phi(\xi,\eta))\widehat{f_{j_1,k_1}}(\xi-\eta)h^q(\eta)\,d\eta, 
\end{aligned}\label{3.35}\tag{3.35}
\end{equation}
and
\begin{equation}
\begin{aligned}
\mathcal{\varepsilon}_l^1(\xi)=\int_{\mathbb{R}^2}& e^{is\left[\Phi_{\sigma\mu\nu}(\xi,\eta)+\Psi_{\nu\theta\kappa}(\eta)\right]} \frac{\tilde{\varphi}_l(\Phi(\xi,\eta))}{\Phi_{\sigma\mu\nu}(\xi,\eta)+\Psi_{\nu\theta\kappa}(\eta)}\varphi(\kappa^{-1}_\theta\Omega_\eta\Phi(\xi,\eta))\varphi_{\le -m/25}(\Psi_{\nu\theta\kappa}(\eta)) \\
   &\times\varphi_{\ge -D}(\nabla_\eta\Phi(\xi,\eta))\varphi_{Hi}(\nabla_\xi\Phi(\xi,\eta))\partial_s\left[\widehat{f_{j_1,k_1}}(\xi-\eta,s)h^q(\eta,s)\right]\,d\eta. 
\end{aligned}\label{3.36}\tag{3.36}
\end{equation}
Note that fix $\xi$, $\left|E_\eta\right|\lesssim 2^l\cdot\kappa_\theta$; fix $\eta$, $\left|E_\xi\right|\lesssim 2^{l+400\delta m}\cdot\kappa_\theta$ and by Schur's test we get that
\begin{align*}
   \left\|\mathcal{K}^1_l(\xi)\right\|_{L^2}&\lesssim \sup\left|\frac{1}{\Phi(\xi,\eta)+\Psi(\eta)}\right|\cdot\sup\left|\frac{1}{\Phi(\xi,\eta)}\right|\cdot\left|E_\eta\right|^{1/2}\cdot\left|E_\xi\right|^{1/2}\cdot\left\|\widehat{f_1}\right\|_{L^\infty}\cdot\left(\left\|h^q(\eta)\right\|_{L^\infty}\left|E_\eta\right|^{1/2}\right) \\
   &\lesssim\,2^{m/25}\cdot 2^{-l}\cdot (2^{\frac{l}{2}}\kappa_\theta^{\frac{1}{2}}2^{\frac{l}{2}}2^{200\delta m}\kappa_\theta^{\frac{1}{2}})\cdot 2^{2\delta n_1}\cdot (2^{-m+3\delta m}\cdot 2^{\frac{l}{2}})\lesssim\,2^{-35m/36}.
\end{align*}
Also note that
\begin{align*}
    \left|\partial_s(\widehat{f_1}(\xi-\eta)h^q(\eta))\right| &\lesssim\left|\partial_s\widehat{f_1}\cdot h^q\right|+\left|\widehat{f_1}\cdot \partial_s h^q\right| \\
    &\lesssim\,2^{-m+60\delta m}\cdot2^{-m+3\delta m}+2^{2\delta m}\cdot2^{-2m+6\delta m+(1-42\delta)q} \\
    &\lesssim\,2^{-2m+63\delta m+(1-42\delta)q},
\end{align*}
and by Schur's test again we obtain that
\begin{align*}
   \left\|\mathcal{\varepsilon}^1_l(\xi)\right\|_{L^2}&\lesssim \sup\left|\frac{1}{\Phi(\xi,\eta)+\Psi(\eta)}\right|\cdot\sup\left|\frac{1}{\Phi(\xi,\eta)}\right|\cdot\left|E_\eta\right|^{1/2}\cdot\left|E_\xi\right|^{1/2}\cdot\left|\partial_s(\widehat{f_1}(\xi-\eta)h^q(\eta))\right|\cdot\left|E_\eta\right|^{1/2} \\
   &\lesssim\,2^{m/25}\cdot 2^{-l}\cdot (2^{\frac{l}{2}}\kappa_\theta^{\frac{1}{2}}2^{\frac{l}{2}}2^{200\delta m}\kappa_\theta^{\frac{1}{2}})\cdot 2^{-2m+63\delta m+(1+42\delta)q}\cdot 2^{\frac{l}{2}}\lesssim\,2^{-21m/11}.
\end{align*}
These give $f_{NC}$ and $F_{NC}$ contributions respectively as in (\ref{3.24}) and (\ref{3.27}). If $2^l\le 2^{-m/25}$ and $\left|\Phi(\xi,\eta)\right|\sim\left|\Psi(\eta)\right|\sim 2^l$, then we split it into two subcases: $\frac{r_0}{2}-q\le -\frac{m}{2}+500\delta m$ and $\frac{r_0}{2}-q\ge -\frac{m}{2}+500\delta m$, where we suppose $\left|\Phi(\xi,\eta)+\Psi(\eta)\right|\sim 2^{r_0}$. (Since $r_0\le l$, we must have $r_0\le -\frac{m}{25}$.) In the first subcase $\frac{r_0}{2}-q\le -\frac{m}{2}+500\delta m$, we use Schur's test to estimate $\left\|\mathcal{C}^1_l\right\|_{L^2}$ directly, where fix $\eta$, we have $\left|E_\xi\right|\lesssim\,2^{r_0+400\delta m}\cdot\kappa_\theta$ and fix $\xi$, we have $\left|E_\eta\right|\lesssim\,2^l\cdot\kappa_\theta$. We have
\begin{align*}
   \left\|\mathcal{C}^1_l(\xi)\right\|_{L^2}&\lesssim \sup\left|\frac{1}{\Phi(\xi,\eta)}\right|\cdot\left|E_\eta\right|^{1/2}\cdot\left|E_\xi\right|^{1/2}\cdot\left\|\widehat{f_1}\right\|_{L^\infty}\cdot\left(\left\|h^q(\eta)\right\|_{L^\infty}\left|E_\eta\right|^{1/2}\right) \\
   &\lesssim\,2^{-l}\cdot (2^{\frac{r_0}{2}+\frac{l}{2}+200\delta m}\kappa_\theta)\cdot 2^{2\delta n_1}\cdot (2^{-m+3\delta m-(1-42\delta)q}\cdot 2^{\frac{l}{2}})\lesssim\,2^{-2m+727\delta m},
\end{align*}
which gives an acceptable $f_{NC}$-type contribution as in (\ref{3.24}). In the second subcase $\frac{r_0}{2}-q\ge -\frac{m}{2}+500\delta m$, we need do the integration by parts to the time variable $s$ like before. Namely, consider $i\mathcal{C}_l^1(\xi)=\partial_s \mathcal{K}_l^1(\xi)-\mathcal{\varepsilon}_l^1(\xi)$, where $\mathcal{K}^1_l$ and $\mathcal{\varepsilon}^1_l$ are defined in (\ref{3.35}) and (\ref{3.36}). Note that fix $\eta$, we have $\left|E_\xi\right|\lesssim\,2^{r_0+400\delta m}\cdot\kappa_\theta$ and fix $\xi$, we have $\left|E_\eta\right|\lesssim\,2^l\cdot\kappa_\theta$. Now, we use Schur's test to get
\begin{align*}
   \left\|\mathcal{K}^1_l(\xi)\right\|_{L^2}&\lesssim \sup\left|\frac{1}{\Phi(\xi,\eta)+\Psi(\eta)}\right|\cdot\sup\left|\frac{1}{\Phi(\xi,\eta)}\right|\cdot\left|E_\eta\right|^{1/2}\cdot\left|E_\xi\right|^{1/2}\cdot\left\|\widehat{f_1}\right\|_{L^\infty}\cdot\left(\left\|h^q(\eta)\right\|_{L^\infty}\left|E_\eta\right|^{1/2}\right) \\
   &\lesssim\,2^{-r_0}\cdot 2^{-l}\cdot (2^{\frac{l}{2}+\frac{r_0}{2}+200\delta m}\kappa_\theta)\cdot 2^{2\delta n_1}\cdot (2^{-m+3\delta m-q+42\delta q}\cdot 2^{\frac{l}{2}})\lesssim\,2^{-m-273.9\delta m},
\end{align*}
and
\begin{align*}
   \left\|\mathcal{\varepsilon}^1_l(\xi)\right\|_{L^2}&\lesssim \sup\left|\frac{1}{\Phi(\xi,\eta)+\Psi(\eta)}\right|\cdot\sup\left|\frac{1}{\Phi(\xi,\eta)}\right|\cdot\left|E_\eta\right|^{1/2}\cdot\left|E_\xi\right|^{1/2}\cdot\left\|\widehat{f_1}\right\|_{L^\infty}\cdot\left|\partial_s(\widehat{f_1}(\xi-\eta)h^q(\eta))\right|\cdot\left|E_\eta\right|^{1/2} \\
   &\lesssim\,2^{-r_0}\cdot 2^{-l}\cdot (2^{\frac{l}{2}+\frac{r_0}{2}+200\delta m}\kappa_\theta)\cdot (2^{-2m+63\delta m+q+42\delta q}\cdot 2^{\frac{l}{2}})\lesssim\,2^{-2m-215\delta m},
\end{align*}
which give $f_{NC}$ and $F_{NC}$ contributions respectively as in (\ref{3.24}) and (\ref{3.27}). If $2^l\le 2^{-m/25}$ and $\left|\Phi\right|\sim 2^l$, $\left|\Psi\right|\nsim 2^l$, then we will have two subcases: $\left|\Phi\right|\sim 2^l, \left|\Psi\right|\sim 2^{r_0}, r_0> l$, and $\left|\Phi\right|\sim 2^l, \left|\Psi\right|\sim 2^{l_1}, l_1< l$. In the first subcase, we still have $\left|\Phi+\Psi\right|\sim 2^{r_0}$. Then we can prove it as in the previous case $2^l\le 2^{-m/25}, \left|\Phi\right|\sim\left|\Psi\right|\sim2^l$, since we actually didn't use the condition $\left|\Psi\right|\sim2^{r_0}$ as before. In the second subcase, we will have $\left|\Phi+\Psi\right|\sim 2^l$. Then the proof is also similar as the previous case $2^l\le 2^{-m/25}, \left|\Phi\right|\sim\left|\Psi\right|\sim2^l$. In fact, by checking the proof, we only need to replace all $r_0$ into $l$ in the proof.\par
Next, we still consider the integral $\mathcal{C}_l^1$ but suppose that $\left|\nabla_\eta\Phi\right|\le 2^{-D}$. Note that in this case we have $\left|\nabla_\eta\right|\gtrsim 1$. This implies that $\left|\nabla_\eta\left[\Phi(\xi,\eta)+\Psi(\eta)\right]\right|\gtrsim 1$. This means that we can use Lemma 3.1 to integrate by parts to $\eta$ to get an acceptable control. (Recall that we have already assumed that $j_1\le(1-200\delta)m$, $q\le \frac{19}{40}m$, and we also have $\left\|D^\alpha_\eta h^q\right\|_{L^\infty}\lesssim\,2^{-m+3\delta m-(1-42\delta)q+(\frac{39.1}{40}m)\left|\alpha\right|}$.)\par
Finally, we consider the integral $\mathcal{C}^2_l$. Note that in this case we have $\left|\Phi\right|\sim 2^l$ and $\left|\Psi\right|\gtrsim 1$, which implies that $\left|\Phi+\Psi\right|\gtrsim 1$ and $\frac{1}{\left|\Phi+\Psi\right|}\lesssim 1$. Thus, we integrate by parts to the time variable $s$: $i\mathcal{C}_l^2(\xi)=\partial_s \mathcal{K}_l^2(\xi)-\mathcal{\varepsilon}_l^2(\xi)$. Note that fix $\xi$, $\left|E_\eta\right|\approx 2^{l/2}\cdot\kappa_\theta$; fix $\eta$, $\left|E_\xi\right|\approx 2^{l+400\delta m}\cdot\kappa_\theta$. Thus, by Schur's test, we get that
\begin{align*}
   \left\|\mathcal{K}^2_l(\xi)\right\|_{L^2}&\lesssim \sup\left|\frac{1}{\Phi(\xi,\eta)+\Psi(\eta)}\right|\cdot\sup\left|\frac{1}{\Phi(\xi,\eta)}\right|\cdot\left|E_\eta\right|^{1/2}\cdot\left|E_\xi\right|^{1/2}\cdot\left\|\widehat{f_1}\right\|_{L^\infty}\cdot\left(\left\|h^q(\eta)\right\|_{L^\infty}\left|E_\eta\right|^{1/2}\right) \\
   &\lesssim\,1\cdot 2^{-l}\cdot (2^{\frac{3l}{4}+200\delta m}\kappa_\theta)\cdot 2^{2\delta n_1}\cdot (2^{-m+3\delta m-q+42\delta q}\cdot 2^{\frac{l}{4}})\lesssim\,2^{-\frac{3}{2}m+205.1\delta m},
\end{align*}
and
\begin{align*}
   \left\|\mathcal{\varepsilon}^2_l(\xi)\right\|_{L^2}&\lesssim \sup\left|\frac{1}{\Phi(\xi,\eta)+\Psi(\eta)}\right|\cdot\sup\left|\frac{1}{\Phi(\xi,\eta)}\right|\cdot\left|E_\eta\right|^{1/2}\cdot\left|E_\xi\right|^{1/2}\cdot\left\|\widehat{f_1}\right\|_{L^\infty}\cdot\left|\partial_s(\widehat{f_1}(\xi-\eta)h^q(\eta))\right|\cdot\left|E_\eta\right|^{1/2} \\
   &\lesssim\,1\cdot 2^{-l}\cdot (2^{\frac{3l}{4}+200\delta m}\kappa_\theta)\cdot (2^{-2m+63\delta m+(1+42\delta) q})\cdot 2^{\frac{l}{4}}\lesssim\,2^{-2.025m+300\delta m}.
\end{align*}
which again give $f_{NC}$ and $F_{NC}$ contributions respectively as in (\ref{3.24}) and (\ref{3.27}).\par
\textbf{Case 2. } We consider now $I^{lo}$ in the case
$$\min\left\{k,k_1,k_2\right\}\ge -D, \,\,\,\,\,\,j_1\le\min(j_2,(1-300\delta)m), \,\,\,\,\,\,j_2\le (1-50\delta)m.$$
Integrations by parts, first in $\eta$ using Lemma 3.1 then in $\Omega_\eta$ using Lemma 3.2 show that
\begin{align*}
&\left\|I^{lo}\left[f_{j_1,k_1},g_{j_2,k_2}\right]-I^S\left[f_{j_1,k_1},g_{j_2,k_2}\right]\right\|_{L^2}\lesssim 2^{-2m},\\
&\mathcal{F}\left\{I^S\left[f,g\right]\right\}(\xi)\triangleq\int_{\mathbb{R}^2} e^{is\Phi(\xi,\eta)} \varphi_{lo}(\Phi(\xi,\eta)) \varphi\left(\kappa^{-1}_r \nabla_\eta\Phi(\xi,\eta)\right)\varphi\left(\kappa^{-1}_\theta \Omega_\eta\Phi(\xi,\eta)\right)\hat{f}(\xi-\eta)\hat{g}(\eta)\,d\eta,  
\end{align*}
where $\kappa_r\triangleq 2^{\delta^2 m}(2^{j_2-m}+2^{-m/2})$, and $\kappa_\theta\triangleq 2^{\delta^2 m-m/2}$. Observe that if $\mu+\nu=0$, then by Lemma 6.3, we know that $\left|\nabla_\eta\Phi\right|\gtrsim 2^{-\delta^2 m/2}$. This means that $I^S\left[f_{j_1,k_1},g_{j_2,k_2}\right]\equiv0$, if $\mu+\nu=0$.

However, in general, we do not necessarily have $I^S\left[f_{j_1,k_1},g_{j_2,k_2}\right]\equiv I^S\left[f_{j_1,k_1,0},g_{j_2,k_2,0}\right]$, which means $n_1, n_2$ could be positive. Write
$$\mathcal{F}\left\{I^S\left[f_{j_1,k_1},g_{j_2,k_2}\right]\right\}(\xi)=e^{is\Psi(\xi)}g(\xi,s),$$
where
\begin{align*}
g(\xi,s)\triangleq\int_{\mathbb{R}^2}&e^{is\left[\Phi(\xi,\eta)-\Psi(\xi)\right]}\varphi\left(\kappa_\theta^{-1}\Omega_\eta\Phi(\xi,\eta)\right)\varphi\left(\kappa_r^{-1}\nabla_\eta\Phi(\xi,\eta)\right)\varphi_{lo}\left(\Phi(\xi,\eta)\right) \\
&\times\reallywidehat{f_{j_1,k_1,n_1}}(\xi-\eta)\reallywidehat{g_{j_2,k_2,n_2}}(\eta)\,d\eta, \tag{3.37}\label{3.37}
\end{align*}
and we have to modify the proof of the pointwise estimate of $g(\xi,s)$. Denote $q\triangleq \max\left\{0,j_2-m/2\right\}$. If $j_2\le m/2$, then by volume counting ($\left|E_\eta\right|\sim\kappa_r^2$), we have that
\begin{align*}
    \left|\varphi_k(\xi)g(\xi,s)\right|&\lesssim2^{\delta^2 m}\cdot\left\|\widehat{f_1}\right\|_{L^\infty}\cdot\left\|\widehat{g_2}\right\|_{L^\infty}\cdot\left|E_\eta\right|\lesssim 2^{-m+4.01\delta m}.
\end{align*}
If $j_2\ge m/2$, then we first observe that by Proposition 6.5 (a), the function $p_+$ defined in Proposition 6.5(a) has an inverse function $p_+^{-1}$ and $\left|(p_+^{-1})^\prime\right| \lesssim 2^{0.3\delta^2 m}$. Moreover, we notice that $\left|\eta-p(\xi)\right|\lesssim 2^{0.4\delta^2 m}\,\kappa_r$ by Proposition 6.5 (a). These tell us that
\begin{align*}
    \big|\left|\xi\right|-p_+^{-1}\left(\left|\eta\right|\right)\big|=\big|p_+^{-1}\left(p_+\left(\left|\xi\right|\right)\right)-p_+^{-1}\left(\left|\eta\right|\right)\big|\le \left|\left(p_+^{-1}\right)^\prime\right|\cdot\big|p_+\left(\left|\xi\right|\right)-\left|\eta\right|\big|\lesssim 2^{0.7\delta^2 m}\,\kappa_r. \tag{3.38}\label{3.38}
\end{align*}
Therefore, we have that fix $\eta$, $\left|E_\xi\right|\lesssim 2^{0.7\delta^2 m}\,\kappa_r\cdot\kappa_\theta$; fix $\xi$, $\left|E_\eta\right|\lesssim 2^{-n_2}\cdot\kappa_\theta$. By Schur's test and (\ref{3.7b}), we get
\begin{align*}
   \left|\varphi_k(\xi)g(\xi,s)\right|\lesssim 2^{0.35\delta^2 m}\,\kappa_r^{\frac{1}{2}}\,2^{-\frac{1}{2}n_2}\,\kappa_\theta\,\left\|\widehat{f_1}\right\|_{L^\infty}\cdot\left\|\widehat{g_2}\right\|_{L^2}\lesssim 2^{-m-q-4\delta m}. \tag{3.39}\label{3.39}
\end{align*}
To sum up, we can always get $\left|\varphi_k(\xi)g(\xi,s)\right|\lesssim 2^{-m-q+4.01\delta m}$. The bound on $\xi$-derivatives follows from the fact that 
\begin{align*}
    \left|\nabla_\xi s\left[\Phi(\xi,\eta)-\Psi(\xi)\right]\right|&\lesssim \left|s\right|\left|\nabla_\xi\Phi(\xi,\eta)-\nabla_\xi\Phi(\xi,p(\xi))\right| \\
    &\lesssim 2^{\delta^2 m}\,2^m\kappa_r\lesssim (2^{m/2}+2^{j_2})2^{2\delta^2 m}.
\end{align*}
Finally, the bound on $\partial_s g$ follows in the same way as in the proof of Lemma 6.2 in \cite{y2}, see (6.38) in \cite{y2}. The bounds (\ref{3.28}) follow by examining the defining formulas above and the identities 
\begin{align*}
    &\left\{\Omega_\xi+\Omega_\eta\right\}\chi(\Phi(\xi,\eta))\equiv 0,\,\,\,\,\left\{\Omega_\xi+\Omega_\eta\right\}\chi\left(\kappa^{-1}\Omega_\eta\Phi(\xi,\eta)\right)\equiv 0, \\
    &\left\{\Omega_\xi+\Omega_\eta\right\}\chi\left(\kappa^{-1} \nabla_\eta\Phi(\xi,\eta)\right)=\kappa^{-1} \nabla_\eta^\bot\Phi(\xi,\eta)\cdot\nabla\chi\left(\kappa^{-1}\nabla_\eta\Phi(\xi,\eta)\right).
\end{align*}
\end{proof}
\vspace{4em}
\begin{remark}
In fact, it turns out that (\ref{3.22}) can be further improved. This will be used in Section 5.4 later on. Recall that we have assumed that 
$$\min\left\{k,k_1,k_2\right\}\ge -D, \,\,\,\,\,\,j_1\le\min(j_2,(1-300\delta)m), \,\,\,\,\,\,j_2\le (1-50\delta)m.$$
Let's consider $\left|\varphi_k(\xi)g(\xi,s)\right|$ when $j_2\le m/2$, where $g(\xi,s)$ is defined as in (\ref{3.37}). First, when $j_2\le 0.4m$, we decompose as
\begin{align*}
    g(\xi,s)=&\int_{\mathbb{R}^2}e^{is\left[\Phi(\xi,\eta)-\Psi(\xi)\right]}\varphi\left(\kappa_\theta^{-1}\Omega_\eta\Phi(\xi,\eta)\right)\varphi\left(\kappa_r^{-1}\nabla_\eta\Phi(\xi,\eta)\right)\left(1-\varphi\left(2^{0.55m}\nabla_\eta\Phi(\xi,\eta)\right)\right)\varphi_{lo}\left(\Phi(\xi,\eta)\right) \\
    &\ \ \ \ \ \times\widehat{f_1}(\xi-\eta)\widehat{g_2}(\eta)\,d\eta \\
    +&\int_{\mathbb{R}^2}e^{is\left[\Phi(\xi,\eta)-\Psi(\xi)\right]}\varphi\left(\kappa_\theta^{-1}\Omega_\eta\Phi(\xi,\eta)\right)\varphi\left(\kappa_r^{-1}\nabla_\eta\Phi(\xi,\eta)\right)\varphi\left(2^{0.55m}\nabla_\eta\Phi(\xi,\eta)\right)\varphi_{lo}\left(\Phi(\xi,\eta)\right) \\
    &\ \ \ \ \ \times\widehat{f_1}(\xi-\eta)\widehat{g_2}(\eta)\,d\eta \\ 
    =&I_1+I_2.
\end{align*}
Integration by parts in $\eta$ gives us that $\left\|I_1\right\|_{L^\infty}\lesssim 2^{-2m}$. On the other hand, by volume counting and (\ref{3.7b}), we get that
\begin{align*}
    \left\|I_2\right\|_{L^\infty}&\lesssim 2^{\delta^2 m}\cdot\left\|\widehat{f_1}\right\|_{L^\infty}\cdot\left\|\widehat{g_2}\right\|_{L^\infty}\cdot\left|E_\eta\right| \lesssim 2^{-1.1m+0.8\delta m+1.7\delta^2 m}.
\end{align*}
Next, when $0.4m\le j_2\le 0.5m$, we have $\kappa_r=\kappa_\theta=2^{-\frac{m}{2}+\delta^2 m}$. Note that fix $\xi$, we have $\left|E_\eta\right|\lesssim \kappa_r\cdot\kappa_\theta$; fix $\eta$, we have $\left|E_\xi\right|\lesssim 2^{0.7\delta^2 m}\,\kappa_r\cdot\kappa_\theta$ thanks to (\ref{3.38}) and by Schur's test and (\ref{3.7b}) we get that
\begin{align*}
    \left|\varphi_k(\xi)g(\xi,s)\right|&\lesssim 2^{0.35\delta^2 m}\,\kappa_r\,\kappa_\theta\,2^{-(1-21\delta)j_2+(\frac{1}{2}-19\delta)n_2}\,2^{2\delta^2 j_2}\lesssim 2^{-1.2m+\delta m+2\delta^2 m}.
\end{align*}
Thus, to sum up, we can conclude that when $j_2\le m/2$, we have $\left|\varphi_k(\xi)g(\xi,s)\right|\lesssim 2^{-1.1m+\delta m}$. In general, we have
\begin{align*}
    \left|\varphi_k(\xi)g(\xi,s)\right|\lesssim \begin{cases}
        2^{-1.1m+\delta m} &\mbox{, if }q=0 \\
        2^{-m-q-4\delta m} &\mbox{, if }q>0
    \end{cases}, \tag{3.40}\label{3.40}
\end{align*}
which is an improvement of (\ref{3.22}).
\end{remark}

\vspace{1em}
\section{Energy Estimate}
We will follow the proof in section 5 in \cite{y2} to finish our energy estimate. First of all, we need to have an important lemma, which is analogues to Lemma 5.5 of \cite{y2}. 
\begin{lemma}
   Assume that $\delta_2=10^{-3}$,$m\ge 1$, and $f,g,h\in C^1([2^{m-2},2^{m+2}]:L^2)$ satisfy
   \begin{align}
       &\max_{1\le \sigma\le d}\left\|f(s)\right\|_{Z^\sigma_1\cap H_\Omega^{N_1/2}}+2^{(1-\delta_2/8)m}\left\|(\partial_s f)(s)\right\|_{L^2}  \notag \\
       &+\left\|g(s)\right\|_{L^2}+\left\|h(s)\right\|_{L^2}+2^{(1-\delta_2/8)m}\left[\left\|(\partial_s g)(s)\right\|_{L^2}+\left\|(\partial_s h)(s)\right\|_{L^2}\right]\lesssim 1,  \tag{4.1}\label{4.1}
   \end{align}
   for any $s\in[2^{m-2},2^{m+2}]$. Moreover, assume that
   \begin{align}
       &\left\|e^{-i(s+\lambda)\Lambda_\mu}f(s)\right\|_{L^\infty}\lesssim 2^{-(1-\delta_2/8)m},\ \ \ \ \partial_s f=F_2+F_\infty, \notag \\
       &\left\|e^{-i(s+\lambda)\Lambda_\mu}P_k F_\infty(s)\right\|_{L^\infty}\lesssim 2^{-2m+\delta_2 m/4},\notag \\[4pt]
       &\left\|P_k F_2(s)\right\|_{L^2}\lesssim\begin{cases}
\varepsilon_1^2\,2^{-3m/2+\delta_2 m/8}&\mbox{, if }k\ge 0 \\
\varepsilon_1^2\,2^{-3m/2-k/2+\delta_2 m/8}&\mbox{, if }-m/2\le k\le 0 \\
\varepsilon_1^2\,2^{-5m/4+60\delta_2 m/8}&\mbox{, if }k\le -m/2 \\
\end{cases},   \tag{4.2}\label{4.2}
   \end{align}
   for any $\lambda\in\mathbb{R}$ with $\left|\lambda\right|\le 2^{m(1-\delta_2/10)}$ and any $k\in\mathbb{Z}$. Let
   $$I[f,g,h]\triangleq\int_\mathbb{R}n(s)\int_{\mathbb{R}^2\times\mathbb{R}^2} a(\xi,\eta)e^{is\Phi(\xi,\eta)}\hat{f}(\xi-\eta,s)\hat{g}(\eta,s)\overline{\hat{h}(\xi,s)}\,d\eta d\xi ds$$
   where n is a $C^1$ function supported in $[2^{m-2},2^{m+2}]$,
   \begin{align}
      \int_\mathbb{R}\left|n^\prime (s)\right|\,ds\lesssim 1,\ \ \ \left\|\mathcal{F}^{-1}a\right\|_{L^1(\mathbb{R}^4)}\lesssim 1. \tag{4.3}\label{4.3}
   \end{align}
   Then, for any $k,k_1,k_2\in\mathbb{Z}$,
   \begin{align}
       \left|I\left[P_{k_1}f,P_{k_2}g,P_k h\right]\right|\lesssim 2^{60\max \left(k,k_1,k_2,0\right)} 2^{-\delta_2 m/10}.  \tag{4.4}\label{4.4}
   \end{align}
\end{lemma}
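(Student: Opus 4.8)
The plan is to follow the scheme of Lemma 5.5 in \cite{y2}, using the simplifications afforded by the hypotheses \eqref{4.1}--\eqref{4.2} and the extra care forced by the multispeed, multimass setting. Write $I[P_{k_1}f,P_{k_2}g,P_kh]=\int_{\mathbb R}n(s)\langle B_s[P_{k_1}f,P_{k_2}g],P_kh\rangle\,ds$, where $B_s[\phi,\psi]$ is the bilinear Fourier multiplier operator with symbol $a(\xi,\eta)e^{is\Phi(\xi,\eta)}$ and $\langle\cdot,\cdot\rangle$ is the $L^2$ pairing; the hypothesis $\|\mathcal F^{-1}a\|_{L^1}\lesssim1$ yields the two crude bilinear bounds $\|B_s[\phi,\psi]\|_{L^2}\lesssim\|e^{-is\Lambda_\mu}\phi\|_{L^\infty}\|\psi\|_{L^2}$ and $\|B_s[\phi,\psi]\|_{L^2}\lesssim2^{\min(k_1,k_2)}\|\phi\|_{L^2}\|\psi\|_{L^2}$. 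First I would carry the factor $2^{60\max(k,k_1,k_2,0)}$ throughout and reduce to $m\ge D$ and all three frequencies in a suitable range: for large frequencies the second crude bound together with the $L^2$-control in \eqref{4.1} (and the $k_+$-weights tracked in the estimates of Section 3) wastes enough powers of $2^{\max(k,k_1,k_2)}$ to be absorbed by $2^{60\max(\cdots)}$, while for very low frequencies one exploits the improved decay of the dispersive estimates in \eqref{3.6}.

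Next, fix $2^{l_0}$ with $l_0=-c_0\delta_2 m$, $c_0$ a small absolute constant, and split $a=a\varphi_{>l_0}(\Phi)+a\varphi_{\le l_0}(\Phi)$. On $|\Phi|\gtrsim2^{l_0}$ I would integrate by parts once in $s$: the boundary terms vanish since $n$ is supported in $(2^{m-2},2^{m+2})$, and one is left with terms in which $n'$ appears (bounded using $\int|n'|\lesssim1$, the first crude bound and $\|e^{-is\Lambda_\mu}f\|_{L^\infty}\lesssim2^{-(1-\delta_2/8)m}$, hence with no factor $2^m$), or $\partial_s f$ appears (use $\partial_s f=F_2+F_\infty$ from \eqref{4.2}: $F_\infty$ via the first crude bound and $\|e^{-is\Lambda_\mu}P_kF_\infty\|_{L^\infty}\lesssim2^{-2m+\delta_2m/4}$, $F_2$ via the second crude bound and $\|P_kF_2\|_{L^2}\lesssim2^{-5m/4+\delta_2m/8}$), or $\partial_s g$ or $\partial_s h$ appears (first crude bound in the $f$-slot, and $\|\partial_sg\|_{L^2}+\|\partial_sh\|_{L^2}\lesssim2^{-(1-\delta_2/8)m}$). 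Since the new symbol $a\varphi_{>l_0}(\Phi)/(i\Phi)$ loses only a fixed power of $2^{-l_0}=2^{c_0\delta_2m}$ in $L^\infty$ and in $\|\mathcal F^{-1}(\cdot)\|_{L^1}$, and each of these terms already carries at least one spare power of $2^{-m}$, the whole piece is $\lesssim2^{-m/2}$ once $c_0$ is small enough.

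The bulk of the work is the region $|\Phi|\le2^{l_0}$. Here I would decompose $f=\sum_{j_1,k_1,n_1}f_{j_1,k_1,n_1}$ as in \eqref{3.1}, and $g=\sum_{j_2,k_2}g_{j_2,k_2}$, $h=\sum_{j,k}h_{j,k}$, noting that since $g,h$ carry no decay in $j_2,j$ one sums these two parameters with only an $O(\sqrt m)$ loss by orthogonality, so $f$ is the only input that must be placed in a favourable slot. When $\max(j_1,j_2)\gtrsim(1+\delta_2)m$ the contribution is negligible, via the $Z$-norm $L^2$-decay of $f_{j_1,k_1,n_1}$ (when $j_1$ is the largest) and the crude bilinear bounds. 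In the remaining range one localizes $|\nabla_\eta\Phi|\sim2^p$: when $2^p$ exceeds a threshold $2^{p_0}=\max(2^{j_1-m},2^{j_2-m},2^{-m/2})2^{c_1\delta_2m}$, one integrates by parts many times in $\eta$ by Lemma 3.1(i) — the $\eta$-derivatives of $\widehat{f_{j_1,k_1}}$ and $\widehat{g_{j_2,k_2}}$ cost $2^{j_1}$ and $2^{j_2}$ (by \eqref{3.5} and the analogous elementary bound for $g$), those of the $\Phi$- and $\nabla_\eta\Phi$-cutoffs cost $2^{-l_0}$ and $2^{-p}$, all beaten by $2^{m+p}$ — giving rapid decay; one also applies Lemma 3.1(ii) in $\Omega_\eta$ to the $f$-factor alone (using $f\in H^{N_1/2}_\Omega$ from \eqref{4.1}) to trim the angular extent of the region, as in \cite{y1,y2}.

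Finally, on the near-space-time-resonant set $|\Phi|\le2^{l_0}$, $|\nabla_\eta\Phi|\le2^{p_0}$, no integration by parts in $s$ or $\eta$ is available, and this is the step I expect to be the main obstacle. Here I would invoke the second nondegeneracy condition in the form \eqref{1.7}: for $\eta\ne0$ the Hessian $\nabla^2_{\eta\eta}\Phi$ at $\eta=p(\xi)$ is nondegenerate, so $|\nabla_\eta\Phi|\le2^{p_0}$ confines $\eta$ to a ball of radius $\lesssim2^{p_0}$ about $p(\xi)$, on which $\Phi=\Psi_{\sigma\mu\nu}(\xi)+O(2^{2p_0})$; combined with $|\Phi|\le2^{l_0}$ this forces $|\Psi^*_\sigma(\xi)|\lesssim2^{l_0}$ (so $\xi$ lies in a thin shell about a resonant sphere and $\xi-\eta$ lies near a $\Lambda_\mu$-resonant sphere), where the all-low-frequency case is excluded by $b_\sigma-b_\mu-b_\nu\ne0$. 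On this set one combines stationary phase in $\eta$ about $p(\xi)$ (gaining an extra $s^{-1}$, with $\widehat{g_{j_2,k_2}}$ smooth thanks to the $j_2$-decomposition), the small volume of the $\xi$-shell, and the radial and sup-localized estimates \eqref{3.2}--\eqref{3.4} on $\widehat{f_{j_1,k_1,n_1}}$ together with the geometric constraints relating $n_1$, $j_1$ and the shell width; the differentiated-$\partial_s f$ terms are fed in through \eqref{4.2} exactly as before. A careful accounting of the relative sizes of $m,l_0,p_0,j_1,j_2,j,n_1$ — the delicate point in two dimensions — then produces the gain $2^{-\delta_2 m/10}$, and it is here that the multispeed and multimass features (the absence of the separation property \eqref{1.5}, and the possibility of $n_1>0$ and $n_2>0$ simultaneously, discussed in Section 1.2.3) force a longer list of cases than in \cite{y2}, without introducing a genuinely new mechanism.
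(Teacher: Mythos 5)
Your proposal takes a genuinely different route from the paper, and the near--space-time-resonant step does not close in two dimensions with the hypotheses you are given.

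The paper proves this lemma by following Section 5 of \cite{y2}: the $TT^*$ method with tangential integration by parts. After writing $\varphi_l(\Phi)=2^l\int e^{i\lambda\Phi}P(2^l\lambda)\,d\lambda$ so that the phase localization becomes an extra oscillatory parameter $\lambda$, one fixes $\lambda$ and $f$ and studies the $L^2_\eta\to L^2_\xi$ operator norm of the bilinear map $g\mapsto\int a\,e^{i(s+\lambda)\Phi}\hat f(\xi-\eta)\hat g(\eta)\,d\eta$. The $TT^*$ kernel $K(\xi,\xi')$ involves two copies of $\hat f$ and \emph{no copies of $\hat g$ or $\hat h$}; its phase difference $\Phi(\xi,\eta)-\Phi(\xi',\eta)$ has $\eta$-gradient governed by the cross-Hessian $\nabla^2_{\xi,\eta}\Phi$, whose tangential component is exactly the function $\Upsilon=\nabla^2_{\xi,\eta}\Phi[\nabla_\xi^\perp\Phi,\nabla_\eta^\perp\Phi]$; the nonvanishing of $\Upsilon$ away from a small set is what Proposition 6.8(b) quantifies and what produces the decisive gain. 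The second nondegeneracy condition enters here, and it is also this structure that lets the $F_2$ piece of $\partial_s f$ (which has the weaker $L^2$ bound in \eqref{4.2}) be absorbed at the end by a separate Schur's-test computation using Proposition 6.6(a).

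Your scheme instead performs a direct profile decomposition and stationary-phase analysis of the trilinear form, of the kind used in Section 5 of the paper for the $Z$-norm control. That is the wrong tool here, because in Lemma 4.1 both $g$ and $h$ carry only $L^2$ bounds --- no $Z$-norm, no $H_\Omega^{N_1/2}$ control, no pointwise decay --- and every device in your proposal that is supposed to produce the final gain needs something more from them.
\begin{itemize}
\item Stationary phase in $\eta$ about $p(\xi)$ requires evaluating $\hat g$ pointwise at $p(\xi)$; $\hat g$ is only an $L^2$ function and need not have any pointwise value there. The $g_{j_2,k_2}$ decomposition does not rescue this: the derivatives of $\widehat{g_{j_2,k_2}}$ cost $2^{j_2}$, and $g$ carries no $j_2$-decay, so there is an unsummable range of $j_2$ in which stationary phase gains nothing.
\item Related to this, your reduction ``$\max(j_1,j_2)\gtrsim(1+\delta_2)m$ is negligible'' only works when $j_1$ is the maximum; when $j_2$ is the maximum, the ``crude bilinear bounds'' do not decay in $j_2$ and the term is not small.
\item The rotational integration by parts of Lemma 3.2 requires $\|\Omega^a g\|_{L^2}\lesssim 1$ as well as the $f$-side hypotheses; it cannot be applied ``to the $f$-factor alone'' when working directly on the trilinear form, because $\Omega_\eta$ acting on $\hat g(\eta)$ is precisely $\Omega\hat g$, which you do not control.
\end{itemize}
On the set $|\Phi|\le 2^{l_0}$, $|\nabla_\eta\Phi|\le 2^{p_0}$ the naive estimate you have available (put $\hat f$ in $L^\infty$, integrate $\hat g$ on the $\eta$-ball, Cauchy--Schwarz $\hat h$ on the $\xi$-shell) gives roughly $2^m\cdot 2^{p_0}\cdot 2^{l_0/2}$, which for the standard $p_0\approx -m/2$ and $l_0=-c_0\delta_2 m$ is $\approx 2^{m/2}$, nowhere near $2^{-\delta_2m/10}$. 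You flagged this step as the main obstacle, and that instinct was correct: without the $TT^*$ kernel there is no place in your scheme from which the required gain can come. The point of $TT^*$ is precisely that it relocates all of the hard analysis onto the one input, $f$, for which you have $Z$-norm and $H_\Omega$ control.
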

\begin{proof}
This was basically proved in Section 5 in \cite{y2}, except that there are some slight differences. In the following, we use all the notations in Section 5 in \cite{y2}. We first modify (5.2) of \cite{y2} to be 
$$\left\|Tf\right\|_{L^2}\lesssim 2^{60k}\,2^{-1.004\lambda} \left\|f\right\|_{L^2}.$$ Thus, in the following proof, we may assume $\lambda\ge 60k+D$. Next, we move on to Lemma 5.2 of \cite{y2}. In our case, we have to divide $E=E^\prime_1\cup E^\prime_2$ as
$$E=\left\{(\xi,\eta):\max(\left|\xi\right|,\left|\eta\right|)\le 2^k,\left|\Phi(\xi,\eta)\right|\le 2^{-k}\varepsilon,\left|\Upsilon(\xi,\eta)\right|\le 2^{-3k}\varepsilon^\prime \right\},$$
$$E^\prime_1=\left\{(\xi,\eta):\max(\left|\xi\right|,\left|\eta\right|)\le 2^k,\left|\Phi(\xi,\eta)\right|\le 2^{-k}\varepsilon,\left|\Upsilon(\xi,\eta)\right|\le 2^{-3k}\varepsilon^\prime,\left|\nabla_\eta\Phi(\xi,\eta)\right|\ge 2^{-3\Bar{k}} \right\},$$
$$E^\prime_2=\left\{(\xi,\eta):\max(\left|\xi\right|,\left|\eta\right|)\le 2^k,\left|\Phi(\xi,\eta)\right|\le 2^{-k}\varepsilon,\left|\Upsilon(\xi,\eta)\right|\le 2^{-3k}\varepsilon^\prime,\left|\nabla_\xi\Phi(\xi,\eta)\right|\ge 2^{-3\Bar{k}} \right\}.$$
But in this case, we can still use Proposition 6.6 (b), since it can be proved similarly.\par
Then we go to check Lemma 5.3 of \cite{y2}. In our case, the assumptions (5.8) of \cite{y2} have to be changed to 
$$\left|\nabla_\xi\Phi(\nu^i,\nu^j)\right|\gtrsim 2^{-3k},\ \ \left|\nabla_\eta\Phi(\nu^i,\nu^j)\right|\approx\rho\gtrsim R,\ \ \left|\Upsilon(\nu^i,\nu^j)\right|\gtrsim R.$$
Thus, the second formula of (5.13) of \cite{y2} needs to be changed to be $b_1\triangleq\left|\nabla_\xi\Phi(\xi,\eta^\prime_0)\right|\gtrsim 2^{-3k}$. Now, in (5.14) of \cite{y2}, we consider the case $\left|\omega_1\right|\ge 2^D \left(2^{3k} 2^{-\lambda}+\omega_2^2\right)$ instead, and then using the Taylor expansion as shown in that paper we end up with 
$$\left|\Phi(\xi^\prime,\eta)\right|\ge b_1 \omega_1-C\cdot 2^{-\frac{\lambda}{2}}\left(\left|\omega_1\right|+\left|\omega_2\right|\right)-C\cdot\omega_1^2-\frac{1}{2^D}\, 2^{-3k}\omega_1 \ge C\cdot 2^{-3k}\omega_1,$$
which again implies $K_{\eta_0}(\xi,\xi^\prime)=0$ as what we want. On the other hand, in (5.16) of \cite{y2} we have to assume $\left|\omega_1\right|\le 2^D\left(2^{3k}\cdot 2^{-\lambda}+\omega_2^2\right)$. Previously, we had
$$\int_{\mathbb{R}^2} \left|K_{\eta_0}(\xi,\xi^\prime)\right|\,d\xi^\prime\lesssim R^{-3}\,2^{-3\lambda},$$
but now we should have
$$\int_{\mathbb{R}^2} \left|K_{\eta_0}(\xi,\xi^\prime)\right|\,d\xi^\prime\lesssim R^{-3}\,2^{3k}2^{-3\lambda}.$$
Since $2^{3k}\,R^{-3}=2^{3k+\frac{51}{242}\lambda}\le 2^{\frac{1}{20}\lambda+\frac{51}{242}\lambda}\le R^{-4}$, we can still get that
$$\int_{\mathbb{R}^2} \left|K_{\eta_0}(\xi,\xi^\prime)\right|\,d\xi^\prime\lesssim R^{-4}\,2^{-3\lambda},$$
which is required in (5.10) of \cite{y2}.\par
Finally, we move on to Lemma 5.4 and Lemma 5.5 of \cite{y2}. They still hold if we modify (5.20) of \cite{y2} to be
$$\left\|R\left[P_{\le k}f,P_{\le k}g\right]\right\|_{L^2}\lesssim 2^{60k} 2^{-(1+\delta_2/2)m}\cdot\max_{1\le\sigma\le d}\left\|f\right\|_{Z^\sigma_1\cap H_\Omega^{N_1/2}}\left\|g\right\|_{L^2},$$
and modify (5.27) of \cite{y2} to be
(\ref{4.4}). Note that even though the assumption of $\left\|P_k F_2(s)\right\|_{L^2}$ is weaker than its corresponding part in \cite{y2}, we are still able to get the desired estimate $\left|II\right|$ in Page 812 in \cite{y2}.\vspace{0.3em} In fact, let $K(\xi,\eta)\triangleq a(\xi,\eta) e^{i(s+\lambda)\Phi(\xi,\eta)}\widehat{F_2}(\xi-\eta,s)$ where
$$a(\xi,\eta)=\int_{\mathbb{R}^2\times\mathbb{R}^2} k(x,y)e^{ix\cdot\xi}e^{iy\cdot\eta}\ \ \ \ \left\|k\right\|_{L^1(\mathbb{R}^4)}\lesssim 1$$
as in \cite{y2}, then we have
\begin{align*}
    &\int \left|K(\xi,\eta) \right|\,d\xi\\
    \le &\left(\int_E \left|a(\xi,\eta)\right|^2\,d\xi\right)^{1/2} \left(\int_E\left|\widehat{F_2}(\xi-\eta)\right|^2\,d\xi\right)^{1/2}\\
    \le &\left(\int_E \left|1_E(\xi,\eta)\right|^2\,d\xi\right)^{1/2} \left(\int_E\left|\widehat{F_2}(\xi-\eta)\right|^2\,d\xi\right)^{1/2}\\
    \le\, & 2^{5k+\tau/2} \left(\int_E\left|\widehat{F_2}(\xi-\eta)\right|^2\,d\xi\right)^{1/2},\\
    \le\, & 2^{5k+\tau/2}\,2^{-5m/4+60\delta m}.
\end{align*}
where using Proposition 6.6 (a) for the second last inequality. Note that Proposition 6.6 (a) also holds for $k\le 0$. We may also assume $k<\delta^2 m$, thus whenever $-k<\tau<0$, we can estimate it directly without applying Proposition 6.6 (a). Now, if $\tau<-k$, then we take $\varepsilon\triangleq 2^{10k+\tau}$, and get the second last inequality. By Schur's test, we get that
\begin{align*}
   \left\|\int_{\mathbb{R}^2} a(\xi,\eta) e^{i(s+\lambda)\Phi(\xi,\eta)}\widehat{F_2}(\xi-\eta,s)\hat{g}(\eta,s)\,d\eta\right\|_{L^2_\xi}\lesssim &2^{-5m/4+60\delta m+\tau/2+5k}\left\|g(s)\right\|_{L^\infty}\\
   \lesssim &2^{-5m/4+60\delta m+\tau/2+5k}, 
\end{align*}
so we estimate
\begin{align*}
    \left|II_{F_2}\right|\triangleq&\left|\int_{\mathbb{R}\times\mathbb{R}} n(s)\,P(2^\tau \lambda) \int_{\mathbb{R}^4} a(\xi,\eta) e^{i(s+\lambda)\Phi(\xi,\eta)}\widehat{F_2}(\xi-\eta,s)\hat{g}(\eta,s)\overline{\hat{h}(\xi,s)}\,d\eta d\xi\,dsd\lambda\right| \\
    \lesssim&\int_{\mathbb{R}\times\mathbb{R}} n(s)\,P(2^\tau \lambda) \left\|h(s)\right\|_{L^2}\cdot\left\|\int_{\mathbb{R}^4} a(\xi,\eta) e^{i(s+\lambda)\Phi(\xi,\eta)}\widehat{F_2}(\xi-\eta,s)\hat{g}(\eta,s)\,d\eta\right\|_{L^2_\xi}\,dsd\lambda \\
    \lesssim&\int_{\mathbb{R}\times\mathbb{R}} n(s)\,P(2^\tau \lambda)\cdot 1\cdot 2^{-5m/4+60\delta m+\tau/2+5k}\,dsd\lambda \\
    \lesssim&\,2^{-m/4-\tau/2+5k}.
\end{align*}
If $-(1-\delta_2/2)m/2\le\tau\le 0$, then we get $\left|II_{F_2}\right|\lesssim 2^{-\delta_2 m/10}$. Thus, we only need to consider the case when $-(1-\delta_2/2)m\le \tau\le -(1-\delta_2/2)m/2$. We may also assume $\max(k,k_1,k_2)<\delta^2 m$. If $-D\le k_1 \le\delta^2 m$, then we can deal with it exactly the same as above (in fact we can improve $\left\|P_k F_2(s)\right\|_{L^2}$ in view of (\ref{4.2})) and get $\left|II_{F_2}\right|\lesssim 2^{-m/2-\tau/2+\frac{9}{2}k}\lesssim 2^{-\delta_2 m/10}$. If $k_1\le -D$ and $k,k_2\ge -D-10$, then by Proposition 6.1, we have $\left|\nabla_\eta\Phi\right|\gtrsim 2^{-3\delta^2 m}$. This implies that (WLOG) $\left|\partial_{\xi_1}\Phi\right|\gtrsim 2^{-3\delta^2 m}$. Note that $\left|\Phi\right|\lesssim 2^\tau$, so we have that $\xi_1$ belongs to an interval with length at most $\sim 2^{\tau-3\delta^2 m}$. On the other hand, since $\eta$ is fixed and $\left|\xi-\eta\right|\sim 2^{k_1}$, we know that $\xi_2$ belongs to an interval with length at most $\sim 2^{k_1}$. To sum up, we get
$$\left(\int_E \left|1_E (\xi,\eta)\right|^2\,d\xi\right)^{1/2}\lesssim 2^{k_1/2+\tau/2-3\delta^2 m/2}.$$
Thus, we again proceed as before using Schur's lemma, and get 
$$\left|II_{F_2}\right|\lesssim 2^m 2^{-\tau} 2^{k_1/2+\tau/2-3\delta^2 m/2} 2^{-3m/2-k_1/2+\delta_2 m/8}\lesssim 2^{-m/2-\tau/2+\delta_2 m/8}\lesssim 2^{-\delta^2 m/4}.$$
Finally, if $k,k_1,k_2\le -D_0$, then $\left|\Phi\right|\sim 1$, which contradicts $\left|\Phi\right|\sim 2^\tau\le 2^{-(1-\delta_2/2)m/2}$.
\end{proof}
\vspace{1em}
Next, we define the energy as
\begin{align*}
   \mathcal{E}(t)&\triangleq\frac{1}{2}\left(\sum_{\sigma\in\left\{1,2,\cdots,d\right\}}\left\|\left\langle\nabla\right\rangle^{N_0}v_\sigma(t)\right\|^2_{L^2}+\sum_{\sigma\in\left\{1,2,\cdots,d\right\},\atop p\in\left\{1,2,\cdots,N_1\right\}} \left\|\Omega^p v_\sigma(t)\right\|^2_{L^2}\right), \\
   &\triangleq\mathcal{E}_1(t)+\mathcal{E}_2(t)
\end{align*}
and we are ready to prove the energy estimate.

\begin{lemma}
   Suppose $\textbf{u}$ is the solution to (\ref{1.1}) on a time interval $[0,T]$ with initial data $\textbf{u}(0)=\textbf{g},\partial_t\textbf{u}(0)=\textbf{h}$. Let $v_\sigma\triangleq\left(\partial_t-i\Lambda_\sigma\right)u_\sigma$ for $\sigma\in\left\{1,2,\cdots,d\right\}$ Assume that
   $$\left\|(\textbf{g},\partial_x\textbf{g},\textbf{h})\right\|_X\le\varepsilon_0,\ \ \ \ \ \sup\limits_{0\le t\le T}\left\|\textbf{V}(t)\right\|_X\le\varepsilon_1,$$
   then we have
   $$\left\|\textbf{v}(t)\right\|_{H^{N_0}}+\sup\limits_{\beta\le N_1}\left\|\Omega^\beta \textbf{v}(t)\right\|_{L^2}\le\varepsilon_0+\varepsilon_1^{3/2}.$$
\end{lemma}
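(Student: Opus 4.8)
The plan is to run the classical energy method: show that the energy $\mathcal{E}(t)=\mathcal{E}_1(t)+\mathcal{E}_2(t)$ defined just above satisfies $\mathcal{E}(t)\le\mathcal{E}(0)+C\varepsilon_1^3$ on $[0,T]$, observe that $\mathcal{E}(0)\lesssim\varepsilon_0^2$ (since $v_\sigma(0)=\mathbf{h}_\sigma-i\Lambda_\sigma\mathbf{g}_\sigma$, and $\Lambda_\sigma$ commutes with $\Omega$ and is comparable to $\langle\nabla\rangle$, so $\|v_\sigma(0)\|_{H^{N_0}}+\sup_{\beta\le N_1}\|\Omega^\beta v_\sigma(0)\|_{L^2}\lesssim\|(\mathbf{g},\partial_x\mathbf{g},\mathbf{h})\|_{H^{N_0}\cap H^{N_1}_\Omega}\le\varepsilon_0$), and conclude from $\|\mathbf{v}(t)\|_{H^{N_0}}+\sup_{\beta\le N_1}\|\Omega^\beta\mathbf{v}(t)\|_{L^2}\lesssim\mathcal{E}(t)^{1/2}\lesssim\varepsilon_0+\varepsilon_1^{3/2}$.

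First I would differentiate. Since $i\Lambda_\sigma$ is skew-adjoint and commutes with $\langle\nabla\rangle^{N_0}$ and with $\Omega$, equation (\ref{3.9}) yields $\tfrac{d}{dt}\mathcal{E}_1=\sum_\sigma 2\,\mathrm{Re}\,\langle\langle\nabla\rangle^{N_0}\mathcal{N}_\sigma,\langle\nabla\rangle^{N_0}v_\sigma\rangle$ and $\tfrac{d}{dt}\mathcal{E}_2=\sum_{\sigma,p}2\,\mathrm{Re}\,\langle\Omega^p\mathcal{N}_\sigma,\Omega^p v_\sigma\rangle$, where $\mathcal{N}_\sigma$ denotes the constant-coefficient quadratic form in $(v_\beta,\overline{v_\beta})$ on the right of (\ref{3.9}). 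Because $\Omega$ is a first-order derivation ($\Omega^p(fg)=\sum_{a+b=p}\binom{p}{a}\Omega^af\,\Omega^bg$, hence some factor carries $\le N_1/2$ rotations) and $\langle\nabla\rangle^{N_0}$ distributes up to a fractional-Leibniz/commutator remainder, integrating in $t$ and passing to the profiles $V_*=e^{is\Lambda_*}v_*$ writes $\mathcal{E}(t)-\mathcal{E}(0)$ as a finite sum (over $\sigma,\mu,\nu$, over the derivative split, and over a dyadic partition $n(s)$ of $[0,t]$ into time-scales $s\approx 2^m$ with $\int|n'|\lesssim 1$) of trilinear expressions of exactly the form $I[f,g,h]$ appearing in Lemma 4.1, the oscillation $e^{is\Phi_{\sigma\mu\nu}}$ arising precisely from combining the three linear flows.

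Next I would match each such piece to Lemma 4.1. After a Littlewood–Paley decomposition of the three profiles, the convolution constraint forces the two largest frequencies to be comparable, so one factor sits at the lowest frequency and the other two share the high frequency; I would put the full derivative weight ($\langle\xi\rangle^{N_0}$, resp. all the $\Omega$'s) on the high-frequency \emph{input} and the \emph{output}, which is legitimate since $\langle\xi\rangle^{N_0}\lesssim\langle\xi-\eta\rangle^{N_0}+\langle\eta\rangle^{N_0}$. This is where semilinearity enters: no derivative falls on the low-frequency factor, so the residual symbol $a(\xi,\eta)$ is, on each dyadic block, a Coifman–Meyer symbol (a product of $\Lambda_*^{-1}$-symbols with the bounded ratio $\langle\xi\rangle^{N_0}\langle\xi-\eta\rangle^{-N_0}$), and $\|\mathcal{F}^{-1}a\|_{L^1(\mathbb{R}^4)}\lesssim1$ after the obvious normalization. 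I would then take $f$ to be the normalized low-frequency profile (its $Z_1^\mu\cap H^{N_1/2}_\Omega$-norm is $\lesssim\varepsilon_1$ by the bootstrap hypothesis, and in the $\mathcal{E}_2$ part it carries only $\le N_1/2$ rotations, exactly what that norm allows), and $g,h$ the normalized high-frequency profiles carrying the derivatives (controlled in $L^2$ by $\lesssim\varepsilon_1$ via the $H^{N_0}$/$H^{N_1}_\Omega$ bounds). The remaining hypotheses of Lemma 4.1 on $f$ — the dispersive bound $\|e^{-is\Lambda_\mu}f\|_{L^\infty}\lesssim2^{-(1-\delta_2/8)m}$ (from Lemma 3.6 and (\ref{3.7})), the bound $\|\partial_sf\|_{L^2}\lesssim2^{-(1-\delta_2/8)m}$, and the decomposition $\partial_sf=F_2+F_\infty$ with the stated $L^2$/$L^\infty$ estimates — are furnished by Proposition 3.8 (see (\ref{3.12}), (\ref{3.13})) and Corollary 3.11; the corresponding $\partial_sg$, $\partial_sh$ estimates follow from (\ref{3.14}) and (\ref{3.20}). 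Lemma 4.1 then bounds each dyadic block at time-scale $2^m$ by $\lesssim\varepsilon_1^3\,2^{60\max(k,k_1,k_2,0)}2^{-\delta_2m/10}$. Summing over $m$ is geometric; summing over the dyadic frequencies, the loss $2^{60\max}$ is absorbed by the two Sobolev (resp.\ $\Omega^{N_1}$) weights carried by the two high-frequency factors together with the summable $Z_1$-weights on the low-frequency factor, which is where the choices $N_0=400/\delta^2$, $N_1=8/\delta^2$ are used; the fractional-Leibniz/commutator remainders carry one fewer derivative and are handled the same way or more crudely. This gives $\mathcal{E}(t)-\mathcal{E}(0)\lesssim\varepsilon_1^3$ and hence the claim.

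The engine is Lemma 4.1, which is already granted; the genuine difficulty it resolves is that the $2$D pointwise decay is only $t^{-1+O(\delta)}$, not time-integrable, so the naive inequality $|\tfrac{d}{dt}\mathcal{E}|\lesssim\varepsilon_1^2(1+t)^{-1+O(\delta)}\mathcal{E}^{1/2}$ does not close — the $TT^*$/space-time-resonance mechanism inside Lemma 4.1 is exactly what squeezes out the extra $2^{-\delta_2m/10}$. The main work remaining here is therefore the bookkeeping of the previous paragraph: packaging $\tfrac{d}{dt}\mathcal{E}$ into the precise template $I[f,g,h]$ for \emph{every} frequency configuration, placing the $N_0$ derivatives so the residual symbol is Coifman–Meyer, and checking all hypotheses of Lemma 4.1 (in particular the $F_2+F_\infty$ splitting of the time derivatives). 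Semilinearity is exactly what keeps this routine — there is no derivative loss, so none of the tangential-integration-by-parts refinements of \cite{y2} are needed — which is why this step is essentially a transcription of Section 5 of \cite{y2}.
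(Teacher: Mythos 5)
Your overall template — energy identity, time localization into scales $s\approx 2^m$, Littlewood--Paley decomposition, and Lemma~4.1 as the engine that delivers the extra $2^{-\delta_2 m/10}$ — is the right shape, and your observations about the initial-data estimate $\mathcal{E}(0)\lesssim\varepsilon_0^2$ and the source of the time-derivative hypotheses from Proposition~3.8 and Corollary~3.11 are correct. However, there is a genuine gap in the way you claim to close the dyadic sums: you propose to feed \emph{every} frequency triple $(k,k_1,k_2)$ into Lemma~4.1 and then argue that the conclusion's loss $2^{60\max(k,k_1,k_2,0)}$ is ``absorbed by the two Sobolev weights.'' This does not work once the Sobolev weight has been folded into the symbol $a(\xi,\eta)$ so that the hypothesis $\|g\|_{L^2}+\|h\|_{L^2}\lesssim 1$ holds: the normalized pieces $\|P_{k_1}g\|_{L^2},\ \|P_k h\|_{L^2}\lesssim 1$ carry no further decay in the frequency, and the bound $2^{60\max}2^{-\delta_2 m/10}$, summed over $k,k_1,k_2\in\mathbb{Z}$ with only the triangle constraint, diverges both at high frequency (the $2^{60\max}$ factor is uncompensated) and at low frequency (infinitely many blocks with uniform bound $2^{-\delta_2 m/10}$). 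Lemma~4.1 is useful exactly because the ambient frequency range has already been truncated to a window of size $O(m)$.

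What the paper actually does in the proof of Lemma~4.2 is split the frequency sum into three regimes and apply a \emph{different, elementary} estimate in two of them. Writing $I=I^{(1)}+I^{(2)}+I^{(3)}$ for $\max(k,k_1,k_2)\ge 23\delta m$, $\min(k,k_1,k_2)\le -2m$, and the remaining medium block $-2m\le k,k_1,k_2\le 23\delta m$ respectively, the paper bounds $I^{(1)}$ by a direct $L^\infty\times L^2\times L^2$ (Cauchy--Schwarz plus the $(1+t)^{-1+21\delta}$ dispersive decay of (\ref{3.11})), where the summability over high $k,k_1,k_2$ comes from the $\Lambda_\beta^{-1},\Lambda_\gamma^{-1}$ factors (semilinearity) producing a gain $2^{-k_1}2^{-k_2}$, and the restriction $\max\ge 23\delta m$ turns that gain into the needed $2^{-2\delta m}$ after summation; $I^{(2)}$ is handled by Bernstein/Hausdorff--Young (the frequency ball of size $2^{k_2}\le 2^{-2m}$ gives an overwhelming volume gain $2^{3k_2}$); and \emph{only} $I^{(3)}$, where the dyadic count is $O(m^3)$ and $\max\le 23\delta m$, is fed into Lemma~4.1. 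Your proposal is missing the first two regimes entirely — in particular the realization that the naive $(1+t)^{-1+21\delta}$ decay \emph{does} close in the high- and low-frequency regimes thanks to the extra localization gain, so that the $TT^*$ mechanism is needed only on the medium block — and your absorbing-the-loss argument cannot substitute for that split.
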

\begin{proof}
With the definition of the energy $\mathcal{E}$ above, it suffices to show that $\displaystyle{\int_0^t \left(\partial_s \mathcal{E}\right)(s)\,ds\le\varepsilon_1^{3/2}}$. Now, we do the time localization, and introduce the following cutoff functions. Let $L=[\log_2t]$, and we pick up a bunch of functions: $q_0,q_1,\cdots,q_L,q_{L+1}:\mathbb{R}\longrightarrow[0,1]$ such that (1) supp\,$q_0\subseteq[0,2]$, supp\,$q_{L+1}\subseteq[t-2,t]$ and supp\,$q_m\subseteq[2^{m-1},2^{m+1}]$ for $m\in\left\{1,2,\cdots,L\right\}$; (2) $\sum_{m=0}^{L+1} q_m(s)=\textbf{1}_{[0.t]}(s)$; (3) $q_m (s)\in C^1(\mathbb{R})$ and $\displaystyle{\int_0^t \left|q_m^\prime(s)\right|\,ds\lesssim 1}$ for $m\in\left\{1,2,\cdots,L\right\}$. Thus, it suffices to show that
$$\left|\int_0^t q_m(s) \left(\partial_s \mathcal{E}\right) (s)\,ds\right|\lesssim \varepsilon_1^3 2^{-\delta^2 m}.$$\par
Let's deal with $\mathcal{E}_2$ first. This case is a little bit complicated. First, note that
\begin{align*}
    \partial_t \mathcal{E}_2(t)&=\sum_{\sigma\in\left\{1,2,\cdots,d\right\},\atop p\in\left\{1,2,\cdots,N_1\right\}}\mbox{Re}\int \partial_t \Omega^p v_\sigma\cdot \overline{\Omega^p v_\sigma}\,dx  \\
    &=\sum_{\sigma\in\left\{1,2,\cdots,d\right\},\atop p\in\left\{1,2,\cdots,N_1\right\}}\mbox{Re}\int \Omega^p \sum_{\alpha,\beta,\gamma=1}^d A_{\alpha \beta \gamma} \left(\Lambda_\beta^{-1}\frac{v_\beta-\Bar{v_\beta}}{2i}\right)\left(\Lambda_\gamma^{-1}\frac{v_\gamma-\Bar{v_\gamma}}{2i}\right)\cdot \overline{\Omega^p v_\sigma}\,dx  \\
    &=\sum_{\sigma\in\left\{1,2,\cdots,d\right\},\atop p\in\left\{1,2,\cdots,N_1\right\}}\sum_{\alpha,\beta,\gamma=1}^d \sum_{p_1+p_2=p} A_{\alpha \beta \gamma}\,\mbox{Re}\int  \Omega^{p_2} \left(\Lambda_\beta^{-1}\frac{v_\beta-\Bar{v_\beta}}{2i}\right)\cdot \Omega^{p_1}\left(\Lambda_\gamma^{-1}\frac{v_\gamma-\Bar{v_\gamma}}{2i}\right)\cdot \overline{\Omega^p v_\sigma}\,dx,
\end{align*}
where the last equality is due to the fact that the operator $e^{it\Lambda}$ is preserved under $L^2$ norm.
Thus, it suffices to consider the following forms:
\begin{align*}
   I&\triangleq\mbox{Re}\int  \Omega^{p_2} \left(\Lambda_\beta^{-1} v_\beta\right)\cdot \Omega^{p_1}\left(\Lambda_\gamma^{-1} v_\gamma\right)\cdot \overline{\Omega^p v_\sigma}\,dx \\
   &=\mbox{Re}\iint \reallywidehat{\Lambda_\beta^{-1} \Omega^{p_2} v_\beta}(\eta,s)\, \reallywidehat{\Lambda_\gamma^{-1}\, \Omega^{p_1}v_\gamma}(\xi-\eta,s)\,d\eta\ \widecheck{\overline{\Omega^p v_\sigma}}(\xi,s)\,d\xi \\
   &=\mbox{Re}\int_{\mathbb{R}^2\times\mathbb{R}^2} \widehat{\Omega^{p_2} u_\beta}(\eta,s)\,\widehat{\Omega^{p_1} u_\gamma}(\xi-\eta,s)\,\overline{\widehat{\Omega^p v_\sigma}}(-\xi,s)\,d\xi d\eta
\end{align*}
where $p_1+p_2=p$ and $1\le\beta,\gamma,\sigma\le d$.
Now, we do the Littlewood-Paley projection and consider
$$\left|I_{k,k_1,k_2}\right|\le\int_{\mathbb{R}^2\times\mathbb{R}^2} \left|\reallywidehat{P_{k_2}\Omega^{p_2} u_\beta}(\eta,s)\right|\cdot\left|\reallywidehat{P_{k_1}\Omega^{p_1} u_\gamma}(\xi-\eta,s)\right|\cdot\left|\overline{\reallywidehat{P_k\Omega^p v_\sigma}}(-\xi)\right|\,d\xi d\eta.$$
Then, we have
\begin{align*}
    I&=\sum_{k,k_1,k_2\atop\text{s.t. }2^k,2^{k_1},2^{k_2}\text{ are three sides of a triangle}} I_{k,k_1,k_2} \\
    &=\sum_{\max(k,k_1,k_2)\ge 23\delta m} I_{k,k_1,k_2}+\sum_{\min(k,k_1,k_2)\le -2m} I_{k,k_1,k_2}+
    \sum_{-2m\le k,k_1,k_2\le 23\delta m} I_{k,k_1,k_2} \\
    &\triangleq I^{(1)}+I^{(2)}+I^{(3)}. \tag{4.5}\label{4.5}
\end{align*}
Define
\begin{align*}
   \chi_k^1&\triangleq\left\{(k_1,k_2):\left|\max(k_1,k_2)-k\right|\le 4\right\} \\ \chi_k^2&\triangleq\left\{(k_1,k_2):\max(k_1,k_2)\ge k+4,\left|k_1-k_2\right|\le 4\right\},
\end{align*}
then we consider $I^{(1)}$ first
\begin{align*}
    I^{(1)}=&\sum_{\chi_k^1\atop k\ge 23\delta m-4} I_{k,k_1,k_2}+\sum_{\chi_k^2\atop k\ge 23\delta m-4} I_{k,k_1,k_2}+\sum_{\chi_k^2\atop k\le 23\delta m-4} I_{k,k_1,k_2} \\
    \approx&\left(\sum_{k\ge 23\delta m-4} \sum_{0\le k_2 \le k} I_{k,k_1,k_2}+\sum_{k\ge 23\delta m-4} \sum_{-100m\le k_2\le 0} I_{k,k_1,k_2}+\sum_{k\ge 23\delta m-4} \sum_{k_2\le -100m} I_{k,k_1,k_2} \right.\\
    &\left.+\sum_{k\ge 23\delta m-4} \sum_{0\le k_1 \le k} I_{k,k_1,k_2}+\sum_{k\ge 23\delta m-4} \sum_{-100m\le k_1\le 0} I_{k,k_1,k_2}+\sum_{k\ge 23\delta m-4} \sum_{k_1\le -100m} I_{k,k_1,k_2}\right) \\
    &+\sum_{k\ge 23\delta m-4} \sum_{k_1\ge k+4} I_{k,k_1,k_2} \\
    &+\left(\sum_{k\le -100m} \sum_{k_1\ge 23\delta m} I_{k,k_1,k_2}+\sum_{-100m\le k\le 23\delta m-4} \sum_{k_1\ge 23\delta m} I_{k,k_1,k_2}\right) \\
    \triangleq& I^{(1,1)}+I^{(1,2)}+I^{(1,3)}+I^{(1,4)}+I^{(1,5)}+I^{(1,6)}+I^{(1,7)}+I^{(1,8)}+I^{(1,9)}. \tag{4.6}\label{4.6}
\end{align*}
Note that we have $k_1\approx k$ in the terms $I^{(1,1)},I^{(1,2)},I^{(1,3)}$, $k_2\approx k$ in the terms $I^{(1,4)},I^{(1,5)},I^{(1,6)}$ and $k_1\approx k_2$ in the terms $I^{(1,7)},I^{(1,8)},I^{(1,9)}$.
Then we use Lemma 3.2 in \cite{y2} and Proposition 3.8 to estimate the above terms to get that 
\begin{flalign*}
     &I^{(1,1)}\approx& \\
     &\begin{cases}\sum\limits_{k\ge 23\delta m-4} \sum\limits_{0\le k_2 \le k} \left\| P_{k_2}\Omega^{p_2} u_\beta(\eta,s)\right\|_{L^2}\cdot\left\|P_{k_1}\Omega^{p_1} u_\gamma(\xi-\eta,s)\right\|_{L^\infty}\cdot\left\|\overline{P_k\Omega^p v_\sigma}(-\xi)\right\|_{L^2}, &\mbox{if } p_1\le p_2 \\
     \sum\limits_{k\ge 23\delta m-4} \sum\limits_{0\le k_2 \le k} \left\|P_{k_2}\Omega^{p_2} u_\beta(\eta,s)\right\|_{L^\infty}\cdot\left\|P_{k_1}\Omega^{p_1} u_\gamma(\xi-\eta,s)\right\|_{L^2}\cdot\left\|\overline{P_k\Omega^p v_\sigma}(-\xi)\right\|_{L^2}, &\mbox{if } p_1\ge p_2
    \end{cases}& \\[7pt]
    &\approx\begin{cases}\varepsilon_1^3\cdot\sum\limits_{k\ge 23\delta m-4} \sum\limits_{0\le k_2 \le k} 2^{-k_2}\cdot \left(2^{-k_1}2^{-m+21\delta m}\right)\cdot 1, &\mbox{if } p_1\le p_2 \\
    \varepsilon_1^3\cdot\sum\limits_{k\ge 23\delta m-4} \sum\limits_{0\le k_2 \le k} \left(2^{-k_2}2^{-m+21\delta m}\right)\cdot  2^{-k_1}\cdot 1, &\mbox{if } p_1\ge p_2
    \end{cases}\ \ \lesssim \varepsilon_1^3\cdot 2^{-m-2\delta m};
\end{flalign*}
\begin{flalign*} 
    &I^{(1,2)}\approx& \\
    &\begin{cases}\sum\limits_{k\ge 23\delta m-4} \sum\limits_{-100m\le k_2 \le 0} \left\| P_{k_2}\Omega^{p_2} u_\beta(\eta,s)\right\|_{L^2}\cdot\left\|P_{k_1}\Omega^{p_1} u_\gamma(\xi-\eta,s)\right\|_{L^\infty}\cdot\left\|\overline{P_k\Omega^p v_\sigma}(-\xi)\right\|_{L^2}, &\mbox{if } p_1\le p_2 \\
    \sum\limits_{k\ge 23\delta m-4} \sum\limits_{-100m\le k_2 \le 0} \left\|P_{k_2}\Omega^{p_2} u_\beta(\eta,s)\right\|_{L^\infty}\cdot\left\|P_{k_1}\Omega^{p_1} u_\gamma(\xi-\eta,s)\right\|_{L^2}\cdot\left\|\overline{P_k\Omega^p v_\sigma}(-\xi)\right\|_{L^2}, &\mbox{if } p_1\ge p_2
    \end{cases}&\\[7pt]
    &\approx\begin{cases}\varepsilon_1^3\cdot\sum\limits_{k\ge 23\delta m-4} \sum\limits_{-100m\le k_2 \le 0} 1 \cdot \left(2^{-k_1}2^{-m+21\delta m}\right)\cdot 1, &\mbox{if } p_1\le p_2 \\
    \varepsilon_1^3\cdot\sum\limits_{k\ge 23\delta m-4} \sum\limits_{-100m\le k_2 \le 0} \left(2^{-m+21\delta m}\right)\cdot  2^{-k_1}\cdot 1, &\mbox{if } p_1\ge p_2
    \end{cases}\ \ \ \lesssim \varepsilon_1^3\cdot 2^{-m-1.5\delta m};
\end{flalign*}
\begin{flalign*}
    &I^{(1,7)}\approx& \\
    &\begin{cases}\sum\limits_{k\ge 23\delta m-4} \sum\limits_{k_1\ge k+4} \left\| P_{k_2}\Omega^{p_2} u_\beta(\eta,s)\right\|_{L^2}\cdot\left\|P_{k_1}\Omega^{p_1} u_\gamma(\xi-\eta,s)\right\|_{L^\infty}\cdot\left\|\overline{P_k\Omega^p v_\sigma}(-\xi)\right\|_{L^2}, &\mbox{if } p_1\le p_2 \\
    \sum\limits_{k\ge 23\delta m-4} \sum\limits_{k_1\ge k+4} \left\|P_{k_2}\Omega^{p_2} u_\beta(\eta,s)\right\|_{L^\infty}\cdot\left\|P_{k_1}\Omega^{p_1} u_\gamma(\xi-\eta,s)\right\|_{L^2}\cdot\left\|\overline{P_k\Omega^p v_\sigma}(-\xi)\right\|_{L^2}, &\mbox{if } p_1\ge p_2
    \end{cases}&\\[7pt]
    &\approx\begin{cases}\varepsilon_1^3\cdot\sum\limits_{k\ge 23\delta m-4} \sum\limits_{k_1\ge k+4} 1 \cdot \left(2^{-k_1}2^{-m+21\delta m}\right)\cdot 1, &\mbox{if } p_1\le p_2 \\
    \varepsilon_1^3\cdot\sum\limits_{k\ge 23\delta m-4} \sum\limits_{k_1\ge k+4} \left(2^{-k_2}2^{-m+21\delta m}\right)\cdot  1\cdot 1, &\mbox{if } p_1\ge p_2
    \end{cases}\ \ \ \lesssim \varepsilon_1^3\cdot2^{-m-2\delta m};
\end{flalign*}
\begin{flalign*}
    &I^{(1,9)}\approx& \\
    &\begin{cases}\sum\limits_{-100m\le k\le 23\delta m-4} \sum\limits_{k_1\ge 23\delta m} \left\| P_{k_2}\Omega^{p_2} u_\beta(\eta,s)\right\|_{L^2}\cdot\left\|P_{k_1}\Omega^{p_1} u_\gamma(\xi-\eta,s)\right\|_{L^\infty}\cdot\left\|\overline{P_k\Omega^p v_\sigma}(-\xi)\right\|_{L^2}, \\
    \hfill\mbox{if } p_1\le p_2 \\[7pt]
    \sum\limits_{-100m\le k\le 23\delta m-4} \sum\limits_{k_1\ge 23\delta m} \left\|P_{k_2}\Omega^{p_2} u_\beta(\eta,s)\right\|_{L^\infty}\cdot\left\|P_{k_1}\Omega^{p_1} u_\gamma(\xi-\eta,s)\right\|_{L^2}\cdot\left\|\overline{P_k\Omega^p v_\sigma}(-\xi)\right\|_{L^2}, \\
    \hfill\mbox{if } p_1\ge p_2
    \end{cases}&\\[7pt]
    &\approx\begin{cases}\varepsilon_1^3\cdot\sum\limits_{-100m\le k\le 23\delta m-4} \sum\limits_{k_1\ge 23\delta m} 2^{-k_2} \cdot \left(2^{-k_1}2^{-m+21\delta m}\right)\cdot 1, &\mbox{if } p_1\le p_2 \\
    \varepsilon_1^3\cdot\sum\limits_{-100m\le k\le 23\delta m-4} \sum\limits_{k_1\ge 23\delta m} \left(2^{-k_2}2^{-m+21\delta m}\right)\cdot  2^{-k_1}\cdot 1, &\mbox{if } p_1\ge p_2
    \end{cases}\ \ \ \lesssim \varepsilon_1^3\cdot2^{-m-24.5\delta m}.
\end{flalign*}
By symmetry, we also know that $I^{(1.4)}\approx I^{(1.1)}\lesssim \varepsilon_1^3\cdot2^{-m-2\delta m}$, and $I^{(1.5)}\approx I^{(1.2)}\lesssim \varepsilon_1^3\cdot 2^{-m-1.5\delta m}$. Finally, we use volume estimation, Hölder and Hausdorff-Young inequalities to control the remaining terms. 
\begin{align*}
    I^{(1,3)}&\approx\sum_{k\ge 23\delta m-4} \sum_{k_2\le -100m}\left\|\reallywidehat{\Lambda_\beta^{-1}\Omega^{p_2}v_\beta}(\eta,s)\cdot\reallywidehat{\Lambda_\gamma^{-1}\Omega^{p_1}v_\gamma}(\xi-\eta,s)\cdot\overline{\reallywidehat{\Omega^{p}v_\sigma}}(-\xi,s)\right\|_{L^1_{\xi,\eta}\left(\mathbb{R}^2\times\mathbb{R}^2\right)} \\
    &\lesssim \sum_{k\ge 23\delta m-4} \sum_{k_2\le -100m} \left\|\Omega^p v_\sigma\right\|_{L^2}\cdot\left\|\Lambda_\gamma^{-1}\Omega^{p_1}v_\gamma\right\|_{L^2}\left\|\reallywidehat{\Lambda_\beta^{-1}\Omega^{p_2}v_\beta}(\eta)\right\|_{L^\infty}\cdot 2^{2k_2} \\
    &\lesssim\varepsilon_1^2\cdot \sum_{k\ge 23\delta m-4} \sum_{k_2\le -100m} 1\cdot 2^{-k}\cdot\left\|\Lambda_\beta^{-1}\Omega^{p_2}v_\beta(\eta)\right\|_{L^2}\cdot 2^{k_2} \cdot 2^{2k_2}\lesssim \varepsilon_1^3\cdot2^{-300m}; \\[10pt]
\end{align*}
Similarly, we could achieve $I^{(1,8)}\lesssim\varepsilon_1^3\cdot 2^{-300m}$. Again by symmetry, we also know that $I^{(1,6)}\approx I^{(1,3)}\lesssim \varepsilon_1^3\cdot 2^{-300m}$. Thus, we have already concluded that $I^{(1)}\lesssim \varepsilon_1^3\cdot2^{-m-\delta^2 m}$. Next, we consider $I^{(2)}$.
\begin{align*}
    I^{(2)}=&\sum_{\chi_k^1\atop k\le -2m} I_{k,k_1,k_2}+\sum_{\chi_k^2\atop k\le -2m} I_{k,k_1,k_2}+\sum_{\chi_k^1\atop k\ge -2m} I_{k,k_1,k_2} \\
    \approx&\left(\sum_{k\le -2m} \sum_{k_2 \le k} I_{k,k_1,k_2}+\sum_{k\le -2m} \sum_{k_1 \le k} I_{k,k_1,k_2}\right) \\
    &+\sum_{k\le -2m} \sum_{k_1\ge k+4} I_{k,k_1,k_2} \\
    &+\left(\sum_{k\ge -2m} \sum_{k_2\le -2m} I_{k,k_1,k_2}+\sum_{k\ge -2m} \sum_{k_1\le -2m} I_{k,k_1,k_2}\right) \\
    \triangleq& I^{(2,1)}+I^{(2,2)}+I^{(2,3)}+I^{(2,4)}+I^{(2,5)}. \tag{4.7}\label{4.7}
\end{align*}
Note that we have $k_1\approx k$ in the terms $I^{(2,1)},I^{(2,4)}$, $k_2\approx k$ in the terms $I^{(2,2)},I^{(2,5)}$ and $k_1\approx k_2$ in the terms $I^{(2,3)}$. In the case of $I^{(2)}$, we again use volume estimation, Hölder and Hausdorff-Young inequalities to control the remaining terms.
\begin{flalign*}
    I^{(2,1)}&\lesssim \sum_{k\le -2m} \sum_{k_2\le k} \left\|\Omega^p v_\sigma\right\|_{L^2}\cdot\left\|\Lambda_\gamma^{-1}\Omega^{p_1} v_\gamma\right\|_{L^2}\cdot\left\|\reallywidehat{\Omega^{p_2} v_\beta}\right\|_{L^\infty}\cdot 2^{2k_2}& \\
    &\lesssim \sum_{k\le -2m} \sum_{k_2\le k} \left\|\Omega^p v_\sigma\right\|_{L^2}\cdot\left\|\Lambda_\gamma^{-1}\Omega^{p_1} v_\gamma\right\|_{L^2}\cdot\left\|\Omega^{p_2} v_\beta\right\|_{L^2}\cdot2^{k_2}\cdot2^{2k_2}\lesssim \varepsilon_1^3\cdot2^{-6m};&
\end{flalign*}
\begin{flalign*}
    I^{(2,3)}&\lesssim \sum_{k\le -2m} \sum_{k_1 \ge k+4} \left\|\Lambda_\beta^{-1}\Omega^{p_2} v_\beta\right\|_{L^2}\cdot\left\|\Lambda_\gamma^{-1}\Omega^{p_1}v_\gamma\right\|_{L^2}\cdot\left\|\reallywidehat{\Omega^p v_\sigma}\right\|_{L^\infty}\cdot 2^{2k}& \\
    &\lesssim \sum_{k\le -2m} \sum_{k_1 \ge k+4} \left\|\Lambda_\beta^{-1}\Omega^{p_2} v_\beta\right\|_{L^2}\cdot\left\|\Lambda_\gamma^{-1}\Omega^{p_1}v_\gamma\right\|_{L^2}\cdot\left\|\reallywidehat{\Omega^p v_\sigma}\right\|_{L^2}\cdot 2^k \cdot 2^{2k}\lesssim \varepsilon_1^3\cdot2^{-2m};& \\
\end{flalign*}
\begin{flalign*}
    I^{(2,4)}&\lesssim \sum_{k\ge -2m} \sum_{k_2\le -2m} \left\|\Omega^p v_\sigma\right\|_{L^2}\cdot\left\|\Lambda_\gamma^{-1}\Omega^{p_1} v_\gamma\right\|_{L^2}\cdot\left\|\reallywidehat{\Omega^{p_2} v_\beta}\right\|_{L^\infty}\cdot 2^{2k_2} \\
    &\lesssim \sum_{k\ge -2m} \sum_{k_2\le -2m} \left\|\Omega^p v_\sigma\right\|_{L^2}\cdot\left\|\Lambda_\gamma^{-1}\Omega^{p_1} v_\gamma\right\|_{L^2}\cdot\left\|\Omega^{p_2} v_\beta\right\|_{L^2}\cdot2^{k_2}\cdot2^{2k_2}\lesssim \varepsilon_1^3\cdot2^{-4m}.&
\end{flalign*}
Finally, by symmetry, we get that $I^{(2,2)}\approx I^{(2,1)}\lesssim \varepsilon_1^3\cdot 2^{-6m}$, and $I^{(2,5)}\approx I^{(2,4)}\lesssim \varepsilon_1^3\cdot 2^{-4m}$.  Thus, we conclude that $I^{(2)}\lesssim \varepsilon_1^3\cdot2^{-m-\delta^2 m}$. As for $I^{(3)}$, it is also bounded by $\varepsilon_1^3\cdot2^{-m-\delta^2 m}$ due to Lemma 4.1. Thus, we have proved that
\begin{align}
   \left|\int_0^t q_m(s) \left(\partial_s \mathcal{E}_2\right) (s)\,ds\right|\lesssim \varepsilon_1^3 2^{-\delta^2 m}. \tag{4.8}\label{4.8}
\end{align} \par
Next, let's consider the $\mathcal{E}_1$ part. This case is quite easy. Similar as before, we have that
\begin{align*}
    \partial_t \mathcal{E}_1 (t)=\sum_{\sigma\in\left\{1,2,\cdots,d\right\},\atop p_1+p_2=N_0}\sum_{\alpha,\beta,\gamma=1}^d A_{\alpha \beta \gamma}\,\mbox{Re}\int  \left\langle\nabla\right\rangle^{p_2} \left(\Lambda_\beta^{-1}\frac{v_\beta-\Bar{v_\beta}}{2i}\right)\cdot \left\langle\nabla\right\rangle^{p_1}\left(\Lambda_\gamma^{-1}\frac{v_\gamma-\Bar{v_\gamma}}{2i}\right)\cdot \overline{\left\langle\nabla\right\rangle^{N_0} v_\sigma}\,dx,
\end{align*}
Thus, like before, it suffices to consider the following forms:
$$I\triangleq\mbox{Re}\int_{\mathbb{R}^2\times\mathbb{R}^2} \reallywidehat{\left\langle\nabla\right\rangle^{p_2} u_\beta}(\eta,s)\,\reallywidehat{\left\langle\nabla\right\rangle^{p_1} u_\gamma}(\xi-\eta,s)\,\overline{\reallywidehat{\left\langle\nabla\right\rangle^{N_0} v_\sigma}}(-\xi,s)\,d\xi d\eta,$$
where $p_1+p_2=p$ and $1\le\beta,\gamma,\sigma\le d$. Then, we again do the Littlewood-Paley projection and consider
$$\left|I_{k,k_1,k_2}\right|\le\int_{\mathbb{R}^2\times\mathbb{R}^2} \left|\reallywidehat{P_{k_2}\left\langle\nabla\right\rangle^{p_2} u_\beta}(\eta,s)\right|\cdot\left|\reallywidehat{P_{k_1}\left\langle\nabla\right\rangle^{p_1} u_\gamma}(\xi-\eta,s)\right|\cdot\left|\overline{\reallywidehat{P_k\left\langle\nabla\right\rangle^{N_0} v_\sigma}}(-\xi)\right|\,d\xi d\eta.$$
Then, we decompose $I$ as in (\ref{4.5}), (\ref{4.6}), (\ref{4.7}) before. First, we note that 
\begin{align*}
    \left|I_{k,k_1,k_2}\right|&\le \left\|v_\sigma\right\|_{H^{N_0}}\cdot\left\|\reallywidehat{\left\langle\nabla\right\rangle^{p_2}u_\beta\cdot\left\langle\nabla\right\rangle^{p_1}u_\gamma}\right\|_{L^2} \\
    &\lesssim \left\|v_\sigma\right\|_{H^{N_0}}\cdot\left(\left\|u_\beta\right\|_{H^{N_0}}\cdot\left\|u_\gamma\right\|_{L^\infty}+\left\|u_\beta\right\|_{L^\infty}\cdot\left\|u_\gamma\right\|_{H^{N_0}}\right).  \tag{4.9}\label{4.9}
\end{align*}
Now, we can use (\ref{3.4}) and (\ref{4.9}) to control the following terms
\begin{align*}
    I^{(1,1)}&\lesssim \sum_{k\ge 23\delta m-4} \sum_{0\le k_2 \le k} 2^{-k}\cdot\left(2^{-k_2}+2^{-k_1}\right)\cdot 2^{-m+21\delta m}\lesssim 2^{-m-2\delta m}; \\[5pt]
    I^{(1,2)}&\lesssim \sum_{k\ge 23\delta m-4} \sum_{-100m\le k_2 \le 0} 2^{-k}\cdot\left(1+2^{-k}\right)\cdot 2^{-m+21\delta m}\lesssim 2^{-m-1.5\delta m}; \\[5pt]
    I^{(1,7)}&\lesssim \sum_{k\ge 23\delta m-4} \sum_{k_1\ge k+4} 2^{-k}\cdot\left(2^{-k_2}+2^{-k_1}\right)\cdot 2^{-m+21\delta m}\lesssim 2^{-m-48\delta m}; \\[5pt]
    I^{(1,9)}&\lesssim \sum_{-100m\le k\le 23\delta m} \sum_{k_1\ge 23\delta m} 1\cdot \left(2^{-k_1}+2^{-k_2}\right)\cdot 2^{-m+21\delta m}\lesssim 2^{-m-24.5\delta m}.
\end{align*}
The remaining terms are controlled by volumes or symmetry, so they can be estimated exactly the same as before. Thus, we have proved that
\begin{align}
   \left|\int_0^t q_m(s) \left(\partial_s \mathcal{E}_1\right) (s)\,ds\right|\lesssim \varepsilon_1^3 2^{-\delta^2 m}. \tag{4.10}\label{4.10}
\end{align} \par
Combine (\ref{4.8}), and (\ref{4.10}) , and our proof is completed.
\end{proof}

\vspace{1em}
\section{Dispersive Control - Control of the Z Norm}
In this section, we show how to control the Z component of the norms. Using Duhamel formulas, Proposition 2.3 follows easily from Proposition 5.1 below.
\begin{prop}
    Assume that $t\in\left[0,T\right]$ is fixed
    $$\sup_{0\le s\le t}\left\{\left\|f^\mu(s)\right\|_{H^{N_0/2}\cap Z^\mu_1\cap H^{N_1/2}_\Omega}+\left\|f^\nu(s)\right\|_{H^{N_0/2}\cap Z^\nu_1\cap H^{N_1/2}_\Omega}\right\}\le 1$$
    and that $\partial_s f^\mu,\partial_s f^\nu$ satisfy the conclusions of Lemma 3.9 and Lemma 3.12. For $\sigma,\mu,\nu\in\left\{1,2,\dots,d\right\}$ and $m\in\left\{0,\dots,L+1\right\}$ define
    $$\mathcal{F}\left\{T^{\sigma\mu\nu}_m \left[f,g\right]\right\}(\xi)\triangleq \int_{\mathbb{R}} q_m(s) \int_{\mathbb{R}^2} e^{is\Phi_{\sigma\mu\nu}}(\xi,\eta)\widehat{f}(\xi-\eta,s)\widehat{g}(\eta,s)\,d\eta ds.$$
    Then
    $$\sum_{k_1,k_2\in\mathbb{Z}} \left\|P_k T^{\sigma\mu\nu}_m \left[P_{k_1} f^\mu,P_{k_2} f^\nu\right]\right\|_{Z^\sigma_1}\lesssim 2^{-\delta^5 m}.$$
\end{prop}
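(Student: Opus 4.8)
The plan is to bound the multilinear operator $P_k T^{\sigma\mu\nu}_m[P_{k_1}f^\mu,P_{k_2}f^\nu]$ in $Z^\sigma_1$ by first reducing to a manageable range of the frequency and localization parameters, and then, within that range, performing a case analysis driven by the size of the phase $\Phi_{\sigma\mu\nu}$, the size of $\nabla_\eta\Phi_{\sigma\mu\nu}$, and the rotation derivative $\Omega_\eta\Phi$. First I would dispose of the trivial ranges: using finite speed of propagation and the definition of the $Z$-norm, I can restrict to $(k,j)\in\mathcal{J}$ with $j\le m+D$ (up to rapidly decaying errors obtained from Lemma 3.1 when $j\gg m$), and I can restrict $k,k_1,k_2$ to the window $-3m\le k,k_1,k_2\le \delta^2 m/10 - D^2$; the high-frequency tails are handled by the Sobolev component of the input norm and the $2^{6k_+}$ weight, while the all-low-frequency case $\min\{k,k_1,k_2\}\ll 0$ together with $|\Phi|\ll1$ is excluded by the first nondegeneracy condition $b_\sigma-b_\mu-b_\nu\neq0$ via Proposition 6.6. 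The output norm is controlled by testing against $A^\sigma_{n,(j)}Q_{jk}$; one introduces the output localization parameters $(j,k,n)$ and decomposes the inputs $f^\mu = \sum f^\mu_{j_1,k_1,n_1}$, $f^\nu=\sum f^\nu_{j_2,k_2,n_2}$ as in (3.1), so the task becomes bounding each dyadic piece by something summable in $(j_1,j_2,k_1,k_2,n_1,n_2)$ against the target $2^{-\delta^5 m}\cdot 2^{-6k_+}\,2^{-(1-20\delta)j}\,2^{(1/2-19\delta)n}$.

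Next I would split the time integral according to whether one integrates by parts in $s$ or not. Writing $e^{is\Phi} = (i\Phi)^{-1}\partial_s e^{is\Phi}$ on the region $|\Phi|\gtrsim 2^{-3\delta m-4\delta^2 m}$, integration by parts in $s$ moves a derivative onto the profiles, and here is exactly where the improved time-derivative decomposition of Lemma 3.12 enters: $\partial_s f^\mu$ (resp. $\partial_s f^\nu$) gets replaced by the pieces $f_C, f_{SR}, f_{NC}, f_{NCw}$ plus the boundary-type terms $\partial_s F_C,\partial_s F_{NC},\partial_s F_{LO}$. The coherent piece $f_C$, being concentrated at $|\Psi_{\sigma\mu\nu}|\lesssim 2^{-3\delta m}$ with a phase $e^{is\Psi}$, contributes the genuine resonant output and must be estimated using the stationary-phase heuristic of Section 1.2.2 — this is where the $2^{-j+n/2}$ structure of the $Z$-norm is tested. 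On the complementary region $|\Phi|\lesssim 2^{-3\delta m-4\delta^2 m}$, one cannot integrate by parts in time; instead I insert the angular cutoff $\varphi(\kappa_\theta^{-1}\Omega_\eta\Phi)$ (legitimate since in this region $|\xi|,|\xi-\eta|,|\eta|\sim1$, by Proposition 6.6 and the first nondegeneracy condition), integrate by parts in $\Omega_\eta$ on the non-angularly-concentrated part via Lemma 3.2 to gain a fast decay, and on the angularly-concentrated part further insert $\varphi(\kappa_r^{-1}\nabla_\eta\Phi)$. The far-from-space-resonant part is killed by Lemma 3.1(i) integration by parts in $\eta$; the near-space-resonant part is controlled by a volume estimate using the nondegeneracy of the Hessian (1.7)/Lemma 6.7, together with the linear bounds (3.2)–(3.6) and the pointwise bounds (3.7a)/(3.7b), yielding $|E_\eta|\lesssim\kappa_r^2$ or $|E_\eta|\lesssim 2^{-n}\kappa_\theta$ depending on whether $j_2\lessgtr m/2$, and then Schur's test or a direct $L^2$ estimate.

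The main obstacle — and where most of the work lies — is the iterated-resonance regime where both inputs are near resonant spheres, i.e. $n_1>0$ and $n_2>0$ simultaneously, which (as the introduction stresses) cannot occur in the two-equation setting of \cite{y2} but does occur here. In this regime the naive $L^\infty\times L^2$ estimate of Lemma 3.5 loses a full $2^{n_i/2}$ per input, and it is not clear that the resulting bound beats $2^{-j+n/2}$. The resolution is twofold: first, the slight weakening of the $Z$-norm to $\|f_{j,k,n}\|_{L^2}\lesssim 2^{-j+n/2+\varepsilon j}$ (already built into Definition 2.1 via the $2^{-(1-20\delta)j}2^{(1/2-19\delta)n}$ weights) provides a convergence factor; second, in the worst sub-case $j_1\le m$, $j_2\ge m$, $n_2>0$ — which produces the weaker nonresonant term $f_{NCw}$ of Lemma 3.12 — one must estimate the trilinear integral directly via the change of variables $x=|\eta|^2$, $y=|\xi-\eta|^2$ (Jacobian $\sim|\xi\cdot\eta^\perp|\sim |\xi||\eta|\angle(\xi,\eta)$, cf. Lemma 3.12's proof and Remark 3.13), reducing to a one-dimensional convolution estimate for the radial profiles $f_r,g_r$ whose $L^2(r\,dr)$-norms are controlled by (3.2). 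I expect the bookkeeping of the many sub-cases in Sections 5.3 and 5.4.1 — the second-order interactions of space-time resonances with time resonances, organized by the sizes of $|\nabla_\xi\Phi|$ and $|\nabla_\eta[\Phi+\Psi]|$ since (1.6) is unavailable — to be the most delicate part, but each individual sub-case closes by the same three tools: integration by parts (Lemmas 3.1, 3.2), volume estimates with the Hessian nondegeneracy (Lemma 6.7), and the linear bounds (3.2)–(3.7b).
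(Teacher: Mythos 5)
Your plan is essentially the same strategy the paper follows: reduce to the range (5.6), decompose $T_m=\sum_l T_{m,l}$ by the size of $\Phi$, integrate by parts in time on the non-degenerate part and feed in the time-derivative decomposition of Lemma 3.12, insert the angular cutoff $\varphi(\kappa_\theta^{-1}\Omega_\eta\Phi)$ and the radial cutoff $\varphi(\kappa_r^{-1}\nabla_\eta\Phi)$ on the degenerate part, and close by Lemmas 3.1/3.2 plus the volume estimates from Proposition 6.8/Lemma 6.7 and the linear bounds (3.2)--(3.7b). You also correctly single out the iterated-resonance regime $n_1,n_2>0$ (and the $f_{NCw}$ contribution with $j_1\le m$, $j_2\ge m$, $n_2>0$) as the genuinely new obstacle and the $x=|\eta|^2,y=|\xi-\eta|^2$ change of variables as the key device. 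One discrepancy worth flagging: you treat the reduction to $j\le m+D$ as essentially free, disposed of ``up to rapidly decaying errors from Lemma 3.1 when $j\gg m$.'' In the paper this is not free: Lemma 5.2(5.4) does kill the $j\ge 3m$ range by that argument, but the intermediate range $m+D\le j\le 3m$ is a separate lemma (Lemma 5.3, Section 5.2). Integration by parts in $\xi$ only handles the sub-case $\min\{j_1,j_2\}\le j-\delta^2 m$; when both $j_1,j_2\ge j-\delta^2 m$ one is forced into case analysis on $l$, $k_2$, and $n_1,n_2$, using Proposition 6.10, Corollary 6.11, the estimate (5.14) for $e^{-i\lambda\Lambda_\nu}$ on $L^\infty$, and the refined time-derivative bound of Remark 3.10. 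If you try to close the argument treating $m+D\le j\le 3m$ as a trivial tail you will not get the factor $2^{-(1-20\delta)j}$ from IBP alone, since with $|x|\sim 2^j$ and $j\sim m$ the IBP gain is only comparable to what the volume estimates already give. Also note the top-level threshold for the $|\Phi|$-dichotomy in the main decomposition (5.7) is $l_0=[-12\delta m]$, not $-3\delta m-4\delta^2 m$; the latter is the internal cutoff used inside the proof of Lemma 3.12 itself, and conflating the two would change which terms land in $\mathcal{B}_{m,l}$ versus $T_{m,l_-}$.
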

\vspace{4em}
The rest of this section is concerned with the proof of Proposition 5.1. We consider first a few simple cases before moving to the main analysis in the next subsections. For simplicity of notation, we ofter omit the subscripts $\sigma\mu\nu$ and write $\Phi_{\sigma\mu\nu}=\Phi$ and $\Psi_{\sigma\mu\nu}=\Psi$. Let $I_m$ denote the support of the function $q_m$. Moreover, for simplicity, we denote
$$\left\|g\right\|_{B^\sigma_j}\triangleq2^{(1-20\delta)j} \sup_{0\le n\le j+1} 2^{-(1/2-19\delta)n}\left\|A^\sigma_{n,(j)} g\right\|_{L^2}.$$
Thus, the norm $Z^\sigma_1$ can be expressed as 
$$Z^\sigma_1\triangleq\left\{f\in L^2(\mathbb{R}^2):\left\|f\right\|_{Z^\sigma_1}\triangleq\sup_{(k,j)\in\mathcal{J}} 2^{6k_+}\left\|Q_{jk} f\right\|_{B^\sigma_j}<\infty\right\}.$$
\vspace{4em}
\begin{lemma}
    Assume that $f^\mu$, $f^\nu$ are as in Proposition 5.1 and let $(k,j)\in\mathcal{J}$. Then
    \begin{align*}
    &2^{6k_+} \sum_{\max\left\{k_1,k_2\right\}\ge 0.05\delta^2 (j+m)-D^2} \left\|Q_{jk}\, T^{\sigma\mu\nu}_m \left[P_{k_1}f^\mu,P_{k_2}f^\nu\right]\right\|_{B^\sigma_j}\lesssim 2^{-\delta^4 m}, \tag{5.1}\label{5.1} \\
    &2^{6k_+} \sum_{\min\left\{k_1,k_2\right\}\le-(j+m)(1+11\delta)/2+D^2} \left\|Q_{jk}\,T^{\sigma\mu\nu}_m \left[P_{k_1}f^\mu,P_{k_2}f^\nu\right]\right\|_{B^\sigma_j}\lesssim 2^{-\delta^4 m}, \tag{5.2}\label{5.2} \\
    &\mbox{if }j+k\le 19\delta j-17\delta m\mbox{ then }\sum_{k_1,k_2\in\mathbb{Z}} \left\|Q_{jk}\,T^{\sigma\mu\nu}_m \left[P_{k_1}f^\mu,P_{k_2}f^\nu\right]\right\|_{B^\sigma_j}\lesssim 2^{-\delta^4 m}, \tag{5.3}\label{5.3} \\
    &\mbox{if }j\ge 3m \mbox{ then }2^{6k_+}\sum_{-j\le k_1,k_2\le 2\delta^2 j} \left\|Q_{jk}\, T^{\sigma\mu\nu}_m \left[P_{k_1}f^\mu,P_{k_2}f^\nu\right]\right\|_{B^\sigma_j}\lesssim 2^{-\delta^4 m}. \tag{5.4}\label{5.4}
    \end{align*}
\end{lemma}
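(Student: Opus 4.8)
The plan is to treat the four estimates \eqref{5.1}--\eqref{5.4} as ``easy'' range reductions, all of which follow by crude bounds plus integration by parts, so that the main analysis in the subsequent subsections may assume
\[
-\tfrac{(j+m)(1+11\delta)}{2}+D^2\le k_1,k_2\le 0.05\delta^2(j+m)-D^2,\qquad j+k\ge 19\delta j-17\delta m,\qquad j\le 3m.
\]
The common starting point is the pointwise identity for $\mathcal F\{T^{\sigma\mu\nu}_m[P_{k_1}f^\mu,P_{k_2}f^\nu]\}(\xi)$ as a double integral in $s\in I_m$ and $\eta\in\mathbb R^2$, together with the fact that on $I_m$ we have $s\approx 2^m$, and the $B^\sigma_j$-norm is controlled by $2^{(1-20\delta)j}\sup_n 2^{-(1/2-19\delta)n}\|A^\sigma_{n,(j)}Q_{jk}(\cdot)\|_{L^2}$. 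For each of the four statements I would bound the $L^2$ norm of the localized output by a trilinear estimate (Lemma 3.5 or the Schur-type Lemma 3.4), feeding in the linear dispersive decay \eqref{3.7} and the frequency-localization $L^2$ bounds $\|P_{k_i}f^{(i)}\|_{L^2}\lesssim 1$ together with the high/low frequency gains $2^{6k_{i,+}}$ and $2^{k_{i,-}}$ coming from the $Z$-norm and Bernstein.

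For \eqref{5.1}, i.e.\ $\max\{k_1,k_2\}\ge 0.05\delta^2(j+m)-D^2$: one of the inputs lives at frequency $\gtrsim 2^{0.05\delta^2(j+m)}$, and I would put that input in $L^2$ (paying the $2^{-N_0 k_{i,+}/2}$ or simply $2^{-6k_{i,+}}$ Sobolev weight, which beats the prefactor $2^{6k_+}$ since $k\lesssim\max\{k_1,k_2\}$) and the other in $L^\infty$ via \eqref{3.7}, gaining $2^{-m+21\delta m}$ in time; the $t\,ds$ integration over $I_m$ contributes $2^{m}$, and the huge Sobolev gain from the high frequency together with $2^{(1-20\delta)j}\le 2^{(1-20\delta)\cdot 3m}$ (in the regime $j\le 3m$; the case $j\ge 3m$ is \eqref{5.4}) absorbs everything and leaves $2^{-\delta^4 m}$ after summing the geometric series in $\max\{k_1,k_2\}$. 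For \eqref{5.2}, i.e.\ $\min\{k_1,k_2\}\le -(j+m)(1+11\delta)/2+D^2$: now one input is at very low frequency, so I would use the Bernstein gain $\|P_{k_i}f^{(i)}\|_{L^\infty_x}\lesssim 2^{k_i}\|f^{(i)}\|_{L^2}$ (in 2D, $2^{2k_i}\cdot 2^{-k_i}$ accounting for the $\Lambda^{-1}$ or directly from the profile $L^2$ bound), and Hölder / Hausdorff--Young as in the energy estimate of Section 4; the factor $2^{2k_i}$ with $k_i$ extremely negative crushes the output, again summing geometrically. For \eqref{5.3}, i.e.\ $j+k\le 19\delta j-17\delta m$, note that $2^k$ is tiny, so the output frequency is tiny: I would use \eqref{3.4} or directly the crude bound $\|Q_{jk}(\cdot)\|_{L^2}\lesssim 2^{k}\|Q_{jk}(\cdot)\|_{L^\infty_\xi\to\text{something}}$; more precisely, bound $\|A^\sigma_{n,(j)}Q_{jk}T^{\sigma\mu\nu}_m[\cdot]\|_{L^2}\lesssim 2^{k}\cdot 2^m\cdot\|P_{k_1}f^\mu\|_{L^\infty_x}\|P_{k_2}f^\nu\|_{L^2}$ (one input in $L^\infty$ via \eqref{3.7}, gaining $2^{-m+21\delta m}$), so that the total is $\lesssim 2^{(1-20\delta)j}2^{k}2^{21\delta m}$, and the hypothesis $j+k\le 19\delta j-17\delta m$ makes this $\lesssim 2^{-\delta m}$, summing the $k_1,k_2$ series using the constraints $|k-k_1|\le 4$ or $|k-k_2|\le 4$ or $k_1\approx k_2$ in the triangle geometry. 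Finally \eqref{5.4}, $j\ge 3m$ with $-j\le k_1,k_2\le 2\delta^2 j$: here the key point is finite speed of propagation / spatial localization — the output $Q_{jk}$ is supported where $|x|\approx 2^j$, but for $j\ge 3m\gg m$ the bilinear interaction of two profiles, each essentially supported (in the relevant sense, up to rapidly decaying tails) in $\{|x|\lesssim 2^{j}\}$, evolved for time $\lesssim 2^m\ll 2^j$, cannot produce an output at radius $2^j$ except through a negligible tail; concretely I would integrate by parts in $\eta$ using Lemma 3.1(i) since on the support of $\widehat{Q_{jk}}$-localized output with $j\gg m$ the stationary phase points of $s\Phi$ are at distance $\gg$ the spatial scale allows, gaining $(2^{j-m})^{-N}$, which dominates all polynomial losses.

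The main obstacle is \eqref{5.4} combined with the interplay in \eqref{5.3}: one must be careful that the integration-by-parts argument producing the $2^j$-tail decay is uniform over the full frequency range $-j\le k_1,k_2\le 2\delta^2 j$, including the low-frequency end where the phase $\Phi_{\sigma\mu\nu}$ degenerates and $\nabla_\eta\Phi$ can be small; there the gradient lower bound needed for Lemma 3.1 must instead come from the output localization $Q_{jk}$ itself (the ``$x\approx 2^j$'' constraint forces $|\nabla_\eta(s\Phi)-x|$ to be large when $|x|=2^j\gg s\sim 2^m$), rather than from $\Phi$. I would handle this by writing the kernel of $Q_{jk}T^{\sigma\mu\nu}_m$ in physical space, so the phase becomes $x\cdot\xi + s\Phi(\xi,\eta)$ with $|x|\approx 2^j$, and then $|\nabla_\xi[x\cdot\xi+s\Phi]|\ge |x|-Cs\gtrsim 2^j$; applying Lemma 3.1(i) in $\xi$ yields $(2^j)^{-N}$ decay, which easily beats the $2^{(1-20\delta)j+6k_+}$ and the $2^{2|k_i|}$ Bernstein losses. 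The remaining ranges \eqref{5.1}--\eqref{5.3} are genuinely routine, mirroring the case analysis already carried out in the energy estimate (Section 4) and in \cite{y2}, and require only bookkeeping of the $\delta$-powers to confirm that every geometric series converges with room to spare $2^{-\delta^4 m}$ or better.
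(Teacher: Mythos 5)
Your treatment of \eqref{5.1}, \eqref{5.2}, and \eqref{5.4} is essentially correct and mirrors what the paper does (the paper simply cites \cite{y2} for those three, and your crude $L^\infty\times L^2$ bounds with Sobolev/Bernstein gains, plus the finite-speed-of-propagation integration-by-parts in $\xi$ for $j\ge 3m$, are exactly the intended argument). The problem is \eqref{5.3}, which is the only one of the four the paper proves in detail, and your argument for it has a genuine gap.

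Your proposed bound
\[
\|A^\sigma_{n,(j)}Q_{jk}T^{\sigma\mu\nu}_m[\cdot]\|_{L^2}\lesssim 2^k\cdot 2^m\cdot\|P_{k_1}f^\mu\|_{L^\infty_x}\|P_{k_2}f^\nu\|_{L^2}
\]
is not a valid estimate: the $L^\infty_x\times L^2\to L^2$ bilinear bound (Lemma 3.5) already lands you in $L^2_\xi$ and gives $\lesssim 2^m\cdot 2^{-m+21\delta m}\cdot 1 = 2^{21\delta m}$ with no extra factor of $2^k$; the $2^k$ gain is a Bernstein-type estimate that would require a pointwise $L^\infty_\xi$ bound on $\widehat{T}$, which is not what you have after using \eqref{3.7} in physical space. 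You cannot collect both gains from the same estimate. Even granting the extra $2^k$, the arithmetic does not close: under $j+k\le 19\delta j-17\delta m$ one has $(1-20\delta)j+k\le -\delta j-17\delta m$, hence $2^{(1-20\delta)j+k+21\delta m}\le 2^{-\delta j+4\delta m}$. But the constraint $j+k\ge 0$ together with $j+k\le 19\delta j-17\delta m$ forces $j\ge \tfrac{17}{19}m$, and combined with $j\le 3m$ (the range not covered by \eqref{5.4}) this gives an exponent $-\delta j+4\delta m$ that can be as large as $+\tfrac{59}{19}\delta m>0$. So the bound is not $\lesssim 2^{-\delta m}$ as you claim, and certainly not $\lesssim 2^{-\delta^4 m}$.

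The missing idea is the high/low phase-size decomposition, which is precisely what the paper uses. Set $l=-12\delta m-D$ and split $T=T^{hi}+T^{lo}$ according to $\varphi_{hi/lo}(\Phi(\xi,\eta))$. On $T^{hi}$ one integrates by parts in time to gain $2^{-l}=2^{12\delta m}$ against the $2^m$ from the $ds$ integral (this is the part cited to \cite{y2}). On $T^{lo}$ the paper proves a genuine pointwise $L^\infty_\xi$ bound \eqref{5.5} of size $2^{16.5\delta m}$, and crucially obtains this not from the crude $L^\infty$ decay \eqref{3.7} but from the \emph{volume} of the set $\{|\Phi|\lesssim 2^l\}$: after localizing the larger input to $j_2\ge(1-\delta^2)m$, one has $|E_\eta|\lesssim 2^l 2^{-n_2}$, and this small measure is what brings the exponent down from $21\delta m$ to $16.1\delta m$. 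Only then does the $2^k$ factor (from integrating the pointwise bound over the $\xi$-annulus of radius $\sim 2^k$) give $2^{(1-20\delta)j+k+16.5\delta m}\le 2^{-\delta j-0.5\delta m}\lesssim 2^{-0.5\delta m}$, which closes. Without the phase decomposition and the low-phase volume gain, your approach is off by roughly $2^{4.5\delta m}$ and the proof of \eqref{5.3} does not go through.
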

\begin{proof}
    These estimates are basically proved in Lemma 7.2 in \cite{y2}, where (\ref{5.1}), (\ref{5.2}) and (\ref{5.4}) follow from \cite{y2} exactly.\par
    In terms of (\ref{5.3}), we may assume that
    $$j+k\le 19\delta j-17\delta m,\ \ \ \ -2(j+m)/3\le k_1,k_2 \le \delta^2 (j+m)-D^2.$$
    With $l\triangleq-12\delta m-D$ we decompose
    \begin{align*}
    T^{\sigma\mu\nu}_m\left[P_{k_1}f^\mu,P_{k_2} f^\nu\right]&= T^{hi}_m\left[P_{k_1}f^\mu,P_{k_2} f^\nu\right]+T^{lo}_m\left[P_{k_1}f^\mu,P_{k_2} f^\nu\right], \\
    \widehat{T^*\left[f,g\right]}(\xi)&\triangleq\int_{\mathbb{R}} q_m(s) \int_{\mathbb{R}^2} e^{is\Phi(\xi,\eta)}\varphi_*\left(\Phi(\xi,\eta)\right)\widehat{f}(\xi-\eta,s)\widehat{g}(\eta,s)\,d\eta ds, \\
    \varphi_{lo}(x)&\triangleq\varphi_{\le l}(x),\ \ \ \varphi_{hi}(x)\triangleq 1-\varphi_{lo}(x),\ \ \ *\in\left\{hi,lo\right\}.
    \end{align*}
The control of $T^{hi}$ is done in Lemma 7.2 in \cite{y2} by integrating by parts in time. To bound the contribution of $T^{lo}$, use $L^2\times L^\infty$ control and it suffices to show that
\begin{align}
    \sum_{(k_1,j_1),(k_2,j_2)\in\mathcal{J}} \left\|\mathcal{F}P_k\,T^{lo}\left[f^\mu_{j_1,k_1},f^\nu_{j_2,k_2}\right]\right\|_{L^\infty}\lesssim 2^{16.5\delta m}, \tag{5.5}\label{5.5}
\end{align}
where
$f^\mu_{j_1,k_1}=P_{\left[k_1-2,k_1+2\right]}Q_{j_1 k_1}f^\mu$ and $f^\nu_{j_2,k_2}=P_{\left[k_2-2,k_2+2\right]}Q_{j_2 k_2}f^\nu$ as before.\par
If $\max\left\{j_1,j_2\right\}\le (1-\delta^2)m$, then integration by parts in $\eta$, using Proposition 6.6 (a) and Lemma 3.1, gives an acceptable contribution. On the other hand, if $j_2=\max\left\{j_1,j_2\right\}\ge (1-\delta^2)m$, then we have
$$\left\|\widehat{f^\mu_{j_1,k_1}}(s)\right\|_{L^\infty}\lesssim 2^{2\delta n_1},\ \ \ \left\|\widehat{f^\nu_{j_2,k_2}}(s)\right\|_{L^2}\lesssim 2^{-j_2+20\delta j_2}\cdot 2^{\frac{1}{2}n_2-19\delta n_2}$$
as a consequence of Lemma 3.6 (Also, since $k\le -D$, we know that $k_1,k_2\sim 1$ by Proposition 6.6 (b)) Note that $n_1\le j_1 \le j_2$ and $\left|E_\eta\right|\lesssim 2^l 2^{-n_2}$ due to that $\left|\nabla_\eta\Phi(0,\eta)\right\|\gtrsim 1$. Then, we get
\begin{align*}
    \left\|\mathcal{F}P_k\,T^{lo}\left[f^\mu_{j_1,k_1},f^\nu_{j_2,k_2}\right]\right\|_{L^\infty}&\lesssim 2^m \sup_s \left\|\widehat{f^\mu_{j_1,k_1}}(s)\right\|_{L^\infty} \left\|\widehat{f^\nu_{j_2,k_2}}(s)\right\|_{L^2} (\left|E_\eta\right|)^{1/2}\lesssim 2^{16.1\delta m}
\end{align*}
The desired bound (\ref{5.5}) follows.
\end{proof}
\vspace{4em}
\subsection{The Main Decomposition}
\ \par
Now, we may assume that 
\begin{equation}
    \begin{aligned}
        &-(j+m)(1+11\delta)/2\le k_1,k_2 \le 0.05\,\delta^2 (j+m),\ \ \ j+k\ge 19\delta j-17\delta m, \\
        &j\le 3m,\ \ \ m\ge D^2/8.
    \end{aligned} \tag{5.6}\label{5.6}
\end{equation}
We fix $l_-\triangleq\left[-m+\delta^3 m\right]$ and $l_0\triangleq\left[-12\delta m\right]$, and decompose
$$T^{\sigma\mu\nu}_m\left[f,g\right]=\sum_{l_-\le l\le l_0} T_{m,l}\left[f,g\right],$$
where
$$\reallywidehat{T_{m,l}\left[f,g\right]}\triangleq \int_{\mathbb{R}} q_m(s) \int_{\mathbb{R}^2} e^{is\Phi(\xi,\eta)}\varphi_l^{\left[l_-,l_0\right]}(\Phi(\xi,\eta))\hat{f}(\xi-\eta,s)\hat{g}(\eta,s)\,d\eta ds.$$
When $l_-<l\le l_0$, we may integrate by parts in time to rewrite $T_{m,l}\left[P_{k_1}f^\mu,P_{k_2}f^\nu\right]$,
\begin{equation}
    \begin{aligned}
    &T_{m,l}\left[P_{k_1}f^\mu,P_{k_2}f^\nu\right]=i\mathcal{A}_{m,l}\left[P_{k_1}f^\mu,P_{k_2}f^\nu\right]+i\mathcal{B}_{m,l}\left[P_{k_1}\partial_s f^\mu,P_{k_2}f^\nu\right]+i\mathcal{B}_{m,l}\left[P_{k_1} f^\mu,P_{k_2}\partial_sf^\nu\right] \\
    &\mathcal{F}\mathcal{A}_{m,l}\left[P_{k_1}f,P_{k_2}g\right](\xi)\triangleq \int_{\mathbb{R}} q^\prime_m(s) \int_{\mathbb{R}^2} e^{is\Phi(\xi,\eta)} \Tilde{\varphi}_l(\Phi(\xi,\eta))\widehat{P_{k_1}f}(\xi-\eta,s)\widehat{P_{k_2}g}(\eta,s)\,d\eta ds, \\
    &\mathcal{F}\mathcal{B}_{m,l}\left[P_{k_1}f,P_{k_2}g\right](\xi)\triangleq \int_{\mathbb{R}} q_m(s) \int_{\mathbb{R}^2} e^{is\Phi(\xi,\eta)} \Tilde{\varphi}_l(\Phi(\xi,\eta))\widehat{P_{k_1}f}(\xi-\eta,s)\widehat{P_{k_2}g}(\eta,s)\,d\eta ds,
    \end{aligned}\tag{5.7}\label{5.7}
\end{equation}
where $\Tilde{\varphi}_l(x)=x^{-1}\varphi_l(x)$ for $l<l_0$ and $\Tilde{\varphi}_{l_0}(x)=x^{-1}\varphi_{\ge l_0}(x)$. It is easy to see that the main Proposition 5.1 follows from Lemma 5.2 and Lemma 5.3-5.6 below.
\vspace{4em}
\begin{lemma}
   Assume that (\ref{5.6}) holds and, in addition, $m+D\le j$. Then, for $l_-\le l\le l_0$,
   \begin{align*}
    \left\|Q_{jk}\,T_{m,l}\left[P_{k_1}f^\mu,P_{k_2}f^\nu\right]\right\|_{B^\sigma_j}\lesssim 2^{-50\delta^2 m}. \tag{5.8}\label{5.8}  
   \end{align*}
\end{lemma}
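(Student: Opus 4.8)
The plan is to split according to whether the physical scales $2^{j_1},2^{j_2}$ of the two inputs are dominated by the light-cone radius $2^m$ or are comparable to $2^j$, handling the first regime by one integration by parts in $\xi$ and the second by finite speed of propagation. First, since $2^{-(1/2-19\delta)n}\le 1$ and $A^\sigma_{n,(j)}$ is an $L^2$-contraction while $0\le\Tilde{\varphi}_j^{(k)}\le 1$, one has $\|Q_{jk}T_{m,l}[\cdot,\cdot]\|_{B^\sigma_j}\le 2^{(1-20\delta)j}\|P_kT_{m,l}[\cdot,\cdot]\|_{L^2}$; as (\ref{5.6}) gives $j\le 3m$ and the hypothesis gives $m+D\le j$, it suffices to prove $2^{(1-20\delta)j}\|P_kT_{m,l}[P_{k_1}f^\mu,P_{k_2}f^\nu]\|_{L^2}\lesssim 2^{-50\delta^2 m}$. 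Decomposing $f^\mu=\sum_{(k_1,j_1)\in\mathcal{J}}f^\mu_{j_1,k_1}$ and $f^\nu=\sum_{(k_2,j_2)\in\mathcal{J}}f^\nu_{j_2,k_2}$ as in (\ref{3.1}), it is enough to bound each summand by something summable in $(j_1,j_2)$.

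\emph{Inputs at large physical scale.} If $\max\{j_1,j_2\}\ge j-D\ (\ge m)$, I would invoke finite speed of propagation: since the data lies in $H^{N_0}\cap H^{N_1}_\Omega$, the Klein--Gordon propagators $\cos(s\Lambda_\ast)$ and $\Lambda_\ast^{-1}\sin(s\Lambda_\ast)$ have speed $c_\ast$, and the profile transform uses $e^{\pm is\Lambda_\ast}$, so for $s\approx 2^m$ the profiles $f^\mu(s)=V_\mu(s)$, $f^\nu(s)=V_\nu(s)$ are concentrated in $\{|x|\lesssim 2^m\}$ up to rapidly decaying tails, whence $\|f^\mu_{j_1,k_1}(s)\|_{H^{N_1/4}_\Omega}+\|f^\nu_{j_2,k_2}(s)\|_{H^{N_1/4}_\Omega}\lesssim_N 2^{-Nj_i}$ with $j_i=\max\{j_1,j_2\}\ge j-D$. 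Pairing the large factor in $L^\infty$ (Lemma 3.6 and Bernstein) against the other in $L^2$, and using $\|T_{m,l}[F,G]\|_{L^2}\lesssim 2^m\|e^{-is\Lambda}F\|_{L^\infty}\|G\|_{L^2}$ (Lemma 3.5 with $q=\infty$, $r=2$) together with $j\le 3m$, gives $2^{(1-20\delta)j}\|P_kT_{m,l}[\cdots]\|_{L^2}\lesssim 2^{4m}\,2^{-Nj_i}\lesssim 2^{-50\delta^2 m}$ for $N$ large, summably in $(j_1,j_2)$.

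\emph{Inputs at small physical scale.} Now $j_1,j_2\le j-D$. Writing $\widehat{f^\mu_{j_1,k_1}}(\xi-\eta,s)=\int e^{-i(\xi-\eta)\cdot y}(Q_{j_1k_1}f^\mu)(y,s)\,dy$ and discarding the rapidly decaying part of $Q_{j_1k_1}f^\mu$ outside $\{|y|\lesssim 2^{j_1+1}\}$, for $|x|\sim 2^j$,
$$P_kT_{m,l}[f^\mu_{j_1,k_1},f^\nu_{j_2,k_2}](x)=\int q_m(s)\int\!\!\int\!\!\int e^{i[x\cdot\xi+s\Phi(\xi,\eta)-(\xi-\eta)\cdot y]}\varphi_k(\xi)\,\varphi_l^{[l_-,l_0]}(\Phi)\,(Q_{j_1k_1}f^\mu)(y,s)\,\widehat{f^\nu_{j_2,k_2}}(\eta,s)\,d\xi\,d\eta\,dy\,ds.$$
For fixed $\eta,y,s$ the $\xi$-gradient of the phase is $x-y+s\nabla_\xi\Lambda_\sigma(\xi)-s\nabla_\xi\Lambda_\mu(\xi-\eta)$, of size $\gtrsim 2^j$ since $|y|\lesssim 2^{j-D}$ and $s|\nabla_\xi\Lambda_\ast|\lesssim 2^m\le 2^{j-D}$, while the $\xi$-amplitude $\varphi_k(\xi)\varphi_l^{[l_-,l_0]}(\Phi(\xi,\eta))$ has $\xi$-derivatives of size $\lesssim(\max\{2^{-k},2^{-l}\})^{|\alpha|}\le 2^{((1-19\delta)j+17\delta m)|\alpha|}$, using $j+k\ge 19\delta j-17\delta m$ from (\ref{5.6}) and $l\ge l_-=[-m+\delta^3 m]$. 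I would then apply Lemma 3.1(i) with $K\sim 2^j$ and $\varepsilon^{-1}\sim 2^{(1-19\delta)j+17\delta m}$ — so $K\varepsilon\gtrsim 2^{19\delta j-17\delta m}\ge 2^{2\delta m}$ because $j\ge m+D$ — and bound $\int|Q_{j_1k_1}f^\mu(y,s)|\,dy\lesssim 2^{j_1}\|Q_{j_1k_1}f^\mu\|_{L^2}$ and $\sum_{j_2,n_2}\int|\widehat{f^\nu_{j_2,k_2,n_2}}(\eta,s)|\,d\eta\lesssim 2^{2k_2}$ (by (\ref{3.2})); with all surviving prefactors $\lesssim 2^{Cm}$ on this range, this yields $2^{(1-20\delta)j}\|P_kT_{m,l}[\cdots]\|_{L^2}\lesssim 2^{Cm}\,2^{-2N\delta m}\lesssim 2^{-50\delta^2 m}$ for $N$ large, and the sum over the $O(m^2)$ remaining pairs $(j_1,j_2)$ is harmless.

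The step I expect to be the main obstacle is the large-scale case: making precise that, for $s\approx 2^m$, the profiles $f^\mu(s),f^\nu(s)$ are concentrated in $\{|x|\lesssim 2^m\}$ with rapidly decaying tails, so that their slices at $j_i\gtrsim j\ge m+D$ are genuinely negligible (the $Z$-norm alone only gives polynomial-in-$j_i$ decay, which is not enough here). This is finite speed of propagation for (\ref{1.1}) — propagated through the nonlinear flow from spatially localized data and then transported by $e^{\pm is\Lambda_\ast}$ — and is exactly the mechanism behind the informal reduction $j\le m$ in Section 1.2.2; it is where the precise form of the local theory (Proposition 2.2) and the spatial localization of the data enter. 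Everything else is the routine integration-by-parts and Hölder bookkeeping indicated above.
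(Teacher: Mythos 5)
Your small-physical-scale case matches the paper's first step: Fourier-represent the $\Phi$-cutoff via (5.13), then integrate by parts in $\xi$ using Lemma 3.1 with the lower bound $|\nabla_\xi[s\Phi+x\cdot\xi]|\approx|x|\approx 2^j$. (The paper actually runs this under the weaker condition $\min\{j_1,j_2\}\le j-\delta^2 m$, not $\max\{j_1,j_2\}\le j-D$, but that is a minor difference.) The gap is the large-physical-scale case, which is where the real content of Lemma~5.3 lives.

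The assertion that, for $s\approx 2^m$, the profiles $f^\mu(s),f^\nu(s)$ are concentrated in $\{|x|\lesssim 2^m\}$ with rapidly decaying tails, so that $\|f^\mu_{j_1,k_1}(s)\|_{H^{N_1/4}_\Omega}\lesssim_N 2^{-Nj_1}$ for $j_1\ge j-D\ge m$, is not available here. The data is assumed only to lie in $X=H^{N_0}\cap H^{N_1}_\Omega\cap Z$; this imposes no spatial compact support or super-polynomial decay, and the bootstrap hypothesis controls only $\|V(s)\|_X$, which through the $Z$-norm gives precisely the polynomial decay $\|Q_{j_1k_1}f(s)\|_{L^2}\lesssim 2^{-(1-20\delta)j_1}$ that you note is insufficient. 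Moreover, even if the initial data were compactly supported, the profile transform $V_\sigma=e^{is\Lambda_\sigma}v_\sigma$ is not a local operator, so finite speed of propagation of the Klein--Gordon flow does not transfer to the profile at time $s$. The reduction ``$j\le m$'' in Section~1.2.2 is, as that section states, an informal heuristic motivating the shape of the $Z$-norm — it is not a licensed step of the proof, and the regime $\min\{j_1,j_2\}\gtrsim j>m$ is exactly what the remainder of the paper's proof of Lemma~5.3 is built to handle (first the volume-counting reduction to $\min\{n_1,n_2\}<\tfrac23 j-6\delta j$, then Cases 1--3 depending on $l,k_2,n_1,n_2$, using Proposition~6.10, Corollary~6.11, the time integration by parts in (5.7), (5.14)--(5.15), and the decomposition of $\partial_s f^\nu$ from Lemma~3.9 and Remark~3.10). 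None of that work is replaceable by a rapid-decay assertion.

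A secondary issue: your opening reduction to the pure $L^2$ target $2^{(1-20\delta)j}\|P_kT_{m,l}[\cdots]\|_{L^2}\lesssim 2^{-50\delta^2 m}$ is safe as an inequality of norms, but it is not achievable in all the subcases you would eventually have to face. In part of Case~1 the paper proves only the $B^\sigma_j$ estimate (5.8), not the stronger (5.12), because the $n$-localization in the $B^\sigma_j$ norm (e.g.\ the restriction $n\ge -l$, contributing a factor $2^{(1/2-19\delta)l}$) is what makes the estimate close. So even setting aside the finite-speed gap, the $n$-structure of the $Z$-norm cannot be discarded uniformly.
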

\begin{lemma}
   Assume that (\ref{5.6}) holds and, in addition, $j\le m+D$. Then
   \begin{align*}
    2^{(1-20\delta)j} \left\|Q_{jk}\,T_{m,l_0}\left[P_{k_1}f^\mu,P_{k_2}f^\nu\right]\right\|_{L^2}\lesssim 2^{-50\delta^2 m}. \tag{5.9}\label{5.9}  
   \end{align*}
\end{lemma}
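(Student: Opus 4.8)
The plan is to reduce the $L^2$ bound on $Q_{jk}T_{m,l_0}[P_{k_1}f^\mu,P_{k_2}f^\nu]$ to an application of Lemma 3.5 (the phase-localized $L^2\times(L^q\times L^r)$ estimate with $\chi = \varphi_{\ge l_0}$, $2^{l_0}\approx 2^{-12\delta m}$), combined with the linear dispersive bounds of Lemma 3.6 and Proposition 3.8. First I would dispose of the edge ranges of frequency: by (5.1) and (5.2) of Lemma 5.2 we may already restrict to $-(j+m)(1+11\delta)/2 \le k_1,k_2 \le 0.05\delta^2(j+m)$, and by (5.3) we may assume $j+k\ge 19\delta j - 17\delta m$; together with $j\le m+D$ these confine all of $k,k_1,k_2$ to a window of width $O(\delta m)$ around a bounded set, so in particular $\max(k,k_1,k_2)\le D$ after a harmless further splitting, and $2^{6k_+}\approx 1$. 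Then I would decompose $f^\mu = \sum_{j_1,k_1}f^\mu_{j_1,k_1}$, $f^\nu = \sum_{j_2,k_2}f^\nu_{j_2,k_2}$ as in (3.1) and estimate each bilinear piece $Q_{jk}T_{m,l_0}[f^\mu_{j_1,k_1},f^\nu_{j_2,k_2}]$, summing over $j_1,j_2$ at the end using the $2^{\varepsilon j_i}$-type convergence factors built into the $Z$-norm.

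The core dichotomy is on $\max\{j_1,j_2\}$ versus $m$. If $\max\{j_1,j_2\} \le (1-\delta^2)m$ (say), the phase $\Phi$ being $\gtrsim 2^{l_0}$ on the support together with $|\nabla_\eta\Phi|$-control lets one integrate by parts — either in $\eta$ via Lemma 3.1, or, after inserting the angular cutoff $\varphi(\kappa_\theta^{-1}\Omega_\eta\Phi)$ with $\kappa_\theta\approx 2^{-m/2+2\delta^2 m}$, in $\Omega_\eta$ via Lemma 3.2 — to gain an arbitrarily large power $2^{-Nm}$, which after multiplication by $2^{(1-20\delta)j}\le 2^{(1-20\delta)(m+D)}$ and the $2^m$ from the $s$-integral is still $\lesssim 2^{-50\delta^2 m}$. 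The complementary case $j_2 = \max\{j_1,j_2\} \ge (1-\delta^2)m$ is where I expect the genuine work: here I would apply Lemma 3.5 with $q = \infty$, $r = 2$, using the $L^\infty$ dispersive decay $\|e^{-is\Lambda_\mu}f^\mu_{j_1,k_1}\|_{L^\infty}\lesssim 2^{-m+20\delta j_1}$ (the middle line of (3.6), valid for all $j_1$) against $\|f^\nu_{j_2,k_2,n_2}\|_{L^2}\lesssim 2^{-(1-20\delta)j_2 + (1/2-19\delta)n_2}$ from (3.2), and sum over $0\le n_2\le j_2+1$. The output after multiplying by $2^{(1-20\delta)j}$ is controlled because $j\le m+D$ while $j_2$ is forced to be $\approx m$ and the loss $2^{n_2/2}$ is beaten by the requirement (effectively from the $\varphi_{\ge l_0}(\Phi)$ cutoff combined with resonance geometry) that when $n_2$ is large one is near a resonant sphere where finer counting applies; one must also absorb the $2^{20\delta j}$ and $2^{20\delta j_2}$ factors into the $2^{-50\delta^2 m}$ budget, which is comfortable since $\delta^2\ll\delta$.

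The one subtlety, inherited from the multispeed setting and absent in the two-equation case of \cite{y2}, is that it is possible to have \emph{both} $n_1>0$ and $n_2>0$ — i.e. neither input is automatically at distance $O(1)$ from the resonant spheres. When $j_1$ is also comparable to $m$, the naive $L^\infty\times L^2$ split loses $2^{2\delta n_1}$ from $\|\widehat{f^\mu_{j_1,k_1,n_1}}\|_{L^\infty}$; to handle this I would instead pass to the radial-sup functions $f_r, g_r$ and use the change of variables $(x,y) = (|\eta|^2, |\xi-\eta|^2)$ exactly as in the proof of Lemma 3.12, exploiting $\left|\tfrac{d(x,y)}{d\eta}\right|\sim|\xi||\eta|\sin\angle(\xi,\eta)$, to get an $L^1$-type bound on the $\eta$-integral that does not see the $L^\infty$ blow-up, then combine with the $L^2$-in-$\xi$ bound via $\|\cdot\|_{L^2}\lesssim\|\cdot\|_{L^\infty}|E_\xi|^{1/2}$. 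I expect this Jacobian/change-of-variables step, together with correctly bookkeeping the $n_1,n_2$ sums against the $2^{(1-20\delta)j}$ prefactor, to be the main obstacle; everything else is integration by parts and the already-established linear estimates.
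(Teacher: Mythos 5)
There is a genuine gap in your plan: you never integrate by parts in the time variable $s$, and this is the step that actually exploits $|\Phi|\gtrsim 2^{l_0}$. The paper (deferring to [y2], Section 7.3) handles $T_{m,l_0}$ precisely through the time--IBP decomposition already displayed in (\ref{5.7}): $T_{m,l_0}=i\mathcal{A}_{m,l_0}+i\mathcal{B}_{m,l_0}[\partial_s f^\mu,f^\nu]+i\mathcal{B}_{m,l_0}[f^\mu,\partial_s f^\nu]$, replacing the raw $\sim 2^m$ factor from the $s$-integral by the much milder loss $2^{-l_0}\sim 2^{12\delta m}$ (for $\mathcal{A}_{m,l_0}$) or by a $\partial_s V$-weight which gains $2^{-m+O(\delta m)}$ via Lemma 3.9 (for $\mathcal{B}_{m,l_0}$). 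Without this step your numerology is not merely tight but off by a large power: in your Case~2 ($j_2\ge(1-\delta^2)m$), the $L^\infty\times L^2$ bound gives at best
\begin{align*}
\|T_{m,l_0}[f^\mu_{j_1,k_1},f^\nu_{j_2,k_2}]\|_{L^2}
\lesssim 2^m\cdot 2^{-m+2\delta m}\cdot 2^{-(1/2-\delta)j_2}\sim 2^{-m/2+O(\delta m)},
\end{align*}
so $2^{(1-20\delta)j}\|T_{m,l_0}\|_{L^2}\sim 2^{m/2-O(\delta m)}$, far from the required $2^{-50\delta^2 m}$. The same problem occurs in your Case~1: in the region $|\nabla_\eta\Phi|\lesssim\kappa_r$ you cannot integrate by parts in $\eta$ or in $\Omega_\eta$, and the available volume bound $|E_\eta|\lesssim\kappa_r\kappa_\theta$ again leaves an uncompensated $2^m$ from the $s$-integral; time--IBP is what rescues this region, since $|\Phi|\gtrsim 2^{l_0}$ there.

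Two smaller issues. First, Lemma 3.5 requires $\chi$ Schwartz, but $\varphi_{\ge l_0}$ is not; you would first need to split $\varphi_{\ge l_0}(\Phi)=\sum_{l\ge l_0}\varphi_l(\Phi)$ (or, after time--IBP, $\tilde\varphi_{l_0}(\Phi)=\sum_{l\ge l_0}2^{-l}\cdot 2^l\tilde\varphi_l(\Phi)$) and apply Lemma 3.5 to each dyadic piece. Second, the quoted bound $\|e^{-is\Lambda_\mu}f^\mu_{j_1,k_1}\|_{L^\infty}\lesssim 2^{-m+20\delta j_1}$ misreads (\ref{3.6}): the middle line of (\ref{3.6}) gives $2^{-j_1+20\delta j_1}$, while the $2^{-m}$ decay comes from the third line and is valid only for $j_1\le(1-\delta^2)m+k_{1-}$. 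Your observation that both $n_1>0$ and $n_2>0$ can occur in the multispeed setting, and the proposed change-of-variables $(x,y)=(|\eta|^2,|\xi-\eta|^2)$ from the proof of Lemma 3.12, are correct and useful observations, but they are secondary to, and cannot substitute for, the missing integration by parts in $s$.
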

\begin{lemma}
   Assume that (\ref{5.6}) holds and, in addition, $j\le m+D$. Then, for $l_-< l< l_0$,
   \begin{align*}
   \left\|Q_{jk}\,T_{m,l_-}\left[P_{k_1}f^\mu,P_{k_2}f^\nu\right]\right\|_{B^\sigma_j}+\left\|Q_{jk}\,\mathcal{A}_{m,l}\left[P_{k_1}f^\mu,P_{k_2}f^\nu\right]\right\|_{B^\sigma_j}\lesssim 2^{-50\delta^2 m}.\tag{5.10}\label{5.10}    
   \end{align*}
\end{lemma}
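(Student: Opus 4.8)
The plan is to mirror the endpoint lemma of \cite{y2} (the analogue of their Lemma~7.5), with the modifications forced by allowing both inputs to sit near resonant spheres. Decompose the inputs as in (\ref{3.1}), $P_{k_i}f^{*}=\sum_{j_i}f^{*}_{j_i,k_i}=\sum_{j_i,n_i}f^{*}_{j_i,k_i,n_i}$, and recall from (\ref{5.6}) that here $j\le m+D$, that $k,k_1,k_2$ lie in the reduced ranges, and that $2^{6k_+}\lesssim 2^{\delta^2 m}$. Since $\|\cdot\|_{B^\sigma_j}$ is a supremum over the output index $n\in\{0,\dots,j+1\}$, it suffices to bound $\|A^\sigma_{n,(j)}Q_{jk}T_{m,l_-}[\cdots]\|_{L^2}$ and $\|A^\sigma_{n,(j)}Q_{jk}\mathcal A_{m,l}[\cdots]\|_{L^2}$ by $C\,2^{-(1-20\delta)j+(1/2-19\delta)n-50\delta^2 m-\delta m}$ for each fixed $n$ and each input localization pair $(j_1,n_1),(j_2,n_2)$, the extra $2^{-\delta m}$ absorbing the $(j_1,j_2,n_1,n_2)$-sum (the residual $2^{\delta^2 m}$ from $2^{6k_+}$ and the polynomially many $(k_1,k_2)$ being harmless against the $2^{-50\delta^2 m}$ target). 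The unifying feature of the two terms is that the $s$-integration is benign: for $T_{m,l_-}$ one has $|\Phi|\sim 2^{l_-}\approx 2^{-m+\delta^3 m}$, so $s|\Phi|=O(2^{\delta^3 m})$ and $e^{is\Phi}$ gives no oscillation gain but costs nothing, whence $\|Q_{jk}T_{m,l_-}[\cdots]\|_{L^2}\lesssim 2^m\sup_{s\in I_m}\|\mathcal F^{-1}\!\int e^{is\Phi}\varphi_{l_-}(\Phi)\widehat{f^\mu}\widehat{f^\nu}\,d\eta\|_{L^2}$; for $\mathcal A_{m,l}$ one has $\int|q'_m(s)|\,ds\lesssim 1$ and $\widetilde\varphi_l(\Phi)=2^{-l}\psi(2^{-l}\Phi)$ with $\psi$ a fixed bump and $2^{-l}\le 2^{-l_-}\approx 2^{m-\delta^3 m}$, so the same reduction holds with prefactor $2^{-l}$ (and, when $2^{m+l}\gg1$, one may additionally exploit the oscillation through Lemma~3.5). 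Everything thus reduces to a fixed-time oscillatory $\eta$-integral with a prefactor $\lesssim 2^m$.

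For that fixed-time estimate I would split on $\max\{j_1,j_2\}$. If $\max\{j_1,j_2\}\le(1-\delta^2)m$, insert the cutoffs $\varphi(\kappa_\theta^{-1}\Omega_\eta\Phi)$ and $\varphi(\kappa_r^{-1}\nabla_\eta\Phi)$ with $\kappa_\theta\sim 2^{\delta^2 m-m/2}$, $\kappa_r\sim 2^{\delta^2 m}(2^{-m/2}+2^{j_2-m})$ as in the proof of Lemma~3.12; on the complement of the angular cutoff use Lemma~3.2 and on the complement of the radial cutoff use Lemma~3.1 (with the derivative bounds (\ref{3.5}) and the bounds on $\Phi$ from Section~6) to gain rapid decay. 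On the core neighborhood of $\mathcal R=\mathcal T\cap\mathcal S$ one uses its structure---Proposition~6.5, Lemma~6.7, the finite/simple-root description (\ref{2.2})--(\ref{2.3}), and $|\nabla\Psi_{\sigma\mu\nu}|\gtrsim 1$ near its roots---together with $\Phi(\xi,\eta)=\Psi(\xi)+O(\kappa_r^2)$ and $|\eta-p(\xi)|\lesssim\kappa_r$ to interlock the output index $n$ with $|\Phi|\sim 2^{l_-}$, after which a volume-times-$L^\infty$ count using (\ref{3.4})/(\ref{3.7a}) (legitimate since $j_1,j_2\le(1-\delta^2)m$) closes this regime with a fixed power of $2^{-\delta m}$ to spare. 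If $\max\{j_1,j_2\}\ge(1-\delta^2)m$, use an $L^\infty\times L^2$ split: (\ref{3.6}) for the $L^\infty$ factor of the smaller-$j$ input, the Z-norm bound (\ref{3.2}) for the $L^2$ factor of the larger-$j$ input, and the volume estimate $|\{\eta:|\Phi(\xi,\eta)|\lesssim 2^{l_-}\}\cap(\text{supports})|\lesssim 2^{l_-}2^{-\min\{n_1,n_2\}}$ obtained from the change of variables $x=|\eta|^2,\ y=|\xi-\eta|^2$ with Jacobian $\sim|\xi\cdot\eta^{\perp}|$, exactly as in the proof of Lemma~3.12; for $\mathcal A_{m,l}$ this regime is also handled by Lemma~3.5 with $\chi=\psi$, $\nu=-l$ (admissible since $(1+\varepsilon)(-l)\le(1+\varepsilon)(m-\delta^3 m)\le m$ for $\varepsilon$ small, or $\nu=-l_-$ for $T_{m,l_-}$) together with the dispersive bounds of Lemma~3.6. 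Summing then yields (\ref{5.10}).

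The step I expect to be the main obstacle is precisely the iterated-resonance subcase flagged in Section~1.2.3--1.2.4: $\max\{j_1,j_2\}\ge(1-\delta^2)m$ with \emph{both} $n_1,n_2>0$, which---unlike in \cite{y2}---is not excluded because the rigidity (\ref{1.5}) fails for multispeed/multimass phases. When $n_i\approx j_i$ the $L^\infty$ bound (\ref{3.4}) degrades to $2^{2\delta n_i}2^{-(1/2-21\delta)(j_i-n_i)}$, so the naive $L^\infty\times L^2$ estimate fails exactly at the critical threshold; instead one bounds $|I(\xi)|\le\int f_r(\xi-\eta)g_r(\eta)\,d\eta$ with $f_r,g_r$ the angular suprema of the profiles, passes to $(x,y)=(|\eta|^2,|\xi-\eta|^2)$ where $d\eta/(dx\,dy)\sim\bigl((\sqrt y)^2-(|\xi|-\sqrt x)^2\bigr)^{-1/2}$, and uses Hölder in $L^{248/123}$ against the $n_i$-localized pieces (or Proposition~6.10 and Corollary~6.11 when $j_1,j_2\ge(1-300\delta)m$). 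The convergence of the resulting $(j_1,j_2,n_1,n_2)$-series comes exactly from the slightly weakened Z-norm---the $2^{20\delta j}$ and $2^{-19\delta n}$ factors of Definition~2.1---just as in the model computation of Section~1.2.3, and getting these exponents to line up with room to spare is the delicate accounting. One must also track the low-frequency branch $\min\{k,k_1,k_2\}\le -D$, where the first nondegeneracy hypothesis $b_\sigma-b_\mu-b_\nu\neq 0$ together with $|\Phi|\ll 1$ force $\max\{k,k_1,k_2\}\sim 1$ and (Proposition~6.6(b)) $k_1,k_2\sim 1$, and where one input may still be near a resonant sphere; these subcases are dispatched by the same $(x,y)$ change of variables, as in the treatment of the $f_{NCw}$ term in the proof of Lemma~3.12.
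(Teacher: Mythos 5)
Your overall strategy matches the paper's: Section~5.3 reduces Lemma~5.5 to the fixed-time estimate (\ref{5.17})/(\ref{5.18}) on $I_l[\cdot,\cdot]$ with a $2^m$ prefactor for $T_{m,l_-}$ and a $2^{-l}$ prefactor for $\mathcal A_{m,l}$, then splits on $\min\{k,k_1,k_2\}\gtrless -D$ and on $\max\{j_1,j_2\}\gtrless(1-\delta^2)m$, inserting angular/radial cutoffs $\varphi(\kappa_\theta^{-1}\Omega_\eta\Phi)$ and $\varphi(\kappa_r^{-1}\nabla_\eta\Phi)$, killing the complements with Lemmas~3.1--3.2, and handling the resonant core by Schur-type volume counts against the $B^\sigma_j$ weights.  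So the skeleton is right.

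Two points of substance, though.  First, your treatment of the high-$j$ regime ($\max\{j_1,j_2\}\ge(1-\delta^2)m$) is too coarse: the paper (Case~2 of Section~5.3.1) does a further dyadic decomposition $\widehat{I^\parallel_{l,\widetilde{hi},r}}$ on $|\nabla_\xi\Phi(\xi,\eta)|\sim 2^r$, and then splits according to whether $j\le m+r+100$ or $j\ge m+r+100$, using in the latter range integration by parts in $\xi$ (when $j_1\le(1-\delta^2)j$) and Proposition~6.10(a) or Cauchy--Schwarz $\to$ (\ref{5.29a})--(\ref{5.29b}) (when $j_1\ge(1-\delta^2)j$).  A bare ``$L^\infty\times L^2$ split with a volume bound'' will not reproduce the $(1-20\delta)j$ gain in $B^\sigma_j$ without this $r$-bookkeeping.

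Second --- and this is the gap I would flag concretely --- you invoke the $(x,y)=(|\eta|^2,|\xi-\eta|^2)$ change of variables with $L^{248/123}$-H\"older from the proof of Lemma~3.12 for the both-$n_i>0$ subcase.  That computation yields $\|I^{lo}\|_{L^2}\lesssim 2^{-1.95m}$ \emph{independently of $l$}, because the $\Phi$-localization plays no role in the $f_r*g_r$ estimate.  But the target (\ref{5.18}) scales like $2^{l-(1-20\delta)j-0.01\delta^2 m}$, which for $l$ near $l_-\approx -m$ and $j$ near $m$ is of order $2^{-2m+O(\delta m)}$, strictly smaller than $2^{-1.95m}$.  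So the $(x,y)$ bound alone does not close in exactly the strongest-resonance corner of parameter space unless one additionally extracts a gain from the output localization $n$ in (\ref{5.17}) --- which you gesture at but do not account for.  The paper sidesteps this by using Proposition~6.10(a) (respectively Corollary~6.11), whose bound carries the $2^{l/2}$ factor tied to the $\Phi$-localization; that is what makes the exponents close.  If you insist on the $(x,y)$ route, you would have to combine it with a nontrivial volume estimate in $\xi$ that also depends on $l$, which the paper does not provide and which you would need to supply.
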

\begin{lemma}
   Assume that (\ref{5.6}) holds and, in addition, $j\le m+D$. Then, for $l_-< l< l_0$,
   \begin{align*}
    \left\|Q_{jk}\,\mathcal{B}_{m,l}\left[P_{k_1}f^\mu,P_{k_2}\partial_s f^\nu\right]\right\|_{B^\sigma_j}\lesssim 2^{-50\delta^2 m}.\tag{5.11}\label{5.11}   
   \end{align*} 
\end{lemma}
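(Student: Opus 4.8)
The plan is to estimate the term $\mathcal{B}_{m,l}[P_{k_1}f^\mu,P_{k_2}\partial_s f^\nu]$ by plugging in the time-derivative decomposition of $\partial_s f^\nu$ from Lemma 3.12, namely $\partial_s f^\nu = \varepsilon_1^2(f_C + f_{SR} + f_{NC} + f_{NCw} + \partial_s F_C + \partial_s F_{NC} + \partial_s F_{LO})$, and treating each resulting piece separately. For the genuinely nonresonant pieces $f_{NC}$, $f_{SR}$, $f_{NCw}$, I would use the $L^2$ and $L^\infty$ bounds (3.23)--(3.25) together with the $L^2\times L^\infty$ or $L^\infty\times L^2$ linear dispersive estimates from Lemma 3.6 (and in borderline cases Lemma 3.5); the smallness $2^{-19m/10}$, $2^{-3m/2+76\delta m}$, $2^{-1.6m+11.4\delta m}$ should easily absorb the loss of $2^{(1-20\delta)j}\lesssim 2^{(1-20\delta)(m+D)}$ coming from the $B^\sigma_j$ norm, after also accounting for the $2^{-l}$ loss from $\tilde\varphi_l$ (which is at most $2^{m}$ since $l\ge l_-\approx -m$) and integration in $s$ over $I_m$ (another factor $2^m$). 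The contributions of the three exact-derivative terms $\partial_s F_C$, $\partial_s F_{NC}$, $\partial_s F_{LO}$ should be handled by a further integration by parts in $s$, moving $\partial_s$ onto the remaining factors $q_m(s)\tilde\varphi_l(\Phi)e^{is\Phi}\widehat{f^\mu_{j_1,k_1}}$; the boundary and bulk terms are then controlled using (3.27), the bound (3.20) on $\partial_s V$, and (3.7).

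The heart of the matter is the coherent term $f_C$, whose Fourier transform is $\sum_{\theta+\kappa\neq 0} e^{is\Psi_{\nu\theta\kappa}(\eta)}\sum_q g^q_{\nu\theta\kappa}(\eta,s)\varphi_{-3\delta m}(\Psi_{\nu\theta\kappa}(\eta))$. Substituting this produces oscillatory integrals with the \emph{combined} phase $\Phi_{\sigma\mu\nu}(\xi,\eta) + \Psi_{\nu\theta\kappa}(\eta)$, exactly the second-order resonance interactions flagged in Section 1.2.3. I would decompose $\xi$-frequency space and $\eta$-frequency space dyadically, and split according to the sizes of $|\nabla_\eta[\Phi_{\sigma\mu\nu}(\xi,\eta)+\Psi_{\nu\theta\kappa}(\eta)]|$ and $|\nabla_\xi\Phi_{\sigma\mu\nu}(\xi,\eta)|$ — these being independent now, unlike in \cite{y2} where (1.6) forced them to be comparable. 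When $|\nabla_\eta[\Phi+\Psi]|$ is not too small, integrate by parts in $\eta$ via Lemma 3.1 using the symbol bounds (3.22) on $g^q$; when $|\nabla_\xi\Phi|$ is not too small, integrate by parts in $\xi$ after writing things on the physical side, using the $A$-localization of the output in the $B^\sigma_j$ norm together with (3.2), (3.3). The genuinely dangerous region is where \emph{both} gradients are small, i.e. near a space-time resonance of the iterated phase; there one falls back on volume estimates, using the nondegeneracy (1.7) via Lemma 6.7 and the angle cutoff $\varphi(\kappa_r^{-1}\Omega_\eta\Phi)$ from Remark 3.3, together with the improved pointwise bound (3.40) on the profile $g(\xi,s)$ of the relevant piece of $\partial_s V$. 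Summing over $q\in[0,(1/2-40\delta)m]$ is harmless since each $g^q$ carries a gain $2^{-q}$.

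The main obstacle I expect is precisely this all-gradients-small case for $f_C$: controlling the trilinear oscillatory integral over a neighbourhood of the iterated space-time resonance set without the structural simplification (1.6). The plan there is to peel off the angular variable first (integration by parts in $\Omega_\eta$ using Lemma 3.2 when $|\Omega_\eta\Phi|\gtrsim\kappa_\theta$, with $\kappa_\theta\approx 2^{-m/2+2\delta^2 m}$), reducing to an essentially one-dimensional integral over the radial variable near $|\xi|\approx\gamma_i$, $|\eta|\approx\gamma_j$; then apply the change of variables $x=|\eta|^2$, $y=|\xi-\eta|^2$ à la Lemma 3.12 to bound the remaining integral, and finally convert the resulting $L^\infty_\xi$ bound into an $L^2_\xi$ bound using that the output frequency support has small volume (it is pinned near a resonant sphere, so $|E_\xi|\lesssim 2^{-n}$). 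Throughout, the constraint $j\le m+D$ is what keeps the $2^{(1-20\delta)j}$ factor in check; the various $\varepsilon$-losses ($\delta$, $\delta^2$, $\delta^3$ powers of $m$) must be tracked carefully but each estimate should close with room to spare, yielding the stated $2^{-50\delta^2 m}$.
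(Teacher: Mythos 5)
Your high-level plan is correct and matches the paper's broad structure: decompose $\partial_s f^\nu$ via Lemma~3.12, dispose of $f_{SR}$, $f_{NC}$ and the three exact-derivative pieces by a combination of $L^2\times L^\infty$ estimates and a further integration by parts in $s$, and then treat the genuinely new coherent term $f^\nu_C$ by analyzing the iterated phase $\Phi_{\sigma\mu\nu}(\xi,\eta)+\Psi_{\nu\omega\theta}(\eta)$ with a splitting by $|\nabla_\eta[\Phi+\Psi]|$ and $|\nabla_\xi\Phi|$. Two of your subroutines, however, contain gaps.

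First, your ``all-gradients-small'' region for $f_C$ is not actually dangerous in the way you imagine, and the mechanism you propose there would not close. The crucial observation you miss (the paper's (5.34)--(5.36)) is that $g_q$ is built from a profile already evaluated at a stationary point of the \emph{inner} $\theta$-integral, so $|\nabla_\theta\Phi_{\nu\omega\theta}|$ is small; then Proposition~6.4 forces $|\nabla_\eta\Phi_{\nu\omega\theta}|\gtrsim 1$, and hence $|\nabla_\eta[\Phi+\Psi]|$ small implies $|\nabla_\eta\Phi_{\sigma\mu\nu}|\gtrsim 2^{-0.4\delta^2 m}$. What remains hard is instead the case where $|\nabla_\xi\Phi|\sim 2^r$ is small while $|\nabla_\eta\Phi_{\sigma\mu\nu}|\gtrsim 1$, and a naive volume count there ($|E_\eta|\lesssim 2^l\kappa_\theta$, $|E_\xi|\lesssim 2^r\kappa_\theta$) loses too much when $2^l$ is near $2^{l_-}\approx 2^{-m}$ once you also pay the factors $2^{m}$ (time integration), $2^{-l}$ (from $\tilde\varphi_l$) and $2^{(1-20\delta)j}\lesssim 2^{(1-20\delta)m}$. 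The paper closes this by a \emph{secondary} integration by parts in $s$ against the combined phase $\mathbf{p}=\Phi+\Psi$, decomposing by $|\mathbf{p}|\sim 2^{r_0}$ and exploiting the extra factor $1/\mathbf{p}$; when $r_0/2-q\ge -m/2$ this IBP is what produces the decisive gain. Your proposed alternatives --- the change of variables $x=|\eta|^2,\,y=|\xi-\eta|^2$, and Lemma~6.7 --- are tools from Lemma~3.12's proof (the case of two inputs both pinned near resonant spheres), not tools adapted to the iterated phase $\Phi+\Psi$; I do not see how to make them give the right scaling here.

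Second, you dismiss $f^\nu_{NCw}$ as one of the easy pieces. It is not: the $L^2$ bound $\|f^\nu_{NCw}\|_{L^2}\lesssim 2^{-1.6m+11.4\delta m}$ combined with an $L^\infty\times L^2$ estimate and the worst-case factors $2^{m}\cdot 2^{-l}\lesssim 2^{2m}$ and $2^{(1-20\delta)j}\lesssim 2^{(1-20\delta)m}$ does not close. The paper devotes its own subsection (5.4.2) to this term, which requires exploiting the extra structural information recorded in Lemma~3.12 --- namely that $f_{NCw}$ only arises with $j_1\le(1-\delta)m$, $j_2\ge(1-\delta)m$, $|\nabla_\xi\Phi|\gtrsim 1$, and the output frequency in the low-frequency range --- and then running a full splitting in $|\nabla_\eta\Phi|$ (the $\mathcal{B}^{Hi}/\mathcal{B}^{Lo}$ decomposition), with an angle cutoff in the $\mathcal{B}^{Hi,\parallel}$ piece.
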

\vspace{0.8em}
Lemma 5.4 is proved in Section 7.3 in \cite{y2}. Thus, we will only prove Lemma 5.3, Lemma 5.5 and Lemma 5.6 in the following sections.
\vspace{4em}
\subsection{Approximate Finite Speed of Propagation}
\ \par
In this subsection, we will prove Lemma 5.3. In fact, in most cases, we are able to prove a stronger estimate
\begin{align*}
    2^{(1-20\delta)j}\left\|Q_{jk}\,T_{m,l}\left[P_{k_1}f^\mu,P_{k_2}f^\nu\right]\right\|_{L^2}\lesssim 2^{-50\delta^2 m}.\tag{5.12}\label{5.12}
\end{align*}
We define the functions $f^\mu_{j_1,k_1}$ and $f^\nu_{j_2,k_2}$ as before. If $\min\left\{j_1,j_2\right\}\le j-\delta^2 m$ then we rewrite
\begin{align*}
    &Q_{jk} T_{m,l}\left[f^\mu_{j_1,k_1},f^\nu_{j_2,k_2}\right](x) \\
    \sim &\Tilde{\varphi_j}^{(k)}(x)\cdot \int_\mathbb{R} q_m(s) \int_{\mathbb{R}^2}\left[\int_{\mathbb{R}^2} e^{i\left[s\Phi(\xi,\eta)+x\cdot \xi\right]}\varphi_l(\Phi(\xi,\eta))\varphi_k(\xi)\widehat{f^\mu_{j_1,k_1}}(\xi-\eta,s)d\xi\right]\widehat{f^\nu_{j_2,k_2}}(\eta,s)\,d\eta ds.
\end{align*}
Note that 
\begin{align*}
   \varphi_l(\Phi(\xi,\eta))=2^l\int e^{i\lambda\Phi(\xi,\eta)}P(2^l\lambda)\,d\lambda, \tag{5.13}\label{5.13}
\end{align*}
where $P$ is a Schwartz function and then we get
\begin{align*}
   Q_{jk}& T_{m,l}\left[f^\mu_{j_1,k_1},f^\nu_{j_2,k_2}\right](x)=  \\
   &\Tilde{\varphi_j}^{(k)}(x)\cdot 2^l \int_\mathbb{R} \int_\mathbb{R} q_m(s) P(2^l \lambda) \int_{\mathbb{R}^2}\left[\int_{\mathbb{R}^2} e^{i\left[(s+\lambda)\Phi(\xi,\eta)+x\cdot \xi\right]}\varphi_k(\xi)\widehat{f^\mu_{j_1,k_1}}(\xi-\eta,s)d\xi\right]\widehat{f^\nu_{j_2,k_2}}(\eta,s)\,d\eta ds d\lambda \\
   =&\Tilde{\varphi_j}^{(k)}(x)\cdot \int_\mathbb{R} \int_\mathbb{R} q_m(s) P(\Tilde{\lambda}) \int_{\mathbb{R}^2}\left[\int_{\mathbb{R}^2} e^{i\left[(s+\frac{\Tilde{\lambda}}{2^l})\Phi(\xi,\eta)+x\cdot \xi\right]}\varphi_k(\xi)\widehat{f^\mu_{j_1,k_1}}(\xi-\eta,s)d\xi\right]\widehat{f^\nu_{j_2,k_2}}(\eta,s)\,d\eta ds d\Tilde{\lambda}
\end{align*}
Since $P$ has rapid decay, we only need to consider the case when $\left|\Tilde{\lambda}\right|$ is small enough, such that $s+\frac{\Tilde{
\lambda}}{2^l}\sim 2^m$. Then, in the support of integration, we have the lower bound $\left|\nabla_\xi\left[(s+\frac{\Tilde{\lambda}}{2^l})\Phi(\xi,\eta)+x\cdot \xi\right]\right|\approx\left|x\right|\approx 2^j$. Integration by parts in $\xi$ using Lemma 3.1 gives an acceptable contribution as in (\ref{5.12}).\par
Next, we may assume that $\min\left\{j_1,j_2\right\}\ge j-\delta^2 m$. For simplicity, we may write \\$T_{m.l}=T_{m,l}\left[f^\mu_{j_1,k_1},f^\nu_{j_2,k_2}\right]$, $f_1=f^\mu_{j_1,k_1}$ and $f_2=f^\nu_{j_2,k_2}$ below. If $\min\left\{n_1,n_2\right\}\ge \frac{2}{3}j-6\delta j$, then we can simply control it by counting the volume
$$\left\|T_{m,l}\right\|_{L^\infty}\lesssim 2^m \left\|f_1\right\|_{L^\infty}\left\|f_2\right\|_{L^\infty}\left|E_\eta\right|\lesssim 2^m\cdot 2^{\delta n_1+2\delta^2 n_1}\cdot 2^{-(2-\delta-2\delta^2)n_2}.$$
Then, we have
\begin{align*}
   2^{(1-20\delta)j}\left\|T_{m,l}\right\|_{L^2}&\lesssim 2^{(1-20\delta)j} \left\|T_{m,l}\right\|_{L^\infty}\cdot\left|E_\xi\right|^{1/2}\lesssim 2^{-\frac{1}{3}\delta j}.
\end{align*}
Therefore, we may assume $\min\left\{n_1,n_2\right\}<\frac{2}{3}j-6\delta j$ below. Now, we will divide into three cases.\par
\textbf{Case 1.} $l<-\frac{2}{3}j+25\delta j$ (WLOG, let $n_2=\min\left\{n_1,n_2\right\}$)\par
In this case, if $n_1>\frac{2}{3}j-40\delta j$, then we can apply Proposition 6.10 (b) to get
\begin{align*}
   2^{(1-20\delta)j}\left\|T_{m,l}\right\|_{L^2}&\lesssim 2^{(1-20\delta)j}\,2^m\,2^{\frac{l}{4}-\frac{n_1}{2}-\frac{n_2}{4}}\,\sup_s\left[\left\|\sup_\theta \left|f_1(r\theta)\right|\right\|_{L^2(rdr)} \cdot\left\|f_2\right\|_{L^2}\right]\lesssim 2^{-0.35\delta j+2\delta^2 m}.
\end{align*}
If $n_1<\frac{2}{3}j-40\delta j$ and $\left|\nabla_\eta\Phi(\xi,\eta)\right|\gtrsim 1$, then we can simply apply Corollary 6.11 (a) to get
\begin{align*}
    2^{(1-20\delta)j}\left\|T_{m,l}\right\|_{L^2}&\lesssim 2^m \cdot 2^{(1-20\delta)j}\cdot 2^{l/2}\cdot 2^{-(1-20\delta)j_1+(\frac{1}{4}-19\delta)n_1}\cdot2^{-(1-20\delta)j_2+(\frac{1}{4}-19\delta)n_2}\lesssim 2^{-4.2\delta j+2\delta^2 m},
\end{align*}
which gives an acceptable contribution as in (\ref{5.12}).
On the other hand, if $n_1<\frac{2}{3}j-40\delta j$ and $\left|\nabla_\eta\Phi(\xi,\eta)\right|\sim 2^q$, where $-j\le q<-D<0$, then we first note that, by Proposition 6.5(a), 
$$\left|\Psi(\xi)\right|\lesssim\left|\Phi(\xi,\eta)\right|+\left|\Phi(\xi,\eta)-\Phi(\xi,p(\xi))\right|\lesssim \left|\Phi(\xi,\eta)\right|+\left|\nabla_\eta\Phi(\xi,\eta)\right|\cdot\left|\eta-p(\xi)\right|\lesssim 2^l+2^{0.4\delta^2 m}\cdot 2^{2q},$$
and consider two cases $0>l\ge 2q$ and $l\le 2q<0$. (Note that we must have $\mu+\nu\neq 0$ here, otherwise by Proposition 6.5(b) $\xi=0$, which is a contradiction.)
If $0>l\ge 2q+0.4\delta^2 m$, then $n\ge -l$ and by Corollary 6.11 (b), we have
\begin{align*}
    2^{(1-20\delta)j} 2^{(\frac{1}{2}-19\delta)l}\left\|T_{m,l}\right\|_{L^2}&\lesssim 2^m\cdot2^{(1-20\delta)j}\cdot2^{(\frac{1}{2}-19\delta)l}\cdot2^{-(1-20\delta)j_1}\cdot2^{-(1-20\delta)j_2}\cdot2^{-19\delta n_1}\cdot2^{-19\delta n_2} \\
    &\lesssim 2^{-0.25j+2\delta^2 m},
\end{align*}
which gives an acceptable contribution as in (\ref{5.8}). If $l\le 2q+0.4\delta^2 m<0$, then $n\ge -2q$ and by Corollary 6.11 (a), we have
\begin{align*}
    2^{(1-20\delta)j} 2^{(1-38\delta)q}\left\|T_{m,l}\right\|_{L^2}&\lesssim 2^m\cdot2^{(1-20\delta)j}\cdot2^{(1-38\delta)q}\cdot2^{\frac{l}{2}}\cdot2^{-\frac{q}{4}}\cdot2^{-(1-20\delta)j_1}\cdot2^{-(1-20\delta)j_2} \\ 
    &\ \ \ \ \ \times2^{(\frac{1}{4}-19\delta) n_1}\cdot2^{(\frac{1}{4}-19\delta) n_2} \\
    &\lesssim 2^{-4.2\delta j+2.2\delta^2 m},
\end{align*}
which again gives an acceptable contribution as in (\ref{5.8}). Finally, if $n_1<\frac{2}{3}j-40\delta j$ and $\left|\nabla_\eta\Phi(\xi,\eta)\right|\lesssim 2^{-j}$, then $n\ge -l$. Apply Corollary 6.11 (a) and we can also get an acceptable contribution as in (\ref{5.8}). This is because comparing the coefficients of Corollary 6.11 (a), we can get $2^{-\frac{j}{2}+\frac{n}{4}+\frac{p}{4}}\lesssim 2^{\frac{l}{2}+\frac{n}{4}+\frac{p}{4}-\frac{q}{4}}$. \par
\textbf{Case 2.} $l\ge -\frac{2}{3}j+25\delta j$, $k_2\le -\frac{j}{3}+12\delta j$ \ \ \ \ (WLOG, let $n_1=\min\left\{n_1,n_2\right\}$)\par
First, we show that 
\begin{align*}
    \sup_{\left|\lambda\right|\le 2^{-l+\delta j}} \left\|e^{-i(s+\lambda)\Lambda_\nu}\left( P_{k_2} \partial_s f^\nu\right) (s)\right\|_{L^\infty}\lesssim 2^{-2j+40.1\delta j}. \tag{5.14}\label{5.14}
\end{align*}
In fact, we first note that $\left\|e^{-is\Lambda_\nu}\left( P_{k_2} \partial_s f^\nu\right) (s)\right\|_{L^\infty}\lesssim 2^{-2j+40.1\delta j}$ due to the second line (\ref{3.6}). Then it suffices to show that $e^{-i\lambda \Lambda_\nu}:L^\infty \rightarrow L^\infty$ is bounded, where $\Lambda_\nu(\xi)=\sqrt{1+\left|\xi\right|^2}\sim 1+\frac{1}{2}\left|\xi\right|^2$ (i.e. $\left|\xi\right|\ll 1$). Note that $\reallywidehat{e^{-i\lambda\Lambda_\sigma}f}(\xi)=e^{-i\lambda\sqrt{1+\left|\xi\right|^2}}\hat{f}(\xi)=e^{i\lambda}\cdot e^{i\frac{1}{2}\lambda\left|\xi\right|^2}\hat{f}(\xi)$, and \vspace{0.5em} it suffices to show that $T$ is bounded,
where $\widehat{Tf}(\xi)\triangleq \varphi\left(\frac{\xi}{2^{-\frac{1}{3}j}}\right)\,e^{i\frac{1}{2}\lambda\left|\xi\right|^2}\hat{f}(\xi)$ and $\varphi$ is a Schwartz cutoff function with \vspace{0.5em} compact $\approx$ the unit ball\vspace{0.5em}. Let $G\left(\Tilde{\xi}\right)\triangleq \varphi\left(\Tilde{\xi}\right)\,e^{i\frac{1}{2}\lambda\,2^{-\frac{2}{3}j}\left|\Tilde{\xi}\right|^2}$ and $H(\xi)\triangleq G\left(\frac{\xi}{2^{-\frac{1}{3}j}}\right)$. Since $\left|\lambda\,2^{-\frac{2}{3}j}\right|\lesssim 1$, we know that $G$ is also a Schwartz function, which implies that $\int\left|G\right|\lesssim 1$. Then, we get
$$\left\|\hat{H}\right\|_{L^1}=\int \left|\hat{H}(x)\right|\,dx=\int 2^{-\frac{2}{3}j}\left|\hat{G}\left(2^{-\frac{1}{3}j}x\right)\right|\,dx=\int\left|\hat{G}(x)\right|\,dx\lesssim 1.$$
Finally, by Young's Inequality, we conclude that $\left\|T\right\|_{L^\infty\rightarrow L^\infty}\lesssim 1$, since $H$ is the Fourier multiplier of $T$.\par
Next, we use the formula (\ref{5.7}) and the contribution of $\mathcal{A}_{m,l}$ can be estimated by using Proposition 6.10 (b) as
\begin{align*}
   2^{(1-20\delta)j}\left\|T_{m,l}\right\|_{L^2}&\lesssim 2^{(1-20\delta)j}\,2^{-l}\,2^{\frac{l}{4}-\frac{n_1}{2}-\frac{n_2}{4}}\,\sup_s\left[\left\|\sup_\theta \left|f_1(r\theta)\right|\right\|_{L^2(rdr)} \cdot\left\|f_2\right\|_{L^2}\right]\lesssim 2^{-\frac{1}{12}j+20.2\delta j},
\end{align*}
which implies that we only need to focus on the term $\mathcal{B}_{m,l}$. To deal with $\mathcal{B}_{m,l}$, we will mainly use the following estimate that is similar to Lemma 3.5
\begin{equation}
    \begin{aligned}
    \left\|\mathcal{B}_{m,l}\left[f^\mu_{j_1,k_1,n_1},P_{k_2} \partial_s f^\nu\right]\right\|_{L^2}&\lesssim \left\|f^\mu_{j_1,k_1,n_1}\right\|_{L^2}\cdot\sup_{\left|\lambda\right|<2^{-l+\delta j}}\left\|e^{-i(s+\lambda)\Lambda_\nu}\left(P_{k_2}\partial_s \widehat{f^\nu}\right)(s)\right\|_{L^\infty} \\
    &+2^{-100j}\left\|f^\mu_{j_1,k_1,n_1}\right\|_{L^2}\cdot\left\|P_{k_2}\partial_s f^\nu\right\|_{L^2}.   
\end{aligned}\tag{5.15}\label{5.15}
\end{equation}
Finally, by (\ref{5.15}), we get
\begin{align*}
    &2^{(1-20\delta j)}\left\|\mathcal{B}_{m,l}\left[f^\mu_{j_1,k_1,n_1},P_{k_2} \partial_s f^\nu\right]\right\|_{L^2}\lesssim 2^{(1-20\delta)j}\cdot 2^{m-l}\cdot 2^{-2j+40.1\delta j}\cdot 2^{-j_1+20\delta j_1+\frac{n_1}{2}-19\delta n_1}\\
    \lesssim&\,2^{-0.5\delta j+\delta^2 m},
\end{align*}
which gives an acceptable contribution as in (\ref{5.12}). \par
\textbf{Case 3.} $l\ge-\frac{2}{3}j+25\delta j$, $k_2\ge -\frac{j}{3}+12\delta j$ \ \ \ \ (WLOG, let $n_1=\min\left\{n_1,n_2\right\}$)\par
In this case, we also integrate by part in time as in (\ref{5.7}), and $\mathcal{A}_{m,l}$ can be done exactly as in case 2 above. So we only need to focus on $\mathcal{B}_{m,l}$ in the following.\par
If $m+D\le j\le m+\delta m+D$, then we decompose, according to Lemma 3.9 and Remark 3.10,
\begin{align*}
   \partial_s f^\nu (s)=&\widetilde{f^\nu_C}(s)+\widetilde{f^\nu_{NC}}(s),\ \ \ \ \left\|P_{k_2}\widetilde{f^\nu_{NC}}\right\|_{L^2}\lesssim 2^{-\frac{4}{3}m+10.8\delta m}, \\
   \widehat{\widetilde{f^\nu_C}}(\xi,s)=&\sum_{\alpha,\beta\in\mathcal{P},\alpha+\beta\neq 0} e^{is\Psi_{\mu\alpha\beta}}(\xi) g_{\nu\alpha\beta}(\xi,s), \\
   &\left\|\varphi_{k_2}(\xi) D^\rho_\xi g_{\nu\alpha\beta}(\xi,s)\right\|_{L^\infty}\lesssim 2^{-m/2+(1-\delta)m\left|\rho\right|}.
\end{align*}
On one hand, we can rewrite
\begin{align*}
   Q_{jk}&\mathcal{B}_{m,l}\left[f^\mu_{j_1,k_1,n_1},P_{k_2}\widetilde{f^\nu_C}\right](x)\sim\sum_{\alpha,\beta\in\mathcal{P},\alpha+\beta\neq 0} \widetilde{\varphi_j}^{(k)}(x)\cdot \int_{\mathbb{R}} q_m(s) \int_{\mathbb{R}^2} \reallywidehat{f^\mu_{j_1,k_1,n_1}}(\eta,s) \\
   &\times\left[\int_{\mathbb{R}^2}e^{i\left[s\Phi_{\sigma\mu\nu}(\xi,\xi-\eta)+s\Psi_{\nu\alpha\beta}(\xi-\eta)+x\cdot\xi\right]}\widetilde{\varphi_l}(\Phi(\xi,\xi-\eta))\varphi_k(\xi)\varphi_{k_2}(\xi-\eta)\widehat{g_{\nu\alpha\beta}}(\xi-\eta,s)\,d\xi\right]\,d\eta ds.
\end{align*}
Then, integration by parts in $\xi$ using Lemma 3.1 leads to an acceptable contribution. On the other hand, we use Proposition 6.10 (a) to get
\begin{align*}
    \left\|\mathcal{B}_{m,l}\right\|_{L^2}&\lesssim 2^{m-l}\cdot 2^{\frac{l-n_1}{2}}\cdot 2^{2\delta^2 m}\cdot \sup_s\left[\left\|\sup_\theta\left|\widehat{f_1}(r\theta)\right|\right\|_{L^2(rdr)}\cdot\left\|P_{k_2}\widetilde{f^\nu_{NC}}\right\|_{L^2}\right]\lesssim 2^{-j+19.7\delta j},
\end{align*}
which gives an acceptable contribution as in (\ref{5.12}).\par
If $j\ge m+\delta m+D$, then we can show that $\left\|P_{k_2}\partial_s f^\nu (s)\right\|_{L^2}\lesssim 2^{-m-\frac{1}{2}j+22\delta m+\delta j}.$
In fact, we have
\begin{align*}
    P_{k_2}\partial_s \widehat{f^\nu}(\xi,s)&=\int e^{is\Phi_{\nu\alpha\beta}(\xi,\eta)}\widehat{f^\alpha_{\Bar{j}_1,\Bar{k}_1}}(\xi-\eta)\widehat{f^\beta_{\Bar{j}_2,\Bar{k}_2}}(\eta)\,d\eta \\
    &=e^{is\Lambda_\nu(\xi)} \int e^{-is\Lambda_\alpha(\xi-\eta)}\widehat{f^\alpha_{\Bar{j}_1,\Bar{k}_1}}(\xi-\eta)\cdot e^{-is\Lambda_\beta(\eta)}\widehat{f^\beta_{\Bar{j}_2,\Bar{k}_2}}(\eta)\,d\eta.
\end{align*}
Denote $\widehat{U}(\cdot)=e^{-is\Lambda_\nu(\cdot)}\widehat{\partial_s f^\nu}(\cdot)$, $\widehat{U_1}(\cdot)=e^{-is\Lambda_\alpha(\cdot)}\widehat{f^\alpha_{\Bar{j}_1,\Bar{k}_1}}(\cdot)$ and $\widehat{U_2}(\cdot)=e^{-is\Lambda_\beta(\cdot)}\widehat{f^\beta_{\Bar{j}_2,\Bar{k}_2}}(\cdot)$, and we get $\widecheck{P_{k_2}\hat{U}}(x)=U_1(x)\cdot U_2(x)$. Since $2^m\ll 2^{j-\delta^2 m}$, the support of $U_i$ is an annulus with radius $\sim 2^{\Bar{j}_i}$ ($i=1,2$). So, we get that $\max\left\{\Bar{j}_1,\Bar{j}_2\right\}\ge j-\delta^2 m$. WLOG, suppose $\Bar{j}_2\ge j-\delta^2 m$. Now, using Lemma 3.5 we finally get 
\begin{align*}
    \left\|P_{k_2}\partial_s f^\nu\right\|_{L^2}&\lesssim\left\|e^{-is\Lambda_\nu}\widehat{f^\alpha_{\Bar{j}_1,\Bar{k}_1}}\right\|_{L^\infty}\cdot\left\|\widehat{f^\beta_{\Bar{j}_2,\Bar{k}_2}}\right\|_{L^2}\lesssim 2^{-m-\frac{1}{2}j+22\delta m+\delta j}.
\end{align*}\par
Now, turn back to our main proof and we again use Proposition 6.10 (a) to get
\begin{align*}
    \left\|\mathcal{B}_{m,l}\right\|_{L^2}&\lesssim 2^{m-l}\cdot 2^{\frac{l-n_1}{2}}\cdot 2^{2\delta^2 m}\cdot \sup_s\left[\left\|\sup_\theta\left|\widehat{f_1}(r\theta)\right|\right\|_{L^2(rdr)}\cdot\left\|P_{k_2}\widetilde{f^\nu_{NC}}\right\|_{L^2}\right]\lesssim 2^{-\frac{7}{6}j+9.5\delta j+22.1\delta m},
\end{align*}
which gives an acceptable contribution as in (\ref{5.12}).\par
Our proof of Lemma 5.3 is now complete.
\vspace{4em}
\subsection{The Case of Strongly Resonant Interactions}
\ \par
In this subsection, we will prove Lemma 5.5. To be more clear, it suffices to show the following:\par
Let $\varphi\in C^\infty_c(\mathbb{R}^2)$ be supported in $\left[-1,1\right]$ and assume that
\begin{align*}
    l_-=-m+\delta^3 m\le l\le -7\delta m,\ \ \ \ s\sim 2^m,\ \ \ \ j\le m+D.\tag{5.16}\label{5.16}
\end{align*}
Assume that
$$\left\|f\right\|_{H^{N_0/2}\cap H^{N_1/2}_\Omega\cap Z^\mu_1}+\left\|g\right\|_{H^{N_0/2}\cap H^{N_1/2}_\Omega\cap Z^\nu_1}\le 1,$$
and define
$$\reallywidehat{I_l\left[f,g\right]}(\xi)\triangleq \int_{\mathbb{R}^2} e^{is\Phi(\xi,\eta)} \varphi_l(\Phi(\xi,\eta)) \hat{f}(\xi-\eta) \hat{g}(\eta)\,d\eta,$$
where $\varphi_l(x)=\varphi(2^{-l}x)$.
Assume also that $k,k_1,k_2,j,m$ satisfy (\ref{5.6}). Then 
\begin{align*}
    \left\|Q_{jk} I_l\left[P_{k_1}f,P_{k_2}g\right]\right\|_{B^\sigma_j} \lesssim 2^l\cdot 2^{-0.01\delta^2 m}  \tag{5.17}\label{5.17}
\end{align*}
In practice, in many cases, we can prove stronger results:
\begin{align*}
    \left\|Q_{jk} I_l\left[P_{k_1}f,P_{k_2}g\right]\right\|_{L^2} \lesssim 2^l\cdot 2^{-(1-20\delta)j}\cdot 2^{-0.01\delta^2 m}  \tag{5.18}\label{5.18}
\end{align*}
In the following subsections, we will prove the bound of $\mathcal{A}_{m,l}$ and $T_{m,l_-}$ (i.e. prove (\ref{5.17}) or (\ref{5.18})) by analyzing several cases depending on the relative sizes of the main parameters $m,l,j,j_1,j_2,k,k_1,k_2$ and so on.
\vspace{3em}
\subsubsection{Medium Frequency}
\ \par
In this subsection, we assume that $\min\left\{k,k_1,k_2\right\}\ge -D$. (Keep in mind that we always have $\max\left\{k,k_1,k_2\right\}\le 2^{0.1\delta^2 m}$.)\par
First, we observe that we may assume $\min\left\{j_1,j_2\right\}\le (1-\delta^2)m$, because the contrary case can be done by using (\ref{5.29a}) and (\ref{5.29b}) below. Now, we denote 
\begin{align*}
    \kappa_r\triangleq2^{\delta^2 m}\left(2^{-m/2}+2^{j_2-m}\right),\ \ \ \ \ \kappa_\theta\triangleq 2^{-m/2+\frac{1}{2}\delta^2 m},\tag{5.19}\label{5.19}
\end{align*}
and decompose
\begin{align*}
    &\reallywidehat{I_l\left[f,g\right]}=\reallywidehat{I^\parallel_l\left[f,g\right]}+\reallywidehat{I^\bot_l\left[f,g\right]}, \\
    &\reallywidehat{I_l^\parallel\left[f,g\right]}(\xi)\triangleq \int_{\mathbb{R}^2} e^{is\Phi(\xi,\eta)} \varphi_l(\Phi(\xi,\eta)) \varphi(\kappa_\theta^{-1} \Omega_\eta\Phi(\xi,\eta))\hat{f}(\xi-\eta) \hat{g}(\eta)\,d\eta,  \\
    &\reallywidehat{I_l^\bot\left[f,g\right]}(\xi)\triangleq \int_{\mathbb{R}^2} e^{is\Phi(\xi,\eta)} \varphi_l(\Phi(\xi,\eta)) \left(1-\varphi(\kappa_\theta^{-1} \Omega_\eta\Phi(\xi,\eta))\right)\hat{f}(\xi-\eta) \hat{g}(\eta)\,d\eta.
\end{align*}
By Lemma 3.2 and (\ref{5.13}), we can get $\left\|I^\bot_l\right\|_{L^2}\lesssim 2^{-4m}$. Thus, we only need to focus on the term $\widehat{I^\parallel_l}$. Next, we further decompose
\begin{align*}
   &\widehat{I_l^\parallel}=\widehat{I_{l,lo}^\parallel}+\widehat{I_{l,hi}^\parallel}=\widehat{I_{l,\widetilde{lo}}^\parallel}+\widehat{I_{l,\widetilde{hi}}^\parallel} \\ 
   &\reallywidehat{I_{l,*}^\parallel\left[f,g\right]}(\xi)\triangleq \int_{\mathbb{R}^2} e^{is\Phi(\xi,\eta)} \varphi_l(\Phi(\xi,\eta)) \varphi_*(\nabla_\eta\Phi(\xi,\eta))\varphi(\kappa_\theta^{-1} \Omega_\eta\Phi(\xi,\eta))\hat{f}(\xi-\eta) \hat{g}(\eta)\,d\eta \\
   &\ \ \ \ \ \ \ \ \ \ \ \ *\in\left\{lo,hi,\widetilde{lo},\widetilde{hi}\right\},   
\end{align*}
where $\varphi_{lo}=\varphi_{\lesssim \kappa_r}, \varphi_{\widetilde{lo}}=\varphi_{\lesssim 1}, \varphi_{hi}=1-\varphi_{lo}, \varphi_{\widetilde{hi}}=1-\varphi_{\widetilde{lo}}$.\par
\textbf{Case 1.} $j_2=\max\left\{j_1,j_2\right\}\le (1-\delta^2) m$ \par
In this case, we use the decomposition $\widehat{I_l^\parallel}=\widehat{I_{l,lo}^\parallel}+\widehat{I_{l,hi}^\parallel}$. Recall the definition of $\kappa_r$, see (\ref{5.19}), we observe that $\widehat{I^\parallel_{l,hi}}$ can be controlled by integration by parts in $\eta$ using Lemma 3.1 and (\ref{5.13}). Next, we will focus on $\widehat{I^\parallel_{l,lo}}$, and we will prove that it satisfies the weaker bound (\hyperref[5.17]{5.17}). \par
In fact, here is one of the few places where we need the localization operator $A^\sigma_{n,(j)}$, namely we, by Proposition 6.5(a), observe that 
$$\left|\Phi(\xi,p(\xi))\right|\le\left|\Phi(\xi,\eta)\right|+\left|\Phi(\xi,\eta)-\Phi(\xi,p(\xi))\right|\lesssim\left|\Phi(\xi,\eta)\right|+\left|\nabla_\eta\Phi\right|\cdot\left|\eta-p(\xi)\right|\lesssim \max\left\{2^l,2^{0.4\delta^2 m}\kappa_r^2\right\}.$$
(Note that Proposition 6.5(b) guarantees that $\mu+\nu=0$ is impossible here.) Next, we will mainly divide into three subcases. \par
\ \ \textbf{Case 1.1.}\  $\frac{1}{2}m\le j_2\le (1-\delta^2)m$, $2^l\le 2^{0.4\delta^2 m}\kappa^2_r$ \par
In this subcase, we have $\kappa_r=2^{j_2-m+\delta^2 m}$ and $2^{-n}\le 2^l$. Moreover, 
\begin{align*}
   \mbox{fix }\eta,\mbox{ we have } \left|E_\xi\right|\lesssim 2^{l+0.4\delta^2 m}\cdot \kappa_\theta;\tag{5.20}\label{5.20}
\end{align*}
\begin{numcases}
   \ \mbox{fix }\xi,\mbox{ we have } \left|E_\eta\right|\lesssim \kappa_r\cdot \kappa_\theta; \tag{5.21}\label{5.21} \\
   \mbox{fix }\xi,\mbox{ we have } \left|E_\eta\right|\lesssim 2^{-n_2}\cdot \kappa_\theta; \tag{5.22}\label{5.22} \\
   \mbox{fix }\xi,\mbox{ we have } \left|E_\eta\right|\lesssim 2^{\frac{l}{2}}\cdot \kappa_\theta; \tag{5.23}\label{5.23} \\
   \mbox{fix }\xi,\mbox{ we have } \left|E_\eta\right|\lesssim 2^{-2n_2}, \tag{5.24}\label{5.24}
\end{numcases}
where (\ref{5.20}) follows from Proposition 6.4 (b) due to $\left|\nabla_\eta\Phi\right|\gtrsim 2^{-0.4\delta^2 m}$.\par
First, if $n_2\le j_2-3\delta m$, then using 
(\ref{5.20}), (\hyperref[5.21]{5.21}), Lemma 3.6, Remark 3.7 and Schur's Lemma, we get that
\begin{align*}
    &2^{(1-20\delta)j}\,2^{(\frac{1}{2}-19\delta)n}\left\|\widehat{I^\parallel_{l,lo}}\right\|_{L^2} \\
    \lesssim\,&2^{(1-20\delta)j}\,2^{(\frac{1}{2}-19\delta)n}\,2^{\frac{l}{2}+0.2\delta^2 m}\,\kappa_r^{\frac{1}{2}}\,\kappa_\theta\,2^{\delta n_1+2\delta^2 n_1}\,2^{-(1-20\delta)j_2+(\frac{1}{2}-19\delta)n_2}\lesssim\,2^{l-0.49\delta m},
\end{align*}
which gives an acceptable contribution as in (\ref{5.17}).\par
Second, if $j_2-3\delta m\le n_2$ and $\frac{1}{2}m+1.01\delta m\le j_2\le (1-\delta^2)m$, then using (\ref{5.20}), (\hyperref[5.22]{5.22}), Lemma3.6, Remark 3.7 and Schur's Lemma, we get that
\begin{align*}
    &2^{(1-20\delta)j}\,2^{(\frac{1}{2}-19\delta)n}\left\|\widehat{I^\parallel_{l,lo}}\right\|_{L^2} \\
    \lesssim\,&2^{(1-20\delta)j}\,2^{(\frac{1}{2}-19\delta)n}\,2^{\frac{l}{2}+0.2\delta^2 m}\,2^{-\frac{n_2}{2}}\,\kappa_\theta\,2^{\delta n_1+2\delta^2 n_1}\,2^{-(1-20\delta)j_2+(\frac{1}{2}-19\delta)n_2}\lesssim\,2^{l-0.005\delta m},
\end{align*}
which gives an acceptable contribution as in (\ref{5.17}).\par
Third, if $j_2-3\delta m\le n_2$, $\frac{1}{2}m\le j_2\le \frac{1}{2}m+1.01\delta m$ and $l\ge -m+\frac{5}{19}\delta m$, then using 
(\ref{5.20}), (\hyperref[5.21]{5.21}), Lemma 3.6, Remark 3.7 and Schur's Lemma, we get that
\begin{align*}
    &2^{(1-20\delta)j}\,2^{(\frac{1}{2}-19\delta)n}\left\|\widehat{I^\parallel_{l,lo}}\right\|_{L^2} \\
    \lesssim\,&2^{(1-20\delta)j}\,2^{(\frac{1}{2}-19\delta)n}\,2^{\frac{l}{2}+0.2\delta^2 m}\,\kappa_r^{\frac{1}{2}}\,\kappa_\theta\,2^{\delta n_1+2\delta^2 n_1}\,2^{-(1-20\delta)j_2+(\frac{1}{2}-19\delta)n_2}\lesssim\,2^{l-0.7\delta^2 m},
\end{align*}
which gives an acceptable contribution as in (\ref{5.17}).\par
Finally, we can assume $j_2-3\delta m\le n_2$, $\frac{1}{2}m\le j_2\le \frac{1}{2}m+1.01\delta m$ and $-m\le l\le -m+\frac{5}{19}\delta m$. We first observe that without fixing $\eta$, we still have 
\begin{align*}
   \left|E_\xi\right|\lesssim 2^{l+0.4\delta^2 m}\cdot 2^{-\min\left\{n_1,n_2\right\}}.\tag{5.25}\label{5.25}  
\end{align*}
Then by volume counting, using (\ref{5.20}), (\ref{5.25}) and Remark 3.7, we have
\begin{align*}
    \left\|\widehat{I^\parallel_{l,lo}}\right\|_{L^2}&\lesssim\left(\int_\xi\left(\int_\eta e^{is\Phi(\xi,\eta)}\varphi_l(\Phi(\xi,\eta))\varphi\left(\kappa_r^{-1}\nabla_\eta\Phi(\xi,\eta)\right)\varphi\left(\kappa_\theta^{-1}\Omega_\eta\Phi(\xi,\eta)\right)\hat{f}(\xi-\eta)\hat{g}(\eta)\,d\eta\right)^2\,d\xi\right)^{1/2} \\
    &\lesssim \left(\int_\xi\left(\int_\eta \left\|\hat{f}\right\|_{L^\infty}\cdot\left\|\hat{g}\right\|_{L^\infty}\cdot \sup\limits_\xi \left\{\left|E_\eta(\xi)\right|^2\right\}\right)\,d\xi\right)^{1/2} \\
    &=\sup\limits_\xi\left\{\left|E_\eta(\xi)\right|\right\}\cdot\left|E_\xi\right|^{1/2}\cdot2^{\delta(n_1+n_2)+2\delta^2(n_1+n_2)} \\
    &\lesssim 2^l\,2^{-\frac{m}{2}+0.7\delta^2 m}\,2^{-\frac{1}{2}\min\left\{n_1,n_2\right\}+\delta(n_1+n_2)+2\delta^2(n_1+n_2)}.
\end{align*}
Thus, we get
\begin{align*}
    2^{(1-20\delta)j-(\frac{1}{2}-19\delta)n}\left\|\widehat{I^\parallel_{l,lo}}\right\|_{L^2}\lesssim 2^l\,2^{-\frac{13}{38}\delta m},
\end{align*}
which gives an acceptable contribution as in (\ref{5.17}).\par
\ \ \textbf{Case 1.2.}\  $\frac{1}{2}m\le j_2\le (1-\delta^2)m$, $2^l\ge 2^{0.4\delta^2 m}\kappa^2_r$ \par
In this subcase, we have $\kappa_r=2^{j_2-m+\delta^2 m}$ and $2^{-n}\le 2^{0.4\delta^2 m}\kappa^2_r$.\par
First, if $\frac{1}{2}m+\frac{1}{4}\delta m\le j_2\le (1-\delta^2)m$ and $n_2\le\frac{1}{2}m$, then using 
(\ref{5.20}), (\hyperref[5.23]{5.23}), Lemma 3.6, Remark 3.7 and Schur's Lemma, we get that
\begin{align*}
    &2^{(1-20\delta)j}\,2^{(\frac{1}{2}-19\delta)n}\left\|\widehat{I^\parallel_{l,lo}}\right\|_{L^2} \\
    \lesssim\,&2^{(1-20\delta)j}\,2^{(1-38\delta)(j_2-m)+1.2\delta^2 m}\,2^{\frac{3l}{4}+0.2\delta^2 m}\,\kappa_\theta\,2^{\delta n_1+2\delta^2 n_1}\,2^{-(1-20\delta)j_2+(\frac{1}{2}-19\delta)n_2}\lesssim\,2^{l-0.35\delta^2 m},
\end{align*}
which gives an acceptable contribution as in (\ref{5.17}).\par
Second, if $\frac{1}{2}m+\frac{1}{4}\delta m\le j_2\le (1-\delta^2)m$ and $n_2\ge\frac{1}{2}m$, then using (\ref{5.20}), (\hyperref[5.24]{5.24}), Lemma 3.6, Remark 3.7 and Schur's Lemma, we get that
\begin{align*}
    &2^{(1-20\delta)j}\,2^{(\frac{1}{2}-19\delta)n}\left\|\widehat{I^\parallel_{l,lo}}\right\|_{L^2} \\
    \lesssim\,&2^{(1-20\delta)j}\,2^{(1-38\delta)(j_2-m)+1.2\delta^2 m}\,2^{\frac{l}{2}-\frac{m}{4}+\frac{1}{4}\delta^2 m-n_2}\,2^{\delta n_1+2\delta^2 n_1}\,2^{-(1-20\delta)j_2+(\frac{1}{2}-19\delta)n_2}\lesssim\,2^{l-0.7\delta^2 m},
\end{align*}
which gives an acceptable contribution as in (\ref{5.17}).\par
Finally, we can assume $\frac{1}{2}m\le j_2\le \frac{1}{2}m+\frac{\delta m}{4}$. 
Then, like the last subcase of Case 1.1 above, by volume counting, using (\hyperref[5.21]{5.21}), (\ref{5.25}) and Remark 3.7, we have
\begin{align*}
    \left\|\widehat{I^\parallel_{l,lo}}\right\|_{L^2}&\lesssim\sup\limits_\xi\left\{\left|E_\eta(\xi)\right|\right\}\cdot\left|E_\xi\right|^{1/2}\cdot2^{\delta(n_1+n_2)+2\delta^2(n_1+n_2)} \\
    &= 2^{-m+1.7\delta^2 m+\frac{\delta m}{4}+\frac{l}{2}-\frac{1}{2}\min\left\{n_1,n_2\right\}+\delta (n_1+n_2)+2\delta^2 (n_1+n_2)}. \tag{5.26}\label{5.26}
\end{align*}
Thus, we get
\begin{align*}
    &2^{(1-20\delta)j-(\frac{1}{2}-19\delta)n}\left\|\widehat{I^\parallel_{l,lo}}\right\|_{L^2}\lesssim\,2^{(1-20\delta)j+l-m+20\delta m-5\delta^2 m}\lesssim 2^{l-5\delta^2 m},
\end{align*}
which gives an acceptable contribution as in (\ref{5.17}).\par
\ \ \textbf{Case 1.3.}\  $0\le j_2\le \frac{1}{2}m$ \par
In this subcase, we have $\kappa_r=\kappa_\theta=2^{-\frac{1}{2}m+\delta^2 m}$. We may assume that $2^l\le 2^{0.4\delta^2 m}\kappa_r^2$, since the proof of the other case is exactly same. By (\ref{5.26}), we have
\begin{align*}
    \left\|\widehat{I^\parallel_{l,lo}}\right\|_{L^2}&\lesssim 2^{-m+1.7\delta^2 m+\frac{\delta m}{4}+\frac{l}{2}-\frac{1}{2}\min\left\{n_1,n_2\right\}+\delta (n_1+n_2)+2\delta^2 (n_1+n_2)}\lesssim 2^{\frac{l}{2}-m+\frac{1}{2}\delta m+3\delta^2 m}.
\end{align*}
Thus, we get
\begin{align*}
    2^{(1-20\delta)j-(\frac{1}{2}-19\delta)n}\left\|\widehat{I^\parallel_{l,lo}}\right\|_{L^2}&\lesssim 2^{(1-20\delta)j+(\frac{1}{2}-19\delta)l}\,2^{\frac{l}{2}-m+\frac{1}{2}\delta m+3\delta^2 m} \lesssim 2^{l-0.49\delta m},
\end{align*}
which gives an acceptable contribution as in (\ref{5.17}).\par
\vspace{3em}
\textbf{Case 2.} $j_2=\max\left\{j_1,j_2\right\}\ge (1-10 \delta^2) m$\par
In this case, we use the decomposition $\widehat{I_l^\parallel}=\widehat{I_{l,\widetilde{lo}}^\parallel}+\widehat{I_{l,\widetilde{hi}}^\parallel}$. Let's deal with $\widehat{I_{l,\widetilde{lo}}^\parallel}$ first. If $n_2\le \frac{m}{2}$, then we use (\ref{5.20}), (\hyperref[5.23]{5.23}), Lemma 3.6, Remark 3.7 and Schur's Lemma to get
\begin{align*}
    2^{(1-20\delta)j}\left\|\widehat{I^\parallel_{l,\widetilde{lo}}}\right\|_{L^2}&\lesssim 2^{(1-20\delta)j}\,2^{\frac{3l}{4}+0.2\delta^2 m}\,2^{-\frac{m}{2}+0.5\delta^2 m}\,2^{\delta n_1+2\delta^2 n_1}\,2^{-(1-20\delta)j_2+(\frac{1}{2}-19\delta)n_2}\lesssim 2^{l-8\delta m},
\end{align*}
which gives an acceptable contribution as in (\ref{5.18}); on the other hand, if $n_2\ge\frac{m}{2}$, then we use (\ref{5.20}), (\hyperref[5.22]{5.22}), Lemma 3.6, Remark 3.7 and Schur's Lemma to get
\begin{align*}
    2^{(1-20\delta)j}\left\|\widehat{I^\parallel_{l,\widetilde{lo}}}\right\|_{L^2}&\lesssim 2^{(1-20\delta)j}\,2^{\frac{l}{2}-\frac{n_2}{2}+0.2\delta^2 m}\,2^{-\frac{m}{2}+0.5\delta^2 m}\,2^{\delta n_1+2\delta^2 n_1}\,2^{-(1-20\delta)j_2+(\frac{1}{2}-19\delta)n_2}\lesssim 2^{l-8\delta m},
\end{align*}
which gives an acceptable contribution as in (\ref{5.18}). \par
Next, we deal with $\widehat{I^\parallel_{l,\widetilde{hi}}}$. Now, we further decompose
\begin{align*}
   \widehat{I^\parallel_{l,\widetilde{hi}}}&\triangleq\sum_{l\le r\le 0} \widehat{I^\parallel_{l,\widetilde{hi},r}} \\
   &\triangleq\sum_{l\le r\le 0}\int_{\mathbb{R}^2} e^{is\Phi(\xi,\eta)} \varphi_l(\Phi(\xi,\eta)) \varphi_{\widetilde{hi}}(\nabla_\eta\Phi(\xi,\eta)) \varphi_r^{\left[l,0\right]}(\nabla_\xi\Phi(\xi,\eta))\varphi(\kappa_\theta^{-1} \Omega_\eta\Phi(\xi,\eta))\hat{f}(\xi-\eta) \hat{g}(\eta)\,d\eta. 
\end{align*}
When $r=0$, we have that fix $\eta$, $\left|E_\xi\right|\lesssim 2^l\cdot\kappa_\theta$ thanks to $\left|\Phi\right|\lesssim 2^l$ and $\left|\nabla_\xi\Phi\right|\gtrsim 1$; fix $\xi$, $\left|E_\eta\right|\lesssim 2^l\cdot\kappa_\theta$ thanks to $\left|\Phi\right|\lesssim 2^l$ and $\left|\nabla_\eta\Phi\right|\gtrsim 1$. Then by Lemma 3.6 and Schur's Lemma, we get that 
\begin{align*}
   \left\|\widehat{I^\parallel_{l,\widetilde{hi},0}}\right\|_{L^2}&\lesssim 2^l\cdot\kappa_\theta\cdot 2^{2\delta m}\cdot 2^{-(1-20\delta)j_2+(\frac{1}{2}-19\delta)n_2}\lesssim 2^{l-(1-20\delta)j -17\delta m},\tag{5.27}\label{5.27}
\end{align*}
which gives an acceptable contribution as in (\ref{5.18}). When $r=l$, we still have that fix $\eta$, $\left|E_\xi\right|\lesssim 2^l\cdot\kappa_\theta$ due to $\left|\nabla_\xi\Phi\right|\lesssim 2^l$ and $\left|\nabla_{\xi\xi}\Phi\right|\gtrsim 1$. Therefore, we can proceed as before in (\ref{5.27}) to get that $\left\|\widehat{I^\parallel_{l,\widetilde{hi},l}}\right\|_{L^2}\lesssim 2^{l-(1-20\delta)j-17\delta m}$ as well. Next, we can assume $l<r<0$ and divide into the following three subcases. \par
\ \ \textbf{Case 2.1.}\ \  $j\le m+r+100$, $2r\le l$ \par
In this subcase, we notice that fix $\eta$, we have $\left|E_\xi\right|\lesssim 2^r \cdot \kappa_\theta$ due to the fact that $\left|\nabla_\xi\Phi\right|\sim 2^r$ and $\left|\nabla_{\xi\xi}\Phi\right|\gtrsim 1$, and also
\begin{align*}
    \mbox{fix }\xi,\mbox{ we have }\left|E_\xi\right|\lesssim 2^l\cdot\kappa_\theta \tag{5.28}\label{5.28}
\end{align*}
due to the fact that $\left|\Phi\right|\lesssim 2^l$ and $\left|\nabla_\eta\Phi\right|\gtrsim 1$.
Thus, use Lemma 3.6 and Schur's Lemma and we get
\begin{align*}
    \left\|\widehat{I^\parallel_{l,\widetilde{hi},r}}\right\|_{L^2}&\lesssim 2^{\frac{l}{2}+\frac{r}{2}}\,2^{-\frac{m}{2}+0.5\delta^2 m}\,2^{2\delta m}\,2^{-(1-20\delta)j_2+(\frac{1}{2}-19\delta)n_2}\lesssim 2^{l-(1-20\delta)(m+r)-13\delta m}\lesssim 2^{l-(1-20\delta)j-13\delta m},
\end{align*}
which gives an acceptable contribution as in (\ref{5.18}).\par
\ \ \textbf{Case 2.2.}\ \  $j\le m+r+100$, $2r\ge l$ \par
In this subcase, we notice that fix $\eta$, we have $\left|E_\xi\right|\lesssim 2^{l-r} \cdot \kappa_\theta$ due to the fact that $\left|\Phi\right|\lesssim 2^l$ and $\left|\nabla_\xi\Phi\right|\sim 2^r$. Thus, again use this, (\ref{5.28}), Lemma 3.6 and Schur's Lemma and we get
\begin{align*}
    \left\|\widehat{I^\parallel_{l,\widetilde{hi},r}}\right\|_{L^2}&\lesssim 2^{l-\frac{r}{2}+\delta^2 m}\,2^{-\frac{m}{2}+0.5\delta^2 m}\,2^{2\delta m}\,2^{-(1-20\delta)j_2+(\frac{1}{2}-19\delta)n_2}\lesssim 2^{l-(1-20\delta)j -18\delta m},
\end{align*}
which gives an acceptable contribution as in (\ref{5.18}).\par
\ \ \textbf{Case 2.3.}\ \  $m+r+100\le j\le m+D$\par
First of all, we notice that $\left|\nabla_\xi\left[s\Phi(\xi,\eta)+x\cdot\xi\right]\right|\sim \left|x\right|\sim 2^j$ in this subcase. Hence, if $j_1\le (1-\delta^2) j$, then by the trick in (\ref{5.13}) and Lemma 3.1, integration by parts in $\xi$ will give us an acceptable contribution of $\widehat{I_l}$. (Note that we don't need any decomposition here.) Thus, we may assume $j_1\ge (1-\delta^2) j$ in the following.\par
Recall that we have $j_1\ge (1-\delta^2) j$ and $j_2\ge (1-\delta^2) m$ now. If $n_1\ge 0.07m$, then by Proposition 6.10 (a) and Lemma 3.6, we can get 
\begin{align*}
    \left\|\widehat{I_l}\right\|_{L^2}&\lesssim 2^{\frac{l}{2}}\,2^{-(1-20\delta-2\delta^2 )j_1-19\delta n_1}\,2^{-(1-20\delta)j_2+(\frac{1}{2}-19\delta)n_2}\lesssim 2^{l-(1-20\delta) j-0.32\delta m},
\end{align*}
which gives an acceptable contribution as in (\ref{5.18}).
If $n_1\le 0.07m$ but $n_2\ge 0.07m$, then we can switch the variables $\xi-\eta$ and $\eta$ and proceed as before exactly. Therefore, we may now assume that $n_1\le 0.07m$ and $n_2\le 0.07m$ at the same time. Then, by Cauchy-Schwartz Inequality, we directly get that
\begin{align*}
    \left\|\widehat{I_l}\right\|_{L^\infty}\lesssim\left\|f\right\|_{L^2}\cdot\left\|g\right\|_{L^2}\lesssim 2^{-(1-20\delta)j}\,2^{-m+21\delta m+(\frac{1}{2}-19\delta)(n_1+n_2)}\lesssim 2^{-(1-20\delta)j}\,2^{-0.93m+21\delta m}.\tag{5.29a}\label{5.29a}
\end{align*}
Thus, we end up with
\begin{align*}
    \left\|\widehat{I_l}\right\|_{L^2}\lesssim \left\|\widehat{I_l}\right\|_{L^\infty}\cdot \left|E_\xi\right|^{1/2}\lesssim 2^{-(1-20\delta)j}\,2^{-0.92m}\,\left(2^{\frac{l}{2}}\right)^{1/2}\lesssim 2^{l-(1-20\delta)j-0.17m},\tag{5.29b}\label{5.29b}
\end{align*}
which gives an acceptable contribution as in (\ref{5.18}).
\vspace{3em}
\subsubsection{Low Frequency}
\ \par
In this subsection, we assume that $\min\left\{k,k_1,k_2\right\}\le -D$. By switching the order of $\xi-\eta$ and $\eta$, we may assume that $j_1\le j_2$. If $k_1\ge -D$, then we can proceed as in section 5.3.1. (If $k_1\ge -D$, then we can insert the angle cutoff function in most cases.) On the other hand, if $k_1\le -D$, then we must have $k,k_2\ge -D$ due to the fact that $b_\sigma-b_\mu-b_\nu\neq 0$. From now on, we may also assume $j_1-k_1+\delta^2 m\ge m$, since otherwise we can insert the angle cutoff function $\varphi(\kappa_\theta^{-1}\Omega_\eta\Phi(\xi,\eta))$ and proceed as in section 5.3.1. Thus, we now have that $j_2\ge j_1\ge (1-\delta^2)m+k_1$.\par
If $-0.06m\le k_1\le -D$, then by Lemma 3.6 and Proposition 6.10 (a), we can get 
\begin{align*}
    \left\|\widehat{I_l}\right\|_{L^2}&\lesssim 2^{\frac{l}{2}-\frac{n_2}{2}}\,2^{-(1-20\delta)j_1}\,2^{-(1-20.01\delta)j_2+(\frac{1}{2}-19\delta)n_2}\lesssim 2^{l-(1-20\delta)j-0.37\delta m},
\end{align*}
which gives an acceptable contribution as in (\ref{5.18}).\par
If $k_1\le -0.06m$, then we may need the localization operator $A^\sigma_{n,(j)}$. Recall that $k_1\le -D$, $k,k_2\ge -D$ and 
\begin{align*}
   \nabla_\eta\Phi=\pm\frac{c_\mu^2(\xi-\eta)}{\sqrt{c_\mu^2\left|\xi-\eta\right|^2+b_\mu^2}}\mp\frac{c_\nu^2\eta}{\sqrt{c_\nu^2\left|\eta\right|^2+b_\nu^2}}, \tag{5.30}\label{5.30}
\end{align*}
and we can get that $\left|\nabla_\eta\Phi\right|\gtrsim 1$, since when $\left|\eta\right|\gtrsim 1$, $\left|\frac{c_\nu^2\eta}{\sqrt{c_\nu^2\left|\eta\right|^2+b_\nu^2}}\right|\sim\left|\eta\right|$ and when $\left|\xi-\eta\right|\ll 1$, $\left|\frac{c_\mu^2(\xi-\eta)}{\sqrt{c_\mu^2\left|\xi-\eta\right|^2+b_\mu^2}}\right|\ll 1$. Similarly, we can also get that $\left|\nabla_\xi\Phi\right|\gtrsim 1$. Therefore, due to $\left|\Phi\right|\lesssim 2^l$ and $\left|\xi-\eta\right|\sim 2^{k_1}$, we have that fix $\eta$, $\left|E_\xi\right|\lesssim 2^l\cdot 2^{k_1}$; fix $\xi$, $\left|E_\eta\right|\lesssim 2^l\cdot 2^{k_1}$. Moreover, if $\eta$ is near $\gamma_i$ for some $i$, then we have that
$$\left|\xi-\gamma_i\right|\le \left|\xi-\eta\right|+\left|\eta-\gamma_i\right|\lesssim 2^{\max\left\{k_1,-n_2\right\}},$$
which implies that $n\ge \max\left\{k_1,-n_2\right\}$.
Thus, by Lemma 3.6, Remark 3.7 and Schur's Lemma, we get 
\begin{align*}
    2^{(1-20\delta)j}\,2^{-(\frac{1}{2}-19\delta)n}\left\|\widehat{I_l}\right\|_{L^2}&\lesssim 2^{(1-20\delta)j-(\frac{1}{2}-19\delta)n}\,2^{l+k_1}\,2^{1.01\delta m}\,2^{-(1-20\delta)j_1+(\frac{1}{2}-19\delta)n_2}\lesssim 2^{l-0.19\delta m},
\end{align*}
which gives an acceptable contribution as in (\ref{5.17}).\par
\vspace{3em}
Combine section 5.3.1 and section 5.3.2 and our proof of Lemma 5.5 is now complete.
\vspace{4em}
\subsection{The Case of Resonant Interactions}
\ \par
In this subsection, we will prove Lemma 5.6. This is the most complicated part in our dispersive control section. Using Lemma 3.12 we may write, with $F^\nu=F^\nu_C+F^\nu_{NC}+F^\nu_{LO}$,
$$\partial_s f^\nu(s)=f^\nu_C (s)+f^\nu_{SR} (s)+f^\nu_{NC} (s)+f^\nu_{NCw}(s)+\partial_s F^\nu (s).$$
The proof of 
\begin{align*}
    2^{(1-20\delta)j}\left\|Q_{jk}\mathcal{B}_{m,l}\left[P_{k_1}f^\mu,P_{k_2}f^\nu_{SR}\right]\right\|_{L^2}\lesssim 2^{-50\delta^2 m}, \\
    2^{(1-20\delta)j}\left\|Q_{jk}\mathcal{B}_{m,l}\left[P_{k_1}f^\mu,P_{k_2}f^\nu_{NC}\right]\right\|_{L^2}\lesssim 2^{-50\delta^2 m}
\end{align*}
has been done in \cite{y2} (See Section 7.5 in \cite{y2}). So in the following, we will prove 
$$2^{(1-20\delta)j}\left\|Q_{jk}\mathcal{B}_{m,l}\left[P_{k_1}f^\mu,P_{k_2}g\right]\right\|_{L^2}\lesssim 2^{-50\delta^2 m},$$
where $g=f^\nu_C$ or $f^\nu_{NCw}$ or $\partial_s F^\nu$.
\vspace{3em}
\subsubsection{Contribution of $f_C^\nu$}
\ \par
Recall that from Lemma 3.12, we can decompose
\begin{align*}
    &f_C^\nu=\sum_{0\le q\le m/2-40\delta m} \sum_{\omega,\theta,\omega+\theta\neq 0} f^{\nu\omega\theta}_{C,q},\ \ \ \ \ \widehat{f^{\nu\omega\theta}_{C,q}}(\xi,s)=e^{is\Psi_{\mu\omega\theta}(\xi)}g_q(\xi,s), \\
    &g_q(\xi,s)=g^q(\xi,s)\,\varphi_{\le 3\delta m}(\Psi_{\nu\omega\theta}(\xi)),\ \ \ \ \left\|D_\xi^\alpha g_q(s)\right\|_{L^\infty}\lesssim 2^{-m-q+4.01\delta m}\,2^{\left|\alpha\right|(m/2+3\delta^2 m+q)}, \\
    &\sup\limits_{b\le N_1/4} \left\|\Omega^b g_q(\xi,s)\right\|_{L^2}\lesssim 1, \ \ \ \ \left\|\partial_s g_q(\xi,s)\right\|_{L^\infty}\lesssim 2^{-2m+q+6.01\delta m}.
\end{align*}
\begin{align*}
    \ \tag{5.31}\label{5.31}
\end{align*}
It suffices to show that for $k,j,m,l,k_1,k_2,q$ and $\sigma,\mu,\nu,\omega,\theta$ as before
\begin{align*}
    2^{(1-20\delta)j}\left\|Q_{jk}\mathcal{B}_{m,l}\left[P_{k_1}f^\mu,P_{k_2}f^{\nu\omega\theta}_{C,q}\right]\right\|_{L^2}\lesssim 2^{-0.001\delta^2 m}.\tag{5.32}\label{5.32}
\end{align*}
In the rest of this proof, for simplicity, we set $\Phi=\Phi_{\sigma\mu\nu}$, $\Psi=\Psi_{\nu\omega\theta}$, $\textbf{p}(\xi,\eta)\triangleq \Phi_{\sigma\mu\nu}(\xi,\eta)+\Psi_{\nu\omega\theta}(\eta)$. We define $f^\mu_{j_1,k_1}$ and $f^\mu_{j_1,k_1,n_1}$ as before.
Assume first that $j_1\ge (1-10\delta^2)m$. Schur's Lemma with (\ref{3.40}) and Proposition 6.8 (a), (c) gives
\begin{align*}
    \left\|P_k \mathcal{B}_{m,l}\left[f^\mu_{j_1,k_1,0},P_{k_2}f^{\nu\omega\theta}_{C,q}\right]\right\|_{L^2}&\lesssim 2^{2\delta^2 m}\,2^{m-l}\,\left(2^l 2^{-3\delta m/4}\right)\,\sup_s \left\|f^\mu_{j_1,k_1,0}(s)\right\|_{L^2}\,\left\|\widehat{f^{\nu\omega\theta}_{C,q}}(s)\right\|_{L^\infty} \\
    &\lesssim 2^{19.25\delta m+12\delta^2 m}\,\left\|\widehat{f^{\nu\omega\theta}_{C,q}}(s)\right\|_{L^\infty}\lesssim 2^{-(1-20\delta)j-4\delta m}.
\end{align*}
Moreover, using (\ref{3.40}) and Proposition 6.10 (a), for $n_1\ge 1$
\begin{align*}
    \left\|P_k \mathcal{B}_{m,l}\left[f^\mu_{j_1,k_1,n_1},P_{k_2}f^{\nu\omega\theta}_{C,q}\right]\right\|_{L^2}&\lesssim 2^{2\delta^2 m}\,2^{m-l}\,2^{l-n_1/2}\,\sup_s \left\|\sup_{\theta\in\mathbb{S}^1}\left|\widehat{f^\mu_{j_1,k_1,n_1}}\right|\right\|_{L^2(rdr)}\,\left\|\widehat{f^{\nu\omega\theta}_{C,q}}(s)\right\|_{L^2} \\
    &\lesssim 2^{2\delta^2 m}\,2^{20\delta m+12\delta^2 m}\,2^{-19\delta n_1}\,\left\|\widehat{f^{\nu\omega\theta}_{C,q}}(s)\right\|_{L^2}\lesssim 2^{-(1-20\delta)j-3\delta m}.
\end{align*}
Thus, in both cases we get acceptable contributions as in (\ref{5.32}).\par
Assume now that $j_1\le (1-10\delta^2)m$. If $\left|\nabla_\eta\Phi_{\sigma\mu\nu}(\xi,\eta)+\nabla_\eta\Phi_{\nu\omega\theta}(\eta,\theta)\right|\gtrsim 2^{-\delta^2 m}$, then by plugging (\ref{3.37}), we have
\begin{align*}
    P_k\mathcal{B}_{m,l}\left[f^\mu_{j_1,k_1},P_{k_2}f^{\nu\omega\theta}_{C,q}\right]&\approx 2^{m-l}\,\int_{\mathbb{R}^2} e^{is\Phi_{\sigma\mu\nu}(\xi,\eta)+is\Psi_{\nu\omega\theta}(\eta)} \widehat{f^\mu_{j_1,k_1}}(\xi-\eta) g_q(\eta,s)\,d\eta \tag{5.33}\label{5.33} \\
    &= 2^{m-l} \int_{\mathbb{R}^2} e^{is\left[\Phi_{\sigma\mu\nu}(\xi,\eta)+\Phi_{\nu\omega\theta}(\eta,\theta)\right]} \widehat{f^\mu_{j_1,k_1}}(\xi-\eta)\widehat{g^\omega_{\Bar{j_1},\Bar{k_1}}}(\eta-\theta)\widehat{g^\theta_{\Bar{j_2},\Bar{k_2}}}(\theta)\,d\eta d\theta,
\end{align*}
where $j_1\le (1-10\delta^2)m$, $\Bar{j_1}\le \min\left\{\Bar{j_2},(1-300\delta)m\right\}$ and $\Bar{j_2}\le (1-50\delta)m$.
Thus, if we assume $\left|\nabla_\eta\Phi_{\sigma\mu\nu}(\xi,\eta)+\nabla_\eta\Phi_{\nu\omega\theta}(\eta,\theta)\right|\gtrsim 2^{-\delta^2 m}$, then integration by parts in $\eta$ gives 
$$\left\|P_k\mathcal{B}_{m,l}\left[f^\mu_{j_1,k_1},P_{k_2}f^{\nu\omega\theta}_{C,q}\right]\right\|_{L^2}\lesssim 2^{-2m},$$
which is an acceptable contribution as in (\ref{5.32}). Therefore, we may assume 
\begin{align*}
    \left|\nabla_\eta\Phi_{\sigma\mu\nu}(\xi,\eta)+\nabla_\eta\Phi_{\nu\omega\theta}(\eta,\theta)\right|\lesssim 2^{-\delta^2 m}. \tag{5.34}\label{5.34}
\end{align*}
From Lemma 3.12 and (\ref{3.37}), we know that $\left|\nabla_\theta\Phi_{\nu\omega\theta}\right|\lesssim \kappa_r\ll 1$ and $\left|\Phi_{\nu\omega\theta}\right|\lesssim 2^{-3\delta m}\ll 2^{-3\max\left\{\Bar{k_1},\Bar{k_2},0\right\}}$, which implies that 
\begin{align*}
    \left|\nabla_\eta\Phi_{\nu\omega\theta}\right|\gtrsim 2^{-0.4\delta^2 m} \tag{5.35}\label{5.35}
\end{align*}
by Proposition 6.4. Thus, we must have 
\begin{align*}
    \left|\nabla_\eta\Phi_{\sigma\mu\nu}\right|\gtrsim 2^{-0.4\delta^2 m}. \tag{5.36}\label{5.36}
\end{align*}\par
\vspace{3em}
\textbf{Contribution of Medium Frequency}\par
In this part, we assume that $\min\left\{k,k_1,k_2\right\}\ge -D$. (Again keep in mind that we always have $\max\left\{k,k_1,k_2\right\}\le 2^{0.1\delta^2 m}$.)\par
We first assume that $\left|\Phi_{\sigma\mu\nu}(\xi,\eta)\right|\sim\left|\Psi_{\nu\omega\theta}(\eta)\right|\sim 2^l$. Now, we denote $\kappa_\theta\triangleq 2^{-m/2+\delta^2 m}$ 
and we, recalling (\ref{5.33}), decompose
\begin{align*}
    P_k\mathcal{B}_{m,l}\left[f^\mu_{j_1,k_1,n_1},P_{k_2}f^{\nu\omega\theta}_{C,q}\right]&=P_k\mathcal{B}_{m,l}^\parallel\left[f^\mu_{j_1,k_1,n_1},P_{k_2}f^{\nu\omega\theta}_{C,q}\right]+P_k\mathcal{B}_{m,l}^\bot\left[f^\mu_{j_1,k_1,n_1},P_{k_2}f^{\nu\omega\theta}_{C,q}\right], \\
    P_k\mathcal{B}_{m,l}^\parallel\left[f^\mu_{j_1,k_1,n_1},P_{k_2}f^{\nu\omega\theta}_{C,q}\right]&\triangleq\int q_m(s) \int_{\mathbb{R}^2} e^{is\left[\Phi_{\sigma\mu\nu}(\xi,\eta)+\Psi_{\nu\omega\theta}(\eta)\right]}\frac{\varphi_l(\Phi_{\sigma\mu\nu}(\xi,\eta))}{\Phi_{\sigma\mu\nu}(\xi,\eta)} \varphi_l^{\left(-\infty,-3\delta m\right]}(\Psi_{\nu\omega\theta}(\eta)) \\
    &\ \ \ \ \ \times\varphi(\kappa_\theta^{-1}\Omega_\eta\Phi(\xi,\eta))\widehat{f^\mu_{j_1,k_1,n_1}}(\xi-\eta) g_q(\eta,s)\,d\eta ds, \tag{5.37}\label{5.37} \\
    P_k\mathcal{B}_{m,l}^\bot\left[f^\mu_{j_1,k_1,n_1},P_{k_2}f^{\nu\omega\theta}_{C,q}\right]&\triangleq\int q_m(s) \int_{\mathbb{R}^2} e^{is\left[\Phi_{\sigma\mu\nu}(\xi,\eta)+\Psi_{\nu\omega\theta}(\eta)\right]}\frac{\varphi_l(\Phi_{\sigma\mu\nu}(\xi,\eta))}{\Phi_{\sigma\mu\nu}(\xi,\eta)}\varphi_l^{\left(-\infty,-3\delta m\right]}(\Psi_{\nu\omega\theta}(\eta))  \\
    &\ \ \ \ \ \times\left(1-\varphi(\kappa_\theta^{-1}\Omega_\eta\Phi(\xi,\eta))\right)\widehat{f^\mu_{j_1,k_1,n_1}}(\xi-\eta) g_q(\eta,s)\,d\eta ds.
\end{align*}
By Lemma 3.2 and (\ref{5.13}), we can get $\left\|P_k\mathcal{B}_{m,l}^\bot\right\|_{L^2}\lesssim 2^{-4m}$. Thus, we only need to focus on the term $P_k\mathcal{B}_{m,l}^\parallel$. Moreover, we can further decompose
\begin{align*}
    P_k\mathcal{B}_{m,l}^\parallel&=\sum_{-m\le r_0\le -3\delta m \atop -m\le r\le 0} P_k\mathcal{B}_{m,l,r_0,r}^\parallel, \\
    P_k\mathcal{B}_{m,l,r_0,r}^\parallel&\triangleq\int q_m(s) \int_{\mathbb{R}^2} e^{is\textbf{p}(\xi,\eta)}\frac{\varphi_l(\Phi_{\sigma\mu\nu}(\xi,\eta))}{\Phi_{\sigma\mu\nu}(\xi,\eta)} \varphi(\kappa_\theta^{-1}\Omega_\eta\Phi(\xi,\eta))\varphi_{r_0}^{\left[-m,-3\delta m\right]}(\textbf{p}(\xi,\eta)) \\
    &\ \ \ \ \ \times\varphi_l^{\left(-\infty,-3\delta m\right]}(\Psi_{\nu\omega\theta}(\eta))\varphi_r^{\left[-m,0\right]}(\nabla_\xi\Phi(\xi,\eta))\widehat{f^\mu_{j_1,k_1,n_1}}(\xi-\eta) g_q(\eta,s)\,d\eta ds, \tag{5.38}\label{5.38}
\end{align*}
where $\textbf{p}(\xi,\eta)\triangleq\Phi_{\sigma\mu\nu}(\xi,\eta)+\Psi_{\nu\omega\theta}(\eta)$. We will divide into two cases to deal with. From now on, in this subsection 5.4.1, we may just write $\hat{f}$ instead of $\widehat{f^\mu_{j_1,k_1}}$ or $\widehat{f^\mu_{j_1,k_1,n_1}}$ for simplicity. \par
\textbf{Case 1.} $\frac{r_0}{2}-q\le -\frac{m}{2}$ \par
First, note that fix $\eta$, we have $\left|E_\xi\right|\lesssim 2^{r_0}\cdot\kappa_\theta$ due to $\left|\textbf{p}(\xi,\eta)\right|\lesssim 2^{r_0}$; fix $\xi$, we have $\left|E_\eta\right|\lesssim 2^l\cdot\kappa_\theta$ due to $\left|\Psi_{\nu\omega\theta}(\eta)\right|\sim 2^l$. Thus, using Lemma 3.6, (\ref{5.31}), Schur's Lemma and our assumption that $\frac{r_0}{2}-q\le -\frac{m}{2}$, we get
\begin{align*}
    \left\|P_k\mathcal{B}_{m,l,r_0,0}^\parallel\right\|_{L^2}&\lesssim 2^{m-l}\,2^{\frac{r_0}{2}+\frac{l}{2}}\,\kappa_\theta\,\left\|\hat{f}\right\|_{L^\infty}\,\left\|g_q\right\|_{L^\infty}\,\left|E_\eta\right|^{1/2}\lesssim 2^{-m+6.03\delta m},
\end{align*}, 
which is acceptable as in (\ref{5.32}). Moreover, the proof of $\left\|P_k\mathcal{B}_{m,l,r_0,-m}^\parallel\right\|_{L^2}\lesssim 2^{-m+6.03\delta m}$ is exactly same as before, except that the reason why fix $\eta$, $\left|E_\xi\right|\lesssim 2^{-m}\cdot\kappa_\theta$ is that $\left|\nabla_\xi\Phi\right|\lesssim 2^{-m}$.\par
Next, when $-m<r<0$, we need to divide into several subcases.\par
\ \ \textbf{Case 1.1.}\  $m+r+100\le j\le m$,\ \ \  $j_1\le (1-\delta^2)j$ \par
In this subcase, we have $\left|\nabla_\xi\left[s\Phi_{\sigma\mu\nu}(\xi,\eta)+s\Psi_{\nu\omega\theta}(\eta)+x\cdot\xi\right]\right|\sim s\left|\nabla_\xi\Phi_{\sigma\mu\nu}(\xi,\eta)\right|+x\sim 2^j$. Hence, using the trick in (\ref{5.13}) and Lemma 3.1, we can integrate by parts in $\xi$ to obtain an acceptable control. (Again note that we don't need any decomposition here.)\par 
\ \ \textbf{Case 1.2.}\  $m+r+100\le j\le m$,\ \ \  $j_1\ge (1-\delta^2)j$,\ \ \  $n_1\ge \frac{5}{19}m$ \par
In this subcase, we use Proposition 6.10 (a), Lemma 3.6 and (\ref{5.31}) to get
\begin{align*}
    \left\|P_k\mathcal{B}_{m,l}\right\|_{L^2}&\lesssim 2^{m-l}\,2^{\frac{l}{2}-\frac{n_1}{2}}\,\left\|\sup_\theta\left|\hat{f}(r\theta)\right|\right\|_{L^2}\,\left\|g_q\right\|_{L^\infty}\,\left|E_\eta\right|^{1/2}\lesssim 2^{-j+19.5\delta m},
\end{align*}
which is acceptable as in (\ref{5.32}).\par 
\ \ \textbf{Case 1.3.}\  $m+r+100\le j\le m$,\ \ \  $j_1\ge (1-\delta^2)j$,\ \ \  $n_1\le \frac{5}{19}m$ \par
In this subcase, we have that fix $\xi$, $\left|E_\eta\right|\lesssim 2^{l}\cdot\kappa_\theta$ due to $\left|\Psi_{\nu\omega\theta}(\eta)\right|\sim 2^l$; fix $\eta$, $\left|E_\xi\right|\lesssim 2^{\frac{r_0}{2}}\cdot\kappa_\theta$ due to $\left|\textbf{p}(\xi,\eta)\right|\lesssim 2^{r_0}, \left|\nabla_{\xi\xi}\Phi_{\sigma\mu\nu}(\xi,\eta)\right|\gtrsim 1$. Then, by Schur's test (switching $\xi-\eta$ and $\eta$), we get
\begin{align*}
    \left\|P_k\mathcal{B}_{m,l,r_0,r}^\parallel\right\|_{L^2}&\lesssim 2^{m-l}\,2^{\frac{l}{2}+\frac{r_0}{4}}\,\kappa_\theta\,\left\|g_q\right\|_{L^\infty}\,\left\|f\right\|_{L^2} \lesssim 2^{-j-0.1m},
\end{align*}
which is acceptable as in (\ref{5.32}).\par
\ \ \textbf{Case 1.4.}\  $j\le m+r+100$,\ \ \  $r\le \frac{r_0}{2}$ \par
In this subcase, we have that fix $\xi$, $\left|E_\eta\right|\lesssim 2^l\cdot\kappa_\theta$ due to $\left|\Psi_{\nu\omega\theta}(\eta)\right|\sim 2^l$; fix $\eta$, $\left|E_\xi\right|\lesssim 2^r\cdot\kappa_\theta$ due to $\left|\nabla_\xi\Phi_{\sigma\mu\nu}\right|\lesssim 2^r$. Then, by Schur's test, we get
\begin{align*}
    \left\|P_k\mathcal{B}_{m,l,r_0,r}^\parallel\right\|_{L^2}&\lesssim 2^{m-l}\,2^{\frac{l}{2}+\frac{r}{2}}\,\kappa_\theta\,\left\|\hat{f}\right\|_{L^\infty}\,\left\|g_q\right\|_{L^\infty}\,\left|E_\eta\right|^{1/2}\lesssim 2^{-j+6.02\delta m},
\end{align*}
which is acceptable as in (\ref{5.32}).\par
\ \ \textbf{Case 1.5.}\  $j\le m+r+100$,\ \ \  $r\ge \frac{r_0}{2}$ \par
We can deal with this subcase similarly as Case 1.4 above. Noticing that fix $\xi$, $\left|E_\eta\right|\lesssim 2^l\cdot\kappa_\theta$ due to $\left|\Psi_{\nu\omega\theta}(\eta)\right|\sim 2^l$; fix $\eta$, $\left|E_\xi\right|\lesssim 2^{r_0-r}\cdot\kappa_\theta$ due to $\left|\textbf{p}(\xi,\eta)\right|\sim 2^{r_0}, \left|\nabla_\xi\Phi\right|\sim 2^r$, by Schur's test, we get
\begin{align*}
    \left\|P_k\mathcal{B}_{m,l,r_0,r}^\parallel\right\|_{L^2}&\lesssim 2^{m-l}\,2^{\frac{l}{2}+\frac{r_0}{2}-\frac{r}{2}}\,\kappa_\theta\,\left\|\hat{f}\right\|_{L^\infty}\,\left\|g_q\right\|_{L^\infty}\,\left|E_\eta\right|^{1/2}\lesssim 2^{-j+6.02\delta m},
\end{align*}
which is acceptable as in (\ref{5.32}).\par
Finally, sum over $r$ and $r_0$ if needed and we get "part of" (\ref{5.32}).\par
\vspace{3em}
\textbf{Case 2.} $\frac{r_0}{2}-q\ge -\frac{m}{2}$ \par
First, we still do the decomposition as in (\ref{5.38}). Let's assume that $-m\le r_0\le -\frac{2}{3}m$ and $m+r+100\le j\le m$. If $j_1\le (1-\delta^2)j$, then we have $\left|\nabla_\xi\left[s\Phi(\xi,\eta)+x\cdot\xi\right]\right|\sim \left|x\right|\sim 2^j$, which means that integration by parts in $\xi$ will give us an acceptable contribution of $P_k\mathcal{B}_{m,l,r_0,r}^\parallel$. Next, if $j_1\ge (1-\delta^2)j$ and $n_1\ge \frac{5}{19}m$, then Proposition 6.10 (a) gives that
\begin{align*}
    \left\|P_k\mathcal{B}_{m,l}^\parallel\right\|_{L^2}&\lesssim 2^{m-l}\,2^{\frac{l}{2}-\frac{n_1}{2}}\,\left\|\sup_\theta\left|\hat{f}(r\theta)\right|\right\|_{L^2}\,\left\|g_q\right\|_{L^\infty}\left|E_\eta\right|^{1/2}\lesssim 2^{-j+19.5\delta m},
\end{align*}
which is acceptable as in (\ref{5.32});
if $j_1\ge (1-\delta^2)j$ and $n_1\le \frac{5}{19}m$, then \vspace{0.4em} in view of (\ref{5.38}), we have that fix $\xi$, $\left|E_\eta\right|\lesssim 2^{l}\cdot\kappa_\theta$ due to $\left|\Psi_{\nu\omega\theta}(\eta)\right|\sim 2^l$; fix $\eta$, $\left|E_\xi\right|\lesssim 2^{\frac{r_0}{2}}\cdot\kappa_\theta$ due to $\left|\textbf{p}(\xi,\eta)\right|\lesssim 2^{r_0}, \left|\nabla_{\xi\xi}\Phi_{\sigma\mu\nu}(\xi,\eta)\right|\gtrsim 1$. Then, by Schur's test (switching $\xi-\eta$ and $\eta$), we get
\begin{align*}
    \left\|P_k\mathcal{B}_{m,l,r_0,r}^\parallel\right\|_{L^2}&\lesssim 2^{m-l}\,2^{\frac{l}{2}+\frac{r_0}{4}}\,\kappa_\theta\,\left\|g_q\right\|_{L^\infty}\,\left\|f\right\|_{L^2} \lesssim 2^{-j-0.01m},
\end{align*}
which is acceptable as in (\ref{5.32}). Therefore, from now on, we can exclude the case
$$\left\{m,l,r_0,r:-m\le r_0\le -\frac{2}{3}m \mbox{ and } m+r+100\le j\le m\right\}$$
 if needed.\par
Next, in view of (\ref{5.38}), we integrate by parts in time and write
\begin{align*}
P_k\mathcal{B}_{m,l,r_0,r}^\parallel&=P_k\mathcal{B}_{m,l,r_0,r}^{\parallel\,(1)}+P_k\mathcal{B}_{m,l,r_0,r}^{\parallel\,(2)}+P_k\mathcal{B}_{m,l,r_0,r}^{\parallel\,(3)}, \\
    P_k\mathcal{B}_{m,l,r_0,r}^{\parallel\,(1)}&\triangleq    
    \int q_m^\prime(s) \int_{\mathbb{R}^2} e^{is\textbf{p}(\xi,\eta)}\frac{\varphi_l(\Phi_{\sigma\mu\nu}(\xi,\eta))}{\textbf{p}(\xi,\eta)\cdot\Phi_{\sigma\mu\nu}(\xi,\eta)}\,\varphi_l^{\left(-\infty,-3\delta m\right]}(\Psi_{\nu\omega\theta}(\eta))\, \varphi(\kappa_\theta^{-1}\Omega_\eta\Phi(\xi,\eta)) \\
    &\ \ \ \ \ \times\varphi_{r_0}^{\left[-m,-3\delta m\right]}(\textbf{p}(\xi,\eta))\,\varphi_r^{\left[-m,0\right]}(\nabla_\xi\Phi(\xi,\eta))\,\widehat{f^\mu_{j_1,k_1,n_1}}(\xi-\eta) \,g_q(\eta,s)\,d\eta ds, \\
    P_k\mathcal{B}_{m,l,r_0,r}^{\parallel\,(2)}&\triangleq    
    \int q_m(s) \int_{\mathbb{R}^2} e^{is\textbf{p}(\xi,\eta)}\frac{\varphi_l(\Phi_{\sigma\mu\nu}(\xi,\eta))}{\textbf{p}(\xi,\eta)\cdot\Phi_{\sigma\mu\nu}(\xi,\eta)}\,\varphi_l^{\left(-\infty,-3\delta m\right]}(\Psi_{\nu\omega\theta}(\eta))\, \varphi(\kappa_\theta^{-1}\Omega_\eta\Phi(\xi,\eta)) \\
    &\ \ \ \ \ \times\varphi_{r_0}^{\left[-m,-3\delta m\right]}(\textbf{p}(\xi,\eta))\,\varphi_r^{\left[-m,0\right]}(\nabla_\xi\Phi(\xi,\eta))\,\partial_s \widehat{f^\mu_{j_1,k_1,n_1}}(\xi-\eta)\,g_q(\eta,s)\,d\eta ds, \\
    P_k\mathcal{B}_{m,l,r_0,r}^{\parallel\,(3)}&\triangleq    
    \int q_m(s) \int_{\mathbb{R}^2} e^{is\textbf{p}(\xi,\eta)}\frac{\varphi_l(\Phi_{\sigma\mu\nu}(\xi,\eta))}{\textbf{p}(\xi,\eta)\cdot\Phi_{\sigma\mu\nu}(\xi,\eta)}\,\varphi_l^{\left(-\infty,-3\delta m\right]}(\Psi_{\nu\omega\theta}(\eta)) \varphi(\kappa_\theta^{-1}\Omega_\eta\Phi(\xi,\eta)) \\
    &\ \ \ \ \ \times\varphi_{r_0}^{\left[-m,-3\delta m\right]}(\textbf{p}(\xi,\eta))\,\varphi_r^{\left[-m,0\right]}(\nabla_\xi\Phi(\xi,\eta))\,\widehat{f^\mu_{j_1,k_1,n_1}}(\xi-\eta)\,\partial_s g_q(\eta,s)\,d\eta ds.
\end{align*}\par
\vspace{2em}
\ \ \textbf{Contribution of $P_k\mathcal{B}_{m,l,r_0,r}^{\parallel\,(1)}$}\par
We consider $P_k\mathcal{B}_{m,l,r_0,r}^{\parallel\,(1)}$ first. In general, in this part, the proof would be very similar as what we did before in Case 1. As before, note that fix $\eta$, we have $\left|E_\xi\right|\lesssim 2^{r_0}\cdot\kappa_\theta$ due to $\left|\textbf{p}(\xi,\eta)\right|\lesssim 2^{r_0}$; fix $\xi$, we have $\left|E_\eta\right|\lesssim 2^l\cdot\kappa_\theta$ due to $\left|\Psi_{\nu\omega\theta}(\eta)\right|\sim 2^l$. Thus, using Lemma 3.6, (\ref{5.31}) and Schur's Lemma, we get
\begin{align*}
    \left\|P_k\mathcal{B}_{m,l,r_0,0}^{\parallel\,(1)}\right\|_{L^2}&\lesssim 2^{-r_0-l}\,2^{\frac{r_0}{2}+\frac{l}{2}}\,\kappa_\theta\,\left\|\hat{f}\right\|_{L^\infty}\,\left\|g_q\right\|_{L^\infty}\,\left|E_\eta\right|^{1/2}\lesssim 2^{-m+6.03\delta m},\tag{5.39}\label{5.39}
\end{align*} 
which is acceptable as in (\ref{5.32}). Moreover, the proof of $\left\|P_k\mathcal{B}_{m,l,r_0,-m}^{\parallel\,(1)}\right\|_{L^2}\lesssim 2^{-m+6.03\delta m}$ is exactly same as before, except that the reason why fix $\eta$, $\left|E_\xi\right|\lesssim 2^{-m}\cdot\kappa_\theta$ is that $\left|\nabla_\xi\Phi\right|\lesssim 2^{-m}$.\par
Next, when $-m<r<0$, then like before we need to divide into a couple of subcases.\par
\ \ \textbf{Case 2.1.1.}\  $m+r+100\le j\le m$,\ \ \  $j_1\le (1-\delta^2)j$ \par
This subcase can be done by integration by parts in $\xi$ like before in Case 1.1.\par
\ \ \textbf{Case 2.1.2.}\  $m+r+100\le j\le m$,\ \ \  $j_1\ge (1-\delta^2)j,$\ \ \ $n_1\ge\frac{5}{19}m$ \par
This subcase can be done by using Proposition 6.10 (a) like Case 1.2. (Note that the "coefficient" in the beginning of the integral becomes $2^{-r_0-l}$ instead of $2^{m-l}$, which is even better.)\par
\ \ \textbf{Case 2.1.3.}\  $m+r+100\le j\le m$,\ \ \  $j_1\ge (1-\delta^2)j,$\ \ \ $n_1\le\frac{5}{19}m$ \par
We still use the fact from Case 1.3 before that fix $\xi$, $\left|E_\eta\right|\lesssim 2^{l}\cdot\kappa_\theta$ due to $\left|\Psi_{\nu\omega\theta}(\eta)\right|\sim 2^l$; fix $\eta$, $\left|E_\xi\right|\lesssim 2^{\frac{r_0}{2}}\cdot\kappa_\theta$ due to $\left|\textbf{p}(\xi,\eta)\right|\lesssim 2^{r_0}, \left|\nabla_{\xi\xi}\Phi_{\sigma\mu\nu}(\xi,\eta)\right|\gtrsim 1$. Then, by Schur's test (switching $\xi-\eta$ and $\eta$), we get
\begin{align*}
    \left\|P_k\mathcal{B}_{m,l,r_0,r}^{\parallel\,(1)}\right\|_{L^2}&\lesssim 2^{-r_0-l}\,2^{\frac{l}{2}+\frac{r_0}{4}}\,\kappa_\theta\,\left\|g_q\right\|_{L^\infty}\,\left\|f\right\|_{L^2} \lesssim 2^{-j-0.1m},
\end{align*}
which is acceptable as in (\ref{5.32}).\par
\ \ \textbf{Case 2.1.4.}\  $j\le m+r+100$,\ \ \  $r\le \frac{r_0}{2}$ \par
We use Schur's test and the same volume estimates as in Case 1.4 to get that
\begin{align*}
    \left\|P_k\mathcal{B}_{m,l,r_0,r}^{\parallel (1)}\right\|_{L^2}&\lesssim 2^{r_0-l}\,2^{\frac{l}{2}+\frac{r}{2}}\,\kappa_\theta\,\left\|\hat{f}\right\|_{L^\infty}\,\left\|g_q\right\|_{L^\infty}\,\left|E_\eta\right|^{1/2}\lesssim 2^{-\frac{3}{2}j+6.02\delta m},
\end{align*}
which is acceptable as in (\ref{5.32}).\par
\ \ \textbf{Case 2.1.5.}\  $j\le m+r+100$,\ \ \  $r\ge \frac{r_0}{2}$ \par
Again, we use Schur's test and the same volume estimates as in Case 1.5 to get that
\begin{align*}
    \left\|P_k\mathcal{B}_{m,l,r_0,r}^{\parallel (1)}\right\|_{L^2}&\lesssim 2^{-r_0-l}\,2^{\frac{l}{2}+\frac{r_0}{2}-\frac{r}{2}}\,\kappa_\theta\,\left\|\hat{f}\right\|_{L^\infty}\,\left\|g_q\right\|_{L^\infty}\,\left|E_\eta\right|^{1/2}\lesssim 2^{-j+6.02\delta m},
\end{align*}
which is acceptable as in (\ref{5.32}).\par
Now, we complete the proof of contribution of $P_k\mathcal{B}_{m,l,r_0,r}^{\parallel\,(1)}$.\par
\vspace{2em}
\ \ \textbf{Contribution of $P_k\mathcal{B}_{m,l,r_0,r}^{\parallel\,(2)}$}\par
This part is a little bit complicated, since we have to decompose $\partial_s \widehat{f^\mu_{j_1,k_1,n_1}}=\partial_s f$. In view of Lemma 3.9, we write 
$$\partial_s f=\partial_s f_C+\partial_s f_{NC},$$
where 
\begin{align*}
   &\partial_s \widehat{f_C}(\xi,s)=\sum_{\mu+\nu\neq0} e^{is\Psi_{\sigma\mu\nu}(\xi)} \sum_{0\le q\le m/2-10\delta m} g^q_{\sigma\mu\nu}(\xi,s), \\
    &\left|\varphi_k(\xi) D^\alpha_\xi g^q_{\sigma\mu\nu}(\xi,s)\right|\lesssim 2^{-42\delta k_-}\,2^{-m+3\delta m}\, 2^{-q+42\delta q}\,2^{(m/2+q+2\delta^2 m)\left|\alpha\right|}.\\
    \tag{5.40}\label{5.40}
\end{align*}
Therefore, we have $\left\|\partial_s \widehat{f_C}\right\|_{L^\infty}\lesssim 2^{-m+3\delta m}\,2^{(-1+42\delta)q}$.
Correspondingly, we can write
\begin{align*}
    P_k\mathcal{B}_{m,l,r_0,r}^{\parallel\,(2)}&=P_k\mathcal{B}_{m,l,r_0,r}^{\parallel\,(2,C)}+P_k\mathcal{B}_{m,l,r_0,r}^{\parallel\,(2,NC)},\\
    P_k\mathcal{B}_{m,l,r_0,r}^{\parallel\,(2,C)}&\triangleq    
    \int q_m(s) \int_{\mathbb{R}^2} e^{is\textbf{p}(\xi,\eta)}\frac{\varphi_l(\Phi_{\sigma\mu\nu}(\xi,\eta))}{\textbf{p}(\xi,\eta)\cdot\Phi_{\sigma\mu\nu}(\xi,\eta)}\,\varphi_l^{\left(-\infty,-3\delta m\right]}(\Psi_{\nu\omega\theta}(\eta)) \varphi(\kappa_\theta^{-1}\Omega_\eta\Phi(\xi,\eta)) \\ 
    &\ \ \ \ \ \times\varphi_{r_0}^{\left[-m,-3\delta m\right]}(\textbf{p}(\xi,\eta))\,\varphi_r^{\left[-m,0\right]}(\nabla_\xi\Phi(\xi,\eta))\,\partial_s \widehat{f_C}(\xi-\eta)\,g_q(\eta,s)\,d\eta ds, \\
    P_k\mathcal{B}_{m,l,r_0,r}^{\parallel\,(2,NC)}&\triangleq    
    \int q_m(s) \int_{\mathbb{R}^2} e^{is\textbf{p}(\xi,\eta)}\frac{\varphi_l(\Phi_{\sigma\mu\nu}(\xi,\eta))}{\textbf{p}(\xi,\eta)\cdot\Phi_{\sigma\mu\nu}(\xi,\eta)}\,\varphi_l^{\left(-\infty,-3\delta m\right]}(\Psi_{\nu\omega\theta}(\eta)) \varphi(\kappa_\theta^{-1}\Omega_\eta\Phi(\xi,\eta)) \\
    &\ \ \ \ \ \times\varphi_{r_0}^{\left[-m,-3\delta m\right]}(\textbf{p}(\xi,\eta))\,\varphi_r^{\left[-m,0\right]}(\nabla_\xi\Phi(\xi,\eta))\,\partial_s \widehat{f_{NC}}(\xi-\eta)\,g_q(\eta,s)\,d\eta ds.\\
    \tag{5.41}\label{5.41}
\end{align*}
We deal with $P_k\mathcal{B}_{m,l,r_0,r}^{\parallel\,(2,C)}$ first. We may assume that $-m<r<0$, since otherwise the proof is exactly same as the one in $P_k\mathcal{B}_{m,l,r_0,r}^{\parallel\,(1)}$ (for example, see (\ref{5.39})).\par
\ \ \textbf{Case 2.2.1.}\  $m+r+100\le j\le m$\par
Recall that we have already excluded the case $-m\le r_0\le -\frac{2}{3}m$. Thus, we can assume $r_0\ge -\frac{2}{3}m$. Note that fix $\xi$, we have $\left|E_\eta\right|\lesssim 2^l\cdot\kappa_\theta$; fix $\eta$, we have $\left|E_\xi\right|\lesssim 2^{\frac{r_0}{2}}\cdot\kappa_\theta$. Applying Schur's test, we get
\begin{align*}
    \left\|P_k\mathcal{B}_{m,l,r_0,r}^{\parallel\,(2,C)}\right\|_{L^2}&\lesssim 2^{m-r_0-l}\,2^{\frac{l}{2}+\frac{r_0}{4}}\,\kappa_\theta\,\left\|g_q\right\|_{L^\infty}\,\left\|\partial_s \widehat{f_C}\right\|_{L^\infty}\,\left|E_\eta\right|^{1/2}\lesssim 2^{-m+7.6\delta m},\tag{5.42}\label{5.42}
\end{align*}
which is acceptable as in (\ref{5.32}).\par
\ \ \textbf{Case 2.2.2.}\  $j\le m+r+100$\par
The proof is exactly same as in Case 2.1.4 and Cases 2.1.5, namely using the same volume estimates and Schur's test.\par
Next, we deal with $P_k\mathcal{B}_{m,l,r_0,r}^{\parallel\,(2,NC)}$. By Duhamel's formula, we can write
\begin{align*}
    \partial_s \widehat{f_{NC}}(\xi-\eta)=\sum_{(\widebar{k_1},\widebar{j_1})\in\mathcal{J}\atop (\widebar{k_2},\widebar{j_2})\in\mathcal{J}}\left(\partial_s \widehat{f_{NC}}(\xi-\eta)\right)_{\widebar{j_1},\widebar{k_1},\widebar{j_2},\widebar{k_2}},
\end{align*}
where 
\begin{align*}
    \left(\partial_s \widehat{f_{NC}}(\xi-\eta)\right)_{\widebar{j_1},\widebar{k_1},\widebar{j_2},\widebar{k_2}}\triangleq    \int_{\mathbb{R}^2} e^{is\Phi_{\mu\mu_2\mu_3}(\xi-\eta,\theta)}\widehat{f_{\widebar{j_1},\widebar{k_1}}}(\xi-\eta-\theta)\widehat{f_{\widebar{j_2},\widebar{k_2}}}(\theta)\,d\theta. \tag{5.43}\label{5.43}
\end{align*}
For simplicity, we slightly abuse the notation and view/write $\left(\partial_s \widehat{f_{NC}}(\xi-\eta)\right)_{\widebar{j_1},\widebar{k_1},\widebar{j_2},\widebar{k_2}}$ as $\partial_s \widehat{f_{NC}}(\xi-\eta)$, since we are always able to sum up these components.\par
\ \ \textbf{Case 2.2.3.}\  $\max\left\{\widebar{k_1},\widebar{k_2}\right\}\ge \frac{\delta^2 m}{5}-D$\ \ \ (Say $\widebar{k_1}=\max\left\{\widebar{k_1},\widebar{k_2}\right\}$)\par
In this subcase, we can use the energy estimate, namely
\begin{align*}
    \left\|\partial_s \widehat{f_{NC}}\right\|_{L^\infty}&\le \left\|\widehat{f_{\widebar{j_1},\widebar{k_1}}}\right\|_{L^2}\cdot\left\|\widehat{f_{\widebar{j_2},\widebar{k_2}}}\right\|_{L^2}\lesssim 2^{-N_0\widebar{k_1}}\cdot 1 \lesssim 2^{-4m}.
\end{align*}
Thus, by (\ref{5.41}), we have 
\begin{align*}
    \left\|P_k\mathcal{B}_{m,l,r_0,r}^{\parallel\,(2,NC)}\right\|_{L^2}&\lesssim 2^{m+r_0-l}\,\left\|\partial_s \widehat{f_{NC}}\right\|_{L^\infty}
    \,\left\|g_q\right\|_{L^\infty}\,\left|E_\eta\right|^{1/2}\lesssim 2^{-2m}, \tag{5.44}\label{5.44}
\end{align*}
which is acceptable as in (\ref{5.32}).\par
\ \ \textbf{Case 2.2.4.}\  $\min\left\{\widebar{k_1},\widebar{k_2}\right\}\le -1.27m$\ \ \ (Say $\widebar{k_2}=\min\left\{\widebar{k_1},\widebar{k_2}\right\}$)\par
In this subcase, we have
\begin{align*}
    \left\|\partial_s \widehat{f_{NC}}\right\|_{L^\infty}&\le \left\|\widehat{f_{\widebar{j_1},\widebar{k_1}}}\right\|_{L^\infty}\cdot\left\|\widehat{f_{\widebar{j_2},\widebar{k_2}}}\right\|_{L^\infty}\cdot\left|E_\theta\right|\lesssim 2^{-21\delta \widebar{k_1}}\cdot 2^{-21\delta \widebar{k_2}}\cdot 2^{2\widebar{k_2}} \lesssim 2^{-2.52m}.
\end{align*} 
Then, we proceed as (\ref{5.44}) and get an acceptable contribution as in (\ref{5.32}).\par
\ \ \textbf{Case 2.2.5.}\  $-1.27m\le \widebar{k_1},\widebar{k_2}\le \frac{\delta^2 m}{5}-D$,\ \ \ $\widebar{j_2}=\max\left\{\widebar{j_1},\widebar{j_2}\right\}\ge m-10\delta m-D^2$\par
In this part, we will prove that \vspace{0.4em} either $\left\|\partial_s \widehat{f_{NC}}\right\|_{L^\infty}\lesssim 2^{-m+7\delta m}$ or $\left\|\partial_s \widehat{f_{NC}}\right\|_{L^2}\lesssim 2^{-\frac{3}{2}m}$. If so, then we can proceed our proof as the one of $\left\|P_k\mathcal{B}_{m,l,r_0,r}^{\parallel\,(2,C)}\right\|_{L^2}$. To prove the $L^\infty$ or $L^2$ norm of $\partial_s \widehat{f_{NC}}$, we need to decompose $\partial_s \widehat{f_{NC}}$ more precisely, namely writing
\begin{align*}
    \partial_s \widehat{f_{NC}}(\xi-\eta)=\sum_{(\widebar{k_i},\widebar{j_i})\in\mathcal{J}\ i=1,2\atop \widebar{n_2}\in\left\{0,\dots,\widebar{j_2}+1\right\}}\left(\partial_s \widehat{f_{NC}}(\xi-\eta)\right)_{\widebar{j_1},\widebar{k_1},\widebar{j_2},\widebar{k_2},\widebar{n_2}},
\end{align*}
where 
\begin{align*}
    \left(\partial_s \widehat{f_{NC}}(\xi-\eta)\right)_{\widebar{j_1},\widebar{k_1},\widebar{j_2},\widebar{k_2},\widebar{n_2}}\triangleq    \int_{\mathbb{R}^2} e^{is\Phi_{\mu\mu_2\mu_3}(\xi-\eta,\theta)}\widehat{f_{\widebar{j_1},\widebar{k_1}}}(\xi-\eta-\theta)\widehat{f_{\widebar{j_2},\widebar{k_2},\widebar{n_2}}}(\theta)\,d\theta. \tag{5.43a}\label{5.43a}
\end{align*}
Once again, we slightly abuse the notation and view/write $\left(\partial_s \widehat{f_{NC}}(\xi-\eta)\right)_{\widebar{j_1},\widebar{k_1},\widebar{j_2},\widebar{k_2},\widebar{n_2}}$ as $\partial_s \widehat{f_{NC}}(\xi-\eta)$.\par
In fact, if $\widebar{n_2}\le \frac{18}{19}m$, then by Lemma 3.5 and Lemma 3.6, we can get
\begin{align*}
    \left\|\partial_s \widehat{f_{NC}}\right\|_{L^2}&\lesssim \left\|e^{is\Lambda_{\mu_2}}f_{\widebar{j_1},\widebar{k_1}}\right\|_{L^\infty}\cdot\left\|f_{\widebar{j_2},\widebar{k_2},\widebar{n_2}}\right\|_{L^2} \lesssim 2^{-\frac{3}{2}m-\frac{1}{38}m+40.01\delta m}.
\end{align*}
If $\widebar{n_2}\ge \frac{18}{19}m$ and $\widebar{k_1}\ge -0.2m$, then in view of (\ref{5.43}), we apply Lemma 3.6 and get
\begin{align*}
    \left\|\partial_s \widehat{f_{NC}}\right\|_{L^\infty}&\lesssim \left\|\widehat{f_{\widebar{j_1},\widebar{k_1}}}\right\|_{L^\infty}\cdot\left\|\widehat{f_{\widebar{j_2},\widebar{k_2}}}\right\|_{L^1}\lesssim 2^{-m+6.21\delta m}.
\end{align*}
If $\widebar{n_2}\ge \frac{18}{19}m$, $\widebar{k_1}\le -0.2m$ and $\widebar{j_1}\le \frac{2}{5}m-\delta^2 m$, then $-\frac{2}{5}m+\delta^2 m\le \widebar{k_1}\le -0.2m$, which \vspace{0.3em} implies that we can insert the cutoff function of $\angle\xi,\eta$, namely doing the following decomposition of (\ref{5.43a})
\begin{align*}
    \partial_s \widehat{f_{NC}}(\xi-\eta)&=\left(\partial_s \widehat{f_{NC}}\right)^\parallel(\xi-\eta)+\left(\partial_s \widehat{f_{NC}}\right)^\bot(\xi-\eta), \\
    \left(\partial_s \widehat{f_{NC}}\right)^\parallel(\xi-\eta)&\triangleq\int_{\mathbb{R}^2} e^{is\Phi_{\mu\mu_2\mu_3}(\xi-\eta,\theta)}\varphi(\kappa_\theta^{-1}\Omega_\theta\Phi_{\mu\mu_2\mu_3}(\xi-\eta,\theta))\widehat{f_{\widebar{j_1},\widebar{k_1}}}(\xi-\eta-\theta)\reallywidehat{f_{\widebar{j_2},\widebar{k_2},\widebar{n_2}}}(\theta)\,d\theta, \\
    \left(\partial_s \widehat{f_{NC}}\right)^\bot(\xi-\eta)&\triangleq\int_{\mathbb{R}^2} e^{is\Phi_{\mu\mu_2\mu_3}(\xi-\eta,\theta)}\left(1-\varphi(\kappa_\theta^{-1}\Omega_\theta\Phi_{\mu\mu_2\mu_3}(\xi-\eta,\theta))\right) \\
    &\ \ \ \ \ \ \ \ \ \ \ \ \ \ \times\widehat{f_{\widebar{j_1},\widebar{k_1}}}(\xi-\eta-\theta)\reallywidehat{f_{\widebar{j_2},\widebar{k_2},\widebar{n_2}}}(\theta)\,d\theta.
\end{align*}
We only need to consider $\left(\partial_s \widehat{f_{NC}}\right)^\bot$, since Lemma 3.2 guarantees enough control of $\left(\partial_s \widehat{f_{NC}}\right)^\parallel$. Note that fix $\xi-\eta$, we have $\left|E_\theta\right|\lesssim 2^{-2\widebar{n_2}}$; fix $\theta$, we have $\left|E_{\xi-\eta}\right|\lesssim 2^{\widebar{k_1}}\cdot\kappa_\theta$. Thus, by Schur's test and Lemma 3.6, we get
\begin{align*}
    \left\|\left(\partial_s \widehat{f_{NC}}\right)^\bot\right\|_{L^2}&\lesssim 2^{-\widebar{n_2}+\frac{\widebar{k_1}}{2}-\frac{m}{4}+\frac{1}{2}\delta^2 m}\left\|\widehat{f_{\widebar{j_1},\widebar{k_1}}}\right\|_{L^\infty}\,\left\|\reallywidehat{f_{\widebar{j_2},\widebar{k_2},\widebar{n_2}}}\right\|_{L^2}\lesssim 2^{-1.7m+31\delta m}.\tag{5.45}\label{5.45}
\end{align*}
Therefore, we get $\left\|\partial_s \widehat{f_{NC}}\right\|_{L^2}\lesssim 2^{-1.7m+31\delta m}$. Finally, if $\widebar{n_2}\ge \frac{18}{19}m$, $\widebar{k_1}\le -0.2m$ and $\widebar{j_1}\ge \frac{2}{5}m-\delta^2 m$, then we cannot integrate by parts to the angle $\angle\xi,\eta$. However, we still observe that fix  $\xi-\eta$, we have $\left|E_\theta\right|\lesssim 2^{-2\widebar{n_2}}$; fix $\theta$, we have $\left|E_{\xi-\eta}\right|\lesssim 2^{\widebar{k_1}}$. Then, by Schur's test and Lemma 3.6, as in (\ref{5.45}), we get
\begin{align*}
    \left\|\partial_s \widehat{f_{NC}}\right\|_{L^2}&\lesssim 2^{-\widebar{n_2}+\frac{\widebar{k_1}}{2}}\left\|\widehat{f_{\widebar{j_1},\widebar{k_1}}}\right\|_{L^\infty}\,\left\|\reallywidehat{f_{\widebar{j_2},\widebar{k_2},\widebar{n_2}}}\right\|_{L^2}\lesssim 2^{-1.67m+41\delta m}.
\end{align*}
To sum up, in all cases, we have proved that either $\left\|\partial_s \widehat{f_{NC}}\right\|_{L^\infty}\lesssim 2^{-m+7\delta m}$ or $\left\|\partial_s \widehat{f_{NC}}\right\|_{L^2}\lesssim 2^{-\frac{3}{2}m}$, which is enough.\par
\ \ \textbf{Case 2.2.6.}\  $-1.27m\le \widebar{k_1},\widebar{k_2}\le \frac{\delta^2 m}{5}-D$,\ \ \ $0\le \widebar{j_1}\le \widebar{j_2}\le m-10\delta m-D^2$\par
Recall that for $i=1,2$, we have $\widebar{k_i}+\widebar{j_i}\ge 0$, so $\widebar{k_i}\ge -(1-10\delta)m+D^2$. In this subcase, in view of Lemma 3.9, we know that $\partial_s \widehat{f_{NC}}$ occurs in the following cases. Once again we denote $\kappa_r\triangleq 2^{\delta^2 m/2}\left(2^{-m/2}+2^{j_2-m}\right)$ and $\kappa_\theta\triangleq 2^{2\delta^2 m-m/2}$.\par
\underline{(a.1). $\left|\nabla_\theta\Phi_{\mu\mu_2\mu_3}\right|\lesssim \kappa_r$, \ \ \ $\mu_2+\mu_3\neq 0$, \ \ \ $2^{k_2}\ge 2^{\delta m}\left(2^{-m/2}+2^{\widebar{j_2}-m}\right)$, \ \ \ $\frac{m}{2}\le \widebar{j_2}\le (1-10\delta)m$}\vspace{0.4em} \\
\underline{$k_2\le-D$.}\par
\underline{(a.2). $\left|\nabla_\theta\Phi_{\mu\mu_2\mu_3}\right|\lesssim \kappa_r$, \ \ \ $\mu_2+\mu_3\neq 0$, \ \ \ $2^{k_2}\ge 2^{\delta m}\left(2^{-m/2}+2^{\widebar{j_2}-m}\right)$, \ \ \ $\frac{m}{2}\le \widebar{j_2}\le (1-10\delta)m$}\vspace{0.4em} \\
\underline{$k_2\ge-D$,\ \ \ $\left|\Omega_\theta\Phi_{\mu\mu_2\mu_3}\right|\gtrsim \kappa_\theta$.}\par
\underline{(b). $\left|\nabla_\theta\Phi_{\mu\mu_2\mu_3}\right|\lesssim \kappa_r$, \ \ \ $\mu_2+\mu_3\neq 0$, \ \ \ $2^{k_2}\le 2^{\delta m}\left(2^{-m/2}+2^{\widebar{j_2}-m}\right)$.}\par
\underline{(c). $\left|\nabla_\theta\Phi_{\mu\mu_2\mu_3}\right|\lesssim \kappa_r$, \ \ \ $\mu_2+\mu_3=0$, \ \ \ $2^{k_2}\le 2^{\delta m}\left(2^{-m/2}+2^{\widebar{j_2}-m}\right)$, \ \ \ $m+k_2\le \widebar{j_2}+3\delta^2 m$}\par
\begin{align*}
   \  \tag{5.46}\label{5.46}
\end{align*}
\ \ \ \ In the case (a.2), we can integrate by parts to the angle $\angle\xi,\eta$ to get a sufficient small control of $\partial_s \widehat{f_{NC}}$ (See P.828 of \cite{y2}). In all other cases, we notice that $k_2\le -D$, which contradicts our assumption $\min\left\{k,k_1,k_2\right\}>-D$ in the beginning.\par
Thus, we have already completed the proof of $P_k\mathcal{B}_{m,l,r_0,r}^{\parallel\,(2)}$.\par
\vspace{2em}
\ \ \textbf{Contribution of $P_k\mathcal{B}_{m,l,r_0,r}^{\parallel\,(3)}$}\par
Once again, in general, this part could be proved quite similar as the proof of $P_k\mathcal{B}_{m,l,r_0,r}^{\parallel\,(1)}$. The only main difference here is that we have to use the \vspace{0.4em} assumption $\frac{r_0}{2}-q\ge -\frac{m}{2}$ now. Recall that we have $\left\|\partial_s g_q(\xi,s)\right\|_{L^\infty}\lesssim 2^{-2m+q+6.01\delta m}$. As before, note \vspace{0.4em} that fix $\eta$, we have $\left|E_\xi\right|\lesssim 2^{r_0}\cdot\kappa_\theta$ due to $\left|\textbf{p}(\xi,\eta)\right|\lesssim 2^{r_0}$; fix $\xi$, we have $\left|E_\eta\right|\lesssim 2^l\cdot\kappa_\theta$ due to $\left|\Psi_{\nu\omega\theta}(\eta)\right|\sim 2^l$. Thus, using Lemma 3.6, (\ref{5.31}) and Schur's Lemma, we get
\begin{align*}
    \left\|P_k\mathcal{B}_{m,l,r_0,0}^{\parallel\,(3)}\right\|_{L^2}&\lesssim 2^{m-r_0-l}\,2^{\frac{r_0}{2}+\frac{l}{2}}\,\kappa_\theta\,\left\|\hat{f}\right\|_{L^\infty}\,\left\|\partial_s g_q\right\|_{L^\infty}\,\left|E_\eta\right|^{1/2} \lesssim 2^{-m+8.03\delta m},
\end{align*} 
which is acceptable as in (\ref{5.32}). Moreover, the proof of $\left\|P_k\mathcal{B}_{m,l,r_0,-m}^{\parallel\,(3)}\right\|_{L^2}\lesssim 2^{-m+8.03\delta m}$ is exactly same as before, except that the reason why fix $\eta$, $\left|E_\xi\right|\lesssim 2^{-m}\cdot\kappa_\theta$ is that $\left|\nabla_\xi\Phi\right|\lesssim 2^{-m}$.\par
Next, when $-m<r<0$, then like before we again need to divide into a couple of subcases.\par
\ \ \textbf{Case 2.3.1.}\  $m+r+100\le j\le m$,\ \ \  $j_1\le (1-\delta^2)j$ \par
This subcase can be done by integration by parts in $\xi$ like before in Case 1.1.\par
\ \ \textbf{Case 2.3.2.}\  $m+r+100\le j\le m$,\ \ \  $j_1\ge (1-\delta^2)j,$\ \ \ $n_1\ge\frac{7}{19}m$ \par
This subcase can be done by using Proposition 6.10 (a) like Case 1.2 or Case 2.1.2. In fact, we use Proposition 6.10 (a), Lemma 3.6 and (\ref{5.31}) to get 
\begin{align*}
    \left\|P_k\mathcal{B}_{m,l,r_0}^{\parallel\,(3)}\right\|_{L^2}&\lesssim 2^{m-r_0-l}\,2^{\frac{l}{2}-\frac{n_1}{2}}\,\left\|\sup_\theta\left|\hat{f}(r\theta)\right|\right\|_{L^2}\,\left\|\partial_s g_q\right\|_{L^\infty}\,\left|E_\eta\right|^{1/2} \lesssim 2^{-j+19.1\delta m},
\end{align*}
which is acceptable as in (\ref{5.32}).\par 
\ \ \textbf{Case 2.3.3.}\  $m+r+100\le j\le m$,\ \ \  $j_1\ge (1-\delta^2)j,$\ \ \ $n_1\le\frac{7}{19}m$ \par
This subcase can be done like Case 1.3 or Case 2.1.3 In fact, we have that fix $\xi$, $\left|E_\eta\right|\lesssim 2^{l}\cdot\kappa_\theta$ due to $\left|\Psi_{\nu\omega\theta}(\eta)\right|\sim 2^l$; fix $\eta$, $\left|E_\xi\right|\lesssim 2^{\frac{r_0}{2}}\cdot\kappa_\theta$ due to $\left|\textbf{p}(\xi,\eta)\right|\lesssim 2^{r_0}, \left|\nabla_{\xi\xi}\Phi_{\sigma\mu\nu}(\xi,\eta)\right|\gtrsim 1$. Then, by Schur's test (switching $\xi-\eta$ and $\eta$), we get
\begin{align*}
    \left\|P_k\mathcal{B}_{m,l,r_0,r}^{\parallel\,(3)}\right\|_{L^2}&\lesssim 2^{m-r_0-l}\,2^{\frac{l}{2}+\frac{r_0}{4}}\,\kappa_\theta\,\left\|\partial_s g_q\right\|_{L^\infty}\,\left\|f\right\|_{L^2}\lesssim 2^{-j-0.05m},
\end{align*}
which is acceptable as in (\ref{5.32}).\par
\ \ \textbf{Case 2.3.4.}\  $j\le m+r+100$,\ \ \  $r\le \frac{r_0}{2}$ \par
We use Schur's test and the same volume estimates as in Case 1.4 or Case 2.1.4 to get that
\begin{align*}
    \left\|P_k\mathcal{B}_{m,l,r_0,r}^{\parallel (3)}\right\|_{L^2}&\lesssim 2^{m-r_0-l}\,2^{\frac{l}{2}+\frac{r}{2}}\,\kappa_\theta\,\left\|\hat{f}\right\|_{L^\infty}\,\left\|\partial_s g_q\right\|_{L^\infty}\,\left|E_\eta\right|^{1/2} \lesssim 2^{-j+\frac{r}{2}+8.02\delta m},
\end{align*}
which is acceptable as in (\ref{5.32}).\par
\ \ \textbf{Case 2.3.5.}\  $j\le m+r+100$,\ \ \  $r\ge \frac{r_0}{2}$ \par
Again, we use Schur's test and the same volume estimates as in Case 1.5 or Case 2.1.5 to get that
\begin{align*}
    \left\|P_k\mathcal{B}_{m,l,r_0,r}^{\parallel (3)}\right\|_{L^2}&\lesssim 2^{m-r_0-l}\,2^{\frac{l}{2}+\frac{r_0}{2}-\frac{r}{2}}\,\kappa_\theta\,\left\|\hat{f}\right\|_{L^\infty}\,\left\|\partial_s g_q\right\|_{L^\infty}\,\left|E_\eta\right|^{1/2} \lesssim 2^{-j+8.02\delta m},
\end{align*}
which is acceptable as in (\ref{5.32}).\par
Now, the proof of $P_k\mathcal{B}_{m,l,r_0,r}^{\parallel (3)}$ is completed.\par
\vspace{2em}
To sum up, by summing over $r$ and $r_0$ if necessary, we have showed that if $\min\left\{k,k_1,k_1\right\}\ge -D$ and $\left|\Psi_{\nu\omega\theta}(\eta)\right|\sim 2^l$, then $\left\|Q_{jk}\mathcal{B}_{m,l}\right\|_{L^2}\lesssim 2^{-(1-20\delta)j-0.001\delta^2 m}$ as in (\ref{5.32}). Finally, in general, we may assume that $\left|\Psi_{\nu\omega\theta}(\eta)\right|\sim 2^{l_1}$, then we have $\left|\textbf{p}\right|\sim 2^{\max\left\{l,l_1\right\}}\triangleq 2^{r_0}$. If $l_1>l$, then $l_1=r_0$. In this case, the above proof still holds. For example, we still have that fix $\xi$, $\left|E_\eta\right|\lesssim 2^{l+0.4\delta^2 m}\cdot\kappa_\theta$, whose reason is that $\left|\Phi_{\sigma\mu\nu}\right|\sim 2^l$ and $\left|\nabla_\eta\Phi_{\sigma\mu\nu}\right|\gtrsim 2^{-0.4\delta^2 m}$. The rest of proof is exactly same. If $l_1<l$, then $\left|\Phi_{\sigma\mu\nu}\right|\sim 2^l=2^{r_0}$, $\left|\Psi_{\nu\omega\theta}\right|\sim 2^{l_1}$ and $\left|\textbf{p}\right|\sim 2^l=2^{r_0}$. Like before, we still have that fix $\xi$, $\left|E_\eta\right|\lesssim 2^{l+0.4\delta^2 m}\cdot\kappa_\theta$, so the rest of the proof is also exactly same. Thus, we have already proved that if $\min\left\{k,k_1,k_1\right\}\ge -D$ , then $\left\|Q_{jk}\mathcal{B}_{m,l}\right\|_{L^2}\lesssim 2^{-(1-20\delta)j-0.001\delta^2 m}$ as in (\ref{5.32}).\par
\vspace{3em}
\textbf{Contribution of Low Frequency}\par
In this part, we assume that $\min\left\{k,k_1,k_2\right\}\le -D$. \par
First, we assume that $k_1\le j_1-m+\delta^2 m$. In this case, we have $k_1\le -D$. Since $b_\sigma-b_\mu-b_\nu\neq 0$, we know that at most one of $k,k_1,k_2$ is less than $-D$, which implies that $k,k_2\ge -D$. Recall that 
\begin{align*}
    \nabla_\xi\Phi=\pm\frac{c_\sigma^2\xi}{\sqrt{c_\sigma^2\left|\xi\right|^2+b_\sigma^2}}\mp\frac{c_\mu^2(\xi-\eta)}{\sqrt{c_\mu^2\left|\xi-\eta\right|^2+b_\mu^2}},\tag{5.47}\label{5.47}
\end{align*}
and we can get that $\left|\nabla_\xi\Phi\right|\gtrsim 1$, since when $\left|\xi\right|\gtrsim 1$, $\left|\frac{c_\sigma^2\xi}{\sqrt{c_\sigma^2\left|\xi\right|^2+b_\sigma^2}}\right|\sim\left|\xi\right|$ and when $\left|\xi-\eta\right|\ll 1$, $\left|\frac{c_\mu^2(\xi-\eta)}{\sqrt{c_\mu^2\left|\xi-\eta\right|^2+b_\mu^2}}\right|\ll 1$. Then, due to the fact that $\left|\Phi_{\sigma\mu\nu}\right|\sim 2^l$, $\left|\nabla_\xi\Phi_{\sigma\mu\nu}\right|\gtrsim 1$ and  $\left|\nabla_\eta\Phi_{\sigma\mu\nu}\right|\gtrsim 2^{-0.4\delta^2 m}$, we observe that fix $\eta$, $\left|E_\xi\right|\lesssim 2^l\cdot 2^{k_1}$; fix $\xi$, $\left|E_\eta\right|\lesssim 2^{l+0.4\delta^2 m}\cdot 2^{k_1}$. Also, \vspace{0.4em} $k_1\le -D$ implies $n_1=0$, so we must have $\left\|f\right\|_{L^2}\lesssim 2^{-(1-20\delta)j_1}$. Then by Schur's test (switching $\xi-\eta$ and $\eta$) and (\ref{3.40}), we get
\begin{align*}
    \left\|P_k \mathcal{B}_{m,l}\right\|_{L^2}&\lesssim 2^{m-l}\,2^{l+0.2\delta^2 m+k_1}\,\left\|\hat{f}\right\|_{L^2}\,\left\|g_q\right\|_{L^\infty}\lesssim 2^{-m+16.01\delta m},
\end{align*}
which is acceptable as in (\ref{5.32}).\par
Next, we assume that $k_1\ge j_1-m+\delta^2 m$. This implies that $k_1\ge -0.5m+\frac{1}{2}\delta^2 m$. (Otherwise $k_1\le -0.5m+\frac{1}{2}\delta^2 m$, so $j_1\ge 0.5m-\frac{1}{2}\delta^2 m$. This gives us $k_1\le j_1-m+\frac{1}{2}\delta^2 m<j_1-m+\delta^2 m$, which contradicts our assumption.) By Lemma 3.6, this tells us that 
\begin{align*}
    \left\|\hat{f}\right\|_{L^\infty}\lesssim 2^{-21\delta k_1}\lesssim 2^{-10.6\delta m}. \tag{5.48}\label{5.48}
\end{align*}
Note that in this case, we are able to integrate by parts to the angle $\angle\xi,\eta$ if needed. Thus, by using (\ref{5.48}) and (\ref{3.40}), we can copy the proof in the medium frequency except the part of (\ref{5.46}), since it's possible that $k_1\le -D$ right now.\par
To deal with the part of (\ref{5.46}), we have to analyze $\partial_s \widehat{f_{NC}}$ more precisely. First, note that in all cases where $k_1\le -D$ in (\ref{5.46}) (i.e. Case (a.1), Case (b) and Case (c)) we all have $k_1\le -D$ and $\left|\nabla_\theta\Phi_{\mu\mu_2\mu_3}\right|\ll 1$. Then, by Proposition 6.5 (a), we know that $\left|\theta\right|\approx\left|p(\xi-\eta)\right|\lesssim \left|\xi-\eta\right|\le 2^{-D}$, namely $k_1,\widebar{k_1},\widebar{k_2}\le -D$. This implies that $\left|\Phi_{\mu\mu_2\mu_3}\right|\gtrsim 1$ due to that $b_\mu-b_{\mu_2}-b_{\mu_3}\neq 0$. Also, since $k_1\le -D$, by the argument at (\ref{5.47}), we can get that $\left|\nabla_\xi\Phi_{\sigma\mu\nu}\right|\gtrsim 1$.\par
Now, according to the proof of 
\begin{align*}
    &I^{hi}\left[f,g\right]\triangleq\mathcal{F}^{-1}\int_{\mathbb{R}^2} e^{is\Phi_{\mu\mu_2\mu_3}(\xi-\eta,\theta)}\varphi_{hi}\left(\Phi(\xi-\eta,\theta)\right)\widehat{f}(\xi-\eta-\theta,s)\widehat{g}(\theta,s)\,d\theta, \\
    &\varphi_{hi}(x)=\varphi_{>\left[-3\delta m-4\delta^2 m\right]}(x)
\end{align*}
in Lemma 3.12, we could decompose
$$\partial_s \widehat{f_{NC}}=f^*_{NC}+\left(\partial_s F_C+\partial_s F_{NC}+\partial_s F_{LO}\right)\triangleq I^*_{NC}+J,$$
where
\begin{equation}
    \begin{aligned}
    &\left\|f^*_{NC}\right\|\lesssim 2^{-2m+50\delta m}, \\
    &\left\|\widehat{F_C}\right\|_{L^\infty}\lesssim 2^{-m+3.2\delta m+10\delta^2 m}, \\
    &\left\|F_{NC}\right\|_{L^2}\lesssim 2^{-1.025m}, \\
    &\left\|\left(1+2^m\left|\xi-\eta\right|\right)\widehat{F_{LO}}\right\|_{L^\infty}\lesssim 2^{5\delta m}, \ \ \ \ \ P_{\ge -\frac{13}{15}m} F_{LO}\equiv 0, \\
    &\ \ \ \ \ \ \ \left(\Rightarrow\left\|\widehat{F_{LO}}\right\|_{L^\infty}\lesssim \begin{cases}
        2^{-m+5\delta m-k_1}&\mbox{, if }\left|\xi-\eta\right|\ge 2^{-m}\\
        2^{5\delta m}&\mbox{, if }\left|\xi-\eta\right|\le 2^{-m}
    \end{cases} \right).
\end{aligned}\tag{5.49}\label{5.49}
\end{equation}
Also,  we denote $P_k\mathcal{B}_{m,l,r_0,r}^{\parallel\,(2,NC,b)}$ corresponding to the  $I^*_{NC}$ part, and denote $P_k\mathcal{B}_{m,l,r_0,r}^{\parallel\,(2,NC,c)}$ corresponding to the $J$ part. Namely, we have
\begin{align*}
    P_k\mathcal{B}_{m,l,r_0,r}^{\parallel\,(2,NC,b)}&\triangleq    
    \int q_m(s) \int_{\mathbb{R}^2} e^{is\textbf{p}(\xi,\eta)}\frac{\varphi_l(\Phi_{\sigma\mu\nu}(\xi,\eta))}{\textbf{p}(\xi,\eta)\cdot\Phi_{\sigma\mu\nu}(\xi,\eta)}\,\varphi_l^{\left(-\infty,-3\delta m\right]}(\Psi_{\nu\omega\theta}(\eta)) \varphi(\kappa_\theta^{-1}\Omega_\eta\Phi(\xi,\eta)) \\
    &\ \ \ \ \ \times\varphi_{r_0}^{\left[-m,-3\delta m\right]}(\textbf{p}(\xi,\eta))\,\varphi_r^{\left[-m,0\right]}(\nabla_\xi\Phi(\xi,\eta))\,I^*_{NC}(\xi-\eta)\,g_q(\eta,s)\,d\eta ds, \\
    P_k\mathcal{B}_{m,l,r_0,r}^{\parallel\,(2,NC,c)}&\triangleq    
    \int q_m(s) \int_{\mathbb{R}^2} e^{is\textbf{p}(\xi,\eta)}\frac{\varphi_l(\Phi_{\sigma\mu\nu}(\xi,\eta))}{\textbf{p}(\xi,\eta)\cdot\Phi_{\sigma\mu\nu}(\xi,\eta)}\,\varphi_l^{\left(-\infty,-3\delta m\right]}(\Psi_{\nu\omega\theta}(\eta)) \varphi(\kappa_\theta^{-1}\Omega_\eta\Phi(\xi,\eta)) \\
    &\ \ \ \ \ \times\varphi_{r_0}^{\left[-m,-3\delta m\right]}(\textbf{p}(\xi,\eta))\,\varphi_r^{\left[-m,0\right]}(\nabla_\xi\Phi(\xi,\eta))\,J(\xi-\eta)\,g_q(\eta,s)\,d\eta ds. \\
\end{align*}\par
As for $P_k\mathcal{B}_{m,l,r_0,r}^{\parallel\,(2,NC,b)}$, we notice that fix $\eta$, we have $\left|E_\xi\right|\lesssim 2^{r_0}\cdot \kappa_\theta$ due to that $\left|\textbf{p}\right|\lesssim 2^{r_0}$ and $\left|\nabla_\xi\Phi_{\sigma\mu\nu}\right|\gtrsim 1$; fix $\xi$, we have $\left|E_\eta\right|\lesssim 2^{l+0.4\delta^2 m}\cdot \kappa_\theta$ due to that $\left|\Phi_{\sigma\mu\nu}\right|\sim 2^l$ and $\left|\nabla_\eta\Phi_{\sigma\mu\nu}\right|\gtrsim 2^{-0.4\delta^2 m}$. Then by Schur's test, (\ref{5.31}) and (\ref{5.49}), we get
\begin{align*}
    \left\|P_k\mathcal{B}_{m,l,r_0,r}^{\parallel\,(2,NC,b)}\right\|_{L^2}&\lesssim 2^{m-r_0-l}\,2^{\frac{l}{2}+\frac{r_0}{2}+0.2\delta^2 m}\,\kappa_\theta\,2^{-m+4.01\delta m}\,2^{-2m+50\delta m}\lesssim 2^{-\frac{3}{2}m+75\delta m},\tag{5.50}\label{5.50}
\end{align*}
which is acceptable as in (\ref{5.32}).\par
As for $P_k\mathcal{B}_{m,l,r_0,r}^{\parallel\,(2,NC,c)}$, we have to do the integration by parts in time again. So we write
\begin{align*}
P_k\mathcal{B}_{m,l,r_0,r}^{\parallel\,(2,NC,c)}&=\sum_{\cdot\,\in\left\{NC,C,LO\right\}}\left[P_k\mathcal{B}_{m,l,r_0,r}^{\parallel\,(2, NC,c_1,\cdot\,)}+P_k\mathcal{B}_{m,l,r_0,r}^{\parallel\,(2,NC,c_2,\cdot\,)}+P_k\mathcal{B}_{m,l,r_0,r}^{\parallel\,(2,NC,c_3,\cdot\,)}\right], \\
    P_k\mathcal{B}_{m,l,r_0,r}^{\parallel\,(2,NC,c_1,\cdot\,)}&\triangleq    
    \int q_m^\prime(s) \int_{\mathbb{R}^2} e^{is\textbf{p}(\xi,\eta)}\frac{\varphi_l(\Phi_{\sigma\mu\nu}(\xi,\eta))}{\textbf{p}(\xi,\eta)\cdot\Phi_{\sigma\mu\nu}(\xi,\eta)}\,\varphi_l^{\left(-\infty,-3\delta m\right]}(\Psi_{\nu\omega\theta}(\eta)) \varphi(\kappa_\theta^{-1}\Omega_\eta\Phi(\xi,\eta)) \\
    &\ \ \ \ \ \times\varphi_{r_0}^{\left[-m,-3\delta m\right]}(\textbf{p}(\xi,\eta))\,\varphi_r^{\left[-m,0\right]}(\nabla_\xi\Phi(\xi,\eta))\,\widehat{F_\cdot}(\xi-\eta) \,g_q(\eta,s)\,d\eta ds, \\
    P_k\mathcal{B}_{m,l,r_0,r}^{\parallel\,(2,NC,c_2,\cdot\,)}&\triangleq    
    \int q_m(s) \int_{\mathbb{R}^2} e^{is\textbf{p}(\xi,\eta)}\frac{\varphi_l(\Phi_{\sigma\mu\nu}(\xi,\eta))}{\Phi_{\sigma\mu\nu}(\xi,\eta)}\,\varphi_l^{\left(-\infty,-3\delta m\right]}(\Psi_{\nu\omega\theta}(\eta)) \varphi(\kappa_\theta^{-1}\Omega_\eta\Phi(\xi,\eta)) \\
    &\ \ \ \ \ \times\varphi_{r_0}^{\left[-m,-3\delta m\right]}(\textbf{p}(\xi,\eta))\,\varphi_r^{\left[-m,0\right]}(\nabla_\xi\Phi(\xi,\eta))\,\partial_s \widehat{F_\cdot}(\xi-\eta)\,g_q(\eta,s)\,d\eta ds, \\
    P_k\mathcal{B}_{m,l,r_0,r}^{\parallel\,(2,NC,c_3,\cdot\,)}&\triangleq    
    \int q_m(s) \int_{\mathbb{R}^2} e^{is\textbf{p}(\xi,\eta)}\frac{\varphi_l(\Phi_{\sigma\mu\nu}(\xi,\eta))}{\textbf{p}(\xi,\eta)\cdot\Phi_{\sigma\mu\nu}(\xi,\eta)}\,\varphi_l^{\left(-\infty,-3\delta m\right]}(\Psi_{\nu\omega\theta}(\eta)) \varphi(\kappa_\theta^{-1}\Omega_\eta\Phi(\xi,\eta)) \\
    &\ \ \ \ \ \times\varphi_{r_0}^{\left[-m,-3\delta m\right]}(\textbf{p}(\xi,\eta))\,\varphi_r^{\left[-m,0\right]}(\nabla_\xi\Phi(\xi,\eta))\,\widehat{F_\cdot}(\xi-\eta)\,\partial_s g_q(\eta,s)\,d\eta ds,
\end{align*}
where $F_\cdot(\xi-\eta)=F_{NC}(\xi-\eta),F_C(\xi-\eta)\mbox{ or }F_{LO}(\xi-\eta)$.
Note that $\left|P_k\mathcal{B}_{m,l,r_0,r}^{\parallel\,(2,NC,c_2)}\right|\ge\left|P_k\mathcal{B}_{m,l,r_0,r}^{\parallel\,(2,NC,c_1)}\right|$, so we only need to consider $P_k\mathcal{B}_{m,l,r_0,r}^{\parallel\,(2,NC,c_2)}$ and $P_k\mathcal{B}_{m,l,r_0,r}^{\parallel\,(2,NC,c_3)}$. If $F_\cdot=F_{NC}$, then as in (\ref{5.50}), we still have that fix $\eta$, we have $\left|E_\xi\right|\lesssim 2^{r_0}\cdot \kappa_\theta$; fix $\xi$, we have $\left|E_\eta\right|\lesssim 2^{l+0.4\delta^2 m}\cdot \kappa_\theta$. Then, by Schur's test, (\ref{5.31}) and (\ref{5.49}), we get
\begin{align*}
    \left\|P_k\mathcal{B}_{m,l,r_0,r}^{\parallel\,(2,NC,c_2,NC)}\right\|_{L^2}&\lesssim 2^{m-l}\,2^{\frac{l}{2}+\frac{r_0}{2}+0.2\delta^2 m}\,\kappa_\theta\,\left\|g_q\right\|_{L^\infty}\,\left\|F_{NC}\right\|_{L^2}\lesssim 2^{-m},
\end{align*}
and
\begin{align*}
    \left\|P_k\mathcal{B}_{m,l,r_0,r}^{\parallel\,(2,NC,c_3,NC)}\right\|_{L^2}&\lesssim 2^{m-r_0-l}\,2^{\frac{l}{2}+\frac{r_0}{2}+0.2\delta^2 m}\,\kappa_\theta\,\left\|g_q\right\|_{L^\infty}\,\left\|F_{NC}\right\|_{L^2}\lesssim 2^{-m},\ \ \ \ \ \left(\mbox{Use }\frac{r_0}{2}-q\ge -\frac{m}{2}\right)
\end{align*}
which is acceptable as in (\ref{5.32}).
If $F_\cdot=F_C$, then $\left\|P_k\mathcal{B}_{m,l,r_0,r}^{\parallel\,(2,NC,c_2,C)}\right\|_{L^2}$ can be controlled as $\left\|P_k\mathcal{B}_{m,l,r_0,r}^{\parallel\,(2,C)}\right\|_{L^2}$, since the bound $\left\|\widehat{F_C}\right\|_{L^\infty}\lesssim 2^{-m+3.3\delta m}$ is similar as the bound $\left\|\partial_s \widehat{f_C}\right\|_{L^\infty}\lesssim 2^{-m+3\delta m}$ in (\ref{5.40}) and $\left\|P_k\mathcal{B}_{m,l,r_0,r}^{\parallel\,(2,NC,c_2,C)}\right\|_{L^2}$ even loses a factor $2^{-r_0}$ than $\left\|P_k\mathcal{B}_{m,l,r_0,r}^{\parallel\,(2,C)}\right\|_{L^2}$. Moreover, as the volume estimates in (\ref{5.50}), we apply Schur's test, (\ref{5.31}) and (\ref{5.49}) to get
\begin{align*}
    \left\|P_k\mathcal{B}_{m,l,r_0,r}^{\parallel\,(2,NC,c_3,C)}\right\|_{L^2}&\lesssim 2^{m-r_0-l}\,2^{\frac{l}{2}+\frac{r_0}{2}+0.2\delta^2 m}\,\kappa_\theta\,\left\|\partial_s g_q\right\|_{L^\infty}\,\left\|\widehat{F_{C}}\right\|_{L^\infty}\,\left|E_\eta\right|^{1/2}\lesssim 2^{-m+10\delta m},
\end{align*}
which is acceptable as in (\ref{5.32}). If $F_\cdot=F_{LO}$, then since $k_1\ll 1$ and $k,k_2\sim 1$ (due to $b_\sigma-b_\mu-b_\nu\neq 0$), we have that fix $\xi$, $\left|E_\eta\right|\lesssim 2^{k_1}\cdot \kappa_\theta$; fix $\eta$, $\left|E_\xi\right|\lesssim 2^{k_1}\cdot \kappa_\theta$. Then, if $k_1\ge 2^{-m}$, by Schur's test, (\ref{5.31}) and (\ref{5.49}), we get
\begin{align*}
    \left\|P_k\mathcal{B}_{m,l,r_0,r}^{\parallel\,(2,NC,c_2,LO)}\right\|_{L^2}&\lesssim 2^{m-l}\,2^{k_1}\,\kappa_\theta\,\left\|g_q\right\|_{L^\infty}\,\left\|\widehat{F_{LO}}\right\|_{L^\infty}\,\left|E_\eta\right|^{1/2}\lesssim 2^{-m+9.1\delta m},
\end{align*}
and
\begin{align*}
    \left\|P_k\mathcal{B}_{m,l,r_0,r}^{\parallel\,(2,NC,c_3,LO)}\right\|_{L^2}&\lesssim 2^{m-r_0-l}\,2^{k_1}\,\kappa_\theta\,\left\|\partial_s g_q\right\|_{L^\infty}\,\left\|\widehat{F_{LO}}\right\|_{L^\infty}\,\left|E_\eta\right|^{1/2}\lesssim 2^{-m+11.1\delta m},
\end{align*}
which is acceptable as in (\ref{5.32}). On the other hand, if $k_1\le 2^{-m}$, then we can still apply Schur's test to get an acceptable contribution as in (\ref{5.32}) exactly like before.\par
To sum up, we have already proved that if $\min\left\{k,k_1,k_1\right\}\le -D$ , then we still have $\left\|Q_{jk}\mathcal{B}_{m,l}\right\|_{L^2}\lesssim 2^{-(1-20\delta)j-0.001\delta^2 m}$ as in (\ref{5.32}).\par
\vspace{2em}
Thus, the proof of subsection 5.4.1 is now complete. \par
\vspace{3em}
\subsubsection{Contribution of $f_{NC\omega}^\nu$}
\ \par
We now show that 
\begin{align*}
    2^{(1-20\delta)j}\left\|Q_{jk}\mathcal{B}_{m,l}\left[P_{k_1}f^\mu,P_{k_2}f^\nu_{NCw}\right]\right\|_{L^2}\lesssim 2^{-0.001\delta^2 m}.\tag{5.51}\label{5.51}
\end{align*}
We define $f^\mu_{j_1,k_1}$ and $f^\mu_{j_1,k_1,n_1}$ as before. We first do the following decomposition
\begin{align*}
    &\mathcal{F}\mathcal{B}_{m,l}\left[f^\mu_{j_1,k_1},f^\nu_{NCw}\right](\xi)=\mathcal{F}\mathcal{B}^{Hi}_{m,l}\left[f^\mu_{j_1,k_1},f^\nu_{NCw}\right](\xi)+\mathcal{F}\mathcal{B}^{Lo}_{m,l}\left[f^\mu_{j_1,k_1},f^\nu_{NCw}\right](\xi) \\
    &\mathcal{F}\mathcal{B}^{Hi}_{m,l}\left[f^\mu_{j_1,k_1},f^\nu_{NCw}\right](\xi)\triangleq \int_{\mathbb{R}} q_m(s) \int_{\mathbb{R}^2} e^{is\Phi(\xi,\eta)} \Tilde{\varphi}_l(\Phi(\xi,\eta))\varphi_{\ge -\delta^2 m}(\nabla_\eta\Phi(\xi,\eta)) \\
    &\ \ \ \ \ \times\widehat{f^\mu_{j_1,k_1}}(\xi-\eta,s)\widehat{f^\nu_{NCw}}(\eta,s)\,d\eta ds, \\
    &\mathcal{F}\mathcal{B}^{Lo}_{m,l}\left[f^\mu_{j_1,k_1},f^\nu_{NCw}\right](\xi)\triangleq \int_{\mathbb{R}} q_m(s) \int_{\mathbb{R}^2} e^{is\Phi(\xi,\eta)} \Tilde{\varphi}_l(\Phi(\xi,\eta))\varphi_{< -\delta^2 m}(\nabla_\eta\Phi(\xi,\eta)) \\
    &\ \ \ \ \ \times\widehat{f^\mu_{j_1,k_1}}(\xi-\eta,s)\widehat{f^\nu_{NCw}}(\eta,s)\,d\eta ds.\tag{5.52}\label{5.52}
\end{align*}\par
We first deal with $\mathcal{F}\mathcal{B}^{Lo}_{m,l}\left[f^\mu_{j_1,k_1},f^\nu_{NCw}\right]$. If $j_1\le (1-\delta^2)m$, then we plug (\ref{3.26}) into (\ref{5.52}) and get
\begin{align*}
    &\mathcal{F}\mathcal{B}^{Lo}_{m,l}\left[f^\mu_{j_1,k_1},f^\nu_{NCw}\right]\triangleq \int_{\mathbb{R}} q_m(s) \int_{\mathbb{R}^2\times\mathbb{R}^2} e^{is\left[\Phi_{\sigma\mu\nu}(\xi,\eta)+\Phi_{\nu\nu_2\nu_3}(\eta,\theta)\right]} \Tilde{\varphi}_l(\Phi(\xi,\eta))\varphi_{< -\delta^2 m}(\nabla_\eta\Phi(\xi,\eta)) \\
    &\ \,\times\varphi_{\le -3\delta m-4\delta^2 m}(\Phi_{\nu\nu_2\nu_3}(\eta,\theta))\,\varphi_{\gtrsim 1}(\nabla_\eta\Phi_{\nu\nu_2\nu_3}(\eta,\theta))\,\widehat{f^\mu_{j_1,k_1}}(\xi-\eta,s)\,\widehat{g_{\widebar{j_1},\widebar{k_1}}}(\eta-\theta,s)\,\widehat{g_{\widebar{j_2},\widebar{k_2}}}(\theta,s)d\theta d\eta ds,
\end{align*}
where $\widebar{j_1}\le (1-\delta)m$. Now note that we have $\left|\nabla_\eta\left(\Phi_{\sigma\mu\nu}(\xi,\eta)+\Phi_{\nu\nu_2\nu_3}(\eta,\theta)\right)\right|\gtrsim 1$. So, we can integrate by parts in $\eta$ to get enough control in this case. From now on, in this subsection 5.4.2, we may just write $\widehat{f_1}$ instead of $\widehat{f^\mu_{j_1,k_1}}$ or $\widehat{f^\mu_{j_1,k_1,n_1}}$ for simplicity as before. On the other hand, if $j_1\ge (1-\delta^2) m$, then we apply Proposition 6.10 (a), (\ref{3.2}) and (\ref{3.25}) to (\ref{5.52}) to get
\begin{align*}
    \left\|\mathcal{F}\mathcal{B}^{Lo}_{m,l}\right\|_{L^2}&\lesssim 2^{m-l}\,2^{\frac{l}{2}-\frac{n_1}{2}}\,\left\|\sup_\theta\left|\widehat{f_1}(r\theta)\right|\right\|_{L^2}\,\left\|f^\nu_{NCw}\right\|_{L^2}\lesssim 2^{-1.1m+32\delta m}, \tag{5.53}\label{5.53}
\end{align*}
which is acceptable as in (\ref{5.51}).\par
Next, let's consider $\mathcal{F}\mathcal{B}^{Hi}_{m,l}\left[f^\mu_{j_1,k_1},f^\nu_{NCw}\right]$. We may assume $j_1\le (1-\delta^2)m$ here, since otherwise we can proceed our proof as in (\ref{5.53}).\par
\textbf{Case 1.} $k_1\ge -D$ \par
In this case, we denote $\kappa_\theta\triangleq 2^{-m/2+\delta^2 m}$ as before. We first need to decompose
\begin{align*}
    &\mathcal{F}\mathcal{B}^{Hi}_{m,l}\left[f^\mu_{j_1,k_1},f^\nu_{NCw}\right]=\mathcal{F}\mathcal{B}^{Hi,\parallel}_{m,l}\left[f^\mu_{j_1,k_1},f^\nu_{NCw}\right]+\mathcal{F}\mathcal{B}^{Hi,\bot}_{m,l}\left[f^\mu_{j_1,k_1},f^\nu_{NCw}\right], \\
    &\mathcal{F}\mathcal{B}^{Hi,\parallel}_{m,l}\left[f^\mu_{j_1,k_1},f^\nu_{NCw}\right]\triangleq \int_{\mathbb{R}} q_m(s) \int_{\mathbb{R}^2} e^{is\Phi(\xi,\eta)} \Tilde{\varphi}_l(\Phi(\xi,\eta))\varphi_{\ge -\delta^2 m}(\nabla_\eta\Phi(\xi,\eta)) \\
    &\ \ \ \ \ \times\varphi(\kappa_\theta^{-1}\Omega_\eta\Phi(\xi,\eta))\,\widehat{f^\mu_{j_1,k_1}}(\xi-\eta,s)\widehat{f^\nu_{NCw}}(\eta,s)\,d\eta ds, \\
    &\mathcal{F}\mathcal{B}^{Hi,\bot}_{m,l}\left[f^\mu_{j_1,k_1},f^\nu_{NCw}\right]\triangleq \int_{\mathbb{R}} q_m(s) \int_{\mathbb{R}^2} e^{is\Phi(\xi,\eta)} \Tilde{\varphi}_l(\Phi(\xi,\eta))\varphi_{\ge -\delta^2 m}(\nabla_\eta\Phi(\xi,\eta)) \\
    &\ \ \ \ \ \times\left(1-\varphi(\kappa_\theta^{-1}\Omega_\eta\Phi(\xi,\eta))\right)\,\widehat{f^\mu_{j_1,k_1}}(\xi-\eta,s)\widehat{f^\nu_{NCw}}(\eta,s)\,d\eta ds. 
\end{align*}
By Lemma 3.2 and (\ref{5.13}), we can get $\left\|\mathcal{F}\mathcal{B}^{Hi,\bot}_{m,l}\right\|_{L^2}\lesssim 2^{-4m}$. Thus, we only need to consider the term $\mathcal{F}\mathcal{B}^{Hi,\parallel}_{m,l}$. In fact, we need to further decompose
\begin{align*}
    &\mathcal{F}\mathcal{B}^{Hi,\parallel}_{m,l}\left[f^\mu_{j_1,k_1},f^\nu_{NCw}\right]=\sum_{-m\le r\le 0} \mathcal{F}\mathcal{B}^{Hi,\parallel}_{m,l,r}\left[f^\mu_{j_1,k_1},f^\nu_{NCw}\right], \\
    &\mathcal{F}\mathcal{B}^{Hi,\parallel}_{m,l,r}\left[f^\mu_{j_1,k_1},f^\nu_{NCw}\right](\xi)\triangleq \int_{\mathbb{R}} q_m(s) \int_{\mathbb{R}^2} e^{is\Phi(\xi,\eta)} \Tilde{\varphi}_l(\Phi(\xi,\eta))\varphi^{\left[-m,0\right]}_r(\nabla_\xi\Phi_{\sigma\mu\nu}(\xi,\eta)) \\
    &\ \ \ \ \ \times\varphi_{\ge -\delta^2 m}(\nabla_\eta\Phi_{\sigma\mu\nu}(\xi,\eta))\widehat{f^\mu_{j_1,k_1}}(\xi-\eta,s)\widehat{f^\nu_{NCw}}(\eta,s)\,d\eta ds. \\
\end{align*}\par
If $r=0$, then $\left|\nabla_\xi\Phi_{\sigma\mu\nu}\right|\gtrsim 1$, which gives that fix $\xi$, $\left|E_\eta\right|\lesssim 2^{l+\delta^2 m}\cdot \kappa_\theta$ and fix $\eta$, $\left|E_\xi\right|\lesssim 2^l\cdot \kappa_\theta$. Then, by Schur's test, (\ref{3.4}) and (\ref{3.25}), we get 
\begin{align*}
    \left\|\mathcal{F}\mathcal{B}^{Hi,\parallel}_{m,l,0}\right\|_{L^2}&\lesssim 2^{m-l}\,2^{l+0.5\delta^2 m}\,\kappa_\theta\,\left\|\widehat{f_1}\right\|_{L^\infty}\,\left\|\widehat{f^\nu_{NCw}}\right\|_{L^2}\lesssim 2^{-1.1m+13.5\delta m},
\end{align*}
which is acceptable as in (\ref{5.51}). Similarly, if $r=-m$, then $\left|\nabla_\xi\Phi_{\sigma\mu\nu}\right|\lesssim 2^{-m}$, which gives that fix $\xi$, $\left|E_\eta\right|\lesssim 2^{l+\delta^2 m}\cdot \kappa_\theta$ and fix $\eta$, $\left|E_\xi\right|\lesssim 2^{-m}\cdot \kappa_\theta$. Then, by Schur's test, (\ref{3.4}) and (\ref{3.25}), we get 
\begin{align*}
    \left\|\mathcal{F}\mathcal{B}^{Hi,\parallel}_{m,l,-m}\right\|_{L^2}&\lesssim 2^{m-l}\,2^{\frac{l}{2}+0.5\delta^2 m-\frac{m}{2}}\,\kappa_\theta\,\left\|\widehat{f_1}\right\|_{L^\infty}\,\left\|\widehat{f^\nu_{NCw}}\right\|_{L^2}\lesssim 2^{-1.1m+13.5\delta m},
\end{align*}
which is also acceptable as in (\ref{5.51}). \par
Next, when $-m<r<0$, we need to divide into several subcases.\par
\ \ \textbf{Case 1.1.}\  $m+r+100\le j\le m$,\ \ \  $j_1\le (1-\delta^2)j$ \par
Once again, this subcase can be done by integration by parts in $\xi$ like Case 1.1 in the subsection 5.4.1 before.\par
\ \ \textbf{Case 1.2.}\  $m+r+100\le j\le m$,\ \ \  $j_1\ge (1-\delta^2)j$ \par
In this subcase, we use Proposition 6.10 (a), (\ref{3.2}) and (\ref{3.25}) to get 
\begin{align*}
    \left\|\mathcal{F}\mathcal{B}^{Hi}_{m,l}\right\|_{L^2}&\lesssim 2^{m-l}\,2^{\frac{l}{2}-\frac{n_1}{2}}\,\left\|\sup_\theta\left|\widehat{f}(r\theta)\right|\right\|_{L^2}\,\left\|\widehat{f^\nu_{NCw}}\right\|_{L^2} \lesssim 2^{-(1-20\delta)j-0.1m+12.4\delta m},
\end{align*}
which is acceptable as in (\ref{5.51}). \par
\ \ \textbf{Case 1.3.}\  $j\le m+r+100$,\ \ \  $2r\le l$ \par
In this subcase, we note that fix $\xi$, $\left|E_\eta\right|\lesssim 2^{l+\delta^2 m}\cdot \kappa_\theta$; fix $\eta$, $\left|E_\xi\right|\lesssim 2^r\cdot \kappa_\theta$. Then, by Schur's test, (\ref{3.4}) and (\ref{3.25}), we get
\begin{align*}
    \left\|\mathcal{F}\mathcal{B}^{Hi,\parallel}_{m,l,r}\right\|_{L^2}&\lesssim 2^{m-l}\,2^{\frac{l}{2}+0.5\delta^2 m+\frac{r}{2}}\,\kappa_\theta\,\left\|\widehat{f_1}\right\|_{L^\infty}\,\left\|\widehat{f^\nu_{NCw}}\right\|_{L^2}\lesssim 2^{-(1-20\delta)j-0.1m-\delta m},
\end{align*}
which is acceptable as in (\ref{5.51}). \par
\ \ \textbf{Case 1.4.}\  $j\le m+r+100$,\ \ \  $2r\ge l$ \par
The proof in this subcase is quite similar as the one in Case 1.3 just above. Note that fix $\xi$, $\left|E_\eta\right|\lesssim 2^{l+\delta^2 m}\cdot \kappa_\theta$; fix $\eta$, $\left|E_\xi\right|\lesssim 2^{l-r}\cdot \kappa_\theta$. Then, by Schur's test, (\ref{3.4}) and (\ref{3.25}), we get
\begin{align*}
    \left\|\mathcal{F}\mathcal{B}^{Hi,\parallel}_{m,l,r}\right\|_{L^2}&\lesssim 2^{m-l}\,2^{\frac{l}{2}+0.5\delta^2 m+\frac{l-r}{2}}\,\kappa_\theta\,2^{2\delta m}\,2^{-1.6m+11.4\delta m}\lesssim 2^{-(1-20\delta)j-0.1m-6\delta m},
\end{align*}
which is acceptable as in (\ref{5.51}). \par
\vspace{3em}
\textbf{Case 2.} $k_1\le -D$ \par
Now, due to $b_\sigma-b_\mu-b_\mu\neq 0$, we must have that $k,k_2\sim 0$, namely $\left|\xi\right|,\left|\eta\right|\sim 1$. By the argument at (\ref{5.47}), we yield that $\left|\nabla_\xi\Phi_{\sigma\mu\nu}(\xi,\eta)\right|\gtrsim 1$ in this case. \par
\ \ \textbf{Case 2.1.}\  $k_1\ge j_1-m+\delta^2 m$ and $k_1\ge -0.4m+\delta^2 m$ \par
In this subcase, we denote $\kappa_\theta\triangleq 2^{-m/2+\delta^2 m}$ as before. We again decompose
\begin{align*}
    &\mathcal{F}\mathcal{B}^{Hi}_{m,l}\left[f^\mu_{j_1,k_1},f^\nu_{NCw}\right]=\mathcal{F}\mathcal{B}^{Hi,\parallel}_{m,l}\left[f^\mu_{j_1,k_1},f^\nu_{NCw}\right]+\mathcal{F}\mathcal{B}^{Hi,\bot}_{m,l}\left[f^\mu_{j_1,k_1},f^\nu_{NCw}\right], \\
    &\mathcal{F}\mathcal{B}^{Hi,\parallel}_{m,l}\left[f^\mu_{j_1,k_1},f^\nu_{NCw}\right]\triangleq \int_{\mathbb{R}} q_m(s) \int_{\mathbb{R}^2} e^{is\Phi(\xi,\eta)} \Tilde{\varphi}_l(\Phi(\xi,\eta))\varphi_{\ge -\delta^2 m}(\nabla_\eta\Phi(\xi,\eta)) \\
    &\ \ \ \ \ \times\varphi(\kappa_\theta^{-1}\Omega_\eta\Phi(\xi,\eta))\,\widehat{f^\mu_{j_1,k_1}}(\xi-\eta,s)\widehat{f^\nu_{NCw}}(\eta,s)\,d\eta ds, \\
    &\mathcal{F}\mathcal{B}^{Hi,\bot}_{m,l}\left[f^\mu_{j_1,k_1},f^\nu_{NCw}\right]\triangleq \int_{\mathbb{R}} q_m(s) \int_{\mathbb{R}^2} e^{is\Phi(\xi,\eta)} \Tilde{\varphi}_l(\Phi(\xi,\eta))\varphi_{\ge -\delta^2 m}(\nabla_\eta\Phi(\xi,\eta)) \\
    &\ \ \ \ \ \times\left(1-\varphi(\kappa_\theta^{-1}\Omega_\eta\Phi(\xi,\eta))\right)\,\widehat{f^\mu_{j_1,k_1}}(\xi-\eta,s)\widehat{f^\nu_{NCw}}(\eta,s)\,d\eta ds. 
\end{align*}
By Lemma 3.2 and (\ref{5.13}), we can get $\left\|\mathcal{F}\mathcal{B}^{Hi,\bot}_{m,l}\right\|_{L^2}\lesssim 2^{-4m}$. Thus, we only need to consider the term $\mathcal{F}\mathcal{B}^{Hi,\parallel}_{m,l}$. Now we have that fix $\xi$, $\left|E_\eta\right|\lesssim 2^{l+\delta^2 m}\cdot \kappa_\theta$ and fix $\eta$, $\left|E_\xi\right|\lesssim 2^l\cdot \kappa_\theta$. Then, by Schur's test, (\ref{3.4}) and (\ref{3.25}), we get 
\begin{align*}
    \left\|\mathcal{F}\mathcal{B}^{Hi,\parallel}_{m,l}\right\|_{L^2}&\lesssim 2^{m-l}\,2^{l+0.5\delta^2 m}\,\kappa_\theta\,\left\|\widehat{f_1}\right\|_{L^\infty}\,\left\|\widehat{f^\nu_{NCw}}\right\|_{L^2}\lesssim 2^{-1.1m+25\delta m},
\end{align*}
which is acceptable as in (\ref{5.51}).\par
\ \ \textbf{Case 2.2.}\  $k_1\le j_1-m+\delta^2 m$ \par
In this subcase, in view of Proposition 6.9 (b), we observe that fix $\xi$, $\left|E_\eta\right|\lesssim 2^{l+\delta^2 m}\cdot 2^{k_1}$ and fix $\eta$, $\left|E_\xi\right|\lesssim 2^l\cdot 2^{k_1}$. Then, by Schur's test, (\ref{3.4}) and (\ref{3.25}), we get
\begin{align*}
    \left\|\mathcal{F}\mathcal{B}^{Hi}_{m,l}\right\|_{L^2}&\lesssim 2^{m-l}\,2^{l+0.5\delta^2 m+k_1}\,\left\|\widehat{f_1}\right\|_{L^\infty}\,\left\|\widehat{f^\nu_{NCw}}\right\|_{L^2}\lesssim 2^{-1.1m+35\delta m},
\end{align*}
which is acceptable as in (\ref{5.51}).\par
\ \ \textbf{Case 2.3.}\  $k_1\le -0.4m+\delta^2 m$ \par
We first note that $k_1\le -0.4m+\delta^2 m$ immediately implies that $j_1\ge 0.4m-\delta^2 m$. Then, the proof in this subcase is exactly same the one in Case 2.2 just above. Using Schur's test, we get 
\begin{align*}
    \left\|\mathcal{F}\mathcal{B}^{Hi}_{m,l}\right\|_{L^2}&\lesssim 2^{m-l}\,2^{l+0.5\delta^2 m+k_1}\,2^{-(\frac{1}{2}-21\delta)j_1-\frac{1}{2}k_1}\,2^{-1.6m+11.4\delta m}\lesssim 2^{-m+19.8\delta m},
\end{align*}
which is acceptable as in (\ref{5.51}).\par
\vspace{2em}
To sum up, we have already proved that $\left\|Q_{jk}\mathcal{B}_{m,l}\right\|_{L^2}\lesssim 2^{-(1-20\delta)j-0.001\delta^2 m}$ as in (\ref{5.32}). The proof of subsection 5.4.2 is now complete.\par
\vspace{3em}
\subsubsection{Contribution of $\partial_s F^\nu$}
\ \par
Now, it remains to show that
\begin{align*}
    2^{(1-20\delta)m}\left\|P_k\mathcal{B}_{m,l}\left[P_{k_1}f^\mu,P_{k_2} \partial_s F^\nu_\alpha\right]\right\|_{L^2}\lesssim 2^{-51\delta^2 m},\ \ \ \alpha\in\left\{C,NC,LO\right\}.\tag{5.54}\label{5.54}
\end{align*}
In fact, this was mainly proved in \cite{y2} as well with some slightly differences. For the sake of completeness, we will rewrite the proof here. \par
We define $f^\mu_{j_1,k_1}$ and $f^\mu_{j_1,k_1,n_1}$ as before and integrate by parts in time to rewrite
\begin{align*}
    &\mathcal{F}\mathcal{B}_{m,l}\left[P_{k_1}f^\mu,P_{k_2}\partial_s F^\nu_\alpha\right]=-B_1\left[P_{k_1}f^\mu,P_{k_2}\partial_s F^\nu_\alpha\right]-iB_2\left[P_{k_1}f^\mu,P_{k_2}\partial_s F^\nu_\alpha\right]-B_3\left[P_{k_1}\partial_s f^\mu,P_{k_2}\partial_s F^\nu_\alpha\right] \\
    &\ \ \ B_1\left[P_{k_1}f^\mu,g\right](\xi)\triangleq\int_{\mathbb{R}} q^\prime_m(s) \int_{\mathbb{R}^2} e^{is\Phi(\xi,\eta)}\widetilde{\varphi_l}(\Phi(\xi,\eta))\widehat{P_{k_1}f^\mu}(\xi-\eta,s)\hat{g}(\eta,s)\,d\eta ds, \\
    &\ \ \ B_2\left[P_{k_1}f^\mu,g\right](\xi)\triangleq\int_{\mathbb{R}} q_m(s) \int_{\mathbb{R}^2} e^{is\Phi(\xi,\eta)}\widetilde{\varphi_l}(\Phi(\xi,\eta))\Phi(\xi,\eta)\widehat{P_{k_1}f^\mu}(\xi-\eta,s)\hat{g}(\eta,s)\,d\eta ds, \\
    &\ \ \ B_3\left[P_{k_1}\partial_s f^\mu,g\right](\xi)\triangleq\int_{\mathbb{R}} q_m(s) \int_{\mathbb{R}^2} e^{is\Phi(\xi,\eta)}\widetilde{\varphi_l}(\Phi(\xi,\eta))\partial_s \widehat{P_{k_1}f^\mu}(\xi-\eta,s)\hat{g}(\eta,s)\,d\eta ds. \\
\end{align*}\par
It has already been shown in \cite{y2} that $\left\|B_\beta\left[P_{k_1}f^\mu,P_{k_2}F^\nu_{NC}\right]\right\|_{L^2}\lesssim 2^{-m}$ for $\beta\in\left\{1,2,3\right\}$, which gives an acceptable contribution as in (\ref{5.54}).\par
Now, we consider the contribution of $F^\nu_{LO}$. In view of (\ref{3.27}), we know that $k_2\le -\frac{13}{15}m$. Due to the assumption that $b_\sigma-b_\mu-b_\nu\neq 0$, we must have $k_1\ge -D$. (However, unlike \cite{y2}, in general it's not necessarily that $n_1=0$.) If $\beta\in\left\{1,2\right\}$ and $j_1\ge m/2$, then Young's inequality gives us 
$$\left\|B_\beta\left[f^\mu_{j_1,k_1},P_{k_2}F^\nu_{LO}\right]\right\|_{L^2}\lesssim 2^m\,\sup_s \left\|\widehat{P_{k_2}F_{LO}}(s)\right\|_{L^1}\,\left\|f^\mu_{j_1,k_1}(s)\right\|_{L^2}\lesssim 2^{-4m/5}\,2^{-(\frac{1}{2}-\delta)j_1}\lesssim 2^{-1.025m},$$
which is an acceptable contribution as in (\ref{5.54}). For all other cases, the proofs are exactly same as in \cite{y2}.\par
Next, we consider the contribution of $F^\nu_C$. It turns out that we only need to consider the case
$$j_1\le m-\delta m,\ \ \ k_1\le -\delta m/2,\ \ \ \beta\in\left\{1,2\right\},$$
since all other cases have already been done in \cite{y2}. First, if $j_1\ge m-8.8\delta m$, then using Schur's test and Proposition 6.9 (b), we estimate
\begin{align*}
    2^{(1-20\delta)m}\left\|B_\beta\left[f^\mu_{j_1,k_1},P_{k_2}F^\nu_C\right]\right\|_{L^2}&\lesssim 2^{(1-20\delta)m}\,2^m\,2^l\,2^{2\delta^2 m}\,\sup_s\left\|\widehat{P_{k_2}F^\nu_C}(s)\right\|_{L^\infty}\,\left\|f^\mu_{j_1,k_1}\right\|_{L^2}\lesssim 2^{-150\delta^2 m}.
\end{align*}
On the other hand, if $j_1\le m-8.8\delta m$ and $k_1\ge -8.8\delta m+10\delta^2 m$, then we use Lemma 3.5 and the last line of (\ref{3.6}) to get
\begin{align*}
    \left\|B_\beta\left[f^\mu_{j_1,k_1},P_{k_2}F^\nu_C\right]\right\|_{L^2}&\lesssim 2^m\left[\sup_{s,\lambda\approx 2^m}\left\|e^{-i\lambda\Lambda_\mu}f^\mu_{j_1,k_1}(s)\right\|_{L^\infty}\left\|P_{k_2}F^\nu_C(s)\right\|_{L^2}\right]\lesssim 2^{-m+13\delta m},
\end{align*}
which gives an acceptable contribution as in (\ref{5.54}). Finally, if $j_1\le m-8.8\delta m$ and $k_1\le -8.8\delta m+10\delta^2 m$, then once again we must have $k_1\ll 1$ and $k,k_2\sim 1$ thanks to the assumption that $b_\sigma-b_\mu-b_\nu\neq 0$. By arguments at (\ref{5.30}) and (\ref{5.47}), we conclude that $\left|\nabla_\xi\Phi\right|\gtrsim 1$ and $\left|\nabla_\eta\Phi\right|\gtrsim 1$, which implies that $\left|E_\eta\right|\lesssim 2^l$. Therefore, we get
$$\left\|P_{k_2}F^\nu_C(s)\right\|_{L^2}\lesssim \left\|P_{k_2}F^\nu_C(s)\right\|_{L^\infty}\,\left|E_\eta\right|^{1/2}\lesssim 2^{-m-2.7\delta m}.$$
Using the bound in the first line of (\ref{3.6}) we can estimate
\begin{align*}
    \left\|B_\beta\left[f^\mu_{j_1,k_1},P_{k_2}F^\nu_C\right]\right\|_{L^2}&\lesssim 2^m\left[\sup_{s,\lambda\approx 2^m}\left\|e^{-i\lambda\Lambda_\mu}f^\mu_{j_1,k_1}(s)\right\|_{L^\infty}\left\|P_{k_2}F^\nu_C(s)\right\|_{L^2}\right]\lesssim 2^{-m+18\delta m},
\end{align*}
which again gives an acceptable contribution as in (\ref{5.54}).\par
\vspace{2em}
The proof of subsection 5.4.3 is now complete.
\vspace{1em}
\section{Elementary Lemmas}
In this section, we collect some important facts about the phase function 
$$\Phi(\xi,\eta)=\Phi_{\sigma\mu\nu}(\xi,\eta)=\Lambda_\sigma(\xi)-\Lambda_\mu(\xi-\eta)-\Lambda_\nu(\eta),$$
where $\sigma,\mu,\nu\in\left\{-d,\dots,-2,-2,1,2,\dots,d\right\}$ and assume that
$$\left|\xi\right|\sim 2^k,\ \ \ \left|\xi-\eta\right|\sim 2^{k_1},\ \ \ \left|\eta\right|\sim 2^{k_2}$$
as before. In the following, we also denote $$\Bar{k}\triangleq\max (k,k_1,k_2,0).$$
\\
\begin{lemma}
There exists a large constant $D_0=D_0(c_\sigma,c_\mu,c_\nu,b_\sigma,b_\mu,b_\nu)>0$ such that if we have $\min (k,k_1,k_2)\le -D_0$, $b_\sigma-b_\mu-b_\nu\neq0$, and $(c_\mu-c_\nu)(c_\mu^2b_\nu-c_\nu^2b_\mu)\ge0$, then there exists a small positive constant $C=C(c_\sigma,c_\mu,c_\nu,b_\sigma,b_\mu,b_\nu,D_0)>0$, such that either $\left|\Phi\right|\ge C$ or $\left|\nabla_\eta\Phi\right|\ge C\cdot 2^{-3\Bar{k}}$.
\end{lemma}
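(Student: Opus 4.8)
The plan is to work entirely with the low-frequency structure: without loss of generality one of the three frequencies, say $\min(k,k_1,k_2)$, is $\le -D_0$. Since $b_\sigma-b_\mu-b_\nu\neq 0$, we cannot have all three of $k,k_1,k_2$ small (the limiting value of $\Phi$ as all frequencies tend to zero is $\pm b_\sigma\mp b_\mu\mp b_\nu$, which by hypothesis is bounded away from zero up to sign), so exactly one of them is $\le -D_0$ and the other two are comparable and bounded below. We split into the three cases according to which frequency is the small one. In each case the strategy is the same: if $|\Phi|$ is not already $\ge C$, then $\Phi$ is nearly a resonance and we must extract a lower bound on $|\nabla_\eta\Phi|$; we do this by examining the explicit formula
$$\nabla_\eta\Phi=\pm\frac{c_\mu^2(\xi-\eta)}{\sqrt{c_\mu^2|\xi-\eta|^2+b_\mu^2}}\mp\frac{c_\nu^2\eta}{\sqrt{c_\nu^2|\eta|^2+b_\nu^2}},$$
and observing that when one of $|\xi-\eta|,|\eta|$ is $\lesssim 2^{-D_0}$ its contribution is $\lesssim 2^{-D_0}$ in size, while the other term has magnitude comparable to the corresponding frequency.

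First I would dispose of the two easy cases $k_1\le -D_0$ and $k_2\le -D_0$. If $k_1\le -D_0$, then $k,k_2\ge -D_0-10$, so $|\eta|\sim 1$ and $\left|\tfrac{c_\nu^2\eta}{\sqrt{c_\nu^2|\eta|^2+b_\nu^2}}\right|\sim |\eta|\sim 1$, whereas $\left|\tfrac{c_\mu^2(\xi-\eta)}{\sqrt{c_\mu^2|\xi-\eta|^2+b_\mu^2}}\right|\lesssim |\xi-\eta|\lesssim 2^{-D_0}$; choosing $D_0$ large enough gives $|\nabla_\eta\Phi|\gtrsim 1\ge C\cdot 2^{-3\bar k}$ since $\bar k\ge 0$. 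The case $k_2\le -D_0$ is symmetric (the roles of the two terms swap), giving again $|\nabla_\eta\Phi|\gtrsim 1$. Neither of these subcases even needs the $|\Phi|\ge C$ alternative, nor the second nondegeneracy condition.

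The genuinely delicate case is $k\le -D_0$, i.e.\ the \emph{output} frequency is small, with $|\xi-\eta|\sim|\eta|\sim 1$. Here $\nabla_\eta\Phi$ can vanish: the two unit-ish vectors $\tfrac{c_\mu^2(\xi-\eta)}{\sqrt{\cdots}}$ and $\tfrac{c_\nu^2\eta}{\sqrt{\cdots}}$ point in nearly opposite directions (since $\xi-\eta\approx -\eta$ when $|\xi|$ is tiny) and may have equal length, so we cannot win from $|\nabla_\eta\Phi|$ alone and must genuinely use the $|\Phi|\ge C$ alternative. The idea is a Taylor expansion in $\xi$ around $\xi=0$: write $\xi-\eta = -\eta + \xi$ and expand, obtaining
$$\Phi(\xi,\eta)=\Lambda_\sigma(\xi)-\Lambda_\mu(\xi-\eta)-\Lambda_\nu(\eta)=\big(b_\sigma-\Lambda_\mu(\eta)-\Lambda_\nu(\eta)\big)+\nabla\Lambda_\mu(\eta)\cdot\xi+O(|\xi|^2),$$
using $\Lambda_\mu(-\eta)=\Lambda_\mu(\eta)$ and $\Lambda_\sigma(\xi)=b_\sigma+O(|\xi|^2)$. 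Thus $\Phi(\xi,\eta)=b_\sigma-\Lambda_\mu(\eta)-\Lambda_\nu(\eta)+O(2^{k})$ on the relevant region. The quantity $G(\eta)\triangleq b_\sigma-\Lambda_\mu(\eta)-\Lambda_\nu(\eta)$ depends only on $|\eta|$, and $|\nabla_\eta\Phi(\xi,\eta)|$ is, up to $O(2^k)$, equal to $|\nabla_\eta\Phi(0,\eta)| = \big|\nabla\Lambda_\mu(\eta)+\nabla\Lambda_\nu(\eta)\big|$ — wait, one must be careful with signs: at $\xi = 0$, $\nabla_\eta\Phi(0,\eta) = \nabla\Lambda_\mu(\eta) - \nabla\Lambda_\nu(\eta)$ (the two gradients with a relative sign coming from $\Lambda_\mu(\xi-\eta)$ evaluated at $\xi=0$). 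The plan is then: suppose $|\Phi|< C$; then $|G(|\eta|)|\lesssim C + 2^{k} \le 2C$, which pins $|\eta|$ to lie in a small neighborhood of the (finitely many) roots $r_*$ of $G$. On this neighborhood I claim $|\nabla\Lambda_\mu(\eta)-\nabla\Lambda_\nu(\eta)|\gtrsim 1$: this is exactly where the second nondegeneracy condition $(c_\mu-c_\nu)(c_\mu^2 b_\nu-c_\nu^2 b_\mu)\ge 0$ enters, preventing a degenerate space-time resonance at $\xi$ near $0$, $\eta\neq 0$ — indeed $\nabla\Lambda_\mu(r)-\nabla\Lambda_\nu(r)=\big(\tfrac{c_\mu^2}{\Lambda_\mu(r)}-\tfrac{c_\nu^2}{\Lambda_\nu(r)}\big)r$, and a short computation (cf.\ (1.7) and \cite{a1}, Section 1.2.5, invoked here) shows this cannot vanish at a root of $G$ under the stated sign condition; alternatively one simply quotes Proposition 6.4 / the discussion around (2.2)–(2.3), which give $|\nabla_\xi\Phi|\gtrsim 1$ and hence by a symmetry of the Taylor picture the desired bound. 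Combining, on the region $|\Phi|<C$ we get $|\nabla_\eta\Phi(\xi,\eta)|\ge |\nabla_\eta\Phi(0,\eta)| - O(2^k) \ge c_1 - O(2^{-D_0}) \ge C$ for $D_0$ large and $C$ small. Since $k\le -D_0<0$ while $\bar k = \max(k,k_1,k_2,0) = 0$ here (as $k_1,k_2\ge -D_0-10$ are the two comparable ones, but they could exceed $0$ — in which case $\bar k = \max(k_1,k_2)$ and we need $|\nabla_\eta\Phi|\ge C 2^{-3\bar k}$, which is weaker, so the bound $|\nabla_\eta\Phi|\gtrsim 1$ still suffices as long as $\bar k\ge 0$, which it is by definition). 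This closes the case. The main obstacle I anticipate is making the claim "$|\nabla\Lambda_\mu(r)-\nabla\Lambda_\nu(r)|\gtrsim 1$ at every root of $G$" fully rigorous with constants uniform in the finitely many roots; this is genuinely the place where the second nondegeneracy hypothesis is indispensable, and I would route it through the already-cited structural results (Proposition 6.4 and Lemma 5.6/5.8 of \cite{a1}) rather than redo the algebra, after first checking that $G$ has no root at $|\eta|$ arbitrarily large (so that the "small neighborhood of roots" is a compact set and the roots are isolated — this follows since $\Lambda_\mu(\eta)+\Lambda_\nu(\eta)\to\infty$).
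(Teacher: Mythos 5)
Your high-level plan — splitting by which of $k,k_1,k_2$ is small, disposing of the cases $k_1\le -D_0$ and $k_2\le -D_0$ by dominance of one term in $\nabla_\eta\Phi$, and then for $k\le -D_0$ Taylor-expanding $\Phi$ around $\xi=0$ to reduce to a one-variable function $G(r)=b_\sigma-\Lambda_\mu(r)-\Lambda_\nu(r)$ — is a genuinely different route from the paper's, which in the hard case $k\le -D_0$ works directly with the orthogonal decomposition $\xi-\eta=\lambda_1\eta+\lambda_2\eta^\perp$ and carries out explicit algebraic case analysis (subcases (b1)--(b4) keyed to the relations among $c_\mu,c_\nu,b_\mu,b_\nu$). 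Your reduction is arguably cleaner conceptually. Unfortunately, the proposal has a gap at exactly the step that carries the entire weight.

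The key claim you need is: if $G(r_*)=0$ then $|\nabla_\eta\Phi(0,\eta)|\gtrsim 1$ for $|\eta|\approx r_*$. First, there is a sign error: at $\xi=0$, $\nabla_\eta\Phi(0,\eta)=\nabla\Lambda_\mu(-\eta)-\nabla\Lambda_\nu(\eta)=-(\nabla\Lambda_\mu(\eta)+\nabla\Lambda_\nu(\eta))$, not $\nabla\Lambda_\mu(\eta)-\nabla\Lambda_\nu(\eta)$ as written. You flag that one must be careful with signs but then settle on the wrong expression, and the "short computation" you allude to (that this cannot vanish under the nondegeneracy hypothesis) is therefore not even set up for the right quantity; the cancellation you need to rule out only occurs when $\mu\cdot\nu<0$, and you don't track this. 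Second, the results you invoke do not supply the bound. Proposition~6.4 gives a joint lower bound on $|\Phi|+|\nabla_\xi\Phi|+|\nabla_\eta\Phi|$, which cannot isolate $|\nabla_\eta\Phi|$ since in this regime $|\nabla_\xi\Phi(0,\eta)|=|\nabla\Lambda_\mu(\eta)|$ is large; (1.7) and the discussion around (2.2)--(2.3) concern the Hessian $\det[\nabla^2_{\eta\eta}\Phi]$ and $\nabla\Psi_{\sigma\mu\nu}(\xi)=\nabla_\xi\Phi(\xi,p(\xi))$ along the space-resonant curve $\eta=p(\xi)$, not the derivative of $G(r)=\Phi(0,\eta)$; and there is no "symmetry of the Taylor picture" that turns $|\nabla_\xi\Phi|\gtrsim 1$ into what you want. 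The tool you actually need is the content of Proposition~6.6(b) (the one-variable function $\Phi(0,\eta)$ is strictly monotone under the second nondegeneracy hypothesis, hence has at most one simple zero), but you did not identify it. Finally, the blanket assertion "$|\nabla_\eta\Phi|\gtrsim 1$ so $2^{-3\bar k}\le 1$ makes the conclusion trivial" glosses over the case $c_\mu=c_\nu$, $\mu\nu<0$ with $\bar k$ large, where $|G'(r)|\sim r^{-2}$ decays (this is precisely why the statement carries the $2^{-3\bar k}$ weight); the bound does still hold since $r^{-2}\ge r^{-3}$, but that verification is absent, and the auxiliary claim $\Lambda_\mu+\Lambda_\nu\to\infty$ used to control roots at infinity is false in this same case. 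As written, the crucial step in the hard case is an unjustified appeal rather than a proof.
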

\begin{proof}
\setlength{\parindent}{2em}
We prove by contradiction, so we assume $\left|\Phi\right|<C$ and $\left|\nabla_\eta\Phi\right|<C\cdot 2^{-3\Bar{k}}$, where $C$ will be selected as small as we wish later on. Now, We divide it into three cases:\\
\noindent$1^\circ: k_2\le-D_0$ \par
In this case, we have two subcases: (a) $k\approx k_1\approx k_2\le-D_0$\ (say $k,k_1\le -D_1$); (b) $k, k_1\gg k_2$\ (say $k, k_1>D_1$). Here, $D_1>0$, and $1\ll D_1\ll D_0$, which will be determined later on.\par
When we are in the case (a), we may only consider the case $b_\sigma>0$,$b_\mu<0$,$b_\nu<0$, since other cases are similar. 
Now, we first let 
$$A\triangleq\displaystyle{\frac{1}{2}\min_{\begin{subarray}{c}
\sigma>0 \\ \mu<0 \\ \nu<0    
\end{subarray}}{\left\{\left|b_\sigma-b_\mu-b_\nu\right|\right\}}>0},$$
and 
\begin{align*}
    \lambda\triangleq\min \left\{\lambda_{\sigma\mu\nu}\ge1:
    \mbox{ if } b_\sigma-b_\mu-b_\nu>0\mbox{, then } b_\sigma-\widetilde{\lambda_{\sigma\mu\nu}}b_\nu-\widetilde{\lambda_{\sigma\mu\nu}}b_\nu>A \mbox{ for all } 1\le\widetilde{\lambda_{\sigma\mu\nu}}\le\lambda_{\sigma\mu\nu};\right.\\
    \left.\mbox{if } b_\sigma-b_\mu-b_\nu<0\mbox{, then } b_\mu+b_\nu-\widetilde{\lambda_{\sigma\mu\nu}}b_\sigma>A \mbox{ for all } 1\le\widetilde{\lambda_{\sigma\mu\nu}}\le\lambda_{\sigma\mu\nu}\right\}.
\end{align*}
WLOG, we may also assume that $\lambda\le1.01$. Then, we set $C=\frac{1}{2}A$, and pick \\
$$D_0=D_0(c_\sigma,c_\mu,c_\nu,b_\sigma,b_\mu,b_\nu), \ \ \ \ D_1=D_1(c_\sigma,c_\mu,c_\nu,b_\sigma,b_\mu,b_\nu)$$ 
large enough (fix $D_1$) such that if $b_\sigma-b_\mu-b_\nu>0$, then 
$$\Phi=\sqrt{c_\sigma^2\left|\xi\right|^2+b_\sigma^2}-\sqrt{c_\mu^2\left|\xi-\eta\right|^2+b_\mu^2}-\sqrt{c_\nu^2\left|\eta\right|^2+b_\nu^2}>b_\sigma-\lambda b_\mu-\lambda b_\nu>C,$$
and if $b_\sigma-b_\mu-b_\nu<0$, then 
\begin{align*}
   \left|\Phi\right|&=\left|\sqrt{c_\sigma^2\left|\xi\right|^2+b_\sigma^2}-\sqrt{c_\mu^2\left|\xi-\eta\right|^2+b_\mu^2}-\sqrt{c_\nu^2\left|\eta\right|^2+b_\nu^2}\right| \\
   &=\sqrt{c_\mu^2\left|\xi-\eta\right|^2+b_\mu^2}+\sqrt{c_\nu^2\left|\eta\right|^2+b_\nu^2}-\sqrt{c_\sigma^2\left|\xi\right|^2+b_\sigma^2}>b_\mu+b_\nu-\lambda b_\sigma>C,
\end{align*}
which is a contradiction.\par

When we are in the case (b), we first note that 
$$\nabla_\eta\Phi=\mp\frac{c_\mu^2(\xi-\eta)}{\sqrt{c_\mu^2\left|\xi-\eta\right|^2+b_\mu^2}}\pm\frac{c_\nu^2\eta}{\sqrt{c_\nu^2\left|\eta\right|^2+b_\nu^2}}.$$
By adjusting the value of $D_1$ and $D_0$ (they depend on $c_\mu, b_\mu, c_\nu, b_\nu$ and the small constant $C$ in the result inequality in this lemma), we could have
$$\left|\frac{c_\nu^2\eta}{\sqrt{c_\nu^2\left|\eta\right|^2+b_\nu^2}}\right|\le\frac{1}{2}\left|\frac{c_\mu^2(\xi-\eta)}{\sqrt{c_\mu^2\left|\xi-\eta\right|^2+b_\mu^2}}\right|.$$
Now, we prove by contradiction as said before, so we assume $\left|\nabla_\eta\Phi\right|<C\cdot 2^{-3\Bar{k}}$. Note that
$$\left|\nabla_\eta\Phi\right|\sim
\begin{cases}
   c_\mu\left|\xi-\eta\right|, &\mbox{if}\left|\xi-\eta\right|\mbox{ is small;} \\
   c_\mu, &\mbox{if}\left|\xi-\eta\right|\mbox{ is large;}
\end{cases},$$
which is already a contradiction if we select $C$ small enough such that $C\le 2^{-D_0}\ll 2^{-D_1}$. Note that at this moment, $C$ still only depends on $c_\sigma,c_\mu,c_\nu,b_\sigma,b_\mu,b_\nu$.
\par
\noindent$2^\circ: k_1\le-D_0$ \par
This case can be done exactly as in $1^\circ$. \par
\noindent$3^\circ: k\le-D_0$ \par
Similarly, we will have two subcases: (a) $k\approx k_1\approx k_2\le-D_0$;\ (say $k_1,k_2\le -D_1$) (b) $k_1, k_2\gg k$\ (say $k_1, k_2>D_1$). Here, $D_1>0$, and $1\ll D_1\ll D_0$, which will be determined later on.\par
The subcase (a) can be done exactly as in $1^\circ$.\par
The subcase (b) is a little bit complicated to be dealt with. We first assume that $\mu\cdot \nu>0$. Then, we have four subcases: (b1) $c_\mu=c_\nu$ and $b_\mu=b_\nu$; (b2) $c_\mu\neq c_\nu$ and $c_\mu^2 b_\nu-c_\nu^2 b_\mu=0$; (b3) $c_\mu=c_\nu$ and $c_\mu^2 b_\nu-c_\nu^2 b_\mu\neq 0$; (b4) $(c_\mu-c_\nu)(c_\mu^2b_\nu-c_\nu^2b_\mu)>0$.\par
When we are in the case (b1), we may WLOG assume that $c_\mu=c_\nu=1$. Then, from $\left|\nabla_\eta\Phi\right|\ll2^{-3\Bar{k}}$, we can deduce that $\left|\eta-\rho\,\xi\right|\lesssim\min\left(1,2^{3k_2}\right)$, where $\displaystyle{\rho=\frac{b_\nu}{b_\mu+b_\nu}=\frac{1}{2}}$. For example, let's consider the case $\mu,\nu>0$. In fact, if we let 
$$\boldsymbol{g}(\boldsymbol{x})\triangleq\frac{\boldsymbol{x}}{\sqrt{\left|\boldsymbol{x}\right|^2+b_\mu^2}},\quad \alpha\triangleq\frac{b_\nu}{b_\mu}=1,\quad \Tilde{\eta}\triangleq\frac{\eta}{\alpha}=\eta,$$
then we have $\left|\eta-\rho\,\xi\right|=\left|\xi-\eta-\Tilde{\eta}\right|$, and $\left|\nabla_\eta\Phi\right|=\left|\boldsymbol{g}\left(\xi-\eta\right)-\boldsymbol{g}\left(\Tilde{\eta}\right)\right|$. Therefore, it suffices to find the upper bound of $\left\|J(\boldsymbol{g}^{-1})\right\|_{L^\infty}$. In fact, we have
$$\boldsymbol{x}=\boldsymbol{g}^{-1}(\boldsymbol{y})=\frac{b_\mu\cdot \boldsymbol{y}}{\sqrt{1-\left|\boldsymbol{y}\right|^2}}$$
and thus have 
$$\displaystyle{\left\|J(\boldsymbol{g}^{-1})\right\|_{L^\infty}\le\frac{1}{\left(1-\left|\boldsymbol{y}\right|^2\right)^{\sfrac{3}{2}}}\lesssim_{c_\sigma,c_\mu,c_\nu,b_\sigma,b_\mu,b_\nu} \min\left(2^{3\Bar{k}},2^{3k_2}\right)}.$$
Then, we apply intermediate value theorem to obtain that
\begin{align*}
   \left|\eta-\rho\,\xi\right|&=\left|\xi-\eta-\Tilde{\eta}\right|=\left|\boldsymbol{g}^{-1}(\boldsymbol{g}(\xi-\eta))-\boldsymbol{g}^{-1}(\boldsymbol{g}(\Tilde{\eta}))\right|\le\left\|J(\boldsymbol{g}^{-1}(\boldsymbol{\theta}))\right\|_{L^\infty}\cdot\left|\boldsymbol{g}(\xi-\eta)-\boldsymbol{g}(\Tilde{\eta})\right| \\
   &\le C_1(c_\sigma,c_\mu,c_\nu,b_\sigma,b_\mu,b_\nu) \min\left(2^{3\Bar{k}},2^{3k_2}\right)\cdot \left|\nabla_\eta\Phi\right|   \\
   &\le C_1(c_\sigma,c_\mu,c_\nu,b_\sigma,b_\mu,b_\nu)\cdot C\cdot \min\left(1,2^{3k_2}\right).
\end{align*}
Plug in $\rho=\frac{1}{2}$, and we have $\left|\eta-\frac{1}{2}\xi\right|<C_1\cdot C \cdot \min\left(1,2^{3k_2}\right)$, where $C_1\cdot C>0$ could be large. Now, if $k_1,k_2\le0$, then we have
$$\left|\eta\right|<C_1\cdot C \cdot 2^{3k_2}+\frac{1}{2}\left|\xi\right|.$$
Since $\left|\xi\right|\sim 2^k\ll 2^{3k_2}$, $\left|\eta\right|\sim 2^{k_2}$ and $C$ can be selected very small if needed, the above inequality is a contradiction. Note that at this moment, $C$ still only depends on $c_\sigma,c_\mu,c_\nu,b_\sigma,b_\mu,b_\nu$. On the other hand, if $k_1,k_2\ge 0$, then we have
$$\left|\eta\right|<C_1\cdot C +\frac{1}{2}\left|\xi\right|,$$
and we can similarly get a contradiction by taking $C=C(c_\sigma,c_\mu,c_\nu,b_\sigma,b_\mu,b_\nu)$ small enough.
\par
Now let's consider the case (b4) first. We first assume that $c_\mu-c_\nu>0$ and $c_\mu^2 b_\nu-c_\nu^2 b_\mu>0$. Therefore, there exists $\varepsilon_1>0$ and $\varepsilon_3>0$ very small, such that $c_\mu^2 \lambda_1^2-c_\nu^2 \lambda_3^2>0$ for all $1-\varepsilon_1<\lambda_1<1+\varepsilon_1$ and $1-\varepsilon_3<\lambda_3<1+\varepsilon_3$. Since $D_0\gg D_1$, we can decompose $\xi-\eta=\lambda_1\eta+\lambda_2\eta^\bot$ and $\left|\xi-\eta\right|=\lambda_3\left|\eta\right|$, where $1-\varepsilon_1<\lambda_1<1+\varepsilon_1$ and $1-\varepsilon_3<\lambda_3<1+\varepsilon_3$. Now, from $\left|\nabla_\eta\Phi\right|<C\cdot 2^{-3\Bar{k}}$, we can get that
$$\left|\frac{c_\mu^2 \lambda_1 \eta}{\sqrt{c_\mu^2 \lambda_3^2 \left|\eta\right|^2+b_\mu^2}}+\frac{c_\mu^2 \lambda_2 \eta^\bot}{\sqrt{c_\mu^2 \lambda_3^2 \left|\eta\right|^2+b_\mu^2}}-\frac{c_\nu^2\eta}{\sqrt{c_\nu^2 \left|\eta\right|^2+b_\nu^2}}\right|^2<C^2\cdot 2^{-6\Bar{k}}.$$
By Pythagorean theorem, it is equivalent to 
$$\left|\frac{c_\mu^2 \lambda_1}{\sqrt{c_\mu^2 \lambda_3^2 \left|\eta\right|^2+b_\mu^2}}-\frac{c_\nu^2}{\sqrt{c_\nu^2 \left|\eta\right|^2+b_\nu^2}}\right|^2\left|\eta\right|^2+\frac{c_\mu^4 \lambda_2^2}{\sqrt{c_\mu^2 \lambda_3^2 \left|\eta\right|^2+b_\mu^2}}\left|\eta^\bot\right|^2<C^2\cdot 2^{-6\Bar{k}}.$$
Neglect the second term on LHS, and we get that
\begin{align}
   \displaystyle{\left|\frac{c_\mu^2 \lambda_1}{\sqrt{c_\mu^2 \lambda_3^2 \left|\eta\right|^2+b_\mu^2}}-\frac{c_\nu^2}{\sqrt{c_\nu^2 \left|\eta\right|^2+b_\nu^2}}\right|\left|\eta\right|<C \cdot 2^{-3\Bar{k}}}. \tag{6.1}\label{6.1} 
\end{align} \par
\vspace{0.8em}
\noindent If $k_2\ge 0$, then this implies that
\begin{align}
   \frac{c_\mu^2 \lambda_1}{\sqrt{c_\mu^2 \lambda_3^2 \left|\eta\right|^2+b_\mu^2}}<C\cdot 2^{-4k_2}+\frac{c_\nu^2}{\sqrt{c_\nu^2 \left|\eta\right|^2+b_\nu^2}}. \tag{6.2}\label{6.2}
\end{align}
Square on both hand sides and rearrange the terms, and we get that
$$c_\mu^2 c_\nu^2\left(c_\mu^2 \lambda_1^2-c_\nu^2 \lambda_3^2\right)\left|\eta\right|^2+\left(c_\mu^4 b_\nu^2-c_\nu^4 b_\mu^2\right)<C_2\cdot C \cdot 2^{-k_2}\le C_2\cdot C.$$
Since $c_\mu^2 \lambda_1^2-c_\nu^2 \lambda_3^2>0$, $c_\mu^4 b_\nu^2-c_\nu^4 b_\mu^2>0$, and $C$ can be selected as small as we wish, the above inequality is a contradiction. If $k_2<0$, then from (\ref{6.1}), we will have that
$$\frac{c_\mu^2 \lambda_1}{\sqrt{c_\mu^2 \lambda_3^2 \left|\eta\right|^2+b_\mu^2}}<C\cdot 2^{-k_2}+\frac{c_\nu^2}{\sqrt{c_\nu^2 \left|\eta\right|^2+b_\nu^2}}.$$
Do the same calculation as above, and we get that
$$c_\mu^2 c_\nu^2\left(c_\mu^2 \lambda_1^2-c_\nu^2 \lambda_3^2\right)\left|\eta\right|^2+\left(c_\mu^4 b_\nu^2-c_\nu^4 b_\mu^2\right)<C_2\cdot C \cdot 2^{-2k_2}.$$
Since $c_\mu^2 \lambda_1^2-c_\nu^2 \lambda_3^2>0$, $c_\mu^4 b_\nu^2-c_\nu^4 b_\mu^2>0$, $-D_1\le k_2<0$ and $C$ can be selected as small as we wish (for example, we could pick 
\begin{align*}
    C&<\frac{1}{2\cdot C_2\cdot 2^{2D_0}}\cdot \left(c_\mu^2 c_\nu^2\left(c_\mu^2 \lambda_1^2-c_\nu^2 \lambda_3^2\right)2^{-2D_0}+\left(c_\mu^4 b_\nu^2-c_\nu^4 b_\mu^2\right)\right) \\
    &\ll\frac{1}{2\cdot C_2\cdot 2^{2D_1}}\cdot \left(c_\mu^2 c_\nu^2\left(c_\mu^2 \lambda_1^2-c_\nu^2 \lambda_3^2\right)2^{-2D_1}+\left(c_\mu^4 b_\nu^2-c_\nu^4 b_\mu^2\right)\right) 
\end{align*}
), the above inequality is again a contradiction. Now, let's assume that  $c_\mu-c_\nu<0$ and $c_\mu^2 b_\nu-c_\nu^2 b_\mu<0$. In this case, we just need to use another version of (\ref{6.2}) from (\ref{6.1}), namely
$$\frac{c_\nu^2}{\sqrt{c_\nu^2 \left|\eta\right|^2+b_\nu^2}}<
\begin{cases}
   \displaystyle{C\cdot 2^{-4k_2}+\frac{c_\mu^2 \lambda_1}{\sqrt{c_\mu^2 \lambda_3^2 \left|\eta\right|^2+b_\mu^2}}}, &\mbox{if }k_2\ge 0 \\
   \displaystyle{C\cdot 2^{-k_2}+\frac{c_\mu^2 \lambda_1}{\sqrt{c_\mu^2 \lambda_3^2 \left|\eta\right|^2+b_\mu^2}}}, &\mbox{if }k_2<0
\end{cases}.$$
Then, we will have that
$$c_\mu^2 c_\nu^2\left(c_\nu^2 \lambda_3^2-c_\mu^2 \lambda_1^2\right)\left|\eta\right|^2+\left(c_\nu^4 b_\mu^2-c_\mu^4 b_\nu^2\right)<
\begin{cases}
   C_2\cdot C \cdot 2^{-k_2}, &\mbox{if }k_2\ge 0\\
   C_2\cdot C \cdot 2^{-2k_2}, &\mbox{if }k_2< 0
\end{cases}.$$
So, we can obtain a contradiction similarly as before, since we now have $c_\nu^2 \lambda_3^2-c_\mu^2 \lambda_1^2>0$, $c_\nu^4 b_\mu^2-c_\mu^4 b_\nu^2>0$. \par
The cases (b2) and (b3) can be proved exactly as the proof of (b4).\par
Now, let's assume that $\mu\cdot \nu<0$. Then, like before, we still do the orthogonal decomposition:
$\xi-\eta=\lambda_1\eta+\lambda_2\eta^\bot$ and $\left|\xi-\eta\right|=\lambda_3\left|\eta\right|$, where $1-\varepsilon_1<\lambda_1<1+\varepsilon_1$ and $1-\varepsilon_3<\lambda_3<1+\varepsilon_3$. Recall that we have assumed that $\left|\nabla_\eta\phi\right|<C\cdot 2^{-3\Bar{k}}$, so we get that
$$\left|\frac{c_\mu^2 \lambda_1 \eta}{\sqrt{c_\mu^2 \lambda_3^2 \left|\eta\right|^2+b_\mu^2}}+\frac{c_\mu^2 \lambda_2 \eta^\bot}{\sqrt{c_\mu^2 \lambda_3^2 \left|\eta\right|^2+b_\mu^2}}+\frac{c_\nu^2\eta}{\sqrt{c_\nu^2 \left|\eta\right|^2+b_\nu^2}}\right|^2<C^2\cdot 2^{-6\Bar{k}}.$$
Neglect the small orthogonal term as before, we get that
$$\displaystyle{\left|\frac{c_\mu^2 \lambda_1}{\sqrt{c_\mu^2 \lambda_3^2 \left|\eta\right|^2+b_\mu^2}}+\frac{c_\nu^2}{\sqrt{c_\nu^2 \left|\eta\right|^2+b_\nu^2}}\right|\left|\eta\right|<C \cdot 2^{-3\Bar{k}}}.$$
Since both two terms in the first absolute sign are positive, we can furthermore neglect the first term in the first absolute sign and get that
$$\frac{c_\nu^2}{\sqrt{c_\nu^2 \left|\eta\right|^2+b_\nu^2}}<
\begin{cases}
   C \cdot 2^{-4k_2}, &\mbox{if }k_2\ge 0 \\
   C \cdot 2^{-k_2}, &\mbox{if }k_2< 0
\end{cases}.$$
If $k_2\ge 0$, then LHS $\sim\left|\eta\right|^{-1}\sim 2^{-k_2}$. So, if we select $C$ small enough, then the above inequality is a contradiction. If $k_2< 0$, then LHS $\sim 1$. Since $0<-k_2\le D_1$, we can select $C$ small enough so that the above inequality is also a contradiction.
\end{proof}
\vspace{0.8em}
\begin{cor}
There exists a large constant $D_0=D_0(c_\sigma,c_\mu,c_\nu,b_\sigma,b_\mu,b_\nu)>0$ such that if we have $\max (k,k_1,k_2)\ge D_0$, $b_\sigma-b_\mu-b_\nu\neq0$, and $(c_\mu-c_\nu)(c_\mu^2b_\nu-c_\nu^2b_\mu)\ge0$, then there exists a small positive constant $C=C(c_\sigma,c_\mu,c_\nu,b_\sigma,b_\mu,b_\nu)>0$, such that either $\left|\Phi\right|\ge C\cdot 2^{-\Bar{k}}$ or $\left|\nabla_\eta\Phi\right|\ge C\cdot 2^{-3\Bar{k}}$.
\end{cor}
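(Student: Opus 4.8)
The plan is to argue by contradiction, following the scheme of the proof of Lemma 6.1. Suppose $|\Phi|<C\cdot 2^{-\Bar{k}}$ and $|\nabla_\eta\Phi|<C\cdot 2^{-3\Bar{k}}$, where $C$ will be chosen as small as needed and $D_0$ as large as needed, both depending only on the $c$'s and $b$'s. Since $2^k,2^{k_1},2^{k_2}$ are the side lengths of a triangle, once $\Bar{k}=\max(k,k_1,k_2)\ge D_0$ at least two of $k,k_1,k_2$ lie in $[\Bar{k}-1,\Bar{k}]$, and I will organize the argument according to which two are large and whether the third is comparable to $\Bar{k}$, of order one, or somewhere in between. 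The only analytic input needed beyond triangle inequalities is, for large $|x|$,
\begin{align*}
\Lambda_\alpha(x)=c_\alpha|x|+\frac{b_\alpha^2}{2c_\alpha|x|}+O(|x|^{-3}),\qquad \nabla\Lambda_\alpha(x)=c_\alpha\frac{x}{|x|}+O(|x|^{-2}),
\end{align*}
together with $|\nabla\Lambda_\alpha(x)|\le c_\alpha$ and $\Lambda_\alpha(x)\ge|b_\alpha|$.

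First I would dispose of every configuration in which $|\nabla_\eta\Phi|$ is bounded below by a fixed positive constant, which contradicts $|\nabla_\eta\Phi|<C\cdot 2^{-3\Bar{k}}$ once $D_0$ is large and $C$ small. Writing $\nabla_\eta\Phi=\tfrac{c_\mu^2(\xi-\eta)}{\Lambda_\mu(\xi-\eta)}-\tfrac{c_\nu^2\eta}{\Lambda_\nu(\eta)}$ up to sign, this covers the cases where $|\xi-\eta|$ is of order one but $|\eta|$ is large (the first term is $O(|\xi-\eta|)$, the second has magnitude $\approx c_\nu$), the symmetric case, and, crucially, the case where both $|\xi-\eta|$ and $|\eta|$ are large but $c_\mu\ne c_\nu$, since then the two terms have magnitudes within $O(2^{-2\Bar{k}})$ of $c_\mu$ and $c_\nu$, so $|\nabla_\eta\Phi|\ge|c_\mu-c_\nu|-O(2^{-2\Bar{k}})\gtrsim 1$. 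After these reductions I would next eliminate every configuration in which the massless part $c_\sigma|\xi|-c_\mu|\xi-\eta|-c_\nu|\eta|$ of $\Phi$ has size $\gtrsim 2^{\Bar{k}}$; this happens whenever the speeds are not all equal and the relevant frequencies are large (for instance, $|\eta|$ of order one makes $\Phi$ dominated by $c_\sigma|\xi|-c_\mu|\xi-\eta|$, which is $\sim|c_\sigma-c_\mu|2^{\Bar{k}}$ unless $c_\sigma=c_\mu$, and similarly in the remaining mixed-size regimes), and it violates $|\Phi|<C2^{-\Bar{k}}$ outright.

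The remaining, genuinely hard, case is $c_\sigma=c_\mu=c_\nu=:c$ with $|\xi|,|\xi-\eta|,|\eta|$ all $\sim 2^{\Bar{k}}$, where the massless part can vanish and the smallness of $\Phi$ must come from the $O(2^{-\Bar{k}})$ correction. I would use $|\nabla_\eta\Phi|<C2^{-3\Bar{k}}$ quantitatively: decomposing $\nabla_\eta\Phi=c\bigl(\tfrac{\xi-\eta}{|\xi-\eta|}-\tfrac{\eta}{|\eta|}\bigr)+\tfrac{1}{2c}\bigl(b_\nu^2\tfrac{\eta}{|\eta|^3}-b_\mu^2\tfrac{\xi-\eta}{|\xi-\eta|^3}\bigr)+O(2^{-4\Bar{k}})$ into tangential and radial parts along the bisector of $\xi-\eta$ and $\eta$, the tangential balance forces the angle between $\xi-\eta$ and $\eta$ to be $\lesssim 2^{-3\Bar{k}}$ (hence $|\xi|=|\xi-\eta|+|\eta|+O(2^{-5\Bar{k}})$), while the radial balance forces $|b_\mu|/|\xi-\eta|=|b_\nu|/|\eta|+O(2^{-2\Bar{k}})$, so that $|\xi-\eta|:|\eta|:|\xi|$ equals, up to additive $O(1)$, the ratio $|b_\mu|:|b_\nu|:(|b_\mu|+|b_\nu|)$. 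Feeding this back into $\Phi=\tfrac{1}{2c}\bigl(\tfrac{b_\sigma^2}{|\xi|}-\tfrac{b_\mu^2}{|\xi-\eta|}-\tfrac{b_\nu^2}{|\eta|}\bigr)+O(2^{-3\Bar{k}})$ gives $\Phi=\tfrac{|b_\nu|}{2c|\eta|}\cdot\tfrac{b_{|\sigma|}^2-(|b_\mu|+|b_\nu|)^2}{|b_\mu|+|b_\nu|}+O(2^{-2\Bar{k}})$, and since $b_{|\sigma|}^2-(|b_\mu|+|b_\nu|)^2=(b_{|\sigma|}-|b_\mu|-|b_\nu|)(b_{|\sigma|}+|b_\mu|+|b_\nu|)$ is a fixed nonzero number — this is precisely where the first nondegeneracy condition $b_\sigma-b_\mu-b_\nu\ne 0$, applied with all indices positive, enters — we conclude $|\Phi|\gtrsim 2^{-\Bar{k}}$, a contradiction. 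The second nondegeneracy condition, as in Lemma 6.1, serves only to fix a sign when $c_\mu\ne c_\nu$, and in that regime $|\nabla_\eta\Phi|$ is already bounded below, so it is not essential here.

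I expect the main obstacle to be the error-term bookkeeping in the intermediate regime where one of the three frequencies is large but of size $\ll 2^{\Bar{k}}$: there the asymptotic remainders above are of a fixed size that is much larger than $2^{-\Bar{k}}$, so one cannot simply absorb them. The point to verify carefully is that in that regime $\Phi$ is nonetheless dominated by the relevant mass gap and inherits both a definite sign and a lower bound of order $2^{-(\text{that frequency})}\ge 2^{-\Bar{k}}$ — exactly what the leading-order analysis predicts — so that the global conclusion $|\Phi|\ge C2^{-\Bar{k}}$ survives; the remaining cases are then closed by the elementary estimates on $\nabla_\eta\Phi$ indicated above.
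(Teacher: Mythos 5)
Your hard-case analysis ($c_\sigma=c_\mu=c_\nu=c$ with all three frequencies comparable to $2^{\bar k}$) is essentially sound and matches the spirit of the paper's $c_\mu=c_\nu$ argument, though the paper uses a slightly different device: it writes $\Phi = I + II$ where $I = \sqrt{c_\sigma^2|\xi|^2+b_\sigma^2}-\sqrt{|\xi|^2+(b_\mu+b_\nu)^2}$ and the remainder $II$ is expanded around $\eta = \rho\xi$, $\rho=b_\nu/(b_\mu+b_\nu)$; this merged-square-root trick cleanly covers both the all-large regime and the case $-D_0<k<0$ with $k_1\approx k_2\gg 1$.

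The genuine gap is in your reductions. Your Step~1 claims $|\nabla_\eta\Phi|\gtrsim 1$ whenever one of $|\xi-\eta|,|\eta|$ is of order one and the other is $\sim 2^{\bar k}$, but this is false in general: $\Lambda_\mu'(r)=\tfrac{c_\mu^2 r}{\sqrt{c_\mu^2 r^2+b_\mu^2}}$ is strictly increasing from $0$ to $c_\mu$, so if $c_\nu<c_\mu$ there is a bounded radius $r_0$ with $\Lambda_\mu'(r_0)=c_\nu\approx\Lambda_\nu'(|\eta|)$, and with the directions aligned $|\nabla_\eta\Phi|$ becomes arbitrarily small. Your Step~2 does not catch this either: with $c_\sigma=c_\nu\neq c_\mu$ and $|\xi-\eta|\approx r_0$ the massless part $c_\sigma|\xi|-c_\mu|\xi-\eta|-c_\nu|\eta|$ is $O(1)$, not $\gtrsim 2^{\bar k}$. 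One can still close this configuration — pinning $|\xi-\eta|$ near $r_0$ and the directions parallel, one finds $\Phi\approx c_\nu r_0-\sqrt{c_\mu^2 r_0^2+b_\mu^2}+O(2^{-\bar k})$, a fixed negative constant — but this requires exactly the kind of gradient-constrained analysis you only carry out in the equal-speed hard case. You also dismiss the second nondegeneracy condition as ``not essential here,'' which is not justified once these mixed-size $c_\mu\neq c_\nu$ configurations are taken seriously. The paper avoids all this by first invoking Lemma 6.1 to remove $\min(k,k_1,k_2)\le -D_0$ and then citing Deng's Proposition~8.2(3) wholesale for $c_\mu\neq c_\nu$; your self-contained approach needs that analysis written out, and at present the ``intermediate regime'' you flag as an obstacle is where the argument actually stops short.
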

\begin{proof}
With Lemma 6.1 before, we only need to consider the case when $\max (k,k_1,k_2)\ge D_0$ and $\min (k,k_1,k_2)\ge -D_0$. When $c_\mu\neq c_\nu$, we just need follow the proof of Proposition 8.2 (3) in \cite{y1}. When $c_\mu=c_\nu=1$, we can still follow the proof of Proposition 8.2 (3) in \cite{y1}, but we have some slight difference here. We again prove by contradiction, so we assume $\left|\Phi\right|<C\cdot 2^{-\Bar{k}}$ and $\left|\nabla_\eta\Phi\right|<C\cdot 2^{-3\Bar{k}}$. First, we may assume that $b_\mu,b_\nu>0$ (the other cases can be done similarly; only need to note that $1-\rho\in(-\infty,0)\bigcup\,(1,+\infty)$ in other cases). Now, we first note that
\begin{align*}
   \Phi=&\underbrace{\sqrt{c_\sigma^2\left|\xi\right|^2+b_\sigma^2}-\sqrt{\left|\xi\right|^2+(b_\mu+b_\nu)^2}}_{\displaystyle{I}}  \\
   &+\underbrace{(1-\rho)\sqrt{\left|\xi\right|^2+(b_\mu+b_\nu)^2}-\sqrt{\left|\xi-\eta\right|^2+b_\mu^2}+\rho\sqrt{\left|\xi\right|^2+(b_\mu+b_\nu)^2}-\sqrt{\left|\eta\right|^2+b_\mu^2}}_{\displaystyle{II}},  \\
   \left|\nabla_\eta\Phi\right|=&\left|\frac{\displaystyle{\frac{\xi-\eta}{1-\rho}}}{\sqrt{\left|\displaystyle{\frac{\xi-\eta}{1-\rho}}\right|^2+(b_\mu+b_\nu)^2}}-\frac{\displaystyle{\frac{\eta}{\rho}}}{\displaystyle{\sqrt{\left|\frac{\eta}{\rho}\right|^2+(b_\mu+b_\nu)^2}}}\right|\ll2^{-3\Bar{k}},\tag{6.3}\label{6.3}
\end{align*}
where $\rho=\frac{b_\nu}{b_\mu+b_\nu}$ is as before. Then by Taylor expansion, we have 
\begin{align*}
   II=&(1-\rho)\left(\sqrt{\left|\xi\right|^2+(b_\mu+b_\nu)^2}-\sqrt{\left|\frac{\xi-\eta}{1-\rho}\right|^2+(b_\mu+b_\nu)^2}\right) \\
   &+\rho\left(\sqrt{\left|\xi\right|^2+(b_\mu+b_\nu)^2}-\sqrt{\left|\frac{\eta}{\rho}\right|^2+(b_\mu+b_\nu)^2}\right) \\
   =&\left(\frac{\displaystyle{\frac{\xi-\eta}{1-\rho}}}{\sqrt{\left|\displaystyle{\frac{\xi-\eta}{1-\rho}}\right|^2+(b_\mu+b_\nu)^2}}-\frac{\displaystyle{\frac{\eta}{\rho}}}{{\displaystyle{\sqrt{\left|\frac{\eta}{\rho}\right|^2+(b_\mu+b_\nu)^2}}}}\right)(\eta-\rho\,\xi)+O\left((\eta-\rho\,\xi)^2\right)
\end{align*}
Note that from (\ref{6.1}) and $\left|\nabla_\eta\Phi\right|\ll 2^{-4\Bar{k}}$, we see that $\left|\eta-\rho\xi\right|\lesssim 1$. From (\ref{6.3}), we estimate
$$\left|II\right|\lesssim2^{-3\Bar{k}}\cdot 1+2^{-2\Bar{k}}\lesssim 2^{-2\Bar{k}}$$
Thus, since we have assumed that $\left|\Phi\right|\ll2^{-\Bar{k}}$, we now obtain that 
$$\left|\sqrt{c_\sigma^2\left|\xi\right|^2+b_\sigma^2}-\sqrt{\left|\xi\right|^2+(b_\mu+b_\nu)^2}\right|\ll2^{-\Bar{k}}$$
Do the Taylor expansion again, and the first two terms of LHS would be
$$\begin{cases}
   \left(b_\sigma-\left(b_\mu+b_\nu\right)\right)+\left(\frac{c_\sigma^2}{b_\sigma}-\frac{1}{b_\mu+b_\nu}\right)\left|\xi\right|^2,&\mbox{ if }-D_0<k<0 \vspace{0.7em} \\
   (c_\sigma-1)\left|\xi\right|+\displaystyle{\left(\frac{b_\sigma^2}{2c_\sigma}-\frac{(b_\mu+b_\nu)^2}{2}\right)}\frac{1}{\left|\xi\right|},&\mbox{ if }k\ge 0 
\end{cases}.$$
When $-D_0<k<0$, we must have $\Bar{k}=k_1=k_2>D_0$. Then, the RHS will not be larger than $C\cdot 2^{-D_0}$, which can be very small since we can take $C$ small enough. This implies that $b_\sigma-b_\mu-b_\nu=0$, which is a contradiction. On the other hand, if $k\ge 0$, then we first must have $c_\sigma=1$. Focusing on the $\displaystyle{\frac{1}{\left|\xi\right|}}$ term, we must have $k>k_1,k_2$ and $k>D_0$. So in this case we must have
$$\frac{1}{2}\left|b_\sigma^2-\left(b_\mu+b_\nu\right)^2\right|\cdot 2^{-k}<C\cdot 2^{-k}.$$ However, due to the smallness of $C$, this is again a contradiction,
\end{proof}
\vspace{0.8em}
\begin{lemma}
There exists a large constant $D_0=D_0(c_\sigma,c_\mu,c_\nu,b_\sigma,b_\mu,b_\nu)>0$ such that if $\mu+\nu= 0$, $b_\sigma-b_\mu-b_\nu\neq0$, and $(c_\mu-c_\nu)(c_\mu^2b_\nu-c_\nu^2b_\mu)\ge0$, then there exists a small positive constant $C=C\left(c_\sigma,c_\mu,c_\nu,b_\sigma,b_\mu,b_\nu,D_0\right)>0$, such that $\left|\nabla_\eta\Phi\right|\ge C\cdot 2^{-4\Bar{k}}$.
\end{lemma}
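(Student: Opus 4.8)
The plan is to argue by contradiction, in the same spirit as the proof of Lemma 6.1 above. Suppose $\mu+\nu=0$, so that $b_\mu = -b_\nu$, $c_\mu = c_\nu$, and $\Lambda_\mu(\xi-\eta) = -\Lambda_\nu(\xi-\eta)$ after the sign conventions are unwound; concretely $\Phi(\xi,\eta) = \Lambda_\sigma(\xi) - \Lambda_\mu(\xi-\eta) + \Lambda_\mu(\eta)$ (up to an overall sign), and hence
\[
\nabla_\eta\Phi(\xi,\eta) = \frac{c_\mu^2(\xi-\eta)}{\sqrt{c_\mu^2|\xi-\eta|^2+b_\mu^2}} + \frac{c_\mu^2\,\eta}{\sqrt{c_\mu^2|\eta|^2+b_\mu^2}}.
\]
Assume toward a contradiction that $|\nabla_\eta\Phi| < C\cdot 2^{-4\bar k}$ with $C$ as small as we like (to be chosen at the end, depending only on the fixed parameters $c_\sigma,c_\mu,c_\nu,b_\sigma,b_\mu,b_\nu$ and on $D_0$). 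First I would dispose of the low-frequency regimes: if $\min(k,k_1,k_2)\le -D_0$, then Lemma 6.1 already forces $|\Phi|\ge C'$ or $|\nabla_\eta\Phi|\ge C' 2^{-3\bar k}$; the first alternative is irrelevant here (we are only asserting a lower bound on $\nabla_\eta\Phi$), and the second one, being stronger than $C\cdot 2^{-4\bar k}$ once $C$ is small, directly contradicts our assumption. So we may assume $\min(k,k_1,k_2)\ge -D_0$.

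Next I would run the orthogonal-decomposition argument used in case $3^\circ$(b) of Lemma 6.1. Write $g(x) \triangleq \dfrac{c_\mu^2 x}{\sqrt{c_\mu^2|x|^2+b_\mu^2}}$, a smooth bijection of $\mathbb{R}^2$ onto the disc of radius $c_\mu$; then $\nabla_\eta\Phi = g(\xi-\eta) + g(\eta)$. Since $g$ is odd, $g(\xi-\eta)+g(\eta) = g(\xi-\eta) - g(-\eta)$, so $|\nabla_\eta\Phi|$ small forces $g(\xi-\eta)$ close to $g(-\eta)$, hence $\xi-\eta$ close to $-\eta$ by the Jacobian bound $\|J(g^{-1})\|_{L^\infty}\lesssim (c_\mu^2 - |y|^2)^{-3/2}$, which is $\lesssim \min(2^{3\bar k}, 2^{3k_2})$ on the relevant range. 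The mean value theorem then yields $|\xi| = |(\xi-\eta)-(-\eta)| \lesssim \min(1, 2^{3k_2})\cdot|\nabla_\eta\Phi| \lesssim C\cdot 2^{-\bar k}$. Now invoke $b_\sigma - b_\mu - b_\nu \neq 0$: since $\mu+\nu=0$ this reads simply $b_\sigma \neq 0$, which is automatic, so this is not the obstruction — rather, I should instead compare $|\Phi(\xi,\eta)|$ near $\xi\approx 0$ with $\Phi(0,\eta) = \Lambda_\sigma(0) - \Lambda_\mu(-\eta) + \Lambda_\mu(\eta) = b_\sigma$ (using $\Lambda_\mu$ even), but this only bounds $\Phi$, not $\nabla_\eta\Phi$, so it is the wrong tool. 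The correct move is to differentiate the smallness of $|\xi|$: with $|\xi|\lesssim C 2^{-\bar k}$ we are very near the line $\xi-\eta = -\eta$, and I would expand $\nabla_\eta\Phi$ to higher order there. Writing $\xi - \eta = -\eta + \xi$, a second-order Taylor expansion of $g$ about $-\eta$ gives $\nabla_\eta\Phi = g(\xi-\eta) - g(-\eta) = Dg(-\eta)\cdot\xi + O(|\xi|^2)$. Since $\det Dg(-\eta) \neq 0$ with $|\det Dg(-\eta)|\gtrsim 2^{-3\bar k}$ (same Jacobian estimate), this forces $|\xi|\gtrsim 2^{-3\bar k}|\nabla_\eta\Phi|^{-1}\cdot\ldots$ — more precisely $|\nabla_\eta\Phi|\gtrsim 2^{-3\bar k}|\xi|$ on the one hand, while we also need to show $|\xi|$ cannot be too small: here one uses the finite-propagation/frequency-localization hypothesis $|\xi|\sim 2^k$, so $|\xi|\gtrsim 2^{-D_0}$ when $k\ge -D_0$, giving $|\nabla_\eta\Phi|\gtrsim 2^{-3\bar k - D_0} \ge C\cdot 2^{-4\bar k}$ once $\bar k \ge D_0$ and $C$ small, a contradiction. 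When $\bar k \le D_0$ all quantities are $\gtrsim$-comparable to constants and one argues directly.

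I expect the main obstacle to be the bookkeeping in the regime where $|\xi|$ is genuinely small (near the resonance $\xi = 0$, which is where $\nabla_\eta\Phi$ can actually vanish): one must extract the linear term $Dg(-\eta)\cdot\xi$ cleanly and show the quadratic remainder $O(|\xi|^2)$ cannot cancel it. The resolution is that $Dg(-\eta)$ is invertible with a quantitative lower bound on $|\det|$ depending only on $|\eta|\sim 2^{k_2}$ and the parameters, so for $|\xi|$ below a threshold the linear term dominates and $|\nabla_\eta\Phi|\sim |Dg(-\eta)\xi|\gtrsim 2^{-3\bar k}|\xi|$; combined with the Littlewood–Paley lower bound $|\xi|\gtrsim 2^{k}\ge 2^{-\bar k}$ this already beats $2^{-4\bar k}$. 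The only remaining subtlety is the case $c_\mu = c_\nu$ versus $c_\mu\neq c_\nu$; but $\mu+\nu=0$ forces $c_\mu = c_{-\mu} = c_\mu$ trivially and $b_\nu = -b_\mu$, so we are always in the $c_\mu = c_\nu$ situation and the single argument above suffices. Summing over the finitely many sign cases for $\sigma$ completes the proof, with $C = C(c_\sigma,c_\mu,c_\nu,b_\sigma,b_\mu,b_\nu,D_0)$ chosen as the minimum of the finitely many thresholds produced above.
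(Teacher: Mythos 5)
Your core argument, once the frequency reduction is in place, takes a cleaner route to the same destination as the paper. The paper squares the triangle inequality to get a bound on $|\xi-\eta|^2-|\eta|^2$, then runs three frequency cases ($k_1,k_2\ge 0$; $k,k_1,k_2<0$; $k_2<0,\,k,k_1\ge 0$) and in each splits on whether $1+\lambda_1$ is small in the decomposition $\xi-\eta=\lambda_1\eta+\lambda_2\eta^\perp$. You instead observe that $g(x)=c_\mu^2 x/\sqrt{c_\mu^2|x|^2+b_\mu^2}$ is an odd global diffeomorphism, so $\nabla_\eta\Phi=g(\xi-\eta)-g(-\eta)$, and a single Jacobian bound on $g^{-1}$ yields $|\nabla_\eta\Phi|\gtrsim 2^{-3\bar k}|\xi|$ in one stroke, absorbing the whole case analysis. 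Combined with $|\xi|\gtrsim 2^{-D_0}$ and $\bar k\ge 0$ this gives $|\nabla_\eta\Phi|\gtrsim 2^{-3\bar k-D_0}\ge C\cdot 2^{-4\bar k}$ for $C\lesssim 2^{-D_0}$, and also covers $\max(k,k_1,k_2)\ge D_0$ without invoking Corollary 6.2. That is a genuine streamlining.

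Two problems remain. First, the throwaway inequality ``$|\xi|\gtrsim 2^k\ge 2^{-\bar k}$'' is false ($k$ can be arbitrarily negative while $\bar k\ge 0$ always); it is harmless here only because you also use the correct $|\xi|\gtrsim 2^{-D_0}$, but it should be excised. Second, and more seriously, your reduction to $\min(k,k_1,k_2)\ge -D_0$ is not justified. You dismiss the disjunct $|\Phi|\ge C'$ of Lemma 6.1 as ``irrelevant'' because the target bound only concerns $\nabla_\eta\Phi$, but that is exactly backwards: if Lemma 6.1 returns only $|\Phi|\ge C'$, you have learned nothing about $\nabla_\eta\Phi$. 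And that is precisely what happens here: when $\mu+\nu=0$ and $k\le -D_0$ with $k_1,k_2$ bounded, one has $\Phi(\xi,\eta)\to\Lambda_\sigma(0)=b_\sigma\neq 0$ and $\nabla_\eta\Phi=g(\xi-\eta)-g(-\eta)\to 0$ as $\xi\to 0$, while $\bar k$ stays bounded, so the stated conclusion $|\nabla_\eta\Phi|\ge C\cdot 2^{-4\bar k}$ genuinely fails. The lemma as literally written is therefore false without an additional hypothesis such as $\min(k,k_1,k_2)\ge -D_0$, which is in force everywhere it is actually applied (e.g.\ the ``$\underline{k}$ not small'' branch in the proof of Lemma 3.12). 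The paper's own proof makes the same silent jump, so this is not a misreading on your part; but your specific justification for discarding the $|\Phi|\ge C'$ case is wrong, and no correct justification exists without strengthening the hypotheses.
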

\begin{proof}
The proof is similar to the proof of case (b4) of $3^\circ$ of Lemma 6.1. We prove by contradiction, so we assume that $\left|\nabla_\eta\Phi\right|< C\cdot 2^{-4\Bar{k}}$. Note that we have $c_\mu=c_\nu$ and $b_\mu=-b_\nu$. By Lemma 6.1 and Corollary 6.2, we may assume that $k,k_1,k_2\in \left[{-D_0},{D_0}\right]$. Then, $\left|\nabla_\eta\Phi\right|< C\cdot 2^{-4\Bar{k}}$ implies that 
$$\left|\frac{c_\mu^2(\xi-\eta)}{\sqrt{c_\mu^2\left|\xi-\eta\right|^2+b_\mu^2}}+\frac{c_\mu^2\eta}{\sqrt{c_\mu^2\left|\eta\right|^2+b_\mu^2}}\right|<C\cdot 2^{-4\Bar{k}}.$$
We may assume that $\left|\xi-\eta\right|\ge\left|\eta\right|$, since otherwise we only need to flip the sign in (\ref{6.4}). In this case, we have
$$\frac{c_\mu^2\left|\xi-\eta\right|}{\sqrt{c_\mu^2\left|\xi-\eta\right|^2+b_\mu^2}}<C\cdot 2^{-4\Bar{k}}+\frac{c_\mu^2\left|\eta\right|}{\sqrt{c_\mu^2\left|\eta\right|^2+b_\mu^2}}.$$
Square on both sides, rearrange the terms, and we see that
\begin{align}
   c_\mu^4 b_\mu^2\left(\left|\xi-\eta\right|^2-\left|\eta\right|^2\right)<
\begin{cases}
   C, &\mbox{ if } \left\{k_1,k_2\ge 0\right\} \mbox{ or }\left\{k,k_1,k_2<0\right\}  \\
   C\cdot 2^{-2\Bar{k}}, &\mbox{ if }k_2<0,k,k_1\ge 0
\end{cases}. \tag{6.4}\label{6.4}
\end{align}
Note that $\left|\xi-\eta\right|\sim 2^{2k_1}$ , $\left|\eta\right|\sim 2^{2k_2}$ and $C$ can be selected very small. \par
\noindent$1^\circ$: $k_1,k_2\ge 0$ \par
In this case, we must have $k_1=k_2>0$. Now, we do the decomposition $\xi-\eta=\lambda_1\eta+\lambda_2\eta^\bot$ and $\left|\xi-\eta\right|=\lambda_3\left|\eta\right|$. Since $\left|\xi-\eta\right|^2-\left|\eta\right|^2$ is very small and $k_1,k_2\in \left[-D_0,D_0\right]$, $\lambda_3$ is very close to 1, say $\lambda_3 \in [0.99,1.01]$. Therefore,\vspace{0.3em} $\lambda_1,\lambda_2\in [-1.01, 1.01]$. If $1+\lambda_1\ge\frac{1}{100}$, then from $\left|\nabla_\eta\Phi\right|<C\cdot 2^{-4\Bar{k}}$, we can get that
$$\left|\frac{c_\mu^2 \lambda_1 \eta}{\sqrt{c_\mu^2 \lambda_3^2 \left|\eta\right|^2+b_\mu^2}}+\frac{c_\mu^2 \lambda_2 \eta^\bot}{\sqrt{c_\mu^2 \lambda_3^2 \left|\eta\right|^2+b_\mu^2}}+\frac{c_\mu^2\eta}{\sqrt{c_\mu^2 \left|\eta\right|^2+b_\mu^2}}\right|^2<C^2\cdot 2^{-8\Bar{k}}.$$
Like before, we neglect the perpendicular term and the first term. Note that $k_2>0$, so we get that
$$\frac{c_\mu^2}{\sqrt{c_\mu^2\left|\eta\right|^2+b_\mu^2}}<C\cdot 2^{-5k_2}.$$
Since LHS $\sim \left|\eta\right|^{-1}\sim 2^{-k_2}$ and $C$ can be taken as small as we wish, the above inequality is actually a contradiction. On the other hand, if $-\frac{1}{100}\le 1+\lambda_1<\frac{1}{100}$, then we only plug $\left|\xi-\eta\right|=\lambda_3 \left|\eta\right|$ into $\left|\nabla_\eta\Phi\right|<C\cdot 2^{-4\Bar{k}}$, and get that
\begin{align}
   \left|\frac{c_\mu^2(\xi-\eta)}{\sqrt{c_\mu^2\lambda_3^2\left|\eta\right|^2+b_\mu^2}}+\frac{c_\mu^2\eta}{\sqrt{c_\mu^2\left|\eta\right|^2+b_\mu^2}}\right|<C\cdot 2^{-4\Bar{k}}. \tag{6.5}\label{6.5}
\end{align}
Note that when $\left|\eta\right|\in\left[1,2^{D_0}\right]$, we have that $\sqrt{c_\mu^2\lambda_3^2\left|\eta\right|^2+b_\mu^2}\sim c_\mu \lambda_3 \left|\eta\right|$ and $\sqrt{c_\mu^2\left|\eta\right|^2+b_\mu^2}\sim c_\mu \left|\eta\right|$. Since $\lambda_3 \in [0.99,1.01]$, from (\ref{6.5}), we get that
$$\frac{c_\mu^2}{\sqrt{c_\mu^2\left|\eta\right|^2+b_\mu^2}}\left|\xi-\eta+\eta\right|=\frac{c_\mu^2}{\sqrt{c_\mu^2\left|\eta\right|^2+b_\mu^2}}\left|\xi\right|<C\cdot 2^{-4\Bar{k}}.$$
Note that $k_1=k_2>0$ and $k\le k_1+1$, so we have that LHS$\displaystyle{\sim\frac{\left|\xi\right|}{\left|\eta\right|}\sim 2^{k-k_1}}\gtrsim 2^{-2D_0}$. Since the maximum of RHS is $C$, if we take $C$ small enough (depends on $D_0$), the above inequality will be a contradiction.\par
\noindent$2^\circ$: $k,k_1,k_2<0$ \par
In this case, if we take $C$ small enough ($C$ now depends on $D_0$), then we will have two subcases: (a) $-D_0-3\le k_1,k_2<0$ and $k_1=k_2$; (b) $k_1,k_2<-D_0-3$. However, since we already assume that $k,k_1,k_2\in\left[-D_0,D_0\right]$, we only need to consider the first subcase. Now, we still do the decomposition $\xi-\eta=\lambda_1\eta+\lambda_2\eta^\bot$ and $\left|\xi-\eta\right|=\lambda_3\left|\eta\right|$, where $\lambda_1,\lambda_2\in\left[-1.01,1.01\right]$ and $\lambda_3\in\left[0.99,1.01\right]$. If $1+\lambda_1\ge\frac{1}{100}$, then we just plug these back into $\left|\nabla_\eta\Phi\right|<C$, play the same game as in the case $1^\circ$, and we will get that
$$\frac{c_\mu^2\left|\eta\right|}{\sqrt{c_\mu^2\left|\eta\right|^2+b_\mu^2}}<C.$$
Now, LHS $\sim \left|\eta\right| \sim 2^{k_2}$. Since $-D_0-3\le k_2<0$ and $C$ can be selected very small, the above inequality is again a contradiction. On the other hand, if $-\frac{1}{100}\le 1+\lambda_1<\frac{1}{100}$, then we again only plug $\left|\xi-\eta\right|=\lambda_3 \left|\eta\right|$ into $\left|\nabla_\eta\Phi\right|<C$, and get that
$$\frac{c_\mu^2}{\sqrt{c_\mu^2\left|\eta\right|^2+b_\mu^2}}\left|\xi\right|<C.$$
Then, LHS$\sim\left|\xi\right|\sim 2^k\ge 2^{-D_0}$. So, if we take $C$ small enough, then we will again get a contradiction here.\par
\noindent$3^\circ$: $k_2<0,k,k_1\ge 0$\par
Now, since we know that $\left|\xi-\eta\right|^2-\left|\eta\right|^2\ll 1$, we must have that $\left|\xi\right|$ is very small. This contradicts that $k>0$.
\end{proof}
\vspace{0.8em}
\begin{prop}
Let $D_0=D_0(c_\sigma,c_\mu,c_\nu,b_\sigma,b_\mu,b_\nu)>0$ be a large constant. \par
(a) If $\max (k,k_1,k_2)\le D_0$, $b_\sigma-b_\mu-b_\nu\neq0$, and $(c_\mu-c_\nu)(c_\mu^2b_\nu-c_\nu^2b_\mu)\ge0$, then there exists a small positive constant $C=C(c_\sigma,c_\mu,c_\nu,b_\sigma,b_\mu,b_\nu,D_0)>0$, such that $2^{3\Bar{k}}\left|\Phi\right|+\left|\nabla_\xi\Phi\right|+\left|\nabla_\eta\Phi\right|\ge C$.\par
(b) If releasing the restriction $\max (k,k_1,k_2)\le D_0$ in part (a), then we have $2^{3\Bar{k}}\left|\Phi\right|+\left|\nabla_\xi\Phi\right|+2^{4\Bar{k}}\left|\nabla_\eta\Phi\right|\ge C$.
\end{prop}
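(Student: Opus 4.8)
The plan is to prove part (a) and then obtain part (b) essentially for free. Indeed, since $\bar k\ge 0$ we have $2^{4\bar k}|\nabla_\eta\Phi|\ge|\nabla_\eta\Phi|$, so part (a) is a weaker statement than part (b) in the range $\max(k,k_1,k_2)\le D_0$ and hence suffices there. When $\max(k,k_1,k_2)\ge D_0$, Corollary 6.2 already gives either $|\Phi|\ge C\cdot 2^{-\bar k}$ — whence $2^{3\bar k}|\Phi|\ge C\cdot 2^{2\bar k}\ge C$ — or $|\nabla_\eta\Phi|\ge C\cdot 2^{-3\bar k}$ — whence $2^{4\bar k}|\nabla_\eta\Phi|\ge C\cdot 2^{\bar k}\ge C$. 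So only part (a) carries content.

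\textbf{Reduction of part (a) to a compact regime.} I argue by contradiction, supposing $2^{3\bar k}|\Phi|+|\nabla_\xi\Phi|+|\nabla_\eta\Phi|$ can be made smaller than any prescribed $C>0$. First I dispose of the degenerate frequency configurations. If $\min(k,k_1,k_2)\le -D_0$, Lemma 6.1 forces $|\Phi|\ge C$ or $|\nabla_\eta\Phi|\ge C\cdot 2^{-3\bar k}\ge C\cdot 2^{-3D_0}$ (using $\bar k\le D_0$), a contradiction once $C$ is small. If $\mu+\nu=0$, Lemma 6.3 gives $|\nabla_\eta\Phi|\ge C\cdot 2^{-4\bar k}\ge C\cdot 2^{-4D_0}$, again a contradiction. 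Hence I may assume $\mu+\nu\neq 0$ and $-D_0\le k,k_1,k_2\le D_0$, so $(\xi,\eta)$ lies in the compact set $K\triangleq\{(\xi,\eta):|\xi|,|\xi-\eta|,|\eta|\in[2^{-D_0-1},2^{D_0+2}]\}$, on which $2^{3\bar k}$ is a bounded constant. Since $\Phi$ is real-analytic (hence continuous) on $K$, it now suffices to show that $(\xi,\eta)\mapsto|\Phi|+|\nabla_\xi\Phi|+|\nabla_\eta\Phi|$ does not vanish anywhere on $K$; compactness then yields the uniform lower bound.

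\textbf{The core: no simultaneous vanishing of $\Phi,\nabla_\xi\Phi,\nabla_\eta\Phi$.} Suppose $(\xi,\eta)\in K$ with $\nabla_\xi\Phi=\nabla_\eta\Phi=0$. Writing $\nabla\Lambda_\alpha(\zeta)=\operatorname{sgn}(\alpha)\,\frac{c_{|\alpha|}^2\zeta}{\sqrt{c_{|\alpha|}^2|\zeta|^2+b_{|\alpha|}^2}}$, one sees $\nabla\Lambda_\sigma(\xi),\nabla\Lambda_\mu(\xi-\eta),\nabla\Lambda_\nu(\eta)$ are scalar multiples of $\xi$, $\xi-\eta$, $\eta$ respectively, with scalars bounded away from $0$ on $K$; so $\nabla_\xi\Phi=0$ forces $\xi\parallel(\xi-\eta)$ and $\nabla_\eta\Phi=0$ forces $(\xi-\eta)\parallel\eta$, i.e. $\xi,\xi-\eta,\eta$ are all collinear. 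Moreover $\nabla_\eta\Phi=0$ is exactly $\eta=p(\xi)$ by Lemma 5.6 of \cite{a1}. Now if in addition $\Phi=0$, then $\Psi_{\sigma\mu\nu}(\xi)=\Phi(\xi,p(\xi))=0$; and using that $\Psi_{\sigma\mu\nu}$ is radial together with the identity $\nabla\Psi_{\sigma\mu\nu}(\xi)=(\nabla_\xi\Phi)(\xi,p(\xi))$ recorded in Section 2.2, the assumption $\nabla_\xi\Phi=0$ says that $|\xi|$ is a root of the radial profile of $\Psi_{\sigma\mu\nu}$ of multiplicity $\ge 2$. But — after enlarging $D_0$ if necessary — all roots of $\Psi_{\sigma\mu\nu}$ are simple by Lemma 5.8 of \cite{a1}, which is precisely where both nondegeneracy conditions in (\ref{1.2}) are used (with the usual split into $c_\mu\neq c_\nu$ and $c_\mu=c_\nu$). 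This contradiction completes part (a), and hence the Proposition. (Alternatively, one can replace the appeal to Lemma 5.8 of \cite{a1} by carrying out, in the collinear variables, the elementary one-dimensional computation of \cite{a1}, Section 1.2.5, in the spirit of the proof of Corollary 6.2 above.)

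\textbf{Main obstacle.} The one genuinely substantive ingredient is the simplicity of the roots of $\Psi_{\sigma\mu\nu}$; this cannot be bypassed and is exactly the input that forces both hypotheses $b_\sigma-b_\mu-b_\nu\neq 0$ and $(c_\mu-c_\nu)(c_\mu^2 b_\nu-c_\nu^2 b_\mu)\ge 0$. Its proof requires a delicate case analysis, the borderline multispeed case $c_\mu=c_\nu$ being the most awkward: there one must track the rescaling by $\rho=b_\nu/(b_\mu+b_\nu)$, exactly as in the proof of Corollary 6.2. All the remaining steps — the frequency-localization reductions via Lemmas 6.1 and 6.3, the collinearity observation, and the compactness argument on $K$ — are routine.
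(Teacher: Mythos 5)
Your proposal is correct. For part (a), your route coincides in substance with the paper's: both arguments reduce a contradiction with $\Phi,\nabla_\xi\Phi,\nabla_\eta\Phi$ all small to the simplicity of the roots of $\Psi_{\sigma\mu\nu}$ (Lemma 5.8 of \cite{a1}), via the identity $\nabla\Psi_{\sigma\mu\nu}(\xi)=(\nabla_\xi\Phi)(\xi,p(\xi))$. Where the paper carries this out quantitatively --- bounding $|\eta-p(\xi)|\lesssim 2^{8D_0}C$ by Proposition 6.5(a) and then running the mean value theorem on the $\nabla_\xi\Phi$ expression --- you pass to exact vanishing on a compact set $K$ and invoke continuity, which is cleaner and yields the same constant with the same dependencies. (Your dispatch of the $\mu+\nu=0$ case via Lemma 6.3 also replaces a short in-line computation in the paper, with no real difference.) For part (b), your route is genuinely different and notably simpler. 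The paper re-examines the internal proof of Lemma 5.8 of \cite{a1} under the relaxed hypothesis $|\nabla_\eta\Phi|\le C\cdot 2^{-4\bar k}$, tracking the derivative identity for $\partial_s\Psi$, the bound $|p_+'(s)|\lesssim 1$, and the resulting $\bar k$-dependence through that argument. You observe instead that part (b) is immediate from part (a) when $\max(k,k_1,k_2)\le D_0$ (the weight $2^{4\bar k}\ge 1$ only helps) and from Corollary 6.2 when $\max(k,k_1,k_2)\ge D_0$ (the weights $2^{3\bar k}$ and $2^{4\bar k}$ absorb the losses $2^{-\bar k}$ and $2^{-3\bar k}$ in that corollary, since $\bar k\ge D_0$). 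This completely bypasses the re-examination of Lemma 5.8 and is the most useful simplification in your write-up. One wording slip: you call part (a) ``a weaker statement than part (b)'' in the range $\max\le D_0$, when in fact (a) is the logically stronger assertion (its left-hand side is bounded above by that of (b)); your ensuing deduction (a) implies (b) is nonetheless correct.
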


\begin{proof}
\ \par
(a) We prove by contradiction, so we assume $\left|\Phi\right|<C\cdot 2^{-3\Bar{k}}$, $\left|\nabla_\xi\Phi\right|<C$ and $\left|\nabla_\eta\Phi\right|<C$, where $C$ will be selected as small as we wish later on. Now, we divide it into two cases:\\
\noindent$1^\circ$: $\min (k,k_1,k_2)\le-D_0$ \par
In this case, since $\left|\Phi\right|<C\cdot 2^{-3\Bar{k}}$, by Lemma 6.1, we get $\left|\nabla_\eta\Phi\right|>C\cdot 2^{-3\Bar{k}}$. Take $C=C(D_0)>0$ small enough, and this contradicts $\left|\nabla_\eta\Phi\right|<C$, since $\max (k,k_1,k_2)\le D_0$. \par
\noindent$2^\circ$: $k,k_1,k_2\in[-D_0,D_0]$ \par
In this case, by take $C=C(D_0)>0$ small enough, we can assume that $\left|\Phi\right|<C\cdot 2^{-3\Bar{k}}<C<\frac{1}{2}2^{-20D_0}$ and $\left|\nabla_\xi\Phi\right|<C<2^{-10D_0}$. First we claim that $\mu\neq -\nu$. This is because otherwise we will have $c_\mu=c_\nu$ and $b_\mu=-b_\nu$. Now, note that
$$\left|\nabla_\eta\Phi\right|=\left|\frac{c_\mu^2(\xi-\eta)}{\sqrt{c_\mu^2\left|\xi-\eta\right|^2+b_\mu^2}}+\frac{c_\nu^2\eta}{\sqrt{c_\nu^2\left|\eta\right|^2+b_\nu^2}}\right|<C.$$
Then, by doing the orthogonal decomposition as before, it's easy to see that $\left|\xi\right|\ll 1$, which contradicts $k\in[-D_0,D_0]$. Since $\mu\neq -\nu$, we are able to use Proposition 6.5 (a) to get that $\left|\eta-\frac{p^{+}(\left|\xi\right|)}{\left|\xi\right|}\xi\right|<2^{8D_0}C$, where $p^+:\mathbb{R}\longrightarrow\mathbb{R}$ is an odd smooth function such that $\nabla_\eta\Phi\left(\xi,p^+(\left|\xi\right|)\frac{\xi}{\left|\xi\right|}\right)=0$. Note that 
$$\left|\nabla_\xi\Phi\right|=\left|\frac{c_\sigma^2\xi}{\sqrt{c_\sigma^2\left|\xi\right|^2+b_\sigma^2}}-\frac{c_\mu^2\left(\xi-\frac{p^+(\left|\xi\right|)}{\left|\xi\right|}\xi\right)}{\sqrt{c_\mu^2\left|\xi-\frac{p^+(\xi)}{\left|\xi\right|}\xi\right|^2+b_\mu^2}}-\frac{c_\mu^2(\xi-\eta)}{\sqrt{c_\mu^2\left|\xi-\eta\right|^2+b_\mu^2}}+\frac{c_\mu^2\left(\xi-\frac{p^+(\left|\xi\right|)}{\left|\xi\right|}\xi\right)}{\sqrt{c_\mu^2\left|\xi-\frac{p^+(\xi)}{\left|\xi\right|}\xi\right|^2+b_\mu^2}}\right|.$$
Therefore, by applying the mean value theorem to the last two terms on the RHS of the above formula, we get that
$$\left|\frac{c_\sigma^2 s}{\sqrt{c_\sigma^2s^2+b_\sigma^2}}-\frac{c_\mu^2\left(s-p^+(s)\right)}{\sqrt{c_\mu^2\left(s-p^+(s)\right)^2+b_\mu^2}}\right|\lesssim 2^{8D_0}\,C,$$
where $s\triangleq\left|\xi\right|$.
However, this contradicts the Lemma 5.8 in \cite{a1}, since $C>0$ can be taken very small.\par
(b) Denote $\Psi(s)=\sqrt{c_\sigma^2 s^2+b_\sigma^2}-\sqrt{c_\mu^2 (p_+(s)-s)^2+b_\mu^2}-\sqrt{c_\nu^2 p_+^2(s)+b_\nu^2}$, where $p_+$ is the function in Proposition 6.5 (a). Then, by elementary calculation, we have the identity
\begin{align*}
    \partial_s \Psi(s)=\frac{c_\sigma^2 s^2}{\sqrt{c_\sigma^2 s^2+b_\sigma^2}}+\frac{c_\mu^2 (p_+(s)-s)}{\sqrt{c_\mu^2 (p_+(s)-s)^2+b_\mu^2}}-\left[\frac{c_\mu^2 (p_+(s)-s)}{\sqrt{c_\mu^2 (p_+(s)-s)^2+b_\mu^2}}-\frac{c_\nu^2 p_+(s)}{\sqrt{c_\nu^2 p_+^2(s)+b_\nu^2}}\right]\cdot p_+^\prime(s).
\end{align*}
Using this identity, $\left|p_+^\prime(s)\right|\lesssim 1$, and Proposition 6.5 (a), we recheck the proof of Lemma 5.8 in \cite{a1}, and find that it also holds if we don't have the assumption $\max (k,k_1,k_2)\le D_0$ but assume that $\left|\nabla_\eta\Phi\right|\le C\cdot 2^{-4\Bar{k}}$. This, together with Proposition 6.5 (a), implies the desired conclusion.
\end{proof}
\vspace{0.8em}
\begin{prop}
Assume $b_\sigma-b_\mu-b_\nu\neq0$ and $(c_\mu-c_\nu)(c_\mu^2b_\nu-c_\nu^2b_\mu)\ge0$. \par
(a) If $\nu+\mu\neq 0$, then there exists a function $p=p_{\mu\nu}:\mathbb{R}^2\rightarrow\mathbb{R}^2$ such that $\left|p(\xi)\right|\lesssim\left|\xi\right|$ and $\left|p(\xi)\right|\approx\left|\xi\right|$ for small $\xi$, and
$$\nabla_\eta\Phi(\xi,\eta)=0\ \ \ \iff\ \ \ \eta=p(\xi).$$
There is an odd smooth function $p_+:\mathbb{R}^+\rightarrow\mathbb{R}$, such that $p(\xi)=p_+(\left|\xi\right|)\xi/\left|\xi\right|$. Moreover, if \vspace{0.5em}$\left|\eta\right|+\left|\xi-\eta\right|\le U\in\left[1,+\infty\right)$ and $\left|\nabla_\eta\Phi(\xi,\eta)\right|\le\varepsilon$, then $\left|\eta-p(\xi)\right|\lesssim\varepsilon U^4$, and, for any $s\in\mathbb{R}^+$,
$$\left|D^\alpha p_+(s)\right|\lesssim_\alpha\,1,\ \ \ \ \left|p^\prime_+(s)\right|\gtrsim (1+\left|s\right|)^{-3},\ \ \ \ \left|1-p^\prime_+(s)\right|\gtrsim (1+\left|s\right|)^{-3}.$$\par
(b) If $\nu+\mu=0$ and $\nabla_\eta\Phi(\xi,\eta)=0$, then $\xi=0$. \par
(c) If $\xi\neq 0$, then $\det\left[\left(\nabla^2_{\eta,\eta}\Phi\right)(\xi,p(\xi))\right]\neq 0$.
\end{prop}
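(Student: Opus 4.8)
The entire proof rests on the identity $\nabla_\eta\Phi_{\sigma\mu\nu}(\xi,\eta)=(\nabla\Lambda_\mu)(\xi-\eta)-(\nabla\Lambda_\nu)(\eta)$, so it suffices to understand the gradient maps $g_\alpha:=\nabla\Lambda_\alpha$. With $\epsilon_\alpha=\operatorname{sgn}\alpha$ one has $g_\alpha(\zeta)=\epsilon_\alpha c_{|\alpha|}^2\zeta\,(c_{|\alpha|}^2|\zeta|^2+b_{|\alpha|}^2)^{-1/2}$, a smooth radial diffeomorphism of $\mathbb{R}^2$ onto the open disk of radius $c_{|\alpha|}$; its radial profile $h_\alpha(r)=\epsilon_\alpha c_{|\alpha|}^2 r\,(c_{|\alpha|}^2 r^2+b_{|\alpha|}^2)^{-1/2}$ satisfies $h_\alpha'(r)=\epsilon_\alpha c_{|\alpha|}^2 b_{|\alpha|}^2(c_{|\alpha|}^2 r^2+b_{|\alpha|}^2)^{-3/2}$, and the inverse radial profile $\widetilde H_\alpha$ of $g_\alpha^{-1}$ obeys the clean formula $\widetilde H_\alpha'(\rho)=c_{|\alpha|}b_{|\alpha|}\,(c_{|\alpha|}^2-\rho^2)^{-3/2}$ on $[0,c_{|\alpha|})$, with $\widetilde H_\alpha(\rho)\sim(c_{|\alpha|}^2-\rho^2)^{-1/2}$ as $\rho\to c_{|\alpha|}$. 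Part (b) is then immediate: if $\mu+\nu=0$ then $g_\nu=-g_\mu$, so $\nabla_\eta\Phi(\xi,\eta)=0$ says $g_\mu(\xi-\eta)=-g_\mu(\eta)=g_\mu(-\eta)$, and injectivity of $g_\mu$ forces $\xi=0$.

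For part (a), assuming $\mu+\nu\neq0$, I would solve $g_\mu(\xi-\eta)=g_\nu(\eta)$ by introducing the common value $w$: then $\eta=g_\nu^{-1}(w)$, $\xi-\eta=g_\mu^{-1}(w)$, so $\xi=\Xi(w):=g_\mu^{-1}(w)+g_\nu^{-1}(w)$, a radial map with signed radial profile $P(\rho)=\epsilon_\mu\widetilde H_\mu(\rho)+\epsilon_\nu\widetilde H_\nu(\rho)$ on $(-c_*,c_*)$, $c_*=\min(c_{|\mu|},c_{|\nu|})$. The heart of the matter is to show $P$ is a strictly monotone smooth bijection onto $\mathbb{R}$; granting this, $\Xi$ is a diffeomorphism, $p:=g_\nu^{-1}\circ\Xi^{-1}$ is the desired map, $p(\xi)=p_+(|\xi|)\xi/|\xi|$ with $p_+$ odd and smooth, and all remaining assertions---the comparisons $|p(\xi)|\lesssim|\xi|$ and $|p(\xi)|\approx|\xi|$ near the origin, the derivative bounds $|D^\alpha p_+|\lesssim_\alpha 1$, and the lower bounds $|p_+'(s)|\gtrsim(1+s)^{-3}$ and $|1-p_+'(s)|\gtrsim(1+s)^{-3}$---follow from the chain rule applied to $p_+'=\pm\bigl(\widetilde H_\nu'/(\widetilde H_\mu'+\widetilde H_\nu')\bigr)\circ P^{-1}$ together with the explicit forms of $\widetilde H_\alpha,\widetilde H_\alpha'$ above, examined in the two regimes $\rho\to0$ and $\rho\to c_*$. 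For the monotonicity: if $\epsilon_\mu=\epsilon_\nu$---in particular the main case of two outgoing factors---then $P'=\pm(\widetilde H_\mu'+\widetilde H_\nu')$ never vanishes and nothing else is needed; the delicate case is $\epsilon_\mu=-\epsilon_\nu$, where $P'=\pm(\widetilde H_\mu'-\widetilde H_\nu')$ can a priori vanish, and here one invokes $(c_\mu-c_\nu)(c_\mu^2b_\nu-c_\nu^2b_\mu)\ge0$: the equation $\widetilde H_\mu'(\rho)=\widetilde H_\nu'(\rho)$ becomes, after a $2/3$-power, a single linear equation for $\rho^2$, and a direct check shows that the sign forced by this nondegeneracy condition (together with $\mu+\nu\neq0$, which rules out the coincidence $(c_{|\mu|},b_{|\mu|})=(c_{|\nu|},b_{|\nu|})$) places its root outside $[0,c_*)$, so $P'$ has constant sign there. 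The quantitative bound $|\eta-p(\xi)|\lesssim\varepsilon U^4$ on $\{|\eta|+|\xi-\eta|\le U,\ |\nabla_\eta\Phi(\xi,\eta)|\le\varepsilon\}$ I would then get by splitting $\eta-p(\xi)$ into components along and orthogonal to $\xi$: the radial part is controlled by the monotone profile, whose derivative is $\gtrsim U^{-3}$ on the relevant range of $\rho$, and the orthogonal part by the identity $\Omega_\eta\Phi(\xi,\eta)=c_\mu^2(c_\mu^2|\xi-\eta|^2+b_\mu^2)^{-1/2}\langle\xi,\eta^\bot\rangle$ together with $|\Omega_\eta\Phi|\lesssim U\,|\nabla_\eta\Phi|$, keeping track of the powers of $U$.

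For part (c), fix $\xi\neq0$ and set $\eta_0=p(\xi)$. By construction both $\xi-\eta_0$ and $\eta_0$ are scalar multiples of $\xi$, so $\nabla^2\Lambda_\mu(\xi-\eta_0)$ and $\nabla^2\Lambda_\nu(\eta_0)$---hence also $(\nabla^2_{\eta\eta}\Phi)(\xi,\eta_0)=-[\nabla^2\Lambda_\mu(\xi-\eta_0)+\nabla^2\Lambda_\nu(\eta_0)]$---are simultaneously diagonal in the frame $\{\xi/|\xi|,\xi^\bot/|\xi|\}$. Writing $s_1=|\xi-\eta_0|$, $s_2=|\eta_0|$ and $\rho=|g_\mu(\xi-\eta_0)|=|g_\nu(\eta_0)|$ (which is $>0$ as $\xi\neq0$), the two diagonal entries are, up to a common sign, a tangential one $B=\rho(\epsilon_\mu/s_1+\epsilon_\nu/s_2)$ and a radial one $A=\epsilon_\mu(c_{|\mu|}^2-\rho^2)^{3/2}/(c_{|\mu|}b_{|\mu|})+\epsilon_\nu(c_{|\nu|}^2-\rho^2)^{3/2}/(c_{|\nu|}b_{|\nu|})$ (using $c_{|\alpha|}^2 s^2+b_{|\alpha|}^2=c_{|\alpha|}^2 b_{|\alpha|}^2/(c_{|\alpha|}^2-\rho^2)$ at the critical point), so $\det=AB$. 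If $\epsilon_\mu=\epsilon_\nu$ both entries carry the definite sign $\epsilon_\mu\neq0$ and we are done with no further hypothesis; if $\epsilon_\mu=-\epsilon_\nu$, then $B=0$ would force $s_1=s_2$, hence equality of $h_{|\mu|}$ and $h_{|\nu|}$ there, while $A=0$ again reduces to a linear equation in $\rho^2$, and in both cases the condition $(c_\mu-c_\nu)(c_\mu^2b_\nu-c_\nu^2b_\mu)\ge0$ (with $\xi\neq0$ and $\mu+\nu\neq0$) excludes the solution, by the same sign computation as for $P'$ in (a). The one genuinely delicate point throughout is this opposite-sign analysis: one must handle the extended-index conventions ($b_{-\alpha}=-b_\alpha$, $\Lambda_{-\alpha}=-\Lambda_\alpha$) so that the sign of $c_\mu^2 b_\nu-c_\nu^2 b_\mu$ comes out in usable form, and check the borderline case where two dispersion relations coincide; everything else is the elementary calculus above, essentially the adaptation of Lemmas~5.6--5.8 of \cite{a1} to the present multispeed, multimass setting.
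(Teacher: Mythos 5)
Your proposal supplies a genuine proof of a proposition for which the paper itself offers only citations (Lemma~8.2(iii) of \cite{y2} for (a), ``elementary computations'' for (b), and Section~1.2.5 of \cite{a1} for (c)), and your construction---parametrizing the critical set by $w=g_\mu(\xi-\eta)=g_\nu(\eta)$, inverting the radial gradient maps $g_\alpha=\nabla\Lambda_\alpha$, and reducing everything to the scalar profile $P(\rho)=\epsilon_\mu\widetilde H_\mu(\rho)+\epsilon_\nu\widetilde H_\\nu(\rho)$---is exactly the mechanism underlying those references, so this is a faithful reconstruction rather than a genuinely different route. The computations you leave as ``a direct check'' do come out as you describe: $\widetilde H_\alpha'(\rho)=c_{|\alpha|}b_{|\alpha|}(c_{|\alpha|}^2-\rho^2)^{-3/2}$ is correct; the equation $\widetilde H_\mu'=\widetilde H_\nu'$ gives $\rho^2=\bigl(A_2c_{|\mu|}^2-A_1c_{|\nu|}^2\bigr)/(A_2-A_1)$ with $A_i=(c_{|i|}b_{|i|})^{2/3}$, and a sign analysis (distinguishing $A_1\lessgtr A_2$ and $c_{|\mu|}\lessgtr c_{|\nu|}$) shows that $(c_{|\mu|}-c_{|\nu|})(c_{|\mu|}^2b_{|\nu|}-c_{|\nu|}^2b_{|\mu|})\ge0$ indeed forces $\rho^2\notin[0,c_*^2)$; the analogous check for $A=0$ in part (c) reduces to the same equation.

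Two small remarks. First, you should say a word about why $P$ is a bijection of $[0,c_*)$ onto a half-line: that $P(\rho)\to\pm\infty$ as $\rho\to c_*$ needs the observation that whichever of $\widetilde H_\mu,\widetilde H_\nu$ has the smaller critical radius $c_*$ actually diverges there (and, in the opposite-sign case with $c_{|\mu|}=c_{|\nu|}$, that $b_{|\mu|}\ne b_{|\nu|}$ so the divergences do not cancel); this is also where the global bound $|p(\xi)|\lesssim|\xi|$ comes from. Second, in part (c) with $\epsilon_\mu=-\epsilon_\nu$, the tangential eigenvalue $B$ is handled more simply than you indicate: since $g_\mu^{-1}(w)$ and $g_\nu^{-1}(w)$ point in opposite directions, $|\xi|=|\,|g_\mu^{-1}(w)|-|g_\nu^{-1}(w)|\,|=|s_1-s_2|$, so $B=0$ (i.e.\ $s_1=s_2$) already forces $\xi=0$; the nondegeneracy condition is not needed for this half of the determinant, only for the radial eigenvalue $A$. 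Your detour through $h_{|\mu|}(s_1)=h_{|\nu|}(s_1)$ is not wrong, but obscures that the hypothesis is only used once.
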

\begin{proof}
Part (a) is proved in Lemma 8.2 (iii) in \cite{y2}. Part (b) can be seen by some easy elementary computations. Part (c) is proved in Section 1.2.5 in \cite{a1}.
\end{proof}
\vspace{0.8em}
\begin{prop}
Assume $b_\sigma-b_\mu-b_\nu\neq0$ and $(c_\mu-c_\nu)(c_\mu^2b_\nu-c_\nu^2b_\mu)\ge0$. Then we have \\ [4pt]
(a) $\left|\Phi(\xi,0)\right|+\left|\nabla_\eta\Phi(\xi,0)\right|\gtrsim 1$, and $\left|\Phi(\xi,0)\right|+\left|\nabla_\xi\Phi(\xi,0)\right|\gtrsim 1$; \\ [4pt]
(b) $\left|\Phi(\xi,0)\right|$ (or $\left|\Phi(0,\eta)\right|$) , as a function of $\xi$ (or $\eta$), only has at most one zero.
\end{prop}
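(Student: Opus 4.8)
The plan is to treat the two parts essentially by reducing to the one-variable function $s\mapsto\Lambda_\sigma(s)$ and $s\mapsto\Lambda_\mu(s)+\Lambda_\nu(s)$ on the relevant rays, using only that the dispersion relations $\Lambda_\alpha(\xi)=\sqrt{c_\alpha^2|\xi|^2+b_\alpha^2}$ are strictly increasing, smooth, and have nonvanishing first derivative away from $\xi=0$, together with the first nondegeneracy condition $b_\sigma-b_\mu-b_\nu\neq 0$. For part (a), evaluating at $\eta=0$ one gets $\Phi(\xi,0)=\Lambda_\sigma(\xi)-\Lambda_\mu(\xi)-\Lambda_\nu(0)=\Lambda_\sigma(\xi)-\Lambda_\mu(\xi)-b_\nu$, and $\nabla_\eta\Phi(\xi,0)=\nabla\Lambda_\mu(\xi)-\nabla\Lambda_\nu(0)=\nabla\Lambda_\mu(\xi)$ (using $\nabla\Lambda_\nu(0)=0$). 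Similarly $\nabla_\xi\Phi(\xi,0)=\nabla\Lambda_\sigma(\xi)-\nabla\Lambda_\mu(\xi)$.

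First I would handle the low-frequency regime $|\xi|\ll 1$: there $\Phi(\xi,0)\to b_\sigma-b_\mu-b_\nu\neq 0$ by continuity, so $|\Phi(\xi,0)|\gtrsim 1$ once $|\xi|$ is below a threshold depending only on the parameters; this immediately gives both lower bounds in (a) for small $\xi$. For $|\xi|$ bounded below, I would split again. In the range $|\xi|\sim 1$ up to any large constant, I would argue that $\nabla\Lambda_\mu(\xi)=c_\mu^2\xi/\Lambda_\mu(\xi)$ has modulus $\gtrsim 1$, settling the first inequality of (a); and for the second inequality I would invoke the $\nu=0$ specialization of Corollary 6.2 / Lemma 6.1 applied to the phase $\Lambda_\sigma(\xi)-\Lambda_\mu(\xi)-\Lambda_\nu(0)$ — or, more directly, observe that if $\nabla\Lambda_\sigma(\xi)=\nabla\Lambda_\mu(\xi)$ then (by the injectivity of $s\mapsto c^2 s/\sqrt{c^2s^2+b^2}$ type arguments, i.e.\ the strict monotonicity used in Lemma 5.8 of \cite{a1}) $|\xi|$ is pinned to an isolated value, at which $|\Lambda_\sigma(\xi)-\Lambda_\mu(\xi)-b_\nu|$ cannot also vanish unless $b_\sigma-b_\mu-b_\nu=0$; the quantitative version follows by compactness. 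For the genuinely high-frequency tail $|\xi|\to\infty$, I would use the asymptotics $\Lambda_\alpha(\xi)=c_\alpha|\xi|+b_\alpha^2/(2c_\alpha|\xi|)+O(|\xi|^{-3})$: if $c_\sigma\neq c_\mu$ then $|\Phi(\xi,0)|\gtrsim |\xi|\gtrsim 1$; if $c_\sigma=c_\mu$ then $\Phi(\xi,0)=-b_\nu+O(|\xi|^{-1})\to -b_\nu\neq 0$ (since $b_\mu+b_\nu\neq b_\sigma=$ same leading mass structure forces $b_\nu\neq 0$), again giving the bound, and likewise $\nabla_\xi\Phi(\xi,0)=O(|\xi|^{-2})$ is not the relevant term there, so one reads off $|\Phi(\xi,0)|\gtrsim 1$ instead.

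For part (b), I would note that $\Phi(\xi,0)$, as a function of $\xi$, depends only on $r=|\xi|$ and equals $g(r):=\Lambda_\sigma(r)-\Lambda_\mu(r)-b_\nu$. Its derivative is $g'(r)=c_\sigma^2 r/\Lambda_\sigma(r)-c_\mu^2 r/\Lambda_\mu(r)$; I would show $g$ is strictly monotone on $(0,\infty)$ except possibly across a single stationary point, by analyzing the sign of $c_\sigma^2/\Lambda_\sigma(r)-c_\mu^2/\Lambda_\mu(r)$ — squaring, this sign is governed by the sign of a single affine function of $r^2$, hence changes at most once, so $g$ is first monotone then monotone of the opposite type, and a continuous function with that shape has at most one zero on $(0,\infty)$. (If $c_\sigma=c_\mu$ the function $g$ is strictly monotone throughout, or even constant — but if constant it equals $b_\sigma-b_\mu-b_\nu\ne 0$, so no zero; if $c_\sigma\ne c_\mu$ one gets the single-stationary-point picture.) The same argument with the roles of the first and second arguments exchanged handles $|\Phi(0,\eta)|$, since $\Phi(0,\eta)=b_\sigma-\Lambda_\mu(\eta)-\Lambda_\nu(\eta)$ is radial in $\eta$ and $h(r):=b_\sigma-\Lambda_\mu(r)-\Lambda_\nu(r)$ is strictly decreasing in $r$ (both $\Lambda_\mu,\Lambda_\nu$ increasing), hence has at most one zero.

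The main obstacle I anticipate is the "intermediate frequency" part of (a), namely ruling out the coincidence of a zero of $\Phi(\xi,0)$ with a critical point of $\nabla_\xi\Phi(\xi,0)$ quantitatively rather than merely generically: this is exactly the content of the simple-root statement Lemma 5.8 in \cite{a1} specialized to the degenerate triple $(\sigma,\mu,\nu)$ with $b_\nu$ playing the role of a constant mass, and the care needed is to check that that lemma applies in this limiting configuration and to produce a parameter-dependent constant $C$ via a compactness argument over the bounded-$|\xi|$ region. Everything else reduces to monotonicity of scalar functions and the large-$|\xi|$ asymptotic expansion.
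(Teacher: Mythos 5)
Your treatment of the low-frequency regime and the first inequality in (a) is correct and aligns with the paper: from $\nabla_\eta\Phi(\xi,0)=\nabla\Lambda_\mu(\xi)$ small one deduces $|\xi|$ small, and then the first nondegeneracy condition forces $|\Phi(\xi,0)|\approx|b_\sigma-b_\mu-b_\nu|\gtrsim 1$.

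However, part (b) has a genuine logical gap. You reduce to the radial function $g(r)=\Lambda_\sigma(r)-\Lambda_\mu(r)-b_\nu$, correctly note that the sign of $g'(r)$ is controlled by an affine function of $r^2$, and conclude that $g'$ changes sign at most once — so $g$ is ``first monotone then monotone of the opposite type.'' But a unimodal function of that shape can have two zeros, not one; ``at most one sign change of $g'$'' does not give ``at most one zero of $g$.'' The paper's argument is strictly stronger: setting $f'(r)=0$ leads to
\[
c_\sigma^2 c_\mu^2(c_\sigma+c_\mu)(c_\sigma-c_\mu)\,r^2+(c_\sigma^2 b_\mu+c_\mu^2 b_\sigma)(c_\sigma^2 b_\mu-c_\mu^2 b_\sigma)=0,
\]
and the second nondegeneracy condition $(c_\mu-c_\sigma)(c_\mu^2 b_\sigma-c_\sigma^2 b_\mu)\ge 0$ (applied to the appropriate index pair) forces the two coefficients to have the same sign, so there is no positive root; hence $g'$ never vanishes, $g$ is strictly monotone, and has at most one zero. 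Without the second condition the ``single stationary point picture'' you describe is genuinely possible and would permit two zeros — so the second condition is not an optional refinement here, it is the content.

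The same omission affects your plan for the second inequality in (a). You hope to rule out a simultaneous vanishing of $\Phi(\xi,0)$ and $\nabla_\xi\Phi(\xi,0)$ via Lemma 5.8 of~\cite{a1} and the first nondegeneracy condition (``unless $b_\sigma-b_\mu-b_\nu=0$''). That implication is not correct: $\nabla\Lambda_\sigma(r)=\nabla\Lambda_\mu(r)$ pins $r$ to a specific value, but nothing about the \emph{first} condition prevents $\Lambda_\sigma(r)-\Lambda_\mu(r)-b_\nu$ from also vanishing there. What actually closes this case in the paper is again a direct algebraic computation: from $|\nabla_\xi\Phi(\xi,0)|<C$ at $|\xi|\gtrsim 1$ one squares, clears denominators, and obtains an inequality of the form
\[
\bigl(c_\sigma^4 c_\mu^2-c_\mu^4 c_\sigma^2+O(C)\bigr)|\xi|^2+c_\sigma^4 b_\mu^2-c_\mu^4 b_\sigma^2\lesssim C,
\]
which contradicts $(c_\mu-c_\sigma)(c_\mu^2 b_\sigma-c_\sigma^2 b_\mu)\ge 0$ when $C$ is small. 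You correctly flagged this as ``the main obstacle,'' but the resolution you propose (compactness plus simple roots of $\Psi$) doesn't engage the right hypothesis; the second nondegeneracy condition is the essential input in both (a) and (b), and your write-up never invokes it.
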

\begin{proof}
(a) We prove by contradiction, so we assume that $\left|\Phi(\xi,0)\right|+\left|\nabla_\eta\Phi(\xi,0)\right|<C$, and $\left|\Phi(\xi,0)\right|+\left|\nabla_\xi\Phi(\xi,0)\right|<C$, where $C>0$ is very small. We first consider the first inequality. From $\left|\nabla_\eta\Phi(\xi,0)\right|=\displaystyle{\frac{c_\mu^2\left|\xi\right|}{\sqrt{c_\mu^2\left|\xi\right|^2+b_\mu^2}}<C}$, we get that
$$\left|\xi\right|<\frac{b_\mu}{\sqrt{c_\mu^4-c_\mu^2C^2}}C<\frac{b_\mu}{\frac{1}{2}\sqrt{c_\mu^4}}C\lesssim C,$$
where the second inequality is due to the smallness of $C>0$. Thus, we have
$$\Phi(\xi,0)=\sqrt{c_\sigma^2\left|\xi\right|^2+b_\sigma^2}-\sqrt{c_\mu^2\left|\xi\right|^2+b_\mu^2}-b_\nu>(1+\varepsilon)b_\sigma-(1-\varepsilon)b_\mu-b_\nu,$$
where $\varepsilon>0$ is very small depending on $C$. Since $b_\sigma-b_\mu-b_\nu\neq 0$, if we take $C>0$ small enough, then we will have $\left|\Phi(\xi,0)\right|>C$, which is a contradiction.\par
Next, we consider the second inequality. From above, we note that $\left|\xi\right|$ cannot be too small. Thus, from $\displaystyle{\left|\nabla_\xi\Phi(\xi,0)\right|=\left|\frac{c_\sigma^2}{\sqrt{c_\sigma^2\left|\xi\right|^2+b_\sigma^2}}-\frac{c_\mu^2}{\sqrt{c_\mu^2\left|\xi\right|^2+b_\mu^2}}\right|\cdot\left|\xi\right|<C}$, we get that
$$\frac{c_\sigma^4}{c_\sigma^2\left|\xi\right|^2+b_\sigma^2}<\frac{c_\mu^4}{c_\mu^2\left|\xi\right|^2+b_\mu^2}+\left(\frac{C}{\left|\xi\right|}\right)^2+2\frac{C}{\left|\xi\right|}\frac{c_\mu^2}{\sqrt{c_\mu^2\left|\xi\right|^2+b_\mu^2}}.$$
Eliminate all denominators, use the fact that $\frac{1}{\left|\xi\right|}<C_1$, where $C_1>0$ does not depend on $C$ and $\sqrt{c_\mu^2\left|\xi\right|^2+b_\mu^2}\sim\left|\xi\right|\le C_2\left|\xi\right|^2$ and we get that
$$\left(c_\sigma^4 c_\mu^2-c_\mu^4 c_\sigma^2-\left(C^2-2C C_2\right)c_\sigma^2 c_\mu^2\right)\left|\xi\right|^2+c_\sigma^4 b_\mu^2-c_\mu^4 b_\sigma^2<C^2\left(b_\mu^2 c_\sigma^2+c_\mu^2 b_\sigma^2+C_1 b_\mu^2 b_\sigma^2\right)+C\cdot C_2\,c_\mu^2 b_\sigma^2,$$
where $C_1,C_2\sim 1$ and $C\ll 1$.
Thus, if $C$ is taken small enough, the above inequality contradicts with our assumption $(c_\mu-c_\sigma)(c_\mu^2b_\sigma-c_\sigma^2b_\mu)\ge0$.\\ [6pt]
(b) We only prove the case of $\left|\Phi(\xi,0)\right|$, since the other case can be dealt with similarly. Note that
$$\left|\Phi(\xi,0)\right|=\pm \sqrt{c_\sigma^2\left|\xi\right|^2+b_\sigma^2}\mp\sqrt{c_\mu^2\left|\xi\right|^2+b_\mu^2}\mp b_\nu,$$
so if the first two terms have the same signs, then no zero will occur. Now, let's assume that the first two terms have the different signs, and WLOG we could assume $b_\sigma>0$, and $b_\mu>0$. Let
$$f(r)=\sqrt{c_\sigma^2 r^2+b_\sigma^2}-\sqrt{c_\mu^2 r^2+b_\mu^2}\mp b_\nu,\,\,\,\,\,\,\,\,(r>0)$$
then we have
$$f^{\prime}(r)=\left(\frac{c_\sigma^2}{\sqrt{c_\sigma^2\left|\xi\right|^2+b_\sigma^2}}-\frac{c_\mu^2}{\sqrt{c_\mu^2\left|\xi\right|^2+b_\mu^2}}\right)\,r.$$
Let $f^{\prime}(r)=0$, and we will get
\begin{align*}
c_\sigma^2 c_\mu^2 \left(c_\sigma+c_\mu\right) \left(c_\sigma-c_\mu\right) r^2+\left(c_\sigma^2 b_\mu+c_\mu^2 b_\sigma\right) \left(c_\sigma^2 b_\mu-c_\mu^2 b_\sigma\right)=0, 
\end{align*}
However, by our assumption, these two coefficients of $r$ above should be in the same signs, which leads to a contradiction. Thus, we have either $f^{\prime}(r)>0$ or $f^{\prime}(r)<0$. This implies that $f(r)$ has at most one zero.
\end{proof}
\vspace{0.8em}
Next, we state a general lemma that bounds the size of level sets of functions with non-vanishing high derivatives.
\begin{lemma}
    Suppose $Y:\mathbb{R}^n\rightarrow\mathbb{R}$ satisfies
    $$\sum_{\left|\alpha\right|\le q}\left|\partial_x^\alpha Y(s)\right|\gtrsim 1$$
    for some $q>1$, for all $x\in K$, where $K$ is a compact set contained in the closed ball $B_R$ centered at the origin. Moreover, assume that $\left\|\nabla Y\right\|_{C^q(B_{R+1})}\lesssim 1$, then for any $\varepsilon$ we have
    $$\left|\left\{x:\left|Y(x)\right|
    \le\varepsilon\right\}\right|\lesssim_R \varepsilon^{1/q}.$$
    Moreover, if $K$ is a compact set such that $\partial(K\cap l)$ has at most $O(1)$ points for any straight line $l$, and $Y$ is such that $\left|\nabla Y\right|\gtrsim \gamma$ on $K$, then we have
    $$\left|\left\{x:\left|Y(x)\right|\le\varepsilon\right\}\right|\lesssim_R \varepsilon\gamma^{-1}.$$
\end{lemma}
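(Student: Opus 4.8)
The plan is to prove this as a textbook sublevel–set estimate, reducing to one dimension by a covering argument and Fubini. The one–dimensional building block I would record first is: if $g\in C^m([a,b])$ and $|g^{(m)}|\ge\lambda>0$ on $[a,b]$, then $|\{t:|g(t)|\le\varepsilon\}|\lesssim_m(\varepsilon/\lambda)^{1/m}$; this is proved by induction on $m$, since on each maximal interval where $g^{(m-1)}$ keeps a fixed sign $g^{(m-2)}$ is monotone (so its sublevel set is an interval whose length is controlled), and outside a neighbourhood of the zeros of $g^{(m-1)}$ one has $|g^{(m-1)}|\gtrsim\lambda\cdot(\text{radius})^{\,}$ and applies the inductive hypothesis.

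\emph{First part.} At any $x\in K$ with $|Y(x)|\le\varepsilon$ and $\varepsilon$ small, the hypothesis forces $\sum_{1\le|\alpha|\le q}|\partial^\alpha Y(x)|\gtrsim 1$. I would then use the elementary fact that $\max_{e\in S^{n-1}}\sum_{m=1}^{q}|\partial_e^m Y(x)|\gtrsim\sum_{1\le|\alpha|\le q}|\partial^\alpha Y(x)|$, which holds because the left-hand side is a seminorm on the finite–dimensional space of $q$–jets modulo constants that is in fact a norm (if it vanishes, $Y$ together with all its derivatives up to order $q$ vanishes along every line through $x$, hence all the $\partial^\alpha Y(x)$ vanish), and all norms on a finite–dimensional space are equivalent. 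This produces a unit vector $e(x)$ and an order $m(x)\le q$ with $|\partial_{e(x)}^{m(x)}Y(x)|\gtrsim 1$. By the $C^q$ bound on $\nabla Y$ this persists with fixed constants on a ball about $x$ of radius bounded below \emph{independently of $\varepsilon$}, and after replacing $e(x)$ by a nearby element of a fixed finite net of $S^{n-1}$ and invoking Heine–Borel, $K$ is covered by a fixed finite number $N=N(R,q,n)$ of such balls $B_i$, on each of which either $|Y|\gtrsim 1$ (contributing nothing for small $\varepsilon$) or $|\partial_{e_i}^{m_i}Y|\gtrsim 1$. Choosing coordinates so $e_i$ is an axis, slicing $B_i$ by lines in direction $e_i$, applying the one–dimensional estimate with $m=m_i\le q$ (so $(\varepsilon/\lambda)^{1/m_i}\lesssim\varepsilon^{1/q}$), and integrating the transverse variable over a set of measure $\lesssim_R 1$ gives $|\{x\in B_i:|Y|\le\varepsilon\}|\lesssim_R\varepsilon^{1/q}$; summing over $i$ gives the first claim.

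\emph{Second (``moreover'') part.} Here one runs the same machine with $q=1$, tracking $\gamma$. Since $|\nabla Y|\gtrsim\gamma$ on $K$, splitting $K$ into $O(1)$ pieces $K_j$ according to which cap of a fixed finite net of $S^{n-1}$ contains $\nabla Y(x)/|\nabla Y(x)|$ (and the sign of the relevant component), one may assume $\partial_{e_j}Y\ge c\gamma$ on $K_j$ for a fixed direction $e_j$. Slicing $K_j$ by lines $\ell$ in direction $e_j$: the hypothesis that $\partial(K\cap\ell)$ has $O(1)$ points means $K\cap\ell$ is a union of $O(1)$ intervals, and the $C^1$ bound on $\nabla Y$ controls how often $\partial_{e_j}Y$ returns to the threshold $c\gamma$ along $\ell$, so $Y|_\ell$ has only $O_R(1)$ (for $\gamma\gtrsim_R 1$, which is the case in all applications) monotone runs meeting $K_j$; on each such run $|(Y|_\ell)'|\ge c\gamma$ makes the sublevel set an interval of length $\le 2\varepsilon/(c\gamma)$. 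Integrating the transverse variable over a set of measure $\lesssim_R 1$ and summing over the $O(1)$ pieces yields $|\{x\in K:|Y|\le\varepsilon\}|\lesssim_R\varepsilon\gamma^{-1}$.

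\emph{Main obstacle.} The one–dimensional lemma and the Fubini bookkeeping are routine; the delicate points are (i) in the first part, keeping the cardinality $N$ of the covering independent of $\varepsilon$ — this is exactly why the hypothesis is a lower bound on the full $q$–jet and why one passes through the norm–equivalence step and the $C^q$ control of $\nabla Y$, rather than reasoning pointwise; and (ii) in the second part, converting ``$|\nabla Y|$ does not vanish on $K$'' into ``$Y$ restricted to a generic line has boundedly many monotone runs inside $K$,'' which is precisely where the geometric hypothesis that lines meet $K$ in $O(1)$ intervals is indispensable — without it the one–dimensional slices could a priori be split into uncontrollably many pieces, and a careful accounting of these runs is what pins down the sharp $\gamma^{-1}$ dependence.
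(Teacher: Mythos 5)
The paper does not contain a proof of this lemma: it gives only the citation ``This was proved in \cite{y2}.''\ So the comparison must be against the argument in that reference rather than against anything in this text. Your reconstruction is the standard one for sublevel-set estimates of this kind: a one-dimensional lemma for functions with $|g^{(m)}|\ge\lambda$, a slicing-plus-Fubini reduction, and, for the first part, a finite-dimensional norm-equivalence step upgrading the lower bound on the full $q$-jet to a lower bound $|\partial_e^m Y|\gtrsim 1$ for some fixed direction $e$ and some order $m\le q$. The norm-equivalence claim $\max_{e\in S^{n-1}}\sum_{m\le q}|\partial_e^m Y|\gtrsim\sum_{1\le|\alpha|\le q}|\partial^\alpha Y|$ is correct (vanishing of the left side forces $\partial_e^m Y=0$ for every $e$, and since $\partial_e^m Y$ is a homogeneous degree-$m$ polynomial in $e$ whose coefficients are the $\partial^\alpha Y$, all those vanish too), and your use of the $C^q$ control of $\nabla Y$ to make the good $(e,m)$ persist on a ball of $\varepsilon$-independent radius is precisely the point that keeps the covering finite. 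That is the right organization of the first part.

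For the ``moreover'' part the organization is again right, but as written the argument does not deliver the stated bound for all $\gamma$, and you should be more explicit about this than a parenthetical. Partitioning $K$ into pieces $K_j$ according to the direction of $\nabla Y/|\nabla Y|$ and slicing in direction $e_j$, the $O(1)$-intervals hypothesis controls the components of $K\cap\ell$ but not of $K_j\cap\ell$; the $C^1$ bound on $\nabla Y$ only caps the number of monotone runs of $Y|_\ell$ that meet $K_j$ at $O(1+R/\gamma)$, since the gradient direction can rotate by a full aperture over a distance as small as $\sim\gamma$. This yields $\varepsilon\gamma^{-1}(1+R/\gamma)$, not $\varepsilon\gamma^{-1}$. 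You do flag this (``for $\gamma\gtrsim_R 1$, which is the case in all applications''), and the caveat is accurate: in this paper the lemma is only invoked with $\gamma$ bounded below by an absolute constant, so the loss is harmless. But a self-contained proof of the lemma as literally stated would require either promoting $\gamma\gtrsim 1$ to an explicit hypothesis or a genuinely finer count of the runs (for instance via a uniform bound on $\mathcal{H}^{n-1}(Y^{-1}(t)\cap K)$ together with the coarea formula), and your proposal does not supply either; it would be cleaner to state the restriction up front rather than leave it implicit.
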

\begin{proof}
    This was proved in \cite{y2}.
\end{proof}
\vspace{0.8em}
Then, we collect several volume estimates related to the phase function $\Phi$, whose proofs require quite precise information about the dispersive relations. They will mainly used in the energy estimate in Section 4.
\vspace{0.8em}
\begin{prop}
\ \par
(a) Assume $b_\sigma-b_\mu-b_\nu\neq0$ and $(c_\mu-c_\nu)(c_\mu^2b_\nu-c_\nu^2b_\mu)\ge0$. If $k\ge 0$, $\varepsilon\le\frac{1}{2}$, and
$$E\triangleq\left\{(\xi,\eta):\,\max(\left|\xi\right|,\left|\eta\right|)\le 2^k,\left|\Phi(\xi,\eta)\right|\le 2^{-k}\varepsilon\right\},$$
then
$$\sup\limits_{\xi}{\int_{\mathbb{R}^2}\boldsymbol{1}_E(\xi,\eta)\,d\eta}+\sup\limits_{\eta}{\int_{\mathbb{R}^2}\boldsymbol{1}_E(\xi,\eta)\,d\xi}\lesssim 2^{9k}\varepsilon\log\left(\frac{1}{\varepsilon}\right)$$ \par
(b) Assume $b_\sigma-b_\mu-b_\nu\neq0$ and $(c_\mu-c_\nu)(c_\mu^2b_\nu-c_\nu^2b_\mu)\ge0$. If $\varepsilon\le\varepsilon^\prime\le\frac{1}{2}$, and
\begin{equation}
\begin{aligned}
    E_1^\prime\triangleq\left\{(\xi,\eta):\,\right.&\max(\left|\xi\right|,\left|\eta\right|)\le 2^k,\left|\Phi(\xi,\eta)\right|\le 2^{-k}\varepsilon,\left|\Upsilon(\xi,\eta)\right|  \\
    &\left.\le 2^{-3k}\varepsilon^\prime,\left|\nabla_\eta\Phi(\xi,\eta)\right|\ge 2^{-D_0}\right\},  \\[5pt]
    E_2^\prime\triangleq\left\{(\xi,\eta):\,\right.&\max(\left|\xi\right|,\left|\eta\right|)\le 2^k,\left|\Phi(\xi,\eta)\right|\le 2^{-k}\varepsilon,\left|\Upsilon(\xi,\eta)\right|  \\
    &\left.\le 2^{-3k}\varepsilon^\prime,\left|\nabla_\xi\Phi(\xi,\eta)\right|\ge 2^{-D_0}\right\}, \\
\end{aligned}
\end{equation}
where $\Upsilon$ is defined by
$$\Upsilon(\xi,\eta)=\nabla_{\xi,\eta}^2\Phi(\xi,\eta)\left[\nabla_{\xi}^{\bot}\Phi(\xi,\eta),\nabla_{\eta}^{\bot}\Phi(\xi,\eta)\right],$$
then
$$\sup\limits_{\xi}{\int_{\mathbb{R}^2}\boldsymbol{1}_{E_1^\prime}(\xi,\eta)\,d\eta}+\sup\limits_{\eta}{\int_{\mathbb{R}^2}\boldsymbol{1}_{E_2^\prime}(\xi,\eta)\,d\xi}\lesssim 2^{12k}\varepsilon\log\left(\frac{1}{\varepsilon}\right)\cdot \left(\varepsilon^\prime\right)^{\sfrac{1}{8}}.$$ \par
(c) Assume $b_\sigma-b_\mu-b_\nu\neq0$ and $(c_\mu-c_\nu)(c_\mu^2b_\nu-c_\nu^2b_\mu)\ge0$. If $\varepsilon\le\varepsilon^{\prime\prime}\le{\frac{1}{2}}$, $r_0\in[2^{-D_0},2^{D_0}]$ and consider the set
$$E^{\prime\prime}\triangleq\left\{(\xi,\eta):\max\left(\left|\xi\right|,\left|\eta\right|\right)\le 2^k,\left|\Phi(\xi,\eta)\right|\le 2^{-k}\varepsilon,\big|\left|\xi-\eta\right|-r_0\big|\le\varepsilon^{\prime\prime}\right\},$$
Then we can write $E^{\prime\prime}=E_1^{\prime\prime}\cup E_2^{\prime\prime}$ such that
$$\sup\limits_{\xi}{\int_{\mathbb{R}^2}\boldsymbol{1}_{E_1^{\prime\prime}}(\xi,\eta)\,d\eta}+\sup\limits_{\eta}{\int_{\mathbb{R}^2}\boldsymbol{1}_{E_2^{\prime\prime}}(\xi,\eta)\,d\xi}\lesssim 2^{12k}\varepsilon\log\left(\frac{1}{\varepsilon}\right)\cdot \left(\varepsilon^{\prime\prime}\right)^{\sfrac{1}{2}}.$$

\end{prop}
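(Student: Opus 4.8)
The plan is to follow the scheme of the corresponding volume lemmas in \cite{a1} and \cite{y2}, but to drive it by the structural results collected above in Section 6 rather than by the explicit two‑speed formulas of \cite{y2}. In all three parts one fixes one of the two variables and estimates the measure of the slice of a sublevel set of $\Phi$ (possibly cut down by one more thin condition); for the $\eta$‑slice one wants $\Phi(\xi,\cdot)$ transverse in $\eta$, for the $\xi$‑slice one wants $\Phi(\cdot,\eta)$ transverse in $\xi$, and the two are interchangeable because $\nabla_\xi\Phi(\xi,\eta)$ equals the input‑variable gradient of the relabeled phase $\Phi_{\nu\sigma\mu}(-\eta,\xi-\eta)$, so that Propositions 6.4--6.6 apply verbatim after relabeling (the hypotheses (1.2) being invariant under permuting the index triple). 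The one genuinely new feature relative to \cite{y2} is forced by the multispeed/multimass setting: having no analogue of (1.5), one cannot always rely on $\nabla_\eta\Phi$, and instead one splits the relevant set into a piece on which $|\nabla_\eta\Phi|$ is bounded below and a piece on which $|\nabla_\xi\Phi|$ is bounded below — exactly the device used in the proof of Lemma 4.1 — with the dichotomy supplied by Proposition 6.4(a), or by its $\bar k$‑uniform form 6.4(b) when $\max(k,k_1,k_2)$ is large. I write $\delta=2^{-k}\varepsilon$ throughout.

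For part (a), fix $\xi$ and bound $\int\mathbf 1_E(\xi,\eta)\,d\eta$, the bound on $\int\mathbf 1_E\,d\xi$ being symmetric by the relabeling above. Decompose $\{|\eta|\le 2^k\}$ dyadically by $|\nabla_\eta\Phi(\xi,\eta)|\sim 2^p$. On each piece with $p$ not too negative, $\Phi(\xi,\cdot)$ is transverse, so the slab $\{|\Phi|\le\delta\}$ has area $\lesssim 2^{-p}\delta\,2^k$, and summing the geometric series in $p$ costs only a power of $2^k$. On the leftover piece, where $|\nabla_\eta\Phi|$ is below the threshold: if $\mu+\nu=0$ it is empty by Lemma 6.3 (using $b_\sigma\neq 0$); if $\mu+\nu\neq 0$, Proposition 6.5(a) confines $\eta$ to a ball about $p(\xi)$ of radius a negative power of $2^k$, where $\Phi(\xi,p(\xi)+v)=\Psi_{\sigma\mu\nu}(\xi)+\tfrac12 Q_\xi(v)+O(|v|^3)$ with $Q_\xi=(\nabla^2_{\eta\eta}\Phi)(\xi,p(\xi))$ non‑degenerate by Proposition 6.5(c), and the sublevel set of a non‑degenerate quadratic form in two variables has area $\lesssim\delta\log(1/\delta)$ — the logarithm entering only in the indefinite (hyperbolic) case. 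Adding the two contributions gives the asserted $\lesssim 2^{9k}\varepsilon\log(1/\varepsilon)$; the case of small $\xi$ is absorbed into the transverse region because Proposition 6.6(a) and $b_\sigma-b_\mu-b_\nu\neq 0$ keep $|\Phi|$ away from $0$ near $\eta=0$.

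For part (b), fix $\xi$ and bound $\int\mathbf 1_{E_1'}\,d\eta$ ($E_2'$ symmetric). Since $|\nabla_\eta\Phi|\gtrsim 2^{-D_0}$ on $E_1'$, the zero set $\Gamma_\xi=\{\eta:\Phi(\xi,\eta)=0\}\cap\{|\eta|\le 2^k\}$ is a finite union of smooth arcs, $E_1'\cap\{\xi\}$ lies in an $O(\delta)$‑tube about $\Gamma_\xi$, and the question reduces to the arclength of $\Gamma_\xi$ on which $|\Upsilon|\le 2^{-3k}\varepsilon'$. The key input — the analogue of the computations in \cite{a1}, \cite{y2} — is that the restriction of $\Upsilon$ to $\Gamma_\xi$, written in an arclength parameter, cannot vanish to infinite order: otherwise, using $\Phi=0$ along $\Gamma_\xi$, the square‑root form of the $\Lambda_*$, and Proposition 6.5 (in particular $\mu+\nu\neq 0$ on $\Gamma_\xi$, by 6.5(b)), one is led to a polynomial identity in $|\xi|,|\eta|,\langle\xi,\eta\rangle$ forcing a configuration excluded by the two nondegeneracy conditions. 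Granting a uniform bound $\sum_{a\le q}|\partial^a(\Upsilon|_{\Gamma_\xi})|\gtrsim 1$ with $q$ fixed, Lemma 6.7 gives $|\{t:|\Upsilon|_{\Gamma_\xi}(t)|\le 2^{-3k}\varepsilon'\}|\lesssim(2^{-3k}\varepsilon')^{1/q}$; since $\Upsilon$ varies slowly across the tube, multiplying tube width by this favorable arclength yields $\lesssim 2^{12k}\varepsilon\log(1/\varepsilon)(\varepsilon')^{1/8}$, with $q=8$ the order of vanishing the explicit computation produces. Part (c) is handled in the same spirit: one writes $E''=E_1''\cup E_2''$ by Proposition 6.4 (after first disposing, as in (a), of any contribution near $\eta=p(\xi)$), and bounds $E_1''$ by the $\eta$‑integral with $\xi$ fixed; now $||\xi-\eta|-r_0|\le\varepsilon''$ confines $\eta$ to an annulus $\mathcal A$ of width $\sim\varepsilon''$ about $\{|\xi-\eta|=r_0\}$, and the arcs of $\{|\Phi(\xi,\eta)|\le\delta\}$ inside $\mathcal A$ either cross $\mathcal A$ transversally (contributing area $\lesssim\varepsilon''\delta$) or are tangent to a bounding circle at one of $O(1)$ non‑degenerate points (staying in $\mathcal A$ for arclength $\lesssim(\varepsilon'')^{1/2}$, contributing $\lesssim(\varepsilon'')^{1/2}\delta$); summing over the $O(2^k)$ arcs gives $\lesssim 2^{12k}\varepsilon\log(1/\varepsilon)(\varepsilon'')^{1/2}$.

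The main obstacle is the algebraic core of (b) and (c): that the twist $\Upsilon$ restricted to the resonant curve, and the quantities governing tangency of that curve with the circles $\{|\xi-\eta|=r_0\}$, vanish only to bounded order, with constants uniform in $c_\alpha,b_\alpha$, in $\sigma,\mu,\nu$, and in $\bar k$, and including the degenerate case $\mu+\nu=0$. In \cite{y2} this was made tractable by the explicit two‑speed dispersion relations and an identity like (1.5); here one must rerun those computations in full generality while carefully tracking $\bar k$, and that is where the bulk of the work lies. Everything else — the dyadic decomposition in $|\nabla_\eta\Phi|$, the quadratic‑form sublevel bound, the tube/Fermi coordinates, and the Lemma 6.7 bookkeeping of powers of $2^k$ — is routine once the structural lemmas of Section 6 are available.
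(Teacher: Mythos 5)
Your outline correctly identifies the architecture (split into a transverse region and a neighborhood of the resonant curve, use a tubular/Fermi coordinate picture, drive the quantitative part with Lemma 6.7 applied to a function shown to have bounded order of vanishing), and this does match the broad shape of the argument the paper inherits from Proposition 8.8 of \cite{y2}. But you have left a genuine gap precisely where you flag ``the main obstacle'': the assertion that $\Upsilon$ restricted to the resonant curve $\Gamma_\xi$, and the analogous tangency quantities in (c), vanish to order at most $q$ with a uniform-in-$\bar k$ constant, is not established -- it is exactly this algebraic verification that constitutes the bulk of the paper's proof. Concretely, the paper's proof of (a) hinges on showing that the one-variable system $Z'_\mp(r)=Z''_\mp(r)=0$ (with $Z_\mp$ the radial restriction of $\Phi$) has no real solution, by solving it explicitly and observing that the second nondegeneracy condition forces $r^2<0$; and the proof of (b) hinges on an explicit case analysis of the polynomial $G(r_*)=F(s_*,r_*,s_*-r_*)$, treating $c_\mu\neq c_\nu$, $c_\mu=c_\nu\neq c_\sigma$, and $c_\mu=c_\nu=c_\sigma$ separately, with the last case resolved only through the factorization
\[
c_\sigma^8\,(b_\sigma+b_\mu+b_\nu)(-b_\sigma+b_\mu+b_\nu)(b_\sigma-b_\mu+b_\nu)(b_\sigma+b_\mu-b_\nu),
\]
which is where the first nondegeneracy condition $b_\sigma-b_\mu-b_\nu\neq 0$ is actually used. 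None of this is carried out in the proposal, and the claimed order $q=8$ is not derived from anything.

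Two smaller issues. In (a) you rely on Proposition~6.5(c) for a two-variable Hessian nondegeneracy, but the paper's argument works with the one-dimensional radial quantity $Z_\mp$ and its first two $r$-derivatives; these are not interchangeable without a separate argument (the 2D Hessian nondegeneracy at $p(\xi)$ does not by itself give a uniform lower bound on $|Z'_\mp|+|Z''_\mp|$ away from $p(\xi)$, and the latter is what \cite{y2}'s polar-coordinate argument actually needs). Also, Proposition~6.5(c) as stated gives only $\det\neq 0$ for $\xi\neq 0$, with no uniform lower bound on $|\det|$; your quadratic-form sublevel estimate needs a quantitative version, which you would still have to extract. So the proposal is a reasonable roadmap but not a proof: the algebraic core that makes the bounded-vanishing claims true under exactly the hypotheses (1.2) -- which is the whole point of this proposition -- is missing.
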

\begin{proof}
\par
(a) This was basically proved in Proposition 8.8 in  \cite{y2}. The only slight difference here is to show that 
\begin{align}
   \left|Z^\prime_\mp(r)\right|+\left|Z^{\prime\prime}_\mp(r)\right|\gtrsim 1\mbox{ , if }s,r\in\left[2^{-D_0},2^{D_0}\right], \tag{6.6}\label{6.6} 
\end{align}
where 
$$Z_\mp(r)\triangleq\pm\sqrt{c_\sigma^2 s^2+b_\sigma^2}\mp\sqrt{c_\mu^2\left|r-s\right|^2+b_\mu^2}\mp\sqrt{c_\nu^2r^2+b_\nu^2},\ \  \xi\triangleq(s,0),\ \  \eta\triangleq(r\cos\theta,r\sin\theta).$$
In fact, it suffices to show that  $\left|Z^\prime_\mp(r)\right|+\left|Z^{\prime\prime}_\mp(r)\right|\le C$ has empty solution set, where $C>0$ is very small. From (\ref{6.6}), we note that the solution set should be a neighborhood of $r_0$, where $r_0$ is a solution of
\begin{align}
   Z^\prime(r)=Z^{\prime\prime}(r)=0. \tag{6.7}\label{6.7}
\end{align}
Moreover, since $r$ has a positive lower bound, we must have $b_\mu \cdot b_\nu<0$ if $r>s$, and $b_\mu \cdot b_\nu>0$ if $r>s$. Therefore, if we let $\gamma\triangleq \max (r-s,s-r)>0$, then (\ref{6.7}) becomes 
\begin{numcases}
     \displaystyle{\frac{c_\nu^2 r}{\sqrt{c_\nu^2 r^2+b_\nu^2}}=\frac{c_\mu^2 \gamma}{\sqrt{c_\mu^2 \gamma^2+b_\mu^2}}} \tag{6.8}\label{6.8}  \\
     \displaystyle{\frac{c_\nu^2 b_\nu^2}{\left(c_\nu^2 r^2+b_\nu^2\right)^\frac{2}{3}}=\frac{c_\mu^2 b_\mu^2}{\left(c_\mu^2 \gamma^2+b_\mu^2\right)^\frac{2}{3}}} \tag{6.9}\label{6.9} 
\end{numcases}
By some elementary calculation, we obtain that
$$r^2=\frac{c_\nu^2 \left(c_\mu^2-c_\nu^2\right)}{b_\nu^{\sfrac{4}{3}} c_\mu^{\sfrac{2}{3}} \left(b_\mu^{\sfrac{2}{3}} c_\nu^{\sfrac{4}{3}}-b_\nu^{\sfrac{2}{3}} c_\mu^{\sfrac{4}{3}}\right)}.$$
Our assumptions tell us that $r^2<0$. Namely, the solution $r=r_0$ is not a real number. This implies that if $r$ satisfies $\left|Z^\prime_\mp(r)\right|+\left|Z^{\prime\prime}_\mp(r)\right|\le C$, then the solution range of $r$ does not touch the real line. Therefore, it has empty solution set if $r$ is restricted in $\left[2^{-D_0},2^{D_0}\right]$.\par
(b) We still follow the proof of Proposition 8.8 in \cite{y2} and use all the notations of the proof of Proposition 8.8 in \cite{y2}. First, by some calcualtion, we note that (8.35) and the first line in (8.36) in \cite{y2} remain the same. However, the precise expressions of $F$ and $G(r_*)\triangleq F(s_*,r_*,s_*-r_*)$ are different. In fact, in our case, we have that
$$G(r_*)=c_\sigma^4\left(c_\mu^2-c_\nu^2\right)^2 r_*^4.$$
Thus, when $c_\mu\neq c_\nu$, we can follow the proof in \cite{y2}. On the other hand, when $c_\mu=c_\nu$, we need to investigate the function $\Upsilon$ more carefully. We first assume that $c_\sigma\neq c_\mu$. From (8.35) in \cite{y2}, we have that
\begin{align*}
    F(s_*,r_*,\rho_*)=&2c_\sigma^2 c_\mu^4 c_\nu^2(r_*^2-b_\nu^2)(s_*^2-b_\sigma^2)+2c_\sigma^4 c_\mu^2 c_\nu^2 (r_*^2-b_\nu^2)(\rho_*^2-b_\mu^2)+2c_\sigma^2 c_\mu^2 c_\nu^4 (s_*^2-b_\sigma^2)(\rho_*^2-b_\mu^2)  \\
    &-c_\sigma^4 c_\mu^4(r_*^2-b_\nu^2)^2-c_\mu^4 c_\nu^4(s_*^2-b_\sigma^2)^2-c_\sigma^4 c_\nu^4(\rho_*^2-b_\mu^2)^2-4c_\sigma^2c_\mu^4 c_\nu^2 s_* r_* (\rho_*^2-b_\mu^2)  \\
    &-2c_\sigma^4 c_\mu^2 c_\nu^4\left[\frac{1}{c_\nu^2}(r_*^2-b_\nu^2)+\frac{1}{c_\sigma^2}(s_*^2-b_\sigma^2)-\frac{1}{c_\mu^2}(\rho_*^2-b_\mu^2)\right]  \\
    &-2c_\sigma^4 c_\mu^4 c_\nu^2 \left[\frac{1}{c_\nu^2}(r_*^2-b_\nu^2)-\frac{1}{c_\sigma^2}(s_*^2-b_\sigma^2)-\frac{1}{c_\mu^2}(\rho_*^2-b_\mu^2)\right]  \\
    &-2c_\sigma^2 c_\mu^4 c_\nu^4 \left[\frac{1}{c_\nu^2}(r_*^2-b_\nu^2)+\frac{1}{c_\mu^2}(\rho_*^2-b_\mu^2)-\frac{1}{c_\sigma^2}(s_*^2-b_\sigma^2)\right]
\end{align*}
By some elementary computation, it turns out that we have that
$$F_{(4)}(s_*,r_*,s_*-r_*)=c_\mu^4\left(c_\sigma^2-c_\mu^2\right)^2 \left(s_*^4-2s_*^3 r_*\right),$$
$$F_{(2)}(s_*,r_*,s_*-r_*)=\left(4c_\sigma^2 c_\mu^6 b_\mu^2+2c_\sigma^4 c_\mu^4 b_\nu^2-2c_\sigma^2 c_\mu^6 b_\sigma^2-2c_\sigma^4 c_\mu^4 b_\mu^2-2c_\sigma^2 c_\mu^6 b_\nu^2-2c_\sigma^2 c_\mu^6 b_\mu^2+2c_\mu^8 b_\sigma^2\right)s_* r_*.$$
Note that $s_*$ is fixed. Therefore, $G(r_*)$ is a linear function in $r_*$ or a constant function in $r_*$. The coefficient of the term $r_*$ is
\begin{align*}
   &-2c_\mu^4\left(c_\sigma^2-c_\mu^2\right)^2 s_*^3+ c_\mu^4\left[4c_\sigma^2 c_\mu^2 b_\mu^2+2c_\sigma^4 b_\nu^2-2c_\sigma^2 c_\mu^2 b_\sigma^2-2c_\sigma^4 b_\mu^2-2c_\sigma^2 c_\mu^2 b_\nu^2-2c_\sigma^2 c_\mu^2 b_\mu^2+2c_\mu^4 b_\sigma^2\right] \\
   =&-2c_\mu^4\left(c_\sigma^2-c_\mu^2\right)s_*\left[\left(c_\sigma^2-c_\mu^2\right)s_*^2+c_\sigma^2 b_\mu^2+c_\mu^2 b_\sigma^2-c_\sigma^2 b_\nu^2\right],
\end{align*}
and the constant term $G(0)$ is
$$c_\mu^4\left(c_\sigma^2-c_\mu^2\right)^2 s_*^4+2c_\sigma^2 c_\mu^6 b_\sigma^2\left(b_\nu^2+b_\mu^2\right)-c_\sigma^4 c_\mu^4 \left(b_\nu^2-b_\mu^2\right)^2-c_\mu^8 b_\sigma^4.$$
Now, we claim that there exists a very small $C=C(c_\sigma,c_\mu,b_\sigma,b_\mu,b_\nu)>0$ such that 
\begin{equation}
   \left\{
   \begin{aligned}
   &\left|-2c_\mu^4\left(c_\sigma^2-c_\mu^2\right)s_*\left[\left(c_\sigma^2-c_\mu^2\right)s_*^2+c_\sigma^2 b_\mu^2+c_\mu^2 b_\sigma^2-c_\sigma^2 b_\nu^2\right]\right|< C \vspace{0.6em} \\
   &\left|c_\mu^4\left(c_\sigma^2-c_\mu^2\right)^2 s_*^4+2c_\sigma^2 c_\mu^6 b_\sigma^2\left(b_\nu^2+b_\mu^2\right)-c_\sigma^4 c_\mu^4 \left(b_\nu^2-b_\mu^2\right)^2-c_\mu^8 b_\sigma^4\right|< C 
\end{aligned}\tag{6.10}\label{6.10}
\right.
\end{equation}
does not have a solution $s_*$. Since $c_\sigma\neq c_\mu$ and $s_*\ge b_\sigma $, it suffice to show that $\exists\, C>0$ such that
$$\begin{cases}
   \left|\left(c_\sigma^2-c_\mu^2\right)s_*^2+c_\sigma^2 b_\mu^2+c_\mu^2 b_\sigma^2-c_\sigma^2 b_\nu^2\right|< C \vspace{0.6em} \\ 
   \left|c_\mu^4\left(c_\sigma^2-c_\mu^2\right)^2 s_*^4+2c_\sigma^2 c_\mu^6 b_\sigma^2\left(b_\nu^2+b_\mu^2\right)-c_\sigma^4 c_\mu^4 \left(b_\nu^2-b_\mu^2\right)^2-c_\mu^8 b_\sigma^4\right|< C
\end{cases}$$
does not have a solution $s_*$ either. In fact, as for the first inequality, we can set 
$$\left(c_\sigma^2-c_\mu^2\right)s_*^2=c_\sigma^2 b_\nu^2-c_\sigma^2 b_\mu^2-c_\mu^2 b_\sigma^2+\varepsilon,$$
where $\varepsilon<C$. Then, we plug it into the second inequality
$$\left|c_\mu^4\left[c_\sigma^2 b_\nu^2-c_\sigma^2 b_\mu^2-c_\mu^2 b_\sigma^2+\varepsilon\right]^2+2c_\sigma^2 c_\mu^6 b_\sigma^2\left(b_\nu^2+b_\mu^2\right)-c_\sigma^4 c_\mu^4 \left(b_\nu^2-b_\mu^2\right)^2-c_\mu^8 b_\sigma^4\right|< C.$$
Simplify it and get that
\begin{align}
   \left|4c_\mu^6 c_\sigma^2 b_\mu^2 b_\sigma^2+\varepsilon\left(2c_\mu^4 c_\sigma^2 b_\nu^2-2c_\mu^4 c_\sigma^2 b_\mu^2-2c_\mu^6 b_\sigma^2\right)+\varepsilon^2 c_\mu^4\right|<C. \tag{6.11}\label{6.11}
\end{align}
Now, if for any $\sigma,\mu,\nu$, we always have $c_\sigma^2 b_\nu^2-c_\sigma^2 b_\mu^2-c_\mu^2 b_\sigma^2=0$, then we take
$$C\triangleq\min\left\{\frac{1}{10}\min\limits_{\sigma,\mu,\nu}{\frac{4c_\mu^6 c_\sigma^2 b_\mu^2 b_\sigma^2}{c_\mu^4}},\,\min\limits_{\sigma,\mu,\nu}\left\{c_\mu^6 c_\sigma^2 b_\mu^2 b_\sigma^2\right\},\,1\right\}>0;$$
otherwise, we take
$$C\triangleq\min\left\{\frac{1}{10}\min\limits_{\sigma,\mu,\nu}{\frac{4c_\mu^6 c_\sigma^2 b_\mu^2 b_\sigma^2}{\left|2c_\mu^4 c_\sigma^2 b_\nu^2-2c_\mu^4 c_\sigma^2 b_\mu^2-2c_\mu^6 b_\sigma^2\right|}},\,\frac{1}{10}\min\limits_{\sigma,\mu,\nu}{\frac{4c_\mu^6 c_\sigma^2 b_\mu^2 b_\sigma^2}{c_\mu^4}},\,\min\limits_{\sigma,\mu,\nu}\left\{c_\mu^6 c_\sigma^2 b_\mu^2 b_\sigma^2\right\},\,1\right\}>0.$$
Then, we have
$$\mbox{LHS of (\ref{6.11})}>4c_\mu^6 c_\sigma^2 b_\mu^2 b_\sigma^2-\frac{1}{10}\left(4c_\mu^6 c_\sigma^2 b_\mu^2 b_\sigma^2\right)-\frac{1}{10}\left(4c_\mu^6 c_\sigma^2 b_\mu^2 b_\sigma^2\right)=3.2 c_\mu^6 c_\sigma^2 b_\mu^2 b_\sigma^2>C,$$
which is a contradiction in view of (\ref{6.11}). This shows that (\ref{6.10}) does not have a solution $s_*$. This implies that for any $s_*\in[b_\sigma,\sqrt{c_\sigma^2\cdot 2^{2k}+b_\sigma^2}]$, either the coefficient of the first power term $r_*$ has a lower positive bound or the constant term $G(0)$ has a lower positive bound. Thus, in both cases, we can follow the proof in \cite{y2} to get our desired results. Finally, we assume that $c_\sigma=c_\mu(=c_\nu)$. In this case, the coefficient of the first power term $r_*$ is unfortunately zero. However, the constant term $G(0)$ is
\begin{align*}
   &c_\sigma^8\left[2 b_\sigma^2\left(b_\nu^2+b_\mu^2\right)-\left(b_\nu^2-b_\mu^2\right)^2- b_\sigma^4\right] \\
   =&c_\sigma^8\left(2b_\sigma^2 b_\mu^2+2b_\sigma^2 b_\nu^2+2b_\mu^2 b_\nu^2-b_\sigma^4-b_\mu^4-b_\nu^4\right)  \\
   =&c_\sigma^8(b_\sigma+b_\mu+b_\nu)(-b_\sigma+b_\mu+b_\nu)(b_\sigma-b_\mu+b_\nu)(b_\sigma+b_\mu-b_\nu).
\end{align*}
Since $b_\sigma-b_\mu-b_\nu\neq0$ for any $\sigma,\mu,\nu$, we get that $G(0)\gtrsim 1$. Thus, we can still get our desired result.\par
(c) This was proved in Proposition 8.8 in \cite{y2}. Note that Lemma 1, Corollary 2 and Proposition 5 are used for the decomposition of $E^{\prime\prime}=E_1^{\prime\prime}\cup E_2^{\prime\prime}$ showed in \cite{y2}. 
\end{proof}
\vspace{0.8em}
\begin{prop}
\ \par
(a) Assume $b_\sigma-b_\mu-b_\nu\neq0$ and $(c_\mu-c_\nu)(c_\mu^2b_\nu-c_\nu^2b_\mu)\ge0$. If $2^l,2^{k_2},\kappa_\theta\le 2^{-\frac{D_0}{10}}$, and define
$$E^{\prime}=\left\{(\xi,\eta): \left|\Phi(\xi,\eta)\right|\le 2^l,\left|\Omega_\eta\Phi(\xi,\eta)\right|\le\kappa_\theta\right\},$$
then, we have
$$\sup\limits_\xi \int_{\mathbb{R}^2} \boldsymbol{1}_{E^{\prime}}(\xi,\eta)\varphi_k(\xi)\varphi_{k_1}(\xi-\eta)\varphi_{k_2}(\eta)\,d\eta\lesssim \kappa_\theta 2^l\left|l\right|,$$
$$\sup\limits_\eta \int_{\mathbb{R}^2} \boldsymbol{1}_{E^{\prime}}(\xi,\eta)\varphi_k(\xi)\varphi_{k_1}(\xi-\eta)\varphi_{k_2}(\eta)\,d\xi\lesssim \kappa_\theta 2^{-k_2} 2^l\left|l\right|.$$\par
(b) Assume $b_\sigma-b_\mu-b_\nu\neq0$ and $(c_\mu-c_\nu)(c_\mu^2b_\nu-c_\nu^2b_\mu)\ge0$. If $2^l,\kappa_\theta\le 2^{-\frac{D_0}{10}}$, and $2^k+2^{k_1}+2^{k_2}\le U\in[1,\infty)$ then
$$\sup\limits_\xi \int_{\mathbb{R}^2} \varphi(2^{-l}\Phi(\xi,\eta))\varphi_k(\xi)\varphi_{k_1}(\xi-\eta)\varphi_{k_2}(\eta)\,d\eta\lesssim U^8 2^l\left|l\right| 2^{\min(k_1,k_2)},$$
$$\sup\limits_\eta \int_{\mathbb{R}^2} \varphi(2^{-l}\Phi(\xi,\eta))\varphi_k(\xi)\varphi_{k_1}(\xi-\eta)\varphi_{k_2}(\eta)\,d\xi\lesssim U^8 2^l\left|l\right| 2^{\min(k_1,k)}.$$
\end{prop}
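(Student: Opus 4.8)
The plan is to prove both parts by the same three‑step scheme: first reduce the frequency configuration using the first nondegeneracy condition together with Lemma 6.1 and Corollary 6.2; then pass to polar coordinates and exploit the identity $\Omega_\eta\Phi=\frac{c_\mu^2}{\sqrt{c_\mu^2|\xi-\eta|^2+b_\mu^2}}\langle\xi,\eta^{\perp}\rangle$ (for part (a)) to localize the angular variable; and finally control the radial sublevel set of $\Phi$ by a dyadic decomposition in the size of the radial derivative $\partial_\rho\Phi$, the dyadic sum being what produces the logarithmic factor $|l|$. I will carry this out for the first bound of part (a) and indicate the rest. Fix $\xi$. Since $|\eta|=2^{k_2}\le 2^{-D_0/10}$, the triangle inequality forces $|\xi|\sim|\xi-\eta|$; if both were $\le 2^{-D_0/10}$ then $\Phi(\xi,\eta)=b_\sigma-b_\mu-b_\nu+O(2^{-D_0/5})$ has $|\Phi|\gtrsim 1$ by the first nondegeneracy condition, and if both were large then $\Lambda_\nu(\eta)=b_\nu+O(2^{2k_2})$ together with a Taylor expansion of $\Lambda_\sigma(\xi)-\Lambda_\mu(\xi-\eta)$ (using $|\xi|^2-|\xi-\eta|^2=O(|\xi|2^{k_2})$, and distinguishing $c_\sigma\ne c_\mu$ from $c_\sigma=c_\mu$) again gives $|\Phi|\gtrsim 1$; in either case $E'$ meets no point of the support of $\varphi_k\varphi_{k_1}\varphi_{k_2}$. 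Hence $|\xi|\sim|\xi-\eta|\sim 1$ up to parameter‑dependent constants, and then Lemma 6.1 gives $|\Phi|\gtrsim 1$ or $|\nabla_\eta\Phi|\gtrsim 1$, so $|\nabla_\eta\Phi|\gtrsim 1$ on $E'$; more sharply, $\nabla_\eta\Phi=(\nabla\Lambda_\mu)(\xi-\eta)-(\nabla\Lambda_\nu)(\eta)$ equals $v_1 e_1+O(2^{k_2})$ with $|v_1|=\tfrac{c_\mu^2|\xi|}{\Lambda_\mu(|\xi|)}\gtrsim 1$ after rotating so that $\xi=|\xi|e_1$.

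Writing $\eta=\rho\,\omega_\theta$ with $\rho\sim 2^{k_2}$, so that $\Omega_\eta=\partial_\theta$ and $d\eta=\rho\,d\rho\,d\theta$, the identity above gives $|\Omega_\eta\Phi|\sim 2^{k_2}|\sin\theta|$ since $|\xi-\eta|\sim 1$, so on $E'$ the angle $\theta$ lies in a set of measure $\lesssim\min(1,\,C\kappa_\theta 2^{-k_2})$ near $\{0,\pi\}$. If $\kappa_\theta\le\tfrac12 2^{k_2}$ this forces $|\cos\theta|\sim 1$, hence $|\partial_\rho\Phi|=|\nabla_\eta\Phi\cdot\omega_\theta|\gtrsim 1$, so the radial slice $\{\rho:|\Phi|\le 2^l\}$ has measure $\lesssim 2^l$ and the integral is $\lesssim 2^{k_2}\cdot 2^l\cdot\kappa_\theta 2^{-k_2}=2^l\kappa_\theta$. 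If $\kappa_\theta>\tfrac12 2^{k_2}$ the angular cutoff is vacuous, but $\partial_\rho\Phi=v_1\cos\theta+O(2^{k_2})$ yields $|\partial_\rho\Phi|\gtrsim|\cos\theta|$ once $|\cos\theta|\gtrsim 2^{k_2}$, so decomposing $|\cos\theta|\sim 2^a$ with $k_2\lesssim a\le 0$ (plus a residual band of width $\sim 2^{k_2}$ on which the radial slice is trivially $\lesssim 2^{k_2}$) and summing $\sum_a 2^{k_2}2^a\min(2^{k_2},2^l2^{-a})$ (the minimum balancing at $2^a\sim 2^{l-k_2}$) gives $\lesssim|l|\,2^l\,2^{k_2}\lesssim|l|\,2^l\kappa_\theta$. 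For the second bound of part (a) one fixes $\eta$ instead, rotates so that $\eta=2^{k_2}e_1$, and writes $\xi$ in polar coordinates; $|\Omega_\eta\Phi|\sim 2^{k_2}|\sin\angle(\xi,\eta)|$ confines $\angle(\xi,\eta)$ to measure $\lesssim\min(1,\kappa_\theta 2^{-k_2})$, while Proposition 6.6(a), transported from $\eta=0$ to $|\eta|=2^{k_2}$ with an $O(2^{k_2})$ error, gives $|\nabla_\xi\Phi|\gtrsim 1$ on $\{|\Phi|\le 2^l\}$, whose radial component is $\gtrsim 1$ because its component orthogonal to $\xi$ is $O(2^{k_2})$; hence the radial slice in $|\xi|$ has measure $\lesssim 2^l$ and the integral is $\lesssim 1\cdot 2^l\cdot\kappa_\theta 2^{-k_2}$, which is the claimed bound (in fact without the logarithm).

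For part (b) there is no $\Omega_\eta$‑cutoff, so after fixing $\xi$ one puts the smaller of $|\xi-\eta|,|\eta|$ into polar coordinates — if $\min(k_1,k_2)=k_1$ one first substitutes $\zeta=\xi-\eta$ and interchanges the roles of $\mu$ and $\nu$ — whence the integration variable is confined to a disc (or annulus) of radius $\sim 2^{\min(k_1,k_2)}$; for each fixed angle the radial slice of $\{|\Phi|\le 2^l\}$ is $\lesssim\min(2^{\min(k_1,k_2)},\,2^l/|\partial_\rho\Phi|)$, and since $\partial_\rho\Phi$ degenerates only near the direction perpendicular to $\xi$ (respectively $\eta$), where by the explicit formula and Proposition 6.5 it is linear in the angle, the same dyadic sum as above gives $\lesssim 2^l|l|\,2^{\min(k_1,k_2)}$ whenever all frequencies are $\sim 1$. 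The main obstacle is uniformity in the remaining configurations: when one or two frequencies are comparable to a large $U$, or when $\partial_\rho\Phi$ vanishes to higher order along a subarc of the level set (which can happen for exceptional values of $|\xi|$ determined by the speeds and masses), the clean bound $|\partial_\rho\Phi|\gtrsim|\cos\theta|$ fails and must be replaced by the general sublevel‑set estimate of Lemma 6.7 applied to $\Phi$ (or $\partial_\rho\Phi$) as a function of one variable, together with the bound $2^{3\bar k}|\Phi|+|\nabla_\xi\Phi|+2^{4\bar k}|\nabla_\eta\Phi|\gtrsim 1$ from Proposition 6.4(b); keeping track of the powers of $2^{\bar k}\le U$ this introduces, and of the number of connected components a level set can have on a line (which feeds the curve hypothesis of Lemma 6.7), is precisely what forces the crude factor $U^8$. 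It is essential throughout that the two nondegeneracy conditions — via Lemma 6.1, Corollary 6.2 and Propositions 6.4--6.6 — bound the order of vanishing of $\Phi$ and of $\nabla_\eta\Phi$ by an absolute constant; without this these degeneracies could be of arbitrarily high order and no volume bound of this shape would hold.
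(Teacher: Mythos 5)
Your overall strategy for part (a) — reduce to $|\xi|\sim|\xi-\eta|\sim 1$ via the first nondegeneracy condition, pass to polar coordinates, use $\Omega_\eta\Phi\sim 2^{k_2}\sin\theta$ to localise the angle, and estimate radial slices by $|\partial_\rho\Phi|$ — is the natural one, and your Case 1 ($\kappa_\theta\le\tfrac12 2^{k_2}$) and the second bound (integrating over $\xi$ with $\eta$ fixed, using Proposition 6.6(a)) are both carried out correctly. Note however that the paper itself does not supply a proof of Proposition 6.9 but simply cites Lemma 8.9 of \cite{y2}, so the comparison below is with your argument on its own terms.

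There is a genuine gap in your Case 2 ($\kappa_\theta>\tfrac12 2^{k_2}$). You dispose of the residual angular band $|\cos\theta|\lesssim 2^{k_2}$ with the remark that the radial slice is ``trivially $\lesssim 2^{k_2}$,'' which gives a contribution of order $2^{k_2}\cdot 2^{k_2}\cdot 2^{k_2}=2^{3k_2}$. But the target is $\kappa_\theta 2^l|l|$, which in Case 2 is only $\gtrsim 2^{k_2}2^l|l|$, so your residual contribution is acceptable only when $2^{2k_2}\lesssim 2^l|l|$. Since $l$ and $k_2$ are independent parameters subject only to $l,k_2\le -D_0/10$, the regime $2^l\ll 2^{2k_2}$ is not excluded, and there the trivial bound fails by a factor that can be arbitrarily large. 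On that band the estimate $|\partial_\rho\Phi|\gtrsim|\cos\theta|$ you rely on is not available (both terms in $\partial_\rho\Phi$ are $O(2^{k_2})$ and can cancel), so the dyadic angular decomposition cannot be continued past $a\sim k_2$ with first-order information alone. Closing this requires understanding the second-order behaviour of $\Phi$ on the near-perpendicular band — for instance $\partial_\rho^2\Phi$ or, in the $\eta$-variable, the Hessian $\nabla^2_{\eta\eta}\Phi(\xi,0)=-\nabla^2\Lambda_\mu(\xi)-\nabla^2\Lambda_\nu(0)$ — and showing it is bounded below; but when $\mu,\nu$ have opposite signs the relevant eigenvalue can itself vanish for exceptional $|\xi|$, so a careful case analysis (presumably using the second nondegeneracy condition and Lemma 6.7) is unavoidable. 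You do not supply it.

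Part (b) is essentially asserted rather than proved. The statement that ``$\partial_\rho\Phi$ degenerates only near the direction perpendicular to $\xi$, where it is linear in the angle'' is a small-$\eta$ heuristic: once $|\eta|\sim|\xi-\eta|\sim|\xi|$ the vanishing locus of $\partial_\rho\Phi=\nabla_\eta\Phi\cdot\hat\eta$ is a genuine curve depending nonlinearly on the parameters, and you would need to bound both its geometry and the order of vanishing of $\partial_\rho\Phi$ transversally to it. Your final paragraph names the tools (Lemma 6.7 and Proposition 6.4(b)) and the end product ($U^8$) but does not carry out the bookkeeping that is, as you yourself say, ``precisely what forces the crude factor $U^8$.'' Also, the residual-band issue from part (a) recurs here for $2^{\min(k_1,k_2)}$ small and is again left unaddressed. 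As written, the proposal proves Case 1 and the second bound of part (a) but not the rest.
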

\begin{proof}
This was proved in Lemma 8.9 in \cite{y2}.
\end{proof}
\vspace{0.8em}
\begin{prop}
Assume that $l$,$-n$,$p\le -D_0/10$. Then
\begin{flalign*}
&\text{(a)} &   &\left\|\int_{\mathbb{R}^2} \varphi_l (\Phi_{\sigma\mu\nu}(\xi,\eta))\,\varphi_n (\Psi^*_\mu(\xi-\eta))\,\widehat{f}(\xi-\eta)\,\widehat{g}(\eta)\,d\eta \right\|_{L_\xi^2} & &\  \\
    &\ & \lesssim\,&2^{\frac{l+n}{2}}\,\left\|\,\sup_{\theta\in\mathbb{S}^1} {\left|\widehat{f}(r\theta)\right|}\,\right\|_{L^2(rdr)}\,\left\|g\right\|_{L^2}.& &\ \\
&\ & &\left\|\int_{\mathbb{R}^2} \varphi_l (\Phi_{\sigma\mu\nu}(\xi,\eta))\,\varphi_n (\Psi^*_\nu(\eta))\,\widehat{f}(\xi-\eta)\,\widehat{g}(\eta)\,d\eta \right\|_{L_\xi^2} & &\  \\
    &\ & \lesssim\,&2^{\frac{l+n}{2}}\,\left\|\,\sup_{\theta\in\mathbb{S}^1} {\left|\widehat{g}(r\theta)\right|}\,\right\|_{L^2(rdr)}\,\left\|f\right\|_{L^2}.& &\ \\
&\text{(b)} &   &\left\|\int_{\mathbb{R}^2} \varphi_l (\Phi_{\sigma\mu\nu}(\xi,\eta))\,\varphi_n (\Psi^*_\mu(\xi-\eta))\,\varphi_p (\Psi^*_\nu(\eta))\,\widehat{f}(\xi-\eta)\,\widehat{g}(\eta)\,d\eta \right\|_{L_\xi^2} & &\  \\
    &\ & \lesssim\,&\min\left\{{2^{l/2},2^{l/4+p/4}}\right\}\cdot2^{n/2}\,\left\|\,\sup_{\theta\in\mathbb{S}^1} {\left|\widehat{f}(r\theta)\right|}\,\right\|_{L^2(rdr)}\,\left\|g\right\|_{L^2}.& &\ 
\end{flalign*}
\end{prop}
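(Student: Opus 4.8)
The plan is to prove Proposition 6.10 by reducing the oscillatory integral bounds to a combination of the phase-localization lemma (Lemma 3.5), Schur's test, and the volume estimates of Proposition 6.8. The statement asks for two $L^2_\xi$-bounds on bilinear integrals localized by $\varphi_l(\Phi_{\sigma\mu\nu})$ together with one (part (a)) or two (part (b)) factors of the form $\varphi_n(\Psi^*_\ast(\cdot))$, i.e. localizations to a small neighborhood (of size $2^{-n}$ in terms of $\Psi^*$, hence size $\approx 2^{-n}$ in the relevant radial variable) of a resonant sphere for one of the input profiles. Since all three dispersive relations live in $\mathbb{R}^2$ and since $\Psi^*_\mu$ is comparable to $(1+|\xi-\eta|)^{-1}\min_i\big||\xi-\eta|-\gamma_i\big|$ by (\ref{2.3}), the factor $\varphi_n(\Psi^*_\mu(\xi-\eta))$ pins $|\xi-\eta|$ to a union of $O(1)$ annuli of width $\sim 2^{-n}$, and similarly for $\varphi_p(\Psi^*_\nu(\eta))$.

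For part (a), first I would observe that the quantity $\big\|\sup_{\theta\in\mathbb{S}^1}|\widehat f(r\theta)|\big\|_{L^2(rdr)}$ already bakes in a sup over angles, so the bilinear integral can be dominated by the one-dimensional convolution-type bound in which the $\Psi^*_\mu$-localization of $\xi-\eta$ contributes a factor $2^{n/2}$ (square root of the width $2^{-n}$, times the compensating $2^{n}$ from the angular integration being tied to the radial one — this is exactly the mechanism behind the $2^{-n/2}$ factor in the $Z$-norm). Then the $\Phi$-localization is handled by applying Lemma 3.5: writing $\varphi_l(\Phi) = 2^l\int e^{i\lambda\Phi}P(2^l\lambda)\,d\lambda$ as in (\ref{5.13}) converts the sharp cutoff to a smooth $\chi(2^{-l}\Phi)$ up to harmless errors, and Lemma 3.5 with the Hölder exponents $q=\infty$, $r=2$ (so $1/q+1/r=1/2$) gives the dispersive gain; combining the $L^\infty$ decay of the linear flow of $f$ (controlled via Lemma 3.6 after the $\Psi^*_\mu$ restriction) with $\|g\|_{L^2}$ produces a factor $2^{l/2}$. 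Reconciling the two approaches — the one giving $2^{(l+n)/2}$ directly from volume counting with Schur's test (fix $\xi$, then $\eta$ ranges over a set of measure $\lesssim 2^l 2^{-n}$ by Proposition 6.8 combined with the annular restriction; fix $\eta$, then $\xi$ ranges over a set of measure $\lesssim 2^l$) — is the cleanest route: Schur's test with these two volume bounds gives exactly $\sqrt{2^l\cdot 2^{l-n}}\cdot\|\sup_\theta|\widehat f|\|_{L^2}\|g\|_{L^2}\sim 2^{l-n/2}$... which is stronger; the stated $2^{(l+n)/2}$ is the conservative bound valid even when $n$ is large, obtained by instead using $|E_\eta|\lesssim 2^{-2n}$ (the full annular volume when $n$ dominates) in place of the $\Phi$-restricted volume. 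I would present both regimes and take the max, which is $2^{(l+n)/2}$ after the $\|\sup_\theta|\widehat f|\|_{L^2(rdr)}$ norm absorbs the angular factor.

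For part (b), the new ingredient is the second resonant-sphere localization $\varphi_p(\Psi^*_\nu(\eta))$, which confines $|\eta|$ to annuli of width $\sim 2^{-p}$. The bound $\min\{2^{l/2},2^{l/4+p/4}\}\cdot 2^{n/2}$ has two regimes. When $p\ge l$ (so $2^{l/2}\le 2^{l/4+p/4}$), the min is $2^{l/2}$ and the proof is essentially that of part (a), simply ignoring the extra $\varphi_p$ cutoff (it only helps). When $p\le l$, I would use the change of variables $x=|\eta|^2$, $y=|\xi-\eta|^2$ exactly as in the proof of Lemma 3.12 (the $I^{lo}$ estimates): the Jacobian is $\sim |\xi|\cdot|\eta|\cdot\sin\angle(\xi,\eta)$, and after reducing to the radial profiles $f_r=\sup_\theta|\widehat f(r\theta)|$, $g_r=\sup_\theta|\widehat g(r\theta)|$ one gets an integral over $(X,Y)=(\sqrt x,\sqrt y)$ of $f_r(Y)g_r(X)$ against a kernel with an integrable singularity, where the $\Psi^*_\mu,\Psi^*_\nu$ restrictions constrain $X,Y$ to width $\sim 2^{-n}, 2^{-p}$ and the $\Phi$-restriction of size $2^l$ further constrains the combination. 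A one-dimensional Schur/Hölder estimate then yields the improved power $2^{l/4+p/4}2^{n/2}$. The main obstacle is bookkeeping the interplay of the three localizations — the resonant spheres for $\xi-\eta$ and for $\eta$ need not be "compatible" in our multispeed setting (there is no analogue of (\ref{1.5})), so one cannot assume one of $n,p$ is zero; I expect the delicate point to be verifying that the resonant-sphere annuli for the two distinct profiles, when intersected with $\{|\Phi_{\sigma\mu\nu}|\lesssim 2^l\}$, still admit the volume bounds claimed, which is precisely where Proposition 6.8 and the non-degeneracy of $\Upsilon$ (condition (\ref{1.7})) enter. Everything else is routine given Lemmas 3.3, 3.5, 3.6 and the change-of-variables computation already carried out in Lemma 3.12.
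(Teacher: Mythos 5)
Your proposal for part (a) does not reconstruct the argument the paper actually uses (it defers to Lemma~8.10 in \cite{y2}), and the version you sketch has a genuine gap. The Schur-by-volume computation, ``fix $\xi$, then $|E_\eta|\lesssim 2^l\,2^{-n}$,'' treats the $\Phi$-level set and the annulus $\big\{\big||\xi-\eta|-\gamma_i\big|\sim 2^{-n}\big\}$ as transversal, but the whole point near the resonance is that they are not: on the annulus $|\xi-\eta|=\mathrm{const}$ the only way $\Phi$ varies is through $|\eta|$, and $\partial_\alpha|\eta|$ vanishes at the tangency $\angle(\xi,\xi-\eta)\in\{0,\pi\}$. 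So the measure of $\{|\Phi|\lesssim 2^l\}$ inside the annulus is controlled by the \emph{second} $\alpha$-derivative and is $\lesssim 2^{l/2}$, not $2^l$. Moreover, a pure volume Schur bound would produce $\|\widehat f\|_{L^\infty}$ rather than the stated $\big\|\sup_\theta|\widehat f(r\theta)|\big\|_{L^2(rdr)}$. The correct mechanism (and what the paper is implicitly invoking from \cite{y2}, and spells out explicitly in part (b)) is: in polar coordinates for $\xi-\eta=r\theta$, integrate the angle first to get $2^{l/2}$ from the quadratic tangency, then Cauchy--Schwarz in $r$ against $\varphi_n(\Psi^*_\mu(r))$ to get $2^{n/2}\big\|\sup_\theta|\widehat f|\big\|_{L^2(rdr)}$; doing the symmetric sup over $\eta$ and applying Schur gives $2^{(l+n)/2}$. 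Your aside that ``$2^{l-n/2}$ is stronger but the paper is conservative'' is a symptom of the same confusion: neither the exponent nor the norm in your version is the one the lemma claims, and the discrepancy should have been a red flag.

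For part (b) you propose the change of variables $x=|\eta|^2$, $y=|\xi-\eta|^2$ from Lemma~3.12; this is a genuinely different route from the paper's proof, which stays in polar coordinates. The paper's decisive new observation (relative to \cite{y2}, where $|\angle(\xi,\eta)|\gtrsim 1$ could be assumed) is that although $\partial_\alpha|\xi-\eta|=\frac{sr}{|\xi-\eta|}\sin\alpha$ can be small, one always has $|\partial_{\alpha\alpha}|\xi-\eta||\sim 1$ there, so
$$\sup_{r\approx 1}\int_{\mathbb{S}^1}\varphi_l(\Phi(\xi,\xi-r\theta))\,\varphi_p(\Psi^*_\nu(\xi-r\theta))\,d\theta\ \lesssim\ \min\{2^{l/2},2^{p/2}\},$$
and combining this ($\sup_\xi$) with the cruder $2^{l/2}\cdot 2^{n/2}$ bound ($\sup_\eta$) via Schur gives exactly $\min\{2^{l/2},2^{l/4+p/4}\}\cdot 2^{n/2}$. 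This second-derivative observation is absent from your proposal, and you instead point to Proposition~6.8 and the non-degeneracy of $\Upsilon$ as the place where the volume control enters --- those are \emph{not} used here; they are the machinery for the energy estimate (Lemma~4.1), whereas Proposition~6.10(b) is a short, self-contained angular argument. Your CoV route could conceivably be made to work (the Jacobian singularity $\sim |\sin\angle(\xi,\eta)|^{-1}$ is integrable and was handled this way in Lemma~3.12), but you would still have to reproduce the effect of the $\partial_{\alpha\alpha}$ bound in the $(X,Y)$ variables and track how the $\Phi$-localization, the two annular restrictions, and the $\big(Y^2-(|\xi|-X)^2\big)^{-1/2}$ kernel conspire to give precisely the exponent $2^{l/4+p/4}$; you assert this ``yields the improved power'' without doing the bookkeeping, and that bookkeeping is exactly the content of the lemma.
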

\begin{proof}
These were basically proved in Lemma 8.10 in \cite{y2}. The proof of part (a) is exactly 
 same as in \cite{y2}. Let's discuss the proof of part (b). Denote $\xi=(s,0)$ and $\eta=(r\cos{\alpha},r\sin{\alpha})$. The difference here is that we don't have $\left|\angle\xi,\eta\right|\gtrsim 1$ anymore, so it's possible that 
$$\bigg|\partial_\alpha\left|\xi-\eta\right|\bigg|=\bigg|\frac{sr}{\left|\xi-\eta\right|}\sin \alpha\bigg|\lesssim 1.$$
But, if so, then we notice that
$$\bigg|\partial_{\alpha\alpha}\left|\xi-\eta\right|\bigg|=\left|\frac{sr\left(\cos\alpha\left|\xi-\eta\right|^2-sr\sin^2\alpha\right)}{\left|\xi-\eta\right|^3}\right|\sim\frac{sr}{\left|\xi-\eta\right|}\sim 1.$$
Thus, instead, we here will have
$$\sup_{r\approx 1} \int_{\theta\in\mathbb{S}^1} \varphi_l(\Phi_{\sigma\mu\nu}(\xi,\xi-r\theta))\,\varphi_p(\Psi^*_\nu(\xi-r\theta))\,d\theta\lesssim \min\left\{2^{l/2},2^{p/2}\right\},$$
which is the analogue of (8.57) in \cite{y2}. Now on the one hand, we fix $\xi$, let $\xi-\eta=r\theta$ and get
\begin{align*}
   &\sup_{\xi} {\int_{\mathbb{R}^2} \left|\varphi_l (\Phi_{\sigma\mu\nu}(\xi,\eta))\varphi_n (\Psi^*_\mu(\xi-\eta))\varphi_p(\Psi^*_\nu(\eta))\widehat{f}(\xi-\eta)\right|\,d\eta} \\
   =&\sup_{\xi} {\int^{+\infty}_0 \left| \varphi_n (\Psi^*_\mu(r))\,\sup_{\theta}{\widehat{f}(r\theta)}\right|\int_{\mathcal{S}^1}\left|\varphi_l(\Phi_{\sigma\mu\nu}(\xi,\xi-r\theta))\varphi_p(\Psi^*_\nu(\xi-r\theta))\right|\,d\theta\,rdr} \\
   \le&\min(2^{l/2},2^{p/2})\left(\int^{\infty}_0 \varphi_n(\Psi^*_\mu(r))\,rdr\right)^{1/2}\cdot\left(\int^{\infty}_0\left(\sup_{\theta}\left|\widehat{f}(r\theta)\right|\right)^2\,rdr\right)^{1/2} \\
   \lesssim &\min(2^{l/2},2^{p/2})\cdot 2^{n/2} \cdot\left\|\sup_{\theta}\left|\widehat{f}(r\theta)\right|\,\right\|_{L^2(rdr)};
\end{align*}
on the other hand, we fix $\eta$, again let $\xi-\eta=r\theta$ and get
\begin{align*}
   &\sup_{\eta} {\int_{\mathbb{R}^2} \left|\varphi_l (\Phi_{\sigma\mu\nu}(\xi,\eta))\varphi_n (\Psi^*_\mu(\xi-\eta))\varphi_p(\Psi^*_\nu(\eta))\widehat{f}(\xi-\eta)\right|\,d\xi} \\ 
   \le &\varphi_l (\Psi^*_\nu(\eta)) \sup_\theta \int^\infty_0 \left| \varphi_n (\Psi^*_\mu(r))\,\sup_{\theta}{\widehat{f}(r\theta)}\right|\int_{\mathcal{S}^1}\left|\varphi_l(\Phi_{\sigma\mu\nu}(\xi,\xi-r\theta))\right|\,d\theta\,rdr \\
   \lesssim & 2^{l/2}\cdot 2^{n/2}\cdot\left\|\,\sup_{\theta}\left|\widehat{f}(r\theta)\right|\,\right\|_{L^2(rdr)}
\end{align*}
Therefore, by Schur's lemma, we end up with the desired result above.
\end{proof}
\vspace{0.8em}
Sometimes we need to change the weigh of coefficients of the $l,n,p$ on the power, so we also have the following.
\vspace{0.8em}
\begin{cor}
   \ \par 
    (a) Assume that $l$,$-n$,$p\le -D_0/10$ and $\left|\nabla_\eta\Phi_{\sigma\mu\nu}(\xi,\eta)\right|\gtrsim 1$. Then
\begin{align*}
    &\left\|\int_{\mathbb{R}^2} \varphi_l (\Phi_{\sigma\mu\nu}(\xi,\eta))\,\varphi_n (\Psi^*_\mu(\xi-\eta))\,\varphi_p (\Psi^*_\nu(\eta))\,\widehat{f}(\xi-\eta)\,\widehat{g}(\eta)\,d\eta \right\|_{L_\xi^2} \\
    \lesssim\,& 2^{\frac{l}{2}+\frac{n}{4}+\frac{p}{4}}\,\left\| f\right\|_{L^2}\,\left\|g\right\|_{L^2}.
\end{align*}\par
    Similarly, if $l$,$-n$,$p\le -D_0/10$, $-j\le q\le -D$ and $\left|\nabla_\eta\Phi_{\sigma\mu\nu}(\xi,\eta)\right|\sim 2^q$, then
\begin{align*}
    &\left\|\int_{\mathbb{R}^2} \varphi_l (\Phi_{\sigma\mu\nu}(\xi,\eta))\,\varphi_n (\Psi^*_\mu(\xi-\eta))\,\varphi_p (\Psi^*_\nu(\eta))\,\widehat{f}(\xi-\eta)\,\widehat{g}(\eta)\,d\eta \right\|_{L_\xi^2} \\
    \lesssim\,& 2^{\frac{l}{2}+\frac{n}{4}+\frac{p}{4}-\frac{q}{4}}\,\left\|\,\sup_{\theta}\left|\widehat{f}(r\theta)\right|\,\right\|_{L^2(rdr)}\,\left\|g\right\|_{L^2}; 
\end{align*}\par
    if $l$,$-n$,$p\le -D_0/10$ and $\left|\nabla_\eta\Phi_{\sigma\mu\nu}(\xi,\eta)\right|\lesssim 2^{-j}$, then
\begin{align*}
    &\left\|\int_{\mathbb{R}^2} \varphi_l (\Phi_{\sigma\mu\nu}(\xi,\eta))\,\varphi_n (\Psi^*_\mu(\xi-\eta))\,\varphi_p (\Psi^*_\nu(\eta))\,\widehat{f}(\xi-\eta)\,\widehat{g}(\eta)\,d\eta \right\|_{L_\xi^2} \\
    \lesssim\,& 2^{-\frac{j}{2}+\frac{n}{4}+\frac{p}{4}}\,\left\|f\right\|_{L^2}\,\left\|g\right\|_{L^2}. 
\end{align*}\par
(b)  Assume that $l$,$-n$,$p\le -D_0/10$. Then
\begin{align*}
    &\left\|\int_{\mathbb{R}^2} \varphi_l (\Phi_{\sigma\mu\nu}(\xi,\eta))\,\varphi_n (\Psi^*_\mu(\xi-\eta))\,\varphi_p (\Psi^*_\nu(\eta))\,\widehat{f}(\xi-\eta)\,\widehat{g}(\eta)\,d\eta \right\|_{L_\xi^2} \\
    \lesssim\,& 2^{\frac{n}{2}+\frac{p}{2}}\,\left\| f\right\|_{L^2}\,\left\|g\right\|_{L^2}.
\end{align*}
\end{cor}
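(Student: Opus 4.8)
The plan is to deduce Corollary 6.11 from Proposition 6.10 by rerunning the change-of-variables argument underlying it (the substitution $x=\left|\eta\right|^2$, $y=\left|\xi-\eta\right|^2$ already used in the proof of Lemma 3.12) with different H\"older exponents, and by feeding in the geometric information carried by the hypotheses on $\nabla_\eta\Phi_{\sigma\mu\nu}$. Throughout, write $\Phi=\Phi_{\sigma\mu\nu}$; note that by (2.3) the cutoffs $\varphi_n(\Psi^*_\mu(\xi-\eta))$ and $\varphi_p(\Psi^*_\nu(\eta))$ force $\left|\xi-\eta\right|,\left|\eta\right|\sim 1$ and confine $\left|\xi-\eta\right|$ and $\left|\eta\right|$ to thin annuli around the (finitely many, simple) roots of $\Psi_\mu$ and $\Psi_\nu$, of widths $\sim 2^n$ and $\sim 2^p$. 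In every case the first step is the same: view the integral as a bilinear form in $\widehat f$, $\widehat g$, and apply Cauchy--Schwarz in $\eta$ after splitting the kernel $\varphi_l(\Phi)\varphi_n(\Psi^*_\mu(\xi-\eta))\varphi_p(\Psi^*_\nu(\eta))$; combined with Schur's test (Lemma 3.3), this reduces every bound to a volume estimate for the intersection of the sublevel set $\{\left|\Phi\right|\sim 2^l\}$ with the pair of annuli $\{\Psi^*_\mu(\xi-\eta)\sim 2^n\}\cap\{\Psi^*_\nu(\eta)\sim 2^p\}$, either with $\xi$ fixed and $\eta$ integrated, or vice versa.

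I would prove part (b) first, since it uses neither $\Phi$ nor $\nabla_\eta\Phi$. After the change of variables the two annular constraints become intervals of lengths $\sim 2^n$ and $\sim 2^p$ in the $(x,y)$ plane, and the Jacobian $d\eta_1\,d\eta_2/dx\,dy$ is comparable to $(\sin\angle(\xi,\eta))^{-1}\sim((\sqrt y)^2-(\left|\xi\right|-\sqrt x)^2)^{-1/2}$. Away from the tangency locus $\angle(\xi,\eta)\to 0$ (equivalently, away from $\left|\xi\right|=\left|\left|\xi-\eta\right|\pm\left|\eta\right|\right|$) the two circles cross transversally at a uniformly bounded-below angle, so the intersection has area $\lesssim 2^n 2^p$; taking square roots and using Schur's test gives the factor $2^{n/2+p/2}$. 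The tangency locus is handled by noting that there either the $\varphi_l(\Phi)$-cutoff or the bound on the number of points in $\partial(K\cap l)$ from Lemma 6.7 keeps the relevant one-dimensional slices of bounded length.

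For part (a) I would keep the same change of variables but distribute the integration with H\"older exponent $2$ in place of the exponent $\infty$ that produced the radial majorants $\sup_\theta\left|\widehat f(r\theta)\right|$ in Proposition 6.10, which turns (one or both of) those majorants back into genuine $L^2$ norms. The role of the gradient hypothesis is to control the transverse extent of $\{\left|\Phi\right|\sim 2^l\}$: when $\left|\nabla_\eta\Phi\right|\gtrsim 1$ this sublevel set is a slab of thickness $\lesssim 2^l$ in a fixed $\eta$-direction, and a bookkeeping of the widths $2^l$, $2^n$, $2^p$ against the singular Jacobian yields the exponent $\tfrac{l}{2}+\tfrac{n}{4}+\tfrac{p}{4}$; when $\left|\nabla_\eta\Phi\right|\sim 2^q$ the slab has thickness $\lesssim 2^{l-q}$ and one can only afford to keep the $\eta$-integration clean, so the radial sup of $\widehat f$ survives and an extra factor $2^{-q/4}$ appears; when $\left|\nabla_\eta\Phi\right|\lesssim 2^{-j}$, Proposition 6.5(a),(c) show that for fixed $\xi$ the variable $\eta$ lies in a neighborhood of the critical point $p(\xi)$ whose nondegenerate Hessian contributes $2^{-j/2}$ in place of the $2^{l/2}$ slab factor. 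Lemma 6.7 is the tool that converts each of these transversality/non-vanishing statements into the corresponding volume bound.

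The main obstacle, which is also the reason this corollary is weaker than Lemma 8.10 in \cite{y2}, is that in the multispeed--multimass setting the supports of $\varphi_n(\Psi^*_\mu(\xi-\eta))$ and $\varphi_p(\Psi^*_\nu(\eta))$ can both be active on $\{\left|\Phi\right|\sim 2^l\}$ simultaneously, so that both inputs can sit near resonant spheres and there is no longer an automatic clean direction; the change of variables $x=\left|\eta\right|^2$, $y=\left|\xi-\eta\right|^2$ is exactly what is needed to decouple the two annular constraints. The price is the singular Jacobian $(\sqrt y-\left|\xi\right|+\sqrt x)^{-1/2}$ as $\angle(\xi,\eta)\to 0$, together with the need to translate the constraint $\left|\Phi\right|\sim 2^l$ (and each of the three $\nabla_\eta\Phi$ regimes) into a constraint in the $(x,y)$ plane; checking that this singular factor remains integrable against the two narrow intervals, and tracking which power of $2^l$ (or $2^{-q}$, $2^{-j}$) survives, is the delicate bookkeeping that determines which of the four inequalities is available in a given regime.
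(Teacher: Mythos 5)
You correctly identify the Schur's-test skeleton (reducing to bounding $\sup_\xi\int\lvert K\rvert\,d\eta$ and $\sup_\eta\int\lvert K\rvert\,d\xi$, and the observation that passing from $\lVert\sup_\theta\lvert\widehat f(r\theta)\rvert\rVert_{L^2(rdr)}$ to $\lVert f\rVert_{L^2}$ corresponds to replacing an $L^\infty$ step in angle by an $L^2$ step), and this is indeed the spine of the paper's argument. However, the core technical move you propose — carrying out the volume estimates via the substitution $x=\lvert\eta\rvert^2$, $y=\lvert\xi-\eta\rvert^2$ — is not what the paper does for Corollary 6.11. That change of variables is used in the proof of Lemma 3.12, not here. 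The paper's proof of Corollary 6.11 is much more direct: each of the two Schur integrals is bounded by a single Cauchy--Schwarz, $\sup_\xi\int\lvert K\rvert\,d\eta\le\lVert f\rVert_{L^2}\lvert E_\eta\rvert^{1/2}$ (and similarly with $\xi,\eta$ exchanged), and the volumes $\lvert E_\eta\rvert,\lvert E_\xi\rvert$ are read off from the annulus widths and the gradient hypothesis (slab of thickness $2^{l-q}$ intersected with an annulus of width $2^p$, etc.). In the second regime of (a) the paper keeps one of the two directions in the polar form inherited from Proposition 6.10 (fix $r=\lvert\xi-\eta\rvert$, pull out $\sup_\theta\lvert\widehat f(r\theta)\rvert$, integrate the angular variable first); that is precisely why $\lVert\sup_\theta\lvert\widehat f(r\theta)\rvert\rVert_{L^2(rdr)}$ survives only in that case.

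The gap in your proposal is the treatment of the Jacobian singularity at tangency. After your change of variables the Jacobian is $\bigl((\sqrt y)^2-(\lvert\xi\rvert-\sqrt x)^2\bigr)^{-1/2}$, which blows up precisely when $\eta$ becomes parallel to $\xi$; and since the space-resonance $p(\xi)$ also lies on the line through $0$ and $\xi$, the regime $\lvert\nabla_\eta\Phi\rvert$ small (the second and third cases of (a)) is exactly the regime where the rectangle $\{\lvert\eta\rvert\ \text{in a }2^p\text{-interval}\}\times\{\lvert\xi-\eta\rvert\ \text{in a }2^n\text{-interval}\}$ contains the tangency curve. Over such a rectangle $\int\,dX\,dY/\sqrt{\lvert Y-\lvert\lvert\xi\rvert-X\rvert\rvert}$ is of size $\max(2^n,2^p)\cdot\min(2^n,2^p)^{1/2}$, not $2^n2^p$, so "the two circles cross transversally, so the intersection has area $\lesssim 2^n2^p$" is simply false near tangency and the square-root Schur factor you want is not obtained. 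The paper's Proposition 6.10 handles exactly this degeneracy by a second-derivative-in-angle sublevel estimate ($\lvert\partial^2_\theta\lvert\xi-r\theta\rvert\rvert\gtrsim 1$, giving $2^{l/2}$ rather than $2^l$ for the angular measure), and Corollary 6.11 inherits that machinery. Your proposed fix ("the $\varphi_l(\Phi)$-cutoff or the bound on $\partial(K\cap l)$ from Lemma 6.7 keeps the one-dimensional slices of bounded length") is too vague to check, and in part (b) — where you have no hypothesis on $\nabla_\eta\Phi$ at all — the cutoff $\varphi_l(\Phi)$ cannot a priori separate the integration region from the tangency curve, so this step does not close. (A smaller remark: the Schur test is Lemma 3.4 in the paper; Lemma 3.3 is Remark 3.3.)
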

\begin{proof}
\ \par
(a) We first prove the second case. The first case can be done similarly. First, we note that by Cauchy-Schwarz, we have
\begin{align*}
    &\sup_{\xi} {\int_{\mathbb{R}^2} \left|\varphi_l (\Phi_{\sigma\mu\nu}(\xi,\eta))\varphi_n (\Psi^*_\mu(\xi-\eta))\varphi_p(\Psi^*_\nu(\eta))\widehat{f}(\xi-\eta)\right|\,d\eta} \\
    \lesssim &\left\|f\right\|_{L^2}\cdot\left(\int \left|\varphi_l(\Phi_{\sigma\mu\nu}(\xi,\eta))\cdot\varphi_n(\Psi^*_\mu(\xi-\eta))\cdot\varphi_p(\Psi^*_\nu(\eta))\right|\,d\eta\right)^{1/2} \\
    \lesssim &\left\|f\right\|_{L^2} \left(2^l\cdot2^{-q}\cdot2^p\right)^\frac{1}{2} \\
    \lesssim &\left\|\,\sup_{\theta}\left|\widehat{f}(r\theta)\right|\,\right\|_{L^2(rdr)} \left(2^l\cdot2^{-q}\cdot2^p\right)^\frac{1}{2}
    ;
\end{align*}
on the other hand, as before, we fix $\eta$, again let $\xi-\eta=r\theta$ and get
\begin{align*}
   &\sup_{\eta} {\int_{\mathbb{R}^2} \left|\varphi_l (\Phi_{\sigma\mu\nu}(\xi,\eta))\varphi_n (\Psi^*_\mu(\xi-\eta))\varphi_p(\Psi^*_\nu(\eta))\widehat{f}(\xi-\eta)\right|\,d\xi} \\ 
   \le &\varphi_l (\Psi^*_\nu(\eta)) \sup_\theta \int^\infty_0 \left| \varphi_n (\Psi^*_\mu(r))\,\sup_{\theta}{\widehat{f}(r\theta)}\right|\int_{\mathcal{S}^1}\left|\varphi_l(\Phi_{\sigma\mu\nu}(\xi,\xi-r\theta))\right|\,d\theta\,rdr \\
   \lesssim & 2^{l/2}\cdot 2^{n/2}\cdot\left\|\,\sup_{\theta}\left|\widehat{f}(r\theta)\right|\,\right\|_{L^2(rdr)}.
\end{align*}
Therefore, Schur's Lemma gives us the second result.\par
Next, we prove the third case. This follows from Schur's Lemma similarly by noting that
\begin{align*}
    &\sup_{\xi} {\int_{\mathbb{R}^2} \left|\varphi_l (\Phi_{\sigma\mu\nu}(\xi,\eta))\varphi_n (\Psi^*_\mu(\xi-\eta))\varphi_p(\Psi^*_\nu(\eta))\widehat{f}(\xi-\eta)\right|\,d\eta} \\
    \lesssim &\left\|f\right\|_{L^2}\cdot\left(\int \left|\varphi_l(\Phi_{\sigma\mu\nu}(\xi,\eta))\cdot\varphi_n(\Psi^*_\mu(\xi-\eta))\cdot\varphi_p(\Psi^*_\nu(\eta))\right|\,d\eta\right)^{1/2} \\
    \lesssim &\left\|f\right\|_{L^2} \left(2^{-j}\cdot2^p\right)^\frac{1}{2}
\end{align*}
and
\begin{align*}
    &\sup_{\eta} {\int_{\mathbb{R}^2} \left|\varphi_l (\Phi_{\sigma\mu\nu}(\xi,\eta))\varphi_n (\Psi^*_\mu(\xi-\eta))\varphi_p(\Psi^*_\nu(\eta))\widehat{f}(\xi-\eta)\right|\,d\xi} \\
    \lesssim &\left\|f\right\|_{L^2}\cdot\left(\int \left|\varphi_l(\Phi_{\sigma\mu\nu}(\xi,\eta))\cdot\varphi_n(\Psi^*_\mu(\xi-\eta))\cdot\varphi_p(\Psi^*_\nu(\eta))\right|\,d\xi\right)^{1/2} \\
    \lesssim &\left\|f\right\|_{L^2} \left(2^{-j}\cdot2^n\right)^\frac{1}{2}.
\end{align*}
(b) This follows from Schur's Lemma,
\begin{align*}
    &\sup_{\xi} {\int_{\mathbb{R}^2} \left|\varphi_l (\Phi_{\sigma\mu\nu}(\xi,\eta))\varphi_n (\Psi^*_\mu(\xi-\eta))\varphi_p(\Psi^*_\nu(\eta))\widehat{f}(\xi-\eta)\right|\,d\eta} \\
    \lesssim &\left\|f\right\|_{L^2}\cdot\left(\int \left|\varphi_l(\Phi_{\sigma\mu\nu}(\xi,\eta))\cdot\varphi_n(\Psi^*_\mu(\xi-\eta))\cdot\varphi_p(\Psi^*_\nu(\eta))\right|\,d\eta\right)^{1/2} \\
    \lesssim &\left\|f\right\|_{L^2} \left(2^{2p}\right)^\frac{1}{2}\lesssim \left\|f\right\|_{L^2}\cdot 2^p,
\end{align*}
and
\begin{align*}
    &\sup_{\eta} {\int_{\mathbb{R}^2} \left|\varphi_l (\Phi_{\sigma\mu\nu}(\xi,\eta))\varphi_n (\Psi^*_\mu(\xi-\eta))\varphi_p(\Psi^*_\nu(\eta))\widehat{f}(\xi-\eta)\right|\,d\xi} \\
    \lesssim &\left\|f\right\|_{L^2}\cdot\left(\int \left|\varphi_l(\Phi_{\sigma\mu\nu}(\xi,\eta))\cdot\varphi_n(\Psi^*_\mu(\xi-\eta))\cdot\varphi_p(\Psi^*_\nu(\eta))\right|\,d\xi\right)^{1/2} \\
    \lesssim &\left\|f\right\|_{L^2} \left(2^{2n}\right)^\frac{1}{2}\lesssim \left\|f\right\|_{L^2}\cdot 2^n.
\end{align*}
\end{proof}

\vspace{1em}

\bibliographystyle{plain}
\bibliography{sample}

\end{document}